\newcommand\R{\mathbb{R}}
\newcommand\C{\mathbb{C}}
\newcommand\N{\mathbb{N}}
\newcommand\Q{\mathbb{Q}}
\newcommand\Z{\mathbb{Z}}
\newcommand{\mA}{\mathcal{A}}
\newcommand{\mB}{\mathcal{B}}
\newcommand{\mC}{\mathcal{C}}
\newcommand{\mD}{\mathcal{D}}
\newcommand{\mE}{\mathcal{E}}
\newcommand{\mF}{\mathcal{F}}
\newcommand{\mG}{\mathcal{G}}
\newcommand{\mJ}{\mathcal{J}}
\newcommand{\mM}{\mathcal{M}}
\newcommand{\mN}{\mathcal{N}}
\newcommand{\mO}{\mathcal{O}}
\newcommand{\mS}{\mathcal{S}}
\newcommand{\mT}{\mathcal{T}}
\newcommand{\B}[1]{{\mathbf #1}}
\newcommand{\OP}{\operatorname}
\newcommand{\cp}{\B C\B P}
\newcommand{\om}{\omega}
\DeclareMathOperator{\id}{id}
\DeclareMathOperator{\Emb}{Emb}
\DeclareMathOperator{\IEmb}{\Im\!\Emb}
\DeclareMathOperator{\Symp}{Symp}
\DeclareMathOperator{\BSymp}{B\Symp}
\DeclareMathOperator{\ESymp}{E\Symp}
\DeclareMathOperator{\Ham}{Ham}
\DeclareMathOperator{\Diff}{Diff}
\DeclareMathOperator{\Aut}{Aut}
\DeclareMathOperator{\Iso}{Iso}
\DeclareMathOperator{\BG}{B\,\mG}
\DeclareMathOperator{\pt}{pt}
\DeclareMathOperator{\Sp}{Sp}
\DeclareMathOperator{\Stab}{Stab}
\DeclareMathOperator{\BStab}{B\Stab}
\DeclareMathOperator{\Fix}{Fix}
\DeclareMathOperator{\ev}{ev}
\DeclareMathOperator{\PGL}{PGL}
\DeclareMathOperator{\PSL}{PSL}
\DeclareMathOperator{\U}{U}
\DeclareMathOperator{\Fr}{Fr}
\DeclareMathOperator{\PU}{PU}
\DeclareMathOperator{\SU}{SU}
\DeclareMathOperator{\PSU}{PSU}
\DeclareMathOperator{\BU}{B\U}
\DeclareMathOperator{\BSU}{B\SU}
\DeclareMathOperator{\BPU}{B\PU}
\DeclareMathOperator{\Conf}{Conf}
\DeclareMathOperator{\PD}{PD}
\DeclareMathOperator{\Map}{Map}
\newcommand{\w}{\omega}
\newcommand{\into}{\hookrightarrow}
\newtheorem{theorem}{Theorem}[section]
\newtheorem{lemma}[theorem]{Lemma}
\newtheorem{proposition}[theorem]{Proposition}
\newtheorem{corollary}[theorem]{Corollary}
\newtheorem{definition}[theorem]{Definition}
\newtheorem{remark}[theorem]{Remark}
\newtheorem{conjecture}[theorem]{Conjecture}
\newtheorem*{theorem*}{Theorem}
\newtheorem*{lemma*}{Lemma}
\newtheorem*{proposition*}{Proposition}
\newtheorem*{corollary*}{Corollary}
\newtheorem*{definition*}{Definition}
\newtheorem*{remark*}{Remark}
\newcommand{\eoesymbol}{$\between$}
\DeclareRobustCommand{\eoe}{%
  \ifmmode \mathqed
  \else
    \leavevmode\unskip\penalty9999 \hbox{}\nobreak\hfill
    \quad\hbox{\eoesymbol}%
  \fi
}
\begin{document}

\title[Embeddings of symplectic balls]{Embeddings of symplectic balls into the complex projective plane }

\author[S. Anjos]{S\'ilvia Anjos}
\address{SA: Center for Mathematical Analysis, Geometry and Dynamical Systems \\ Department of Mathematics \\  Instituto Superior T\'ecnico \\  Av. Rovisco Pais \\ 1049-001 Lisboa \\ Portugal}
\email{sanjos@math.ist.utl.pt}

\author[J. K\k{e}dra]{Jarek K\k{e}dra}
\address{JK: Department of Mathematics \\  University of Aberdeen \\ Fraser Noble Building
Aberdeen AB24 3UE \\
Scotland }
\email{kedra@abdn.ac.uk}

\author[M. Pinsonnault]{Martin Pinsonnault}
\address{MP: Department of Mathematics \\  University of Western Ontario \\ Canada}
\email{mpinson@uwo.ca}

\begin{abstract}
We investigate spaces of symplectic embeddings of $n\leq 4$ balls into the complex projective plane. 
We prove that they are homotopy equivalent to explicitly described algebraic subspaces of the configuration spaces of $n$ points.
We compute the rational homotopy type of these embedding spaces and their cohomology with rational coefficients. 
Our approach relies on the comparison of the action of $\PGL(3,\C)$ on the configuration space of $n$ ordered points in $\cp^2$ with the action of the symplectomorphism group $\Symp(\cp^2)$ on the space of $n$ embedded symplectic balls.
\end{abstract}

\subjclass[2010]{Primary 53D35; Secondary 57R17,57S05,57T20}
\keywords{Symplectic topology; spaces of symplectic embeddings; configuration spaces; group actions}

\maketitle

\begin{spacing}{0.75}
\tableofcontents
\end{spacing}

\section{Introduction}

\paragraph{\bf Background.}
The study of symplectic embeddings is of major importance in symplectic topology, as it directly addresses the question of what it means to be symplectic. An  example of a symplectic embedding result is the Gromov's Nonsqueezing Theorem~\cite{Gr85} which is a fundamental manifestation of symplectic rigidity, and which gives rise to the notion of symplectic capacity. Most of the research on this field has been focused on \emph{existence} problems and much less is known about the topological properties of embedding spaces \emph{per se}. Some notable exceptions are the results about connectedness obtained by Biran~\cite{Bi96} and McDuff~\cite{McD98,McD09}. Other results about the space of symplectic embeddings of one or two balls in rational ruled 4-manifolds can be found in~\cite{LP04,P08i,ALP09}. In particular, in~\cite{ALP09} it is shown that for certain values of the capacity of the ball this space does not have the homotopy type of a finite $CW$-complex. Note that most results have been proven in dimensions $2$ and $4$. On the other hand,  Chaidez and  Munteanu~\cite{CM21}  obtained  interesting results  about the homology groups of spaces of symplectic embeddings between ellipsoids in any dimension.

Consider a symplectic  $4$-manifold $(M,\omega)$. Let $B^4(c)\subset\R^4$ 
\index{B@$B^4(c)$ -- closed standard ball of capacity $c$}
be the closed standard ball of radius $r$ and capacity $c=\pi r^2$ equipped with the restriction of the standard symplectic structure of $\R^4$, and let $\Symp(B^4 (c))$ be the group of symplectomorphisms of $B^{4}(c)$ that extend to some open neighbourhood of $B^4(c)$. Let $\Emb_{n}(\B c;M)$ 
\index{E@$\Emb_{n}(\B c;M)$ -- space of symplectic embeddings of $n$ disjoint balls}
denote the space, equipped with the $C^\infty$-topology, of symplectic embeddings of $n$ disjoint balls $\mB_{\B c}:= B^4(c_1) \, \sqcup \hdots  \sqcup \,  B^4(c_n)$
\index{B@$\mB_{\B c}:= B^4(c_1) \, \sqcup \hdots  \sqcup \,  B^4(c_n)$}
of capacities $\B c:= (c_1,\ldots, c_n)$ into $(M,\omega)$. Let $\IEmb_{n}(\B c; M)$
\index{I@{$\IEmb_{n}(\B c, M)$ -- space of subsets of $M$ that are images of maps belonging to $\Emb_{n}(\B c, M) $}}
be the space of subsets of $M$ that are images of maps belonging to $\Emb_{n}(\B c;M)$, which we topologize as the quotient
\begin{equation*} 
\IEmb_{n}(\B c, M):= \Emb_{n}(\B c, M) / \prod_i \Symp(B^4 (c_i)),
\end{equation*}
where the group $\Symp(B^4 (c_i))$ acts by reparametrizations of the ball $B^4(c_i)$. We say $\IEmb_{n}(\B c, M)$ is the space of unparametrized symplectic balls of capacities $c_1,\dots, c_n$ in $M$. Finally, let $\Conf_n(M)$ denote the configuration space of $n$ distinct and ordered points in $M$, that is,
\[\Conf_n(M) = \{ (x_1,\hdots, x_n) \in M^n  ~|~ x_i \neq x_j \mbox{ for } i\neq j \}.\]  
\index{C@$\Conf_n(M)$ --  space of $n$ distinct and ordered points in $M$}
The homotopy type of $\IEmb_{n} (\B c, \cp^2)$, for $n \in \{1,2\}$ was computed by Pinsonnault in~\cite{P08i}: $\IEmb_1 (\B c, \cp^2)\simeq \cp^2$ while $\IEmb_2 (\B c, \cp^2)\simeq \Conf_2(\cp^2)$. In particular, in these two cases, it is independent of the choice of the capacities. Moreover, the case $n=1$ is a particular instance of a recent result in~\cite{ALLP23}, where the authors showed that for a rational manifold $M$ with $\chi (M) \leq 11$ the space $\IEmb_1( c, M)$ is weakly homotopy equivalent to $M$ whenever the capacity $c$ of the ball is smaller than the symplectic area of any embedded symplectic sphere of negative self-intersection in the blow up $(\widetilde M_c, \tilde \omega_c)$.

\paragraph{\bf Main results.} In this paper we consider the symplectic manifold $(\cp^2, \omega_{FS})$ and compute the rational homotopy type and the cohomology with rational coefficients of the space  $\IEmb_{n}(\B c, \cp^2)$, for $n \in \{3,4 \}$. We  assume that $c_1 \geq \hdots \geq c_n$ and we normalize $\cp^2$ so that the area of a line is~1. The spaces $\Emb_{n}(\B c; \cp^2)$, with $n \leq 4$, are non-empty if and only if $c_i+c_j < 1$ for all $i \neq j$ and, by the work of McDuff~\cite{McD98}, are connected. 

The configuration space $\Conf_{3}(\cp^{2})$ decomposes into $2$ disjoint strata
\[\Conf_{3}(\cp^{2})=F_{0}\sqcup F_{123},\]
where
\begin{align*}
F_{0} & = \{P\in \Conf_{3}(\cp^{2})~|~\text{the 3 points are in general position}\}\\
F_{123} & = \{P\in \Conf_{3}(\cp^{2})~|~\text{the 3 points are colinear}\}.
\end{align*}
Notice that $F_0=\U(3)/\B T^3$.

\newpage

\begin{theorem}\label{3balls}
Consider $\cp^2$ endowed with its standard Fubini-Study symplectic form  and  let $c_1,c_2,c_3 \in (0,1)$ be such that $c_i+c_j < 1$. 
\begin{enumerate}[label=(\roman*)]
\item If $c_1+c_2+c_3 \geq 1$, the space $\IEmb_{3}(\B c, \cp^2)$ is homotopy equivalent to the flag manifold $F_0=\U(3)/\B T^3$.  
\item If $c_1+c_2+c_3<1$, the space $\IEmb_{3}(\B c, \cp^2)$ is homotopy equivalent to the full configuration space $\Conf_3(\cp^2)=F_{0}\sqcup F_{123}$. 
\end{enumerate}
\end{theorem}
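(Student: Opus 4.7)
The plan is to compare the natural actions of $\Symp(\cp^2)$ on $\IEmb_3(\B c,\cp^2)$ and of $\PGL(3,\C)$ on $\Conf_3(\cp^2)$, using Gromov's theorem that $\Symp(\cp^2)\simeq\PU(3)\simeq\PGL(3,\C)$. I would start by introducing the center evaluation map
\[ \ev \colon \IEmb_3(\B c, \cp^2) \to \Conf_3(\cp^2), \qquad \{B_1,B_2,B_3\}\mapsto (x_1,x_2,x_3), \]
where $x_i$ is the image of the origin of $B^4(c_i)$. This map is equivariant for the group actions on both sides, and the proof reduces to two independent claims: (a) identify the image of $\ev$, and (b) show that $\ev$ is a weak equivalence onto its image.

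For (a), the key observation is that if three embedded balls have colinear centers, then symplectically blowing them up produces a closed symplectic $4$-manifold containing the proper transform of the line, a symplectic sphere of area $1-c_1-c_2-c_3$. Positivity of this area forces $c_1+c_2+c_3<1$, so in case (i) the image of $\ev$ is automatically contained in $F_0$. Conversely, when $c_1+c_2+c_3<1$ one can construct explicit symplectic embeddings with colinear centers, for instance by a toric model on a small neighbourhood of a line, showing that $F_{123}$ also lies in the image. Existence of embeddings with centers in general position is immediate from the non-emptiness hypothesis $c_i+c_j<1$.

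For (b), I would show that $\ev$ is a locally trivial fibration over each stratum. Over a fixed configuration $P=(x_1,x_2,x_3)$, the fiber $\ev^{-1}(P)$ is the space of unparametrized symplectic balls centered at the $x_i$, and my approach is to exhibit the diagram of fibrations
\[ \Stab_{\Symp(\cp^2)}(\mB_{\B c}) \to \Symp(\cp^2) \to \IEmb_3(\B c, \cp^2), \qquad \Stab_{\PGL(3,\C)}(P) \to \PGL(3,\C) \to \PGL(3,\C)\cdot P, \]
and to compare them. Since the total spaces are homotopy equivalent by Gromov's theorem, the statement reduces to showing that the stabilizer of an unparametrized ball configuration is homotopy equivalent to the stabilizer in $\PGL(3,\C)$ of the corresponding point configuration. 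After passing to the symplectic blow-up this becomes a statement about $\Symp(\widetilde{\cp^2}_{\#3},\tilde\omega_{\B c})$, which by known results (e.g.\ in the spirit of Abreu--McDuff and previous work of Pinsonnault) deformation retracts onto the Kähler isometry group of a compatible toric Kähler structure; that isometry group is precisely the stabilizer appearing in the $\PGL(3,\C)$-picture.

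The main obstacle is handling the colinear stratum $F_{123}$ in case (ii), where the blow-up admits an exceptional class $L-E_1-E_2-E_3$ of positive but possibly small symplectic area: one has to control the stratification of the space of tamed almost complex structures on $\widetilde{\cp^2}_{\#3}$ so that $F_{123}$ corresponds exactly to the locus where this class is represented by a $J$-holomorphic sphere, and then verify that the symplectomorphism group restricted to this locus has the correct homotopy type. Assembling these local fibrations with matching fibers into a global homotopy equivalence over $F_0$ (case (i)) and over $F_0\sqcup F_{123}=\Conf_3(\cp^2)$ (case (ii)) yields the theorem.
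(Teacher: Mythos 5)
Your step (a) contains a genuine error that sinks the overall strategy. The center evaluation map $\ev\colon \IEmb_3(\B c,\cp^2)\to\Conf_3(\cp^2)$ (which is the map \eqref{Eq:IEmb-Conf} of the paper) is surjective for \emph{every} choice of capacities: $\Ham(\cp^2)$ acts $3$-transitively on points, so any embedding of three balls can be pushed by a Hamiltonian isotopy to have its centers at any prescribed configuration, colinear or not. In particular the image of $\ev$ is never contained in $F_0$, and the argument you give for the containment is invalid: a symplectic ball is a floppy object, and colinearity of the \emph{centers} does not produce a symplectic sphere in the class $L-E_1-E_2-E_3$ in the blow-up (the "proper transform of the line" is not defined for arbitrary symplectic embeddings, so there is no area obstruction $1-c_1-c_2-c_3>0$ attached to colinear centers). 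Worse, by equivariance $\ev$ is a fibration over all of $\Conf_3(\cp^2)$ with fiber $\IEmb_3^{\B p}(\B c,\cp^2)$ independent of $\B p$ up to homeomorphism, so it cannot restrict to a weak equivalence onto a proper union of strata. The dichotomy on $c_1+c_2+c_3$ is detected not by positions of centers but by whether the class $L-E_1-E_2-E_3$ has positive symplectic area in the blow-up, i.e.\ by which strata of the space of almost complex structures $\mA(3,[\Sigma])$ are nonempty for $\widetilde{\om}_{\B c}$.

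The paper's proof deliberately avoids constructing any map $\IEmb_3(\B c,\cp^2)\to\Conf_3(\cp^2)$ (this is stated explicitly in the introduction). Instead it establishes a chain of equivalences of \emph{Borel constructions}: $\OP{B}\Symp(\widetilde M_{\B c},\Sigma)\simeq\big(\mJ_{\B c}(\Sigma)\big)_{h\mG_{\B c}(\Sigma)}\simeq\big(\mA_i([\Sigma])\big)_{h\mD_h}\simeq (F_i)_{h\PGL(3)}$ (Propositions \ref{prop:homotopy orbit symp}, \ref{prop:equivalence S and [S]}, \ref{prop:equivalence G and D actions}, the Stability Theorem \ref{thm:StabilityOfEmbeddings}, and Theorem \ref{claim}), and then reads off $\IEmb_3(\B c,\cp^2)$ as the homotopy fiber of $(F_i)_{h\PGL(3)}\to\OP{B}\PGL(3)$, which is $F_i$. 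The ingredients you list under (b) --- the identification of $\Symp(\widetilde M_{\B c},\Sigma)$-stabilizers of configurations with K\"ahler isometry groups, and the stratification of almost complex structures by $J$-holomorphic representatives of $L-E_1-E_2-E_3$ --- are indeed the correct technical core (they appear as Propositions \ref{prop:SymplecticStabilisers} and \ref{prop:Symp action on J}), but they must be assembled through the equivariant pushout comparison of Theorem \ref{claim}, not through a fiberwise analysis of the center evaluation map.
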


Similarly, the configuration space $\Conf_{4}(\cp^{2})$ decomposes into $6$ disjoint strata
\[\Conf_{4}(\cp^{2})=F_{0}\sqcup F_{234}\sqcup F_{134}\sqcup F_{124}\sqcup F_{123}\sqcup F_{1234}\]
where
\begin{align*}
F_{0} & = \{P\in \Conf_{4}(\cp^{2})~|~\text{no three points of $P$ are colinear}\}\\
F_{ijk} & = \{P\in \Conf_{4}(\cp^{2})~|~\text{only the points~}p_{i},p_{j},p_{k}\text{~are colinear}\}\\
F_{1234} & = \{P\in \Conf_{4}(\cp^{2})~|~\text{all four points of $P$ are colinear}\}
\end{align*}
\index{F@$F_{\bullet}$ -- a stratum in a configuration space of
points of $\cp^2$}

\begin{theorem}\label{homotopy type 4balls}
Consider $\cp^2$ endowed with its standard Fubini-Study symplectic form. Let $0<c_4\leq c_3\leq c_2\leq c_1<1$ be such that $c_i+c_j < 1$. Then the following table gives the homotopy type of the space $\IEmb_{4}(\B c, \cp^2)$ depending on the capacities $c_i$. 
\begin{center}
\begin{tabular}{ ||c |c|| } 
\hline
Capacities & Homotopy type of $\IEmb_{4}(\B c, \cp^2)$ \\  [0.5ex] 
\hline\hline
$c_2+c_3+c_4 \geq 1$ &  $F_{0} \simeq \PU(3)$  \\  [0.5ex] 
\hline
$c_2+c_3+c_4 < 1$ and $c_1+c_3 +c_4 \geq 1$  & $F_1:=F_{0} \sqcup F_{234}$   \\ [0.5ex] 
\hline
$c_1+c_3+c_4 < 1$ and $c_1+c_2 +c_4 \geq 1$  & $F_2:=F_{0} \sqcup F_{234} \sqcup F_{134}$   \\ [0.5ex] 
\hline
$c_1+c_2+c_4 < 1$ and $c_1+c_2 +c_3 \geq 1$  & $F_3:=F_{0} \sqcup F_{234} \sqcup F_{134}\sqcup F_{124}$   \\ [0.5ex] 
\hline
$c_1+c_2+c_3 < 1$ and $c_1+c_2 +c_3 +c_4  \geq 1$  & $F_4:=F_{0} \sqcup F_{234} \sqcup F_{134}\sqcup F_{124} \sqcup F_{123}$   \\ [0.5ex] 
\hline
$c_1+c_2 +c_3 +c_4  <  1$  & $F_5:=\Conf_{4}(\cp^{2})$   \\ [0.5ex] 
\hline
\end{tabular}
\end{center}
\end{theorem}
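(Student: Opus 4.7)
The plan is to mimic the proof of Theorem~\ref{3balls}, using the general framework of the paper that compares the $\Symp(\cp^2)$-action on $\IEmb_4(\B c, \cp^2)$ with the $\PGL(3,\C)$-action on $\Conf_4(\cp^2)$. Concretely, one constructs a natural map $\Phi : \IEmb_4(\B c, \cp^2) \to \Conf_4(\cp^2)$ (for instance, by extracting the centers of the exceptional divisors in the blow up, with respect to a compatible almost complex structure), and proves that $\Phi$ is a homotopy equivalence onto its image. The image is a disjoint union of certain strata $F_S$, and the content of the theorem is to identify, in each of the six capacity regimes, exactly which strata lie in this image.

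The realizability criterion is the standard positivity of area in the blow up. Let $(\widetilde{M}, \widetilde{\omega})$ be the 4-fold symplectic blow up of $\cp^2$ along an embedding in $\Emb_4(\B c, \cp^2)$, and let $L, E_1, \ldots, E_4$ be its line class and exceptional classes, so that $\widetilde{\omega}$ pairs to $1$ with $L$ and to $c_i$ with $E_i$. A configuration in which the points labelled $\{i,j,k\}$ are collinear is realized by an embedding if and only if the class $L - E_i - E_j - E_k$ is represented by an embedded symplectic sphere; by the $J$-holomorphic existence and uniqueness results underlying the comparison with configuration spaces, this happens if and only if $c_i + c_j + c_k < 1$. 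Analogously, the stratum $F_{1234}$ is realized iff $c_1 + c_2 + c_3 + c_4 < 1$.

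Given the ordering $c_1 \geq c_2 \geq c_3 \geq c_4$, the four triple sums $c_i + c_j + c_k$ range from a minimum of $c_2 + c_3 + c_4$ to a maximum of $c_1 + c_2 + c_3$, and the quadruple sum is larger still. Each row of the table pinpoints the capacity range in which a specific subset of these sums is $< 1$ and its complement is $\geq 1$, which selects exactly the strata appearing in the image. In particular, in the top row no triple is allowed and only $F_0$ survives; since four points in general position in $\cp^2$ form a projective frame and $\PGL(3,\C)$ acts simply transitively on frames, $F_0$ is a single $\PGL(3,\C)$-orbit, so $F_0 \simeq \PGL(3,\C) \simeq \PU(3)$.

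The main obstacle is not the combinatorics but the fact that $\Phi$ is a homotopy equivalence onto its image. As in Theorem~\ref{3balls}, this requires showing that the fiber of $\Phi$ over a configuration is contractible (via contractibility of the appropriate space of tamed almost complex structures making the relevant exceptional curves $J$-holomorphic), and that the resulting stratification of $\IEmb_4(\B c, \cp^2)$ matches the pull back of the collinearity stratification of $\Conf_4(\cp^2)$. The extra combinatorial complexity of the 4-ball case enters at the boundaries between capacity regions, where new strata appear or disappear; these transitions must be handled by the same inflation arguments used to establish the basic comparison between the two group actions.
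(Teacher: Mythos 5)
Your chamber combinatorics is right (the walls are exactly the signs of $1-c_i-c_j-c_k$ and $1-c_1-c_2-c_3-c_4$, i.e.\ the symplectic areas of $L_{ijk}$ and $L_{1234}$, as in Theorem~\ref{thm:ClassesDefiningStabilityChambers}), and so is the observation that $F_0\simeq\PGL(3)$ because $\PGL(3)$ acts simply transitively on projective frames. But the core mechanism you propose is not the one the paper uses, and it does not work as stated. The paper says explicitly that the equivalences of Theorems~\ref{3balls} and~\ref{homotopy type 4balls} are \emph{not} obtained by constructing an explicit map; there is no map $\Phi:\IEmb_4(\B c,\cp^2)\to\Conf_4(\cp^2)$ that is a homotopy equivalence onto a union of strata. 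The natural candidate, the centers map~\eqref{Eq:IEmb-Conf}, is surjective onto all of $\Conf_4(\cp^2)$ for every admissible $\B c$ (one can place balls of any admissible capacities with collinear or non-collinear centers at will), so its image is never a proper union of collinearity strata, and its fiber $\IEmb_4^{\B p}(\B c,M)$ is far from contractible --- Remark~2.8 of the paper points out that this fiber is governed by the stabilizer $\Symp(\widetilde M_{\B c},\Sigma)$ and carries essentially all of the symplectic content of the theorem. Your ``realizability criterion'' also conflates two different stratifications: collinearity of points in $(\cp^2,J_{FS})$ stratifies $\Conf_4(\cp^2)$, while existence of embedded $J$-holomorphic representatives of $L_{ijk}$ stratifies the space of almost complex structures on the blow-up; the theorem asserts an equivalence of \emph{homotopy orbits} of these two stratified actions, not that a collinear configuration of centers ``is realized by an embedding'' under an area condition.

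The actual proof runs through the homotopy fibration $\IEmb_4(\B c,\cp^2)\to \OP{B}\Symp(\widetilde M_{\B c},\Sigma)\to \OP{B}\Symp(\cp^2)$ from~\eqref{second fibration unparametrized}, and the chain of equivalences $\OP{B}\Symp(\widetilde M_{\B c},\Sigma)\simeq(\mJ_{\B c}(\Sigma))_{h\mG_{\B c}(\Sigma)}\simeq(\mJ_{\B c}([\Sigma]))_{h\mG_{\B c}}\simeq(\mA_{\B c}([\Sigma]))_{h\mD_h}\simeq(\mA_i([\Sigma]))_{h\mD_h}\simeq(F_i)_{h\PGL(3)}$ supplied by Propositions~\ref{prop:homotopy orbit symp}--\ref{prop:equivalence G and D actions}, the Stability Theorem~\ref{thm:StabilityOfEmbeddings}, and Theorem~\ref{claim}; one then reads off $\IEmb_4(\B c,\cp^2)$ as the fiber of $(F_i)_{h\PGL(3)}\to\OP{B}\PGL(3)$, which is $F_i$. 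The hard content --- proving Theorem~\ref{claim} by matching the pushout diagrams for the two stratified actions, including the isotropy representations on normal bundles and the non-homogeneous stratum $F_{1234}$ fibered over $\mM_{0,4}$ --- is entirely absent from your proposal, and the step you identify as the ``main obstacle'' (contractibility of the fibers of $\Phi$) is false for any map you could plausibly mean by $\Phi$.
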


\begin{remark}
The homotopy type of embedding spaces can be seen as a refinement of the Gromov capacity and, as such, should reveal a finer interplay between symplectic rigidity and flexibility. In both Theorem~\ref{3balls} and Theorem~\ref{homotopy type 4balls}, we observe an interesting gradation from a most rigid situation (the case of "big" balls) to a completely flexible regime (the case of "small" balls). 
\end{remark}

Our geometric methods allow for the computation of the rational cohomology ring of the embedding spaces. For $3$ balls, the rational cohomology algebra of the spaces $\Conf_3(\cp^2)$ and $\U(3)/\B T^3$ is known from previous works that rely on algebraic methods (see~\cite{AB14}). We recover the same results in Section~\ref{section:cohomology3balls}. In the case of $4$ balls, we obtain the following description of the cohomology ring $H^*(\IEmb_{4}(\B c, \cp^2);\Q)$.

\begin{theorem}\label{cohomology ring 4 balls}
Consider $\cp^2$ endowed with its standard Fubini-Study symplectic form. Let $0<c_4\leq c_3\leq c_2\leq c_1<1$. Then the  rational cohomology ring of the space $\IEmb_{4}(\B c, \cp^2)$ of $4$ unparametrized balls in the projective plane  is given in the following table, depending on the capacities $c_i$: 
\begin{center}
\begin{tabular}{ |c |c| } 
 \hline 
 Capacities &  $H^*(\IEmb_{4}(\B c, \cp^2);\Q)$ \\  [0.4ex] 
 \hline\hline
 $c_2+c_3+c_4 \geq 1$  &  $\Lambda (\beta, \eta)$ \\  [0.4ex] 
 \hline
$c_2+c_3+c_4 < 1$ and $c_1+c_3 +c_4 \geq 1$   &  $\Lambda(\alpha_1,\eta) / (\alpha_1^2)$ \\ [0.4ex] 
 \hline
$c_1+c_3+c_4 < 1$ and $c_1+c_2 +c_4 \geq 1$   &  $\Lambda(\alpha_1, \alpha_2,\eta) / (\alpha_1^2+\alpha_1^2, \, \alpha_1\alpha_2)$ \\ [0.4ex] 
 \hline
 $c_1+c_2+c_4 < 1$ and $c_1+c_2 +c_3 \geq 1$   &  $\Lambda(\alpha_1, \alpha_2, \alpha_3, \eta) / (\alpha_1^2+\alpha_2^2 + \alpha_3^2, \, \alpha_i\alpha_j)$, $i \neq j$ \\ [0.4ex] 
 \hline
  $c_1+c_2+c_3 < 1$ and $c_1+c_2 +c_3 +c_4 \geq 1$   &  $\Lambda(\alpha_1, \alpha_2, \alpha_3, \alpha_4,  \eta) / (\alpha_1^2+\alpha_2^2 + \alpha_3^2+ \alpha_4^2, \, \alpha_i\alpha_j)$, $i \neq j$  \\ [0.4ex] 
 \hline
  $c_1+c_2 +c_3 +c_4 < 1$   &  $H^*(\Conf_{4}(\cp^{2});\Q)$ \\
 \hline
 \end{tabular}
\end{center}
Here  $\Lambda$ denotes  an exterior algebra, $\deg \beta =3$, $\deg \eta =5$ and $\deg \alpha_i=2$ for all $1 \leq i \leq 4$. 
\end{theorem}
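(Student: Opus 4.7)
The plan is to combine Theorem~\ref{homotopy type 4balls}, which identifies $\IEmb_{4}(\B c, \cp^2)$ with one of the six nested stratified spaces $F_0\subset F_1\subset\cdots\subset F_5=\Conf_{4}(\cp^{2})$, with a direct computation of the rational cohomology of each $F_i$. The base case is standard: since rationally $\PU(3)\simeq\SU(3)$, we have $H^*(F_0;\Q)=H^*(\PU(3);\Q)=\Lambda(\beta,\eta)$ with $\deg\beta=3$ and $\deg\eta=5$, matching the top row of the table.

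For the inductive step $i=1,\ldots,4$, we use that $F_i=F_{i-1}\sqcup S_i$, where $S_i$ is the new stratum $F_{ijk}$. Each $S_i$ is a closed complex submanifold of complex codimension $1$ in $F_i$, since the collinearity of three prescribed points is a single complex equation; in particular it is a real codimension-$2$ oriented submanifold. The corresponding Thom--Gysin long exact sequence reads
\begin{equation*}
\cdots\to H^{k-2}(S_i;\Q)\xrightarrow{\tau_i} H^k(F_i;\Q)\to H^k(F_{i-1};\Q)\xrightarrow{\delta} H^{k-1}(S_i;\Q)\to\cdots
\end{equation*}
and defines the class $\alpha_i:=\tau_i(1)\in H^2(F_i;\Q)$. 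The input $H^*(S_i;\Q)$ will be computed either through the transitive $\PGL(3,\C)$-action, which realizes each $F_{ijk}$ as a complex homogeneous space, or via the natural fibration $F_{ijk}\to(\cp^2)^*$ sending a configuration to the line through its three collinear points; the fiber is homotopy equivalent to $(\cp^2\setminus\ell)\times\Conf_3(\cp^1)$ and rationally to $S^3$, so the resulting Leray--Serre spectral sequence collapses for parity reasons.

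The main obstacle will be identifying the multiplicative structure on $H^*(F_i;\Q)$. Three points must be verified: first, that the connecting homomorphism $\delta$ vanishes rationally, so that the Gysin sequence splits and $\alpha_i$ is the only new degree-$2$ class over $H^*(F_{i-1};\Q)$; second, that $\alpha_i\alpha_j=0$ for $i\neq j$, which follows from the observation that the closures of the strata $S_i$ and $S_j$ meet only inside the higher-codimension stratum $F_{1234}$, already excluded from $F_i$, so their Poincar\'e duals have vanishing cup product; and third, that $\alpha_i^2$, which by the self-intersection formula equals the Gysin image of the Euler class of the normal bundle of $S_i$, produces the stated quadratic relation $\alpha_1^2+\cdots+\alpha_i^2=0$. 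The final entry of the table, corresponding to $F_5=\Conf_4(\cp^2)$, will follow either by one last application of the Gysin sequence for the complex codimension-$2$ stratum $F_{1234}$, or by direct comparison with the known rational cohomology of the configuration space of $\cp^2$.
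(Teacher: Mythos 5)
Your overall strategy --- computing $H^*(F_i;\Q)$ directly by running Gysin sequences up the stratification of $\Conf_4(\cp^2)$ --- is genuinely different from the paper's, which instead builds a Sullivan minimal model of $\IEmb_4(\B c,\cp^2)$ from the fibration $\Symp(\cp^2)\to\IEmb_4(\B c,\cp^2)\to\OP{B}\mG_{\B c}(\Sigma)$ together with the identification of $\OP{B}\mG_{\B c}(\Sigma)$ with a wedge of copies of $\OP{B}\B S^1$. In principle your route could be carried out, but as written it contains a step that is not merely unverified but false, and which would produce the wrong Betti numbers.

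The problem is your first ``point to be verified'': that the connecting homomorphism $\delta$ vanishes rationally so that the Gysin sequence splits. Take $i=1$, so $F_1=F_0\sqcup F_{234}$ with $F_0\simeq\PU(3)$ and $S_1=F_{234}\simeq\PU(3)/\B S^1$, whose rational Betti numbers are $1,0,1,0,0,1,0,1$ in degrees $0,\dots,7$. If $\delta$ were zero, the segment $0=H^1(S_1)\to H^3(F_1)\to H^3(F_0)\to H^2(S_1)$ would force $H^3(F_1)\cong H^3(F_0)=\Q$, whereas the target ring $\Lambda(\alpha_1,\eta)/(\alpha_1^2)$ has $H^3=0$. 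Exactness in fact forces $\delta\colon H^3(F_0)\to H^2(S_1)$ and $\delta\colon H^8(F_0)\to H^7(S_1)$ to be isomorphisms: the whole content of the computation is that the degree-$3$ class $\beta$ of $\PU(3)$ (and with it $\beta\eta$) dies when the stratum $F_{234}$ is added, and this is exactly what the nonvanishing of $\delta$ records. (In the paper's model the same phenomenon appears as $d\beta=\sum_i m_iT_i^2$ with $m_i\neq0$.) So the real work is to prove $\delta(\beta)\neq0$, and your proposal offers no mechanism for that. A secondary error: the Leray--Serre spectral sequence of $F_{ijk}\to(\cp^2)^*$ with fiber rationally $S^3$ does \emph{not} collapse; parity does not exclude $d_4\colon E_4^{0,3}\to E_4^{4,0}$, and indeed $d_4\neq0$, since collapse would give $H^*(F_{ijk};\Q)$ total dimension $6$ rather than the correct $4$. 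Your alternative computation of $H^*(F_{ijk};\Q)$ via the homogeneous-space description $\PGL(3)/\B S^1$ is the one that works.
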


\begin{remark}
The rational cohomology of $\Conf_{4}(\cp^2)$ can be understood from an algebraic model constructed independently by  Kriz~\cite{Kr94} and Totaro~\cite{Tot96} for configuration spaces $\Conf_n(M)$ whenever $M$ is a projective manifold. However, their model does not give an explicit presentation of the cohomology algebra $H^*(\Conf_{4}(\cp^{2});\Q)$. This is explained in Section \ref{section:small4balls}. 
\end{remark}

It is important to point out that the homotopy equivalences between configurations spaces and embedding spaces given in Theorem~\ref{3balls} and in Theorem~\ref{homotopy type 4balls} are not obtained by constructing explicit maps. Instead, our approach consists in comparing the action of the complex automorphism group $\PGL(3,\C)$ on $\Conf_n(\cp^2)$ with the action of the symplectomorphism group $\Symp(\cp^2)$ on $\IEmb_n(\B c,\cp^2)$. To this end, we use the symplectic blow-up construction to replace balls in $\cp^2$ by exceptional curves in the $n$-fold blow-up $\widetilde{M}_n$ of $\cp^2$. We then replace the  action of $\Symp(\cp^2)$ on $\IEmb_n(\B c,\cp^2)$ by the action of the diffeomorphism group $\Diff_h(\widetilde{M}_n)$ on a certain space $\mA(n,[\Sigma])$ of compatible almost complex structures on $\widetilde{M}_n$ which admits a partition analogous to the stratification of the configuration space $\Conf_n(\cp^2)$. 
Both Theorem \ref{3balls} and \ref{homotopy type 4balls} are proven in Section \ref{SS:main-geometric} and are consequences of the following result which offers a more conceptual explanation as to why they hold.
\begin{theorem}\label{claim introduction}
The natural action of $\PGL(3)$ on the strata of the configuration space $\Conf_n(\cp^2)$ and the action of $\Diff_h(\widetilde{M}_n)$ on the strata of $\mA(n,[\Sigma])$ have equivalent homotopy orbits. 
\end{theorem}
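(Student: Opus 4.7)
The plan is to identify, stratum by stratum, both homotopy orbits with the classifying space of a common stabilizer subgroup. On the algebraic side, each stratum $F_{\bullet}\subset\Conf_n(\cp^2)$ is defined by a projectively invariant incidence pattern, so $\PGL(3,\C)$ acts transitively on $F_{\bullet}$. Choosing a base configuration $P=(p_1,\ldots,p_n)\in F_{\bullet}$ with pointwise stabilizer $H_{\bullet}\subset\PGL(3,\C)$, we have $F_{\bullet}\cong\PGL(3,\C)/H_{\bullet}$ and hence
\[E\PGL(3,\C)\times_{\PGL(3,\C)}F_{\bullet}\;\simeq\;BH_{\bullet}.\]

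On the symplectic side I would use the blow-up construction to assign to each $P\in F_{\bullet}$ an integrable compatible almost complex structure $J_P\in\mA_{\bullet}$, obtaining an equivariant map $F_{\bullet}\to\mA_{\bullet}$ along the natural homomorphism $\PGL(3,\C)\to\Diff_h(\widetilde{M}_n)$ given by lifting projective automorphisms to the blow-up. The proof then splits into two steps. In step (a) I would use positivity of intersections to show that sending $J\in\mA_{\bullet}$ to its unique configuration of $J$-holomorphic exceptional curves yields a locally trivial fibration $\mA_{\bullet}\to F_{\bullet}$ with contractible fibre (a convex space of compatible almost complex structures), and then deduce that the $\Diff_h(\widetilde{M}_n)$-orbit of any $J_P$ exhausts $\mA_{\bullet}$ up to homotopy, giving $\mA_{\bullet}\simeq\Diff_h(\widetilde{M}_n)/\Stab_{\Diff_h}(J_P)$. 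In step (b) I would identify $\Stab_{\Diff_h}(J_P)$ with the holomorphic automorphism group $\Aut(\widetilde{M}_n,J_P)$ of the blown-up surface, which for $n\leq 4$ reduces to the group $H_{\bullet}$ of projective transformations of $\cp^2$ fixing the ordered tuple $(p_1,\ldots,p_n)$. Combining (a) and (b),
\[E\Diff_h(\widetilde{M}_n)\times_{\Diff_h(\widetilde{M}_n)}\mA_{\bullet}\;\simeq\;B\Stab_{\Diff_h}(J_P)\;\simeq\;BH_{\bullet},\]
and the map induced by $\PGL(3,\C)\to\Diff_h(\widetilde{M}_n)$ realizes the identification with the Borel construction on $F_{\bullet}$.

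The main obstacle is the transitivity-up-to-homotopy assertion in step (a). The fibration structure $\mA_{\bullet}\to F_{\bullet}$ is standard from the work of Abreu--McDuff and Pinsonnault, but one must verify that for each of the incidence patterns considered here no unexpected degenerations appear in the $J$-holomorphic configurations of exceptional curves, and that $\mA_{\bullet}$ is $\Diff_h$-saturated. That the $\Diff_h$-orbit of $J_P$ exhausts $\mA_{\bullet}$ up to homotopy comes down to showing that any two integrable structures $J_P, J_{P'}$ with $P,P'\in F_{\bullet}$ are related by a diffeomorphism in $\Diff_h$; this is where the restriction $n\leq 4$ is essential, as only then is $\PGL(3,\C)$ large enough to act transitively on each stratum, providing the required lift. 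Step (b) then reduces to a classical computation of automorphism groups of the del Pezzo surface $\widetilde{M}_n$ for small $n$, carried out stratum by stratum.
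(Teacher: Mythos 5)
Your overall strategy---identify each homotopy orbit with the classifying space of a stabilizer via transitivity---is close in spirit to what the paper does for the open stratum and the codimension-two strata, but it breaks down in two places, one of which is fatal as stated.

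First, the transitivity claim is false on the deepest stratum. For $n=4$ the action of $\PGL(3,\C)$ on $F_{1234}$ (four collinear points) is \emph{not} transitive: the cross-ratio of the four points on their common line is a projective invariant, so $F_{1234}$ fibres over the moduli space $\mM_{0,4}=\C\setminus\{0,1\}\simeq S^1\vee S^1$ with $\PGL(3,\C)$ acting transitively only on the fibres. The correct homotopy orbit is $\OP{B}\B S^1\times\mM_{0,4}$, not $\OP{B}H_{\bullet}$ for a single stabilizer $H_{\bullet}$. The same phenomenon occurs on the almost-complex side: the cross-ratio of the intersection points of the exceptional curves with the $J$-holomorphic $L_{1234}$-sphere is invariant under $\Diff_h(\widetilde{M}_4)$, so $\mA_{1234}([\Sigma])$ is not a single orbit up to homotopy either; the relevant stabilizer of a standard configuration is $\B S^1\times\B F_2$, with the free group $\B F_2\simeq\pi_1(\mM_{0,4})$ recording exactly this failure of transitivity. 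Your step (a) ("the $\Diff_h$-orbit of $J_P$ exhausts $\mA_{\bullet}$ up to homotopy") and step (b) would give the wrong answer on this stratum.

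Second, even where transitivity holds, a stratum-by-stratum identification of homotopy orbits does not by itself prove the theorem in the form it is used: the statement (made precise in the paper as the comparison of $(F_i)_{h\PGL(3)}$ with $(\mA_i([\Sigma]))_{h\mD_h}$ for the \emph{unions} of strata $F_0\subset F_1\subset\cdots$) requires matching the homotopy pushout squares obtained by attaching one stratum at a time. This means you must also show that the equivariant normal data agree --- concretely, that the isotropy representation of the stabilizer on the normal fibre of $\mA_I([\Sigma])$ (identified via deformation theory with $H^{0,1}_{J}(T\widetilde{M}_n)/\ker(u^*)$) is isomorphic to the isotropy representation on the normal fibre of $F_I$ in $\Conf_n(\cp^2)$. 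Your proposal never addresses the gluing maps, and without them one cannot conclude that the two filtered spaces have equivalent Borel constructions. Finally, the transitivity statements you defer to "Abreu--McDuff and Pinsonnault" are substantially harder here than in the one- and two-ball cases, since the strata are cut out by spheres of self-intersection $-2$ and $-3$; establishing that the symplectomorphism (or diffeomorphism) group acts transitively on the corresponding orthogonal configurations requires Cremona-equivalence arguments and the contractibility of compactly supported symplectomorphism groups of the complements, which is where most of the technical work lies.
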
 
\noindent
A more precise statement of the above theorem is given as Theorem \ref{claim} which is proven in Section~\ref{section:proof of main thm}.
Since this approach applies equally well for the cases $n=1$ and $n=2$, it provides a uniform treatment of the embedding spaces $\IEmb_n(\B c, \cp^2)$ for $n\leq 4$.

Our results exhibit an interesting duality between complex geometry and symplectic topology on $\cp^2$. More precisely, for $n\leq 4$, genericity conditions on sets of $n$ distinct points on $\cp^2$ translate into numerical conditions on the symplectic capacites of the $n$ disjoint balls that determine the homotopy type of $\IEmb_n(\B c,\cp^2)$. It is an interesting question to see whether this duality still holds for $n\geq 5$. Our work suggests the following conjecture.
\begin{conjecture}\label{conjecture}
For $n\leq 8$, and for any admissible capacities $\B c=c_1\geq\cdots\geq c_n>0$, the space $\IEmb_n(\B c,\cp^2)$ is homotopy equivalent to a union of strata in $\Conf_n(\cp^2)$ defined by the relative positions of $n$ points with respect to generic immersed holomorphic spheres in $\cp^2$. 
\end{conjecture}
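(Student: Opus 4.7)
The plan is to extend the geometric machinery developed above to the remaining del Pezzo range $5\leq n\leq 8$. The crucial observation is that a generic $n$-fold blow-up $\widetilde{M}_n$ of $\cp^2$ is a smooth del Pezzo surface whose Mori cone is generated by a finite classical list of $(-1)$-classes $E = dL - \sum_i m_i E_i$ with $E\cdot E = K\cdot E = -1$. Each such class corresponds, via the inverse blow-up, to an immersed rational curve of degree $d$ in $\cp^2$ passing through the base point $p_i$ with multiplicity $m_i$; generically these are lines, conics through $5$ points, nodal cubics through $7$ points, and so on. For the blown-up form $\mu L - \sum c_i E_i$, the symplectic area of a $(-1)$-class is $\mu d - \sum_i m_i c_i$, and positivity of these areas over the full $(-1)$-list cuts out the reduced symplectic cone of $\widetilde{M}_n$. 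The walls where some area crosses zero are then expected to dictate the jumps in the homotopy type of $\IEmb_n(\B c,\cp^2)$, in direct analogy with Theorems~\ref{3balls} and~\ref{homotopy type 4balls}.

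First I would enumerate the $(-1)$-classes for each $n\in\{5,6,7,8\}$ — up to $240$ of them in the $n=8$ case — and use them to define a candidate stratification $\Conf_n(\cp^2) = \bigsqcup_S F_S$, where $S$ indexes a maximal subset of $(-1)$-classes realized by embedded $J$-holomorphic spheres for a generic $\omega$-tame $J$ on $\widetilde{M}_n$. Geometrically, each $F_S$ is cut out by classical incidence relations on the points: triples of collinear points, sextuples on a conic, septuples on a singular cubic, etc. For a given admissible $\B c$, the relevant union of strata is determined by the set of $(-1)$-classes of positive area. The next step is to generalize Theorem~\ref{claim} by replacing $\IEmb_n$ with the space $\mA(n,[\Sigma])$ of compatible almost complex structures on $\widetilde{M}_n$, stratified by the combinatorial type of the configuration of embedded $J$-holomorphic $(-1)$-spheres, and to compare the $\Diff_h(\widetilde{M}_n)$-action on these strata with the $\PGL(3,\C)$-action on the corresponding $F_S$.

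The main obstacle, and the reason the conjecture remains open, is that $\PGL(3,\C)$ ceases to be transitive on the top stratum as soon as $n\geq 5$: the GIT moduli $\Conf_n(\cp^2)/\!/\PGL(3,\C)$ has positive dimension $2n-8$, and, dually, $\widetilde{M}_n$ carries a nontrivial moduli of integrable complex structures. Since the proof of Theorem~\ref{claim introduction} ultimately leans on the projective rigidity of configurations of at most four points in $\cp^2$, a new mechanism is needed in the positive-dimensional moduli regime. A natural route is to argue stratum by stratum in families, showing that the space of $\omega$-tame almost complex structures on $\widetilde{M}_n$ whose set of embedded $(-1)$-spheres realizes a combinatorial type $S$ deformation-retracts onto a $\PGL(3,\C)$-invariant integrable slice that is equivariantly homotopy equivalent to $F_S$; this should be accessible through the automatic transversality and pseudo-holomorphic foliation techniques used for ruled and rational surfaces. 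An additional complication, particularly acute for $n=7,8$, is that special K\"ahler structures on $\widetilde{M}_n$ contain strictly more $(-1)$-curves than generic ones, so the stratification must track walls inside the moduli of K\"ahler structures themselves and must interact with the still poorly understood homotopy type of $\Symp(\widetilde{M}_n)$ for $n\geq 5$.
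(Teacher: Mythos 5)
The statement you are trying to establish is Conjecture~\ref{conjecture}; the paper does not prove it. It is stated as open, and the authors record exactly two obstructions: the combinatorial complexity of the stability chambers for $n\geq 5$ (Section~\ref{chambers5}) and the absence of transitivity results for symplectomorphism groups acting on the more general configurations of negative spheres that arise when $n\geq 5$. Your submission is accordingly a research program rather than a proof, and you say so yourself; as a program it is broadly consistent with the route the authors suggest (stratify $\Conf_n(\cp^2)$ by incidence conditions, stratify $\mA(n,[\Sigma])$ by existence of negative $J$-holomorphic spheres, and compare the $\PGL(3,\C)$- and $\Diff_h(\widetilde{M}_n)$-homotopy orbits as in Theorem~\ref{claim}). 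So there is no ``paper proof'' to compare against, and your proposal cannot be accepted as closing the conjecture.

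There is, however, a concrete conceptual slip you should fix. You organize everything around the $(-1)$-classes (the $240$ exceptional classes for $n=8$), but in this framework the $(-1)$-classes only cut out the boundary of the admissible region $\mC(n)$: on a generic del Pezzo surface \emph{all} of them are simultaneously represented by embedded holomorphic spheres, so ``a maximal subset of $(-1)$-classes realized by embedded $J$-holomorphic spheres'' does not stratify $\Conf_n(\cp^2)$ or $\mA(n,[\Sigma])$ at all. Both the interior walls of $\mC(n)$ and the strata are governed by the classes of self-intersection $\leq -2$ of Proposition~\ref{prop:ZhangPossibleNegativeCurves} (see Theorem~\ref{thm:ClassesDefiningStabilityChambers}), i.e.\ by blow-ups of immersed holomorphic spheres of degree $d\leq 6$ in $\cp^2$ --- which is precisely what your own list of incidence conditions (collinear triples, six points on a conic, seven on a nodal cubic) describes, so the inconsistency is internal to your write-up. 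Separately, the failure of $\PGL(3,\C)$-transitivity on a stratum is not by itself the new obstacle: the paper already handles a non-homogeneous stratum ($F_{1234}$, via the cross-ratio fibration over $\mM_{0,4}$ and Proposition~\ref{prop:structure of the stratum J_1234}). The genuinely missing ingredients are the analogues, for the degree $\leq 6$ classes, of Proposition~\ref{prop:NegativeCurvesHamiltonianIsotopic} (transitivity of $\Symp_h$ on embedded spheres in a fixed negative class, proved there only for self-intersection $-2$ and $-3$), of the connectedness and contractibility statements for complements of standard configurations (Proposition~\ref{prop:CompactlySupportedSymplectomorphisms}), and of the deformation-theoretic identification of normal data (Proposition~\ref{prop:properties integrable strata}); your proposal gestures at these via ``automatic transversality and foliation techniques'' but does not supply them.
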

The main difficulties in proving this conjecture are twofold. Firstly, the space of admissible capacities has a more complicated structure giving rise to combinatorial difficulties (see Section~\ref{chambers5} for an example). Secondly, our approach relies crucially on the transitivity of the actions of symplectomorphism groups on certain configurations of symplectic spheres of negative self-intersections (as discussed in Section~\ref{section:Actions on ACS}). It is not clear that analogous results hold for the more general configurations that occur when $n\geq 5$.

From a homotopy theoretic point of view, a duality between points and balls is somewhat expected in view of the homotopy equivalence $\Symp(\cp^2)\simeq\PGL(3)$. Indeed, the homotopy fiber $F_{\C}$ of the complex evaluation map $\ev_{\C}\colon \PGL(3) \to \Conf_n(\cp^2)$ can be computed by looking at the symplectic evaluation fibration 
\begin{equation}
\begin{tikzcd}
F_{\C}\simeq \Symp(\cp^2,\B p) \ar[r]&\Symp(\cp^2) \arrow{r}{\ev_{\om}} & \Conf_n(\cp^2) \\
& \PGL(3) \arrow{u}{\simeq}\arrow{ru}[swap]{\ev_{\C}}& 
\end{tikzcd}
\end{equation}
and one may ask what should be the symplectic analogue of the stabilizer $\Symp(\cp^2,\B p)$ when we restrict the action of $\PGL(3)$ to invariant subspaces of $\Conf_n(\cp^2)$. In essence, this is what we investigate in the present paper.

It is worth mentioning that, as a byproduct of our methods, we obtain a homotopy equivalence between the space $\mJ_{\B c}([\Sigma])$ of compatible almost complex structures on the blow-up $(\widetilde{M}_{\B c},\widetilde{\omega}_{\B c})$ for which there exist the classes of exceptional divisors have $J$-holomorphic representatives and the subspace $\mJ_{\B c}^{int}([\Sigma])$ of integrable ones. See Proposition \ref{prop:equivalence with integrable} for details.

\paragraph{\bf Organization of the paper.} 
In Section~\ref{section: framework} we explain a general framework for the study of symplectic balls. We then characterize the stability chambers of the set of admissible capacities, describe the stratification of the configuration space of points and analogous stratifications of various spaces of almost complex structures. At the end of this section we state our main geometric result Theorem~\ref{claim} in full detail. Assuming this, we then give the proofs of Theorem~\ref{3balls} and Theorem~\ref{homotopy type 4balls}. In Section~\ref{section: homotopy orbits PGL} we analyse the action of $\PGL(3)$ on $\Conf_n(\cp^2)$, while Section \ref{section:Actions on ACS} is devoted to analysis of the actions of symplectomorphism and diffeomorphism groups on spaces of complex structures, obtaining homotopy decompositions of the classifying space of stabilizers of balls.  We then combine the results obtained in Sections~\ref{section: homotopy orbits PGL} and~\ref{section:Actions on ACS} to  prove Theorem~\ref{claim} in Section~\ref{section: homotopy orbits Diff}. Then, in Section~\ref{section rational model}, we construct a rational algebraic model for the space of symplectic embeddings of balls into $\cp^2$. Using this model, in Section \ref{section cohomology ring}, we compute the cohomology with rational coefficients of $\IEmb_{n}(\B c, \cp^2)$ proving Theorem~\ref{cohomology ring 4 balls}.

\paragraph{\bf Acknowledgements.}
The first author is grateful to Pedro Brito and Gustavo Granja for helpful and enlightening conversations. The third author is grateful to Siyuan Yu for useful comments on early drafts of this paper and to Denis Auroux for helpful discussions. All three authors would like to thank the support of FCT/Portugal, through projects UID/MAT/04459/2020 and PTDC/MAT-PUR/29447/2017, and the Mathematics Department of Instituto Superior Técnico, where part of this work was completed. 

\section{Symplectic balls in rational \texorpdfstring{$4$}{4}-manifolds} \label{section: framework}

\subsection{General setting}\label{section General Setting} 

In \cite{LP04} Lalonde and Pinsonnault proposed a general framework to study embedding spaces of symplectic balls in symplectic $4$-manifolds through natural actions of symplectomorphism groups. We recall the main points in the special case of rational $4$-manifolds 
for which a number of simplifications occur. 

In the following, let $(M,\om)$ be a symplectic rational $4$-manifold. Given a compatible almost complex structure $J$ which is integrable near $n$ distinct points let $\widetilde{M}_{n}:=M\#\,n\overline{\cp}\,\!^2$
\index{M@$\widetilde{M}_{n}$ -- $n$-fold complex blow-up of $M$}
be the $n$-fold complex blow-up of $M$ at these points. Let $\Sigma_1,\ldots, \Sigma_{n}$ denote the exceptional divisors, let $E_1,\ldots, E_n$ be their homology classes, and let $\PD(E_i)$ denote their Poincaré duals. If $K:=K(M,J)\in H_2(M,\Z)$ is the anti-canonical class associated to $J$, then $K-E_1-\cdots-E_n$ is the anti-canonical class of $\widetilde{M}_{n}$. Note that these anti-canonical classes only depend on the deformation class of the symplectic form $\om$. A homology class $E\in H_2(M,\Z)$ is \emph{exceptional} if $E\cdot E=-1$, $K\cdot E=1$, and if it is represented by a smooth embedded sphere. Let $\mE(M,\om)\subset H_2(M,\Z)$
\index{E@$\mE(M,\om)$ -- set of exceptional classes}
be the set of all such exceptional classes. The set $\mE(M,\om)$ characterizes the symplectic cone of any symplectic rational $4$-manifold of Euler number $\chi(M)\geq 5$.

Let $\mC(M,\om,n)\subset(0,c_{M})^n$
\index{C@$\mC(M,\om,n)$ -- set of capacities $\B c = (c_1,\ldots,c_n)$ for which there exists a symplectic embedding $B^4(c_1)\sqcup\cdots\sqcup B^4(c_n) \into (M,\om)$}
be the set of capacities $\B c = (c_1,\ldots,c_n)$ for which there exists a symplectic embedding $B^4(c_1)\sqcup\cdots\sqcup B^4(c_n) \into (M,\om)$. In the next theorem we collect a few results about symplectic rational $4$-manifolds that will be useful in what follows. 
\begin{theorem}\label{thm:GeneralResultsOnRationalSurfaces}
Let $M$ be a symplectic rational $4$-manifold. 
\begin{enumerate}
\item  Any two cohomologous symplectic forms on $M$ are diffeomorphic \cite{LM96,LL95}.
\item  There exists a symplectic embedding $B^4(c_1)\sqcup\cdots\sqcup B^4(c_n) \into (M,\om)$ if, and only if, the cohomology class
\[
[\widetilde{\om}_{\B c}]:=[\om]-c_1\PD(E_1)-\cdots-c_n\PD(E_n)\in H^2(\widetilde{M}_{n} ,\R)
\]
pairs strictly positively with all exceptional classes in $\mE(\widetilde{M}_{n})$, and if it satisfies the volume condition $\langle[\widetilde{\om}_{\B c}]^2,[\widetilde{M}_{n}]\rangle>0$; see \cite{LL01}. 
\item  If $M$ is a symplectic rational $4$-manifold, then for each $\B c\in \mC(M,\om,n)$, the embedding space $\Emb_{n}(\B{c},M)$ is path-connected \cite[Corollary 1.5]{McD98}.
\end{enumerate}
\end{theorem}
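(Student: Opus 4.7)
The plan is to observe that each of the three statements is an established deep result in the theory of symplectic rational surfaces, so a proof here amounts to recalling the key mechanisms from the cited literature rather than carrying out new arguments.

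For part (1), I would invoke the uniqueness result of Lalonde--McDuff~\cite{LM96} and Li--Liu~\cite{LL95}. The strategy combines three ingredients: Gromov compactness for $J$-holomorphic curves in classes of negative self-intersection, the symplectic inflation technique which produces a path of symplectic forms of constant cohomology class joining any two given representatives of $[\omega]$, and Moser's stability theorem which then upgrades this cohomological path to an ambient diffeomorphism. The role of rationality is to ensure one has enough $J$-holomorphic spheres available to inflate along.

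For part (2), the key tool is McDuff's symplectic blow-up correspondence: a symplectic embedding $B^4(c_1)\sqcup\cdots\sqcup B^4(c_n)\hookrightarrow(M,\omega)$ determines, after blow-up, a symplectic form on $\widetilde{M}_n$ in the cohomology class $[\widetilde{\omega}_{\B c}]$, and conversely, any such form yields an embedding by blowing down the exceptional divisors. Existence of the embedding thus reduces to deciding when $[\widetilde{\omega}_{\B c}]$ lies in the symplectic cone of $\widetilde{M}_n$. By Li--Liu~\cite{LL01}, this cone is characterized for rational surfaces with $\chi\geq 5$ as the set of classes pairing strictly positively with every exceptional class in $\mE(\widetilde{M}_n)$ and satisfying the volume positivity condition, which is exactly the criterion asserted.

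For part (3), connectedness of $\Emb_n(\B c,M)$ is obtained in McDuff~\cite{McD98} by a parametric refinement of the argument for (1): any two embeddings give rise to cohomologous symplectic forms on $\widetilde{M}_n$, and the diffeomorphism supplied by part (1), combined with parametric inflation and suitable Hamiltonian isotopies supported near the exceptional divisors, can be promoted to a path in the embedding space itself. The main obstacle, were one to reconstruct these arguments independently, would be the inflation--deflation machinery and the uniform control of families of $J$-holomorphic curves in exceptional classes; since all of this has been carefully developed in the references, the theorem can safely be used here as a black box.
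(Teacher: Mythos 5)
The paper offers no proof of this theorem: it is explicitly presented as a collection of results quoted from the literature (\cite{LM96,LL95}, \cite{LL01}, and \cite[Corollary 1.5]{McD98}), and your proposal does the same thing while accurately sketching the standard mechanisms (inflation plus Moser for uniqueness, the blow-up/blow-down correspondence plus the Li--Liu symplectic cone for existence, and McDuff's parametric refinement for connectedness). This matches the paper's treatment, so there is nothing to correct.
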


Note that the theorem implies that a symplectic form in $\widetilde{M}_{n}$ is uniquely defined by the cohomology class $[\om]$ and the capacities $\B c =(c_1, \hdots, c_n)$. 

Given capacities $\B c=(c_1,\ldots,c_n)\in \mC(M,\om,n)$, the connectedness of $\Emb_{n}(\B c, M)$ implies that the natural action of the identity component of the symplectomorphism group $\Symp_0(M,\omega)$
\index{S@$\Symp_0(M,\omega)$ -- identity component of the symplectomorphism group $\Symp(M,\omega)$}
on $\Emb_{n}(\B c, M)$ is transitive. After choosing a fixed embedding $\iota:\mB_{\B c}\into M$ we get a ladder of evaluation fibrations 
\begin{equation}\label{fibration ladder}
\begin{tikzcd}
\Symp(M,\iota(\mB_{\B c})_{\id}) \arrow[r]\arrow[d] & \Symp_0(M,\omega) \arrow[r]\arrow[d] & \Emb_{n}(\B c,M) \arrow[d]\\
\Symp(M,\iota(\mB_{\B c})) \arrow[r] & \Symp_0(M,\omega) \arrow[r] & \IEmb_{n}(\B c,M),
\end{tikzcd}
\end{equation}
where $\Symp(M,\iota(\mB_{\B c})_{\id})$
\index{S@$\Symp(M,\iota(\mB_{\B c})_{\id})$ -- subgroup of symplectomorphisms which restrict to the identity on the image $\iota(\mB_{\B c})$}
is the subgroup of symplectomorphisms which restrict to the identity on the image $\iota(\mB_{\B c})$, and where $\Symp(M,\iota(\mB_{\B c}))$
\index{S@$\Symp(M,\iota(\mB_{\B c}))$ -- symplectomorphisms which map the image of each ball $\mB_{\B c}$ to itself}
consists of symplectomorphisms which map the image of each ball $B^4(c_i)\subset\mB_{\B c}$ to itself. Note that it follows from Theorem \ref{thm:GeneralResultsOnRationalSurfaces} that  $\Emb_{n}(\B c,M)$ does not depend on the choice of the fixed embedding $\iota$. As the group $\Symp(B^4(c))$ is  homotopically equivalent to $\U(2)$ (independently of the capacity $c$), the above two fibrations are essentially equivalent.

Let $\widetilde{M_{\B c}}:=(M\#\,n\overline{\cp}\,\!^2, \widetilde{\omega}_{\B c})$
\index{M@$\widetilde{M_{\B c}}$ -- $n$-fold symplectic blow-up of $M$ at the balls $\iota(\mB_{\B c})$}
be the $n$-fold symplectic blow-up of $M$ at the balls $\iota(\mB_{\B c})$. Let $\Sigma=\Sigma_1\sqcup\cdots\sqcup\Sigma_{n}$ be the union of the exceptional divisors. By definition, $\Sigma$ is an (ordered) configuration of disjoint exceptional symplectic spheres in classes $E_1,\ldots,E_n$. Let's write $\mC_{\B c}(\Sigma)$
\index{C@$\mC_{\B c}(\Sigma)$ -- space of configurations of disjoint symplectic spheres}
for the space of all such configurations, and let $\mJ_{\B c}(\Sigma)$
\index{J@$\mJ_{\B c}(\Sigma)$ -- space  of all compatible complex structures on $\widetilde{M_{\B c}}$ for which there are embedded $J$-holomorphic representatives corresponding to $\Sigma$}
be the space of all compatible complex structures on $\widetilde{M_{\B c}}$ for which there are embedded $J$-holomorphic representatives of each class $E_i$. By positivity of intersections, there is exactly one configuration for each $J\in\mJ_\B c(\Sigma)$. Arguing as in~\cite[Section 4.1]{LP04}, one proves that the map $\mJ_{\B c}(\Sigma)\to\mC_{\B c}(\Sigma)$ is a fibration with contractible fibers, so that it is a homotopy equivalence. In particular, since $\mJ_{\B c}(\Sigma)$ is open, dense, and connected in the contractible space of all tamed almost-complex structures, $\mC_{\B c}(\Sigma)$ is itself connected. It follows that $\Symp_0(\widetilde{M}_{\B c})$ acts transitively on $\mC_{\B c}(\Sigma)$ and that there is a fibration
\begin{equation}\label{fibration exceptional configurations}
 \Symp(\widetilde{M}_{\B c},\Sigma)\to \Symp_0(\widetilde{M}_{\B c})\to \mC_{\B c}(\Sigma)\simeq \mJ_{_{\B c}}(\Sigma),
\end{equation}
\index{S@$\Symp(\widetilde{M}_{\B c},\Sigma)$ -- stabilizer of the exceptional divisor $\Sigma$ }
where $\Symp(\widetilde{M}_{\B c},\Sigma)$ is the subgroup of symplectomorphisms sending each exceptional divisor $\Sigma_i$ to itself. As explained in~Section 2 of~\cite{LP04}, there is a homotopy equivalence
\begin{equation}\label{homotopy equivalence ball-fiber}
 \Symp(\widetilde{M}_{\B c},\Sigma) \simeq \Symp(M,\iota(\mB_{\B c})). 
\end{equation}
This yields a homotopy fibration 
\begin{equation}\label{second fibration unparametrized}
 \Symp(\widetilde{M}_{\B c},\Sigma) \to \Symp_0(M,\omega)\to\IEmb_{n}(\B c,M)
\end{equation}
which can be used to compute the homotopy types of $\IEmb_{n}(\B c,M)$ and of $\Emb_{n}(\B c,M)$.

Given two $n$-tuples of capacities $\B c=(c_1,\ldots,c_n)$ and $\B {c'} =\{c_1',\ldots,c_n'\}$, we declare $\B c\leq \B{c'}$ iff $c_i\leq c_i'$ for all $1\leq i\leq n$. Given a non-negative $n$-tuple $\boldsymbol{\epsilon}=\{\epsilon_1,\ldots, \epsilon_n\}$, we write $\B c+\boldsymbol{\epsilon}$ for $\{c_1+\epsilon_1,\ldots,c_n+\epsilon_n$\}. For each pair $\B c$, $\B{c}+\boldsymbol{\epsilon}$, there is a restriction map
\begin{equation}\label{eqn:Fibration frames}
i_{\B c}^{\B{c}+\boldsymbol{\epsilon}}:\Emb_{n}(\B{c}+\boldsymbol{\epsilon},M)\to \Emb_{n}(\B c,M).
\end{equation}
Let $\Sp\Fr(n,M)$
\index{S@$\Sp\Fr(n,M)$ -- space of symplectic frames at $n$ ordered points in $M$}
be the space of symplectic frames at $n$ ordered points in $M$. Evaluation of the derivatives at the centers of the $n$ balls defines a fibration
\begin{equation}\label{eqn:Fibration framed embeddings}
\Emb_{n}^{\B f}(\B c,M)\to\Emb_{n}(\B c,M) \xrightarrow{j_{\B c}} \Sp\Fr(n,M),
\end{equation}
where $\Emb_{n}^{\B f}(\B c,M)$
\index{E@$\Emb_{n}^{\B f}(\B c,M)$ -- embeddings with a fixed framing $\B f$ at the centers}
consists of embeddings with a fixed framing $\B f$ at the centers. Since the evaluation maps commute with restrictions, there is a map
\begin{equation}\label{eqn:j_infty}
\varinjlim \Emb_{n}(\B c,M) \xrightarrow{j_\infty} \Sp\Fr(n,M),
\end{equation}
where the direct limit is taken with respect to the partial order given by reverse inclusions. The next two results are valid for symplectic manifolds $(M^{2m},\om)$ of any dimension.

\begin{lemma}\label{lemma:Thickening} 
For $(M^{2m},\om)$ compact, and for any $n\geq1$, 
\begin{enumerate}
\item there exists capacities $\B {c_0}=(c_1,\ldots,c_n)$ such that the induced map 
\[\pi_*(j_{\B c}):\pi_*(\Emb_{n}(\B c,M)) \to \pi_*(\Sp\Fr(n,M))\]
is surjective for all $\B c\leq \B{c_0}$,
\item the map $j_\infty:\varinjlim \Emb_{n}(\B c,M) \to \Sp\Fr(n,M)$ is a weak homotopy equivalence.
\end{enumerate}
\end{lemma}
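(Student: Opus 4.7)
The strategy I propose combines a parametrized Darboux theorem with a symplectic dilation argument controlling the homotopy fiber of $j_{\B c}$ as $\B c\to 0$.

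First, I would establish a fiberwise Darboux lemma: the compactness of $M$ provides a uniform $r_0>0$ and a continuous section $\Psi\colon \Sp\Fr(n,M)\to \Emb_n(\B{c_0},M)$ of $j_{\B{c_0}}$, where $\B{c_0}:=(\pi r_0^2,\ldots,\pi r_0^2)$. Concretely, such a family of Darboux charts is built by applying the Weinstein tubular neighborhood theorem to ordered $n$-tuples of distinct points in $M$ and then linearizing the local symplectic coordinates against each chosen frame. Restricting $\Psi_F$ to any $\B c\leq \B{c_0}$ yields a section $s_{\B c}$ of $j_{\B c}$, which immediately gives statement (1).

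For statement (2), surjectivity of $\pi_*(j_\infty)$ is immediate, since the sections $s_{\B c}$ realize any spherical class in $\Sp\Fr(n,M)$ at the colimit level. The heart of the argument is injectivity, which amounts to showing that the homotopy fiber $\Emb_n^{\B f}(\B c,M)$ becomes asymptotically contractible as $\B c\to 0$. Given a map $\alpha\colon S^k\to \Emb_n^{\B f}(\B c,M)$, after shrinking $\B c$ if necessary so that each image $\alpha(s)(\mB_{\B c})$ lies in the Darboux chart $\Psi_{\B f}$ of the fixed frame $\B f$, I would pull back via $\Psi_{\B f}^{-1}$ to obtain a continuous family $g_s\colon \mB_{\B c}\to \R^{2m}$ of symplectic embeddings with $g_s(0_i)=0_i$ and $dg_s|_{0_i}=I$.

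The null-homotopy is then produced by symplectic dilations. Setting $g_s^{(\lambda)}(x):=\lambda^{-1}g_s(\lambda x)$ for $\lambda\in(0,1]$ defines a continuous isotopy of symplectic embeddings of $\mB_{\B c}$ with $g_s^{(1)}=g_s$ and $g_s^{(\lambda)}\to \id$ in the $C^\infty$ topology on compact sets as $\lambda\to 0$. Choosing $\lambda_0$ small enough that $g_s^{(\lambda_0)}$ is uniformly $C^1$-close to $\id$ over $s\in S^k$, a parametrized Moser trick produces a canonical symplectic isotopy from $g_s^{(\lambda_0)}$ to $\id$ preserving the frames at the origins, after further restricting to a smaller capacity $\B{c'}\leq \B c$. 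Concatenating these isotopies gives a null-homotopy of $\alpha$ in $\Emb_n^{\B f}(\B{c'},M)$, hence in the direct limit.

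The main obstacle, as I see it, is ensuring that all these constructions depend continuously on the parameter $s\in S^k$ and that the auxiliary capacity $\B{c'}$ can be chosen uniformly over the compact family. This is a parametrized $C^1$-openness argument: the Darboux charts must vary smoothly with the frame, the dilation scale must be uniform over $S^k$, and the Moser interpolation must produce a continuous family of Hamiltonians, which rests on the smooth dependence of solutions of Moser's ODE on parameters. None of these steps are individually difficult, but their coherent assembly is the real content of the lemma.
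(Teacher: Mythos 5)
Your proposal follows essentially the same route as the paper: a parametrized Darboux construction yields a continuous section of $j_{\B c}$ over a deformation retract of $\Sp\Fr(n,M)$ (giving surjectivity), and the vanishing of classes in the framed fiber $\Emb_{n}^{\B f}(\B c,M)$ after restriction to smaller balls is obtained by the symplectic Alexander trick, which is precisely your dilation $g_s^{(\lambda)}(x)=\lambda^{-1}g_s(\lambda x)$ followed by a Moser correction. The one detail the paper handles more explicitly is that $\Sp\Fr(n,M)$ is not compact (basepoints may collide), so one must first retract onto the frames whose points are $\epsilon$-separated before compactness yields a uniform capacity $\B{c_0}$.
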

\begin{proof}
Let $J$ be a compatible almost complex structure, and let $g$ be the Riemannian metric associated to the pair $(\om,J)$. Let $\exp$ be the corresponding exponential map, and let $r>0$ be the injectivity radius of $(M,g)$. 

Let $\tau:\Sp\Fr(n,M)\to\R_{>0}$ be the function that assign to a frame the minimal distance between any two of its points. Given $\epsilon>0$, let $\Sp\Fr^{\geq\epsilon}(n,M)=\tau^{-1}[\epsilon,\infty)$. For $\epsilon$ small enough, the closed subspace $\Sp\Fr^{\geq\epsilon}(n,M)$ is a strong deformation retract of $\Sp\Fr(n,M)$. The subspace $\Sp\Fr^{\geq\epsilon}(n,M)$ itself deformation retracts onto the compact subspace of unitary frames $\U\Fr^{\geq\epsilon}(n,M)$.

Given a unitary frame $f_{m}$ at a point $m\in M$, we can construct a symplectic embedding of a ball $\phi:B^{4}(c(m))\to M$ of some small capacity $c(m)$ such that $d\phi_{0}(f_{0})=f$, where $f_{0}$ is the standard unitary frame of $\R^{2m}\simeq \C^{m}$. To see this, let $\B f_{m}:\R^{2m}\to T_{m}M$ be the identification of the tangent space given by the symplectic frame $f_{m}$. For $0<\delta<r$, the smooth map $\psi_m(z) = \exp_{m}(\B f_m(z))$ is an embedding of $B^{2m}(\pi\delta^{2})$. Since $d\psi_{m}=\B f$ at the origin, Moser's theorem implies that $\psi_m$ is isotopic to a symplectic embedding $\psi_m'$ of a smaller ball of radius $\delta'$. Let $\delta(f_{m})$ be the supremum of all $\delta'>0$ that are obtained this way. Since Moser's isotopy is continuous with respect to the parameters involved, $\delta(f_{m})$ is continuous in $f_{m}$ and, by compactness of $\U\Fr(1,M)$, we can set
\[\delta_{M}=\frac{1}{3}\min_{f_{m}\in \U\Fr(1,M)}\{\delta_{m},\epsilon\}>0.\]
This procedure defines a continuous map
\[\Psi:\U\Fr^{\geq \epsilon}(n, M)\to\Emb_{n}(\pi\delta_{M}^{2},\ldots,\pi\delta_{M}^{2},M)\]
which is a section of the composition
\[\Emb_{n}(\B c,M) \xrightarrow{j_{\B c}} \Sp\Fr(n,M)\xrightarrow{\simeq} \U\Fr(n,M).\]
This proves the first statement. For the second statement, we look at the fiber of the fibration~\eqref{eqn:Fibration framed embeddings} under restriction maps. Choose a frame $\B f$ and a collection $\psi_i:B^{2m}(\delta)\to M$ of $n$ disjoint framed Darboux charts. Let $A$ be a class in $\pi_q(\Emb_{n}^{\B f}(\B c,M))$ represented by the map $\phi:S^q\to \Emb_{n}^{\B f}(\B c,M)$. By restricting the embeddings to small enough balls, we can ensure that for each $z\in S^q$, the image of the restriction is contained in the chosen charts. Consequently, after restriction, the class $A$ is represented by a collection of $n$ framed embeddings of a single ball $B^{2m}(\epsilon_i)$ into another ball $B^{2m}(\delta)$ whose derivative at the origin is the identity. By further restricting to smaller balls, we can apply Alexander trick to show that each restriction is isotopic to the inclusion. This shows that the class $A$ becomes trivial after restriction to small enough balls. This concludes the proof of the second statement.
\end{proof}

\begin{corollary}\label{cor:RationalHyperbolicity}
Let $(M^{2m},\om)$ be a compact, simply connected, symplectic manifold without boundary. Let $n$ be a positive integer satisfying the following conditions:
\begin{enumerate}
\item $n\geq 4$ if $M=S^2$,  
\item $n\geq 3$ if $H(M;\Q)\simeq \Q[x]/x^{k}$,
\item $n\geq 2$ in all other cases.
\end{enumerate}
Then there exists capacities $\B {c_0}=(c_1,\ldots,c_n)$ such that, for every $\B c\leq \B {c_0}$, the embedding space $\Emb_{n}(\B c,M)$ is rationally hyperbolic and the rational cohomology ring $H^*(\Symp(M,\iota(\mB_{\B c})_{\id});\Q)$ is not finitely generated.
\end{corollary}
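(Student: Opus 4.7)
The idea is to transfer rational hyperbolicity from the configuration space $\Conf_n(M)$ to $\Emb_n(\B c,M)$ via Lemma~\ref{lemma:Thickening}, and then to the stabilizer via the fibration~\eqref{fibration ladder}. The starting point is the classical rational homotopy result that, under exactly the listed numerical hypotheses, $\Conf_n(M)$ is rationally hyperbolic; this is visible in the Cohen--Taylor/Kriz/Totaro model, where the exponential growth of minimal generators begins precisely at $n=4,\,3,\,2$ in the three cases, because $\Conf_3(S^2)\simeq \PGL(2,\C)$ and $\Conf_2(\cp^k)$ are the critical rationally elliptic cases that must be pushed past.

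First I would observe that the symplectic frame space $\Sp\Fr(n,M)$ fibers over $\Conf_n(M)$ with rationally elliptic fiber $\Sp(2m,\R)^n \simeq_{\Q} \U(m)^n$, so the long exact sequence of rational homotopy implies that $\Sp\Fr(n,M)$ is also rationally hyperbolic. Next, by Lemma~\ref{lemma:Thickening}(1), for $\B c \leq \B{c_0}$ the map $j_{\B c}$ is surjective on rational homotopy groups, so $\pi_*(\Emb_n(\B c,M))\otimes\Q$ inherits exponential growth. Being nilpotent of finite rational type, the F\'elix--Halperin dichotomy then forces $\Emb_n(\B c,M)$ to be itself rationally hyperbolic. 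For the second assertion I would feed this into the long exact sequence of the fibration $\Symp(M,\iota(\mB_{\B c})_{\id}) \to \Symp_0(M,\om) \to \Emb_n(\B c,M)$ coming from~\eqref{fibration ladder}. Since $M$ is compact, $\Symp_0(M,\om)\subset \Diff_0(M)$ has finite-type rational homotopy in each degree, so the exponential growth of $\pi_*(\Emb_n(\B c,M))\otimes\Q$ must be absorbed by $\pi_*(\Symp(M,\iota(\mB_{\B c})_{\id}))\otimes\Q$, which is therefore infinite-dimensional. By Hopf's theorem, the rational cohomology of a topological group is the free graded-commutative algebra on its rational homotopy, so $H^*(\Symp(M,\iota(\mB_{\B c})_{\id});\Q)$ cannot be finitely generated as a ring.

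The main obstacle is the first step: verifying rational hyperbolicity of $\Conf_n(M)$ in each of the three numerical regimes. The boundary cases are the delicate ones, since $\Conf_3(S^2)$ and $\Conf_2(\cp^k)$ are themselves rationally elliptic, so one must check that adding a single additional point already pushes the space into the hyperbolic regime. This requires an explicit analysis of the Kriz--Totaro model exploiting the multiplicative structure of $H^*(M;\Q)$: the hypothesis that this ring is neither that of $S^2$ nor a truncated polynomial on one generator is precisely what supplies enough independent cup-product relations among the configuration-space generators to produce exponentially many classes in the minimal model. A secondary but routine point is the finite-type property of $\Symp_0(M,\om)$, which follows from the corresponding statement for $\Diff_0(M)$ on compact manifolds.
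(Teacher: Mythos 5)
Your first half follows the paper's route almost exactly: the paper also starts from the F\'elix--Thomas theorem (\cite[Corollary p.~562]{FT94}), which asserts that $\Conf_{n}(M)$ is rationally hyperbolic under precisely the three numerical conditions, passes this to $\Sp\Fr(n,M)$ via the fibration with rationally elliptic fibre $\Sp(2m)^n$, and then to $\Emb_{n}(\B c,M)$ via the surjectivity statement of Lemma~\ref{lemma:Thickening}(1). Two minor remarks: the paper simply cites F\'elix--Thomas rather than re-deriving hyperbolicity from the Kriz--Totaro model (which in any case requires $M$ projective, so for a general compact symplectic $M$ you would be forced back to the Cohen--Taylor/F\'elix--Thomas setting), and your appeal to the F\'elix--Halperin dichotomy is superfluous, since surjectivity of $\pi_*(j_{\B c})\otimes\Q$ already transfers the exponential growth of the ranks directly.

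The genuine gap is in your treatment of the second assertion. You extract infinite-dimensionality of $\pi_*(\Symp(M,\iota(\mB_{\B c})_{\id}))\otimes\Q$ from the long exact sequence of the evaluation fibration by asserting that $\Symp_0(M,\om)$ has finite-type rational homotopy because it sits inside $\Diff_0(M)$. Being a subgroup gives no control on homotopy groups; finite generation of $\pi_*(\Diff_0(M))$ for compact manifolds is itself a deep theorem with dimension restrictions and is irrelevant to the subgroup; and, most importantly, even finite rank in each degree would not suffice: for the connecting-homomorphism argument you need the total rank $\sum_{i\le k}\dim\pi_i(\Symp_0(M,\om))\otimes\Q$ to grow subexponentially, since otherwise the exponential growth of $\pi_*(\Emb_{n}(\B c,M))\otimes\Q$ can be entirely absorbed by $\pi_*(\Symp_0(M,\om))\otimes\Q$ (the maps $\pi_k(\Symp_0)\to\pi_k(\Emb_{n})$ could all be surjective) and the sequence then says nothing about the fibre. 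No such growth bound is known for symplectomorphism groups of general compact symplectic manifolds. This is exactly the difficulty that the paper's citation of K\k{e}dra's theorem (\cite[Theorem~1.1]{Ke05}) is designed to circumvent: if a topological group acts transitively on a simply connected rationally hyperbolic space, then the rational cohomology ring of the isotropy subgroup of a point is infinitely generated, with no hypothesis on the ambient group. Your closing appeal to Hopf's theorem also silently assumes the stabilizer is connected and of finite type, neither of which you have established. To repair the argument, replace your final paragraph by an application of K\k{e}dra's theorem (or reproduce its proof, which works with the classifying space fibration rather than the long exact homotopy sequence of the evaluation fibration).
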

\begin{proof}
A result of Y. Felix and J.-C. Thomas asserts that under the conditions of the statement, the configuration space $\Conf_{n}(M)$ is rationally hyperbolic, see~\cite[Corollary p.~562]{FT94}. Looking at the fibration
\[\Sp(2m)^n\to\Sp\Fr(n,M)\to\Conf_{n}(M)\]
and recalling that finite dimensional Lie groups are rationally elliptic (see~\cite{FOT08} p.~86), it follows that $\Sp\Fr(n,M)$ is also rationally hyperbolic. On the other hand, Theorem~1.1 in~\cite{Ke05} asserts that if a topological group $G$ acts transitively on a simply connected rationally hyperbolic space $X$, then the rational cohomology ring of the isotropy subgroup of a point $H_*(G_{\pt};\Q)$ is infinitely generated. Together with Lemma~\ref{lemma:Thickening}, this proves the statement.
\end{proof}

\begin{remark}
In general, it is not known whether there exist capacities $\B {c_0}=(c_1,\ldots,c_n)$ such that the map $\Emb_{n}(\B{c},M) \to \Sp\Fr(n,M)$ is a weak homotopy equivalence for every $\B{c}\leq\B {c_0}$. See the discussion of stability in Section~\ref{section:stability}.
\end{remark}

\subsection{Configurations of balls and points}
Unlike $\Emb_{n}(\B c,M)$, the spaces $\IEmb_{n}(\B c,M)$ do not form a directed system. However,
due to the stability properties described in the next section they are homotopy equivalent to configuration spaces of points for suitable $\B c$. In what follows we make a preparatory observation.

Let $\Symp(B^4(\B c)) = \prod_i \Symp(B^4(c_i))$
\index{S@$\Symp(B^4(\B c))$ -- group of symplectic reparametrisations of the disjoint union of balls }
be the group of symplectic reparametrisations of the disjoint union of balls $B^4(c_1)\sqcup \dots \sqcup B^4(c_n)$ and let $\Symp(B^4(\B c),0)$ be the subgroup fixing the origins $0_i\in B^4(c_i)$. Notice that the orbit of $(0_1,\ldots,0_{n})$ with respect to the natural action of $\Symp(B^4(\B c))$ on the product $B^4(c_1)\times \dots \times B^4(c_n)$ is the whole interior of the product and the isotropy subgroup is $\Symp(B^4(\B c),0)$. This shows that the inclusion $\Symp(B^4(\B c),0)\subseteq \Symp(B^4(\B c))$ is a homotopy equivalence. In particular, the projection
\[
\IEmb_{n}^*(\B c,M):=\Emb_{n}(\B c,M)/\Symp(B^4(\B c),0)\to \Emb_{n}(\B c,M)/\Symp(B^4(\B c))=\IEmb_{n}(\B c,M)
\]
is a homotopy equivalence.
Consider the following diagram,
\begin{equation}
\begin{tikzcd}
\Symp(B^4(\B c),0) \ar[r] \ar[d] & \Sp(2m)^n\ar[d]\\
\Emb_{n}(\B c,M) \ar[r] \ar[d]   & \Sp\Fr(n,M)\ar[d]\\
\IEmb_{n}^*(\B c,M):=\Emb_{n}(\B c,M)/\Symp(B^4(\B c),0) \ar[r]\ar[d,leftrightarrow] & \Conf_{n}(M)\\
\IEmb_{n}(\B c,M), & \\
\end{tikzcd}
\label{Eq:Emb-Conf}
\end{equation}
in which the rightmost column is the symplectic frame bundle over the configuration space and the two first horizontal maps are defined by evaluating the differential at the center. Observe that an element of the space $\IEmb_{n}^*(\B c,M):=\Emb_{n}(\B c,M)/\Symp(B^4(\B c),0)$ is a configuration of $n$ symplectic balls {\it with centers}. Therefore the bottom horizontal map is well defined. Composing it with the bottom homotopy equivalence we get the map
\begin{equation}
\IEmb_{n}(\B c,M) \to \Conf_{n}(M)
\label{Eq:IEmb-Conf}
\end{equation}
defined for every choice of capacities.

\begin{remark}
Corollary~\ref{cor:RationalHyperbolicity} implies that for generic symplectic manifolds and capacities, part of the homotopical complexity of the embedding space $\IEmb_{n}(\B{c},M)$ is of purely topological nature and does not encode symplectic information. What really matters are the values of capacities $\B c$ at which the homotopy type of $\IEmb_{n}(\B{c},M)$ changes. The simplest way to factor out the contribution of the configuration space $\Conf_n(M)$ to the homotopy of $\IEmb_{n}(\B{c},M)$ is to study the space $\IEmb_{n}^{\B p}(\B c,M)$ of balls with fixed centers $\B p:=\{p_1,\ldots,p_n\}$. However, looking at the fibration
\begin{equation}
\Symp(\widetilde{M}_{\B c},\Sigma) \simeq \Symp(M,\iota(\mB_{\B c}),\B p)
\to
\Symp_0(M,\B p)
\to
\IEmb_n^{\B p}(\B c, M)
\end{equation}
we see that this is still equivalent to analysing the symplectic stabilizer $\Symp(\widetilde{M}_{\B c},\Sigma)$.
\end{remark}

\subsection{Stability for embedding spaces}\label{section:stability}
\begin{definition}
Let $(M,\om)$ be a rational $4$-manifold and let $\B c_0$, $\B c_1$ be two sets of capacities in $\mC(M,\om,n)$. We say that $\B c_0$ and $\B c_1$ are in the same stability component if there exists a continuous family of capacities $\B c_t\subset\mC(M,\om,n)$ interpolating $\B c_0$ and $\B c_1$ for which the homotopy type of $\Emb_{n}(\B c_t,M)$ is constant.
\end{definition}

The dependence of the homotopy type of $\Symp(\widetilde{M}_{\B c},\Sigma)$ on the capacities $\B c=(c_1,\cdots, c_{n})$ can be investigated through a fibration whose total space only depends on the deformation class of the symplectic forms. Let
$\Diff_{[\B c]}(\widetilde{M},\Sigma)$
\index{D@$\Diff_{[\B c]}(\widetilde{M},\Sigma)$ -- group of diffeomorphisms of the blow-up $\widetilde{M}_{\B c}$ that preserves the class $[\widetilde{\om}_{\B c}]$ and that leave the exceptional divisor $\Sigma$ invariant}
be the group of diffeomorphisms of the blow-up $\widetilde{M}_{\B c}$ that preserves the class $[\widetilde{\om}_{\B c}]$ and that leave the exceptional divisor $\Sigma$ invariant. Let $\Omega_{\B c}:=\Omega([\widetilde{\om}_{\B c}])$ 
\index{O@$\Omega_{\B c}$ -- space of symplectic forms cohomologous to $\widetilde{\om}_{\B c}$}
be the space of symplectic forms cohomologous to $\widetilde{\om}_{\B c}$ and let $\Omega_{\B c}(\Sigma)$
\index{O@$\Omega_{\B c}(\Sigma)$ -- subspace of $\Omega_{\B c}$ for which $\Sigma$ is symplectic}
be the subspace of forms for which $\Sigma$ is symplectic. By applying a relative version of Moser's lemma to each component of $\Omega_{\B c}(\Sigma)$, and using Part (1) of Theorem~\ref{thm:GeneralResultsOnRationalSurfaces}, we get that there is an evaluation fibration
\begin{equation}\label{relativeUFIB}
\Symp(\widetilde{M_{\B c}}, \Sigma)
\to 
\Diff_{[\B c]}(\widetilde{M},\Sigma) 
\to
\Omega_{\B c}(\Sigma).
\end{equation}
Similarly, we define the space of pairs 
\begin{equation}\label{eq:space of pairs}
P_{\B c}(\Sigma) =\left\{ (\om',J)~|~\om'\in\Omega_{\B c}(\Sigma),~J\text{~is compatible with~}\om',~\Sigma\text{~is~}J\text{-holomorphic} \right\}
\end{equation}
and the space of compatible almost-complex structures
\[\mA_{\B c}(\Sigma) = \left\{J\text{~is compatible with some~}\om'\in\Omega_{\B c}(\Sigma)\text{~and~}\Sigma\text{~is~}J\text{-holomorphic}\right\}.\]
\index{A@$\mA_{\B c}(\Sigma)$ -- space of almost complex structures $J$ which are compatible with some form in $\Omega_{\B c}(\Sigma)$ and for which $\Sigma$ is $J$-holomorphic}
Then the projection maps 
\begin{equation}\label{eq:two projections}
\Omega_{\B c}(\Sigma)\leftarrow P_{\B c}(\Sigma)\to \mA_{\B c}(\Sigma)
\end{equation}
are homotopy equivalences. As explained in~\cite[Section~4.1]{ALLP23}, the homotopy fiber of the evaluation map $\Diff_{[\B c]}(\widetilde{M},\Sigma)\to\mA_{\B c}(\Sigma)$ is homotopy equivalent to $\Symp(\widetilde{M_{\B c}}, \Sigma)$. In particular, the sequence of maps
\begin{equation} \label{relative homotopy fibration}
\Symp(\widetilde{M_{\B c}}, \Sigma)\into \Diff_{[\B c]}(\widetilde{M},\Sigma) \to \mA_{\B c}(\Sigma).
\end{equation}
induces a long exact sequence of homotopy groups. Consequently, as the capacities $\B c$ vary, the homotopy types of $\Symp(\widetilde{M_{\B c}}, \Sigma)$ and of $\IEmb_{n}(\B c,M)$ change precisely when the homotopy type of the evaluation map $\Diff_{[\B c]}(\widetilde{M},\Sigma) \to\mA_{\B c}(\Sigma)$ changes.

\subsection{Stability chambers for symplectic balls in \texorpdfstring{$\cp^2$}{CP2}}\label{section: stability chambers} 
We now apply the framework of the previous sections to the specific case of symplectic balls in $\cp^2$. 

Any diffeomorphism $\phi\in \Diff_{[\B c]}(\widetilde{M}_n,\Sigma)$ fixes the classes $E_1,\ldots,E_n$, and $\PD [\widetilde{\om}_{\B c}]=L-\sum_i c_iE_i$. It follows that the group $\Diff_{[\B c]}(\widetilde{M}_n,\Sigma)$ is equal to the group $\Diff_{h}(\widetilde{M}_n,\Sigma)$ of all diffeomorphisms acting trivially on homology and leaving the exceptional divisor $\Sigma$ invariant. In particular, it is independent of the capacities~$\B c$. In order to simplify the notation, we will write
\[
\mD_h:=\Diff_{h}(\widetilde{M}_n),\quad\mD_h(\Sigma):=\Diff_{h}(\widetilde{M}_n,\Sigma),\quad\mG_{\B c}:=\Symp(\widetilde{M_{\B c}}),\quad\text{and}\quad\mG_{\B c}(\Sigma):=\Symp(\widetilde{M}_{\B c},\Sigma).
\]
\index{D@$\mD_h=\Diff_h(\widetilde{M}_n)$ -- group of diffeomorphisms of $\widetilde{M}_n$ acting trivially on homology}
\index{D@$\mD_h (\Sigma)=\Diff_h(\widetilde{M}_n,\Sigma)$ -- group of diffeomorphisms of $\widetilde{M}_n$ acting trivially on homology leaving the exceptional divisor $\Sigma$ invariant}
\index{G@$\mG_{\B c}:=\Symp(\widetilde{M_{\B c}})$}
\index{G@$\mG_{\B c}(\Sigma)$ -- stabilizer of the exceptional divisor $\Sigma$, $\Symp(\widetilde{M}_{\B c},\Sigma)$}
Stability components in the set $\mC(n):=\mC(\cp^2,\om_{FS},n)$
\index{C@$\mC(n)$ -- admissible capacities  $\mC(\cp^2,\om_{FS},n)$}
of admissible capacities can be explicitly described for embeddings of up to 8 balls in $\cp^2$. Given $\B c\in\mC(n)$, let $\mS_{\B c}^{\leq-1}(\Sigma)\subset H_2(M_{n},\Z)$ denote the set of homology classes of embedded $\widetilde{\om}_{\B c}$-symplectic spheres of self-in\-ter\-sec\-tion $\leq -1$ that intersect non-negatively with the exceptional classes $E_1,\ldots,E_{n}$. As explained in~\cite[Proposition~2.14(ii) and Lemma~3.2]{ALLP23}, for any $1\leq n\leq 8$ and any $\B c\in\mC(n)$ the set $\mS_{\B c}^{\leq-1}(\Sigma)$ is finite. Let $\mS_{n}^{\leq-1}(\Sigma)$ be the union of the sets $\mS_{\B c}^{\leq-1}(\Sigma)$ over all capacities $\B c\in\mC(n)$. 
To each class $A\in \mS_{n}^{\leq-1}(\Sigma)$ correspond a linear functional $H^2(M_{n},\R)\to\R$ and an associated map $\ell_A:\mC(n)\to\R$ defined by setting $\ell_A(\B c):=\langle [\widetilde{\w}_{\B c}],\, A\rangle$. The wall corresponding to $A\in\mS_{n}^{\leq-1}(\Sigma)$ is the set of capacities $\B c\in\mC(n)$ for which $\ell_A(\B c)=0$. It follows from~\cite[Corollary 3.3]{ALLP23} that the set of walls is locally finite, that is, given any $\B c\in\mC(n)$, $0\leq n\leq8$, there exists an open neighbourhood $U\subset\mC(n)$ that meets at most finitely many walls.  

\begin{theorem}[{Stability, \cite[Theorem 1.3]{ALLP23}}]\label{thm:StabilityOfEmbeddings}
For each integer $1\leq n\leq 8$, the set $\mC(n)$ of admissible capacities admits a partition into convex regions, called \emph{stability chambers}, such that:
\begin{enumerate}[label=(\roman*)]
\item each chamber is a convex polyhedron characterized by the signs of the functionals $\ell_A$, $A\in \mS_{n}^{\leq-1}(\Sigma)$;  
\item if two sets of capacities $\B c$ and $\B {c'}$ belong to the same stability chamber, then we have equality $\mA_{\B c}(\Sigma)=\mA_{\B{c'}}(\Sigma)$;
\item if two sets of capacities $\B c$ and $\B {c'}$ belong to the same stability chamber, then the embedding spaces $\Emb_{n}(\B c, \cp^2)$ and $\Emb_{n}(\B {c'}, \cp^2)$ are homotopy equivalent.
\end{enumerate}
\end{theorem}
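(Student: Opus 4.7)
The plan is to establish (i), (ii), (iii) in order, using the walls $W_A:=\ell_A^{-1}(0)$ together with the local finiteness result already cited.

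For (i), I would define a stability chamber as a connected component of $\mC(n)\setminus\bigcup_{A}W_{A}$, where $A$ ranges over $\mS_{n}^{\leq-1}(\Sigma)$. Since each $\ell_A$ is the restriction of a linear functional on $H^{2}(\widetilde M_{n},\R)$, every chamber is the intersection of $\mC(n)$ with open half-spaces indexed by the signs of the $\ell_A$, hence a convex polyhedron. Local finiteness of the walls guarantees that, on any compact subset of $\mC(n)$, only finitely many defining inequalities are non-redundant, so that the chambers are genuine convex polyhedra which together partition $\mC(n)$.

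For (ii), the goal is to characterize $\mA_{\B c}(\Sigma)$ by a positivity condition that depends only on the signs of the $\ell_A$. The inclusion $\supseteq$ of the sign characterization is elementary: if $J$ is compatible with some $\omega'\in\Omega_{\B c}(\Sigma)$, then every irreducible $J$-holomorphic curve $C$ satisfies $\langle[\widetilde\omega_{\B c}],[C]\rangle>0$. Conversely, using the Taubes--Li--Liu characterization of the symplectic cone of a rational $4$-manifold, an almost complex structure $J$ on $\widetilde M_{n}$ admits a compatible form in the class $[\widetilde\omega_{\B c}]$ precisely when $\langle[\widetilde\omega_{\B c}],[C]\rangle>0$ for every irreducible $J$-holomorphic curve $C$ of negative self-intersection and the volume condition holds. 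For $n\leq 8$, positivity of intersection and the finiteness of $\mS_{n}^{\leq-1}(\Sigma)$ show that the finite set of such obstructing classes, together with the components of $\Sigma$, is contained in $\mS_{n}^{\leq-1}(\Sigma)$. Consequently the compatibility criterion reduces to the finite system of inequalities $\ell_A(\B c)>0$, $A\in\mS_{n}^{\leq-1}(\Sigma)$ active for $J$, and constancy of the signs on a chamber forces $\mA_{\B c}(\Sigma)=\mA_{\B{c'}}(\Sigma)$.

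For (iii), I would invoke the fibration
\[
\mG_{\B c}(\Sigma)\to\mD_h(\Sigma)\to\mA_{\B c}(\Sigma)
\]
of \eqref{relative homotopy fibration}. As $\mD_h(\Sigma)$ is manifestly independent of $\B c$ and $\mA_{\B c}(\Sigma)$ is constant on a chamber by (ii), the associated long exact sequence shows that $\mG_{\B c}(\Sigma)$ has a constant weak homotopy type on the chamber. Combined with the equivalence $\mG_{\B c}(\Sigma)\simeq\Symp(M,\iota(\mB_{\B c}))$ of \eqref{homotopy equivalence ball-fiber} and the fibration \eqref{second fibration unparametrized}, this implies that $\IEmb_{n}(\B c,\cp^{2})$ has constant homotopy type on a chamber. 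Passing from $\IEmb_{n}$ to $\Emb_{n}$ via the ball-reparametrisation fibration, with $\Symp(B^{4}(c_i))\simeq\U(2)$ independent of $c_i$, concludes the argument.

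The main obstacle is step (ii): one must control, uniformly in $\B c$, the full set of classes of irreducible $J$-holomorphic curves of negative self-intersection that may obstruct the existence of a compatible form in $\Omega_{\B c}(\Sigma)$, and show this set is forced to lie inside $\mS_{n}^{\leq-1}(\Sigma)$. The restriction $n\leq 8$ is essential here, since it is precisely in this range that $\mS_{n}^{\leq-1}(\Sigma)$ is finite; for $n\geq 9$ infinitely many classes come into play and the chamber structure becomes much more intricate.
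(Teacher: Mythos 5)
First, a point of comparison: the paper does not actually prove this statement --- it is imported wholesale from \cite[Theorem 1.3]{ALLP23} --- so there is no internal argument to measure yours against. Judged on its own terms, your outline follows the route one would expect (walls $\ell_A^{-1}(0)$ cutting $\mC(n)$ into polyhedra, constancy of $\mA_{\B c}(\Sigma)$ on each piece via a cone characterization, and transfer to embedding spaces through the fibrations \eqref{relative homotopy fibration} and \eqref{second fibration unparametrized}), and parts (i) and (iii) are essentially fine. In (iii), to get an honest homotopy equivalence of embedding spaces rather than merely abstractly equivalent fibers, you should note that once $\mA_{\B c}(\Sigma)=\mA_{\B{c'}}(\Sigma)$ the two evaluation maps out of $\mD_h(\Sigma)$ are \emph{the same map}, so the comparison of fibrations is the identity on base and total space; this is the standard way the paper uses the result. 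In (i), defining chambers as connected components of $\mC(n)\setminus\bigcup_A\ell_A^{-1}(0)$ does not partition $\mC(n)$: the paper's chambers are half-open (for $n=3$ they are $c_1+c_2+c_3<1$ and $c_1+c_2+c_3\geq 1$), with each wall attached to the side where $A$ has nonpositive area, precisely because a class of nonpositive $\widetilde{\om}_{\B c}$-area admits no $J$-holomorphic representative and hence $\mA_{\B c}(\Sigma)$ does not change as $\ell_A$ passes from negative to zero.

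The genuine gap is in step (ii), which is the heart of the theorem. What you need is not the Taubes--Li--Liu description of the symplectic cone of $\widetilde{M}_n$ (which concerns the existence of \emph{some} symplectic form in a given class), but the description of the \emph{$J$-compatible cone for a fixed tamed $J$}: for a $4$-manifold with $b^+=1$ this cone coincides with the $J$-tamed cone and is cut out by strict positivity on the classes of irreducible $J$-holomorphic subvarieties of negative self-intersection (a theorem of Li--Zhang), after which Zhang's classification (Proposition \ref{prop:ZhangPossibleNegativeCurves}) confines those classes to the finite set $\mS_{n}^{\leq-1}(\Sigma)$ when $n\leq 8$. Your sentence beginning ``Conversely, using the Taubes--Li--Liu characterization\dots'' asserts exactly this equivalence but cites a theorem that does not give it; since everything else in the argument is routine, this is the step that carries the entire content of \cite[Theorem 1.3]{ALLP23} and cannot be waved through.
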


It follows that to describe the stability chambers for embeddings of balls in $\cp^2$, it suffices to describe the sets $\mS_{n}^{\leq-1}(\Sigma)$.
\begin{proposition}[{\cite[Proposition 4.6]{Zhang17}}]\label{prop:ZhangPossibleNegativeCurves}
Let $J$ be a tamed almost complex structure on $M_{n}=M\#\,n\overline{\cp}\,\!^2$ with $n\leq 8$, and let $C=aL-\sum r_i E_i$ be an irreducible curve with $C\cdot C\leq-1$ and $a>0$. Then the homology class $[C]$ is one of the following:
\begin{enumerate}
\item $L-\sum E_{i_j}$,
\item $2L-\sum E_{i_j}$,
\item $3L-2E_m-\sum_{i_j\neq m} E_{i_j}$,
\item $4L-2E_{m_1}-2E_{m_2}-2E_{m_3}-\sum_{i_j\neq m_i} E_{i_j}$,
\item $5L-E_{m_1}-E_{m_2}-\sum_{i_j\neq m_i} 2E_{i_j}$,
\item $6L-3E_{m}-\sum_{i_j\neq m} 2E_{i_j}$.
\end{enumerate}
\end{proposition}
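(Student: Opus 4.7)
The strategy is to combine three constraints on the coefficients of an irreducible $J$-holomorphic curve $C = aL - \sum_{i=1}^n r_i E_i$: the adjunction inequality, the self-intersection assumption, and the bound $n \leq 8$. The outcome is that $a$ is forced to be at most $6$, and a short arithmetic enumeration produces exactly the six families listed.

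First, I would invoke the adjunction formula for irreducible $J$-holomorphic curves. Since the arithmetic genus $g_a$ of an irreducible curve is non-negative, the identity $2g_a - 2 = C \cdot C + K \cdot C$ together with $K = -3L + \sum E_i$ yields the inequality $\sum_i r_i(r_i-1) \leq (a-1)(a-2)$. The hypothesis $C\cdot C \leq -1$ reads $\sum_i r_i^2 \geq a^2 + 1$. Subtracting gives $\sum_i r_i \geq 3a - 1 > 0$.

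Next, applying Cauchy--Schwarz to the at most $n \leq 8$ non-zero coefficients yields $(\sum_i r_i)^2 \leq 8 \sum_i r_i^2$. Combined with the two bounds above, and writing $s = \sum_i r_i$, this becomes $s^2 - 8s \leq 8(a-1)(a-2)$. Substituting the lower bound $s \geq 3a-1$ simplifies to $(a-7)(a+1) \leq 0$, hence $a \leq 7$. The borderline case $a = 7$ would force equality throughout Cauchy--Schwarz, so all $r_i$ would have to equal the non-integer value $20/8 = 5/2$, a contradiction; hence $a \leq 6$.

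To finish, for each $a \in \{1,\ldots,6\}$ I would enumerate non-negative integer solutions of the system $\sum r_i(r_i-1) \leq (a-1)(a-2)$, $\sum r_i^2 \geq a^2+1$, with at most $8$ non-zero $r_i$'s. The adjunction bound tightly controls how many $r_i$ can be large: for $a \leq 2$ all $r_i \in \{0,1\}$; for $a = 3$ at most one $r_i = 2$; for $a = 4$ at most three $r_i = 2$; for $a = 5$ at most six $r_i = 2$; and for $a = 6$ a single $r_i = 3$ accompanied by seven $r_i = 2$. In each case one checks that the self-intersection lower bound and $n \leq 8$ together pin down exactly one family, matching the classes (1)--(6) in the statement.

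The main obstacle is justifying the positivity $r_i \geq 0$ used in the enumeration: for general tamed $J$ the exceptional classes $E_i$ may fail to have smooth $J$-holomorphic representatives, so positivity of intersections does not apply directly. The standard fix is either a genericity argument (perturbing $J$ to one in $\mJ_{\B c}(\Sigma)$, for which the $E_i$ are represented and intersect $C$ non-negatively, and observing that the homology class is unaffected), or an appeal to the action of the Weyl group of $M_n$ to reduce to the case $r_i \geq 0$ via reflections preserving $K$. A secondary care point is the enumeration itself, where one must verify that no configurations are missed at the boundary of feasibility (e.g.\ ruling out the possibility of two coefficients $\geq 3$ when $a=6$, or of a single $r_i = 4$ when $a=5$), which is a finite but somewhat delicate combinatorial check.
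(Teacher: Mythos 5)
The paper offers no proof of this statement: it is quoted directly from Zhang \cite[Proposition~4.6]{Zhang17}, so there is no internal argument to measure yours against. Your route --- the adjunction inequality $\sum r_i(r_i-1)\le(a-1)(a-2)$ for an irreducible $J$-holomorphic curve, the hypothesis $\sum r_i^2\ge a^2+1$, Cauchy--Schwarz with $n\le 8$ to force $a\le 7$, exclusion of $a=7$ by non-integrality of $20/8$, and a finite enumeration --- is the standard one, and the arithmetic is correct: $s\ge 3a-1$, $s^2-8s\le 8(a-1)(a-2)$, hence $(a-7)(a+1)\le 0$, and for each $a\in\{3,4,5,6\}$ the non-negative solutions are exactly the listed families, each saturating $\sum r_i^2=a^2+1$ and the adjunction budget.

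The one real weakness is the point you flag yourself, and neither of your proposed repairs works as stated. Perturbing $J$ does not help: a class with $C\cdot C\le -1$ has negative index, so the irreducible curve need not persist under perturbation, and the exceptional spheres of the perturbed structure say nothing about the original $C$. Cremona/Weyl reflections preserve $K$ and the intersection form but change $[C]$, so they classify only the Weyl orbit, not the coefficients appearing in the statement. Fortunately no geometric input is needed: $r(r-1)\ge 0$ for \emph{every} integer $r$, so the adjunction bound already controls all coefficients, and a coefficient $-m$ (with $m\ge1$) costs $m(m+1)$ of the adjunction budget while contributing the same $m^2$ to $\sum r_i^2$ as $+m$, which costs only $m(m-1)$. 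Replacing each $r_i$ by $|r_i|$ therefore preserves all constraints, so any integer solution reduces to one of the listed families; and since those families use the entire adjunction budget when $a\ge 3$ (and the budget is zero when $a\le 2$, forcing $r_i\in\{0,1\}$ outright), no coefficient can in fact be negative. With that purely arithmetic observation in place of your geometric caveat, the proof is complete. It is also worth noting that the only place the paper invokes this proposition is Corollary~\ref{cor:NegativeCurves}, where non-negative intersection with the $E_i$ is part of the hypotheses, so $r_i\ge 0$ is immediate in the application anyway.
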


\begin{corollary}\label{cor:NegativeCurves}
Given a tamed almost-complex structure $J$ on the symplectic blow-up of $\cp^2$ at~$n$ disjoint balls of capacities $c_1,\ldots,c_n$, $0\leq n\leq 8$, an embedded $J$-holomorphic sphere of self-intersection $C\cdot C\leq -2$ that intersects each of the exceptional classes $E_1,\ldots,E_{n}$ non-negatively must represent one of the classes listed in Proposition~\ref{prop:ZhangPossibleNegativeCurves}.
\end{corollary}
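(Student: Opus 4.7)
The plan is to reduce the corollary to a direct application of Proposition~\ref{prop:ZhangPossibleNegativeCurves}. Writing the homology class of $C$ in the standard basis $L, E_1,\ldots, E_n$ as $C = aL - \sum_i r_i E_i$, the intersection pairing $L\cdot L = 1$, $E_i\cdot E_j = -\delta_{ij}$, $L\cdot E_i = 0$ gives $C \cdot E_i = r_i$, so the hypothesis $C\cdot E_i \geq 0$ translates immediately into $r_i\geq 0$ for every $i$.

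To apply Zhang's result it remains to verify that $C$ is irreducible and that $a>0$. Irreducibility is automatic: an embedded $J$-holomorphic sphere is by definition a single, smooth, connected $J$-holomorphic curve, hence an irreducible $J$-holomorphic curve in the sense used by Zhang. For the positivity of $a$ I would invoke the fact that any non-constant $J$-holomorphic curve has strictly positive symplectic area with respect to any compatible symplectic form. Explicitly,
\[
0 < \langle [\widetilde{\om}_{\B c}], C\rangle = a - \sum_i c_i r_i .
\]
Since every capacity $c_i$ is strictly positive and every $r_i$ is non-negative, the sum $\sum_i c_i r_i$ is non-negative, which forces $a > 0$.

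With irreducibility and $a>0$ established, the hypothesis $C\cdot C\leq -2$ certainly implies $C\cdot C\leq -1$, so Proposition~\ref{prop:ZhangPossibleNegativeCurves} applies verbatim and exhibits $C$ as one of the six listed classes. No further case analysis is required, because the corollary is strictly weaker than Zhang's statement once $a>0$ is secured. I do not anticipate any real obstacle in carrying this out: the only mildly non-trivial step is the short area computation ruling out $a\leq 0$, and everything else is either a definition (an embedded sphere is an irreducible curve) or a direct citation of Zhang's classification.
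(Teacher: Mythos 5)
Your proposal is correct and follows essentially the same route as the paper: reduce to Proposition~\ref{prop:ZhangPossibleNegativeCurves} by noting $r_i = C\cdot E_i\geq 0$ and deducing $a>0$ from positivity of the symplectic area $a-\sum_i c_i r_i$. The brief remark on irreducibility of an embedded sphere is left implicit in the paper but is a harmless addition.
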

\begin{proof}
Let $[C]=aL-\sum r_i E_i$. Then $0\leq E_i\cdot C = r_i$. Since any $J$-holomorphic representative of $[C]$ must have positive symplectic area, and since $\langle[\widetilde{\om}_{\B c}],\,[C]\rangle = a-\sum c_i r_i$, the coefficient $a$ must be strictly positive.
\end{proof}

\begin{theorem}[Stability chambers]\label{thm:ClassesDefiningStabilityChambers}
For each integer $1\leq n\leq 8$, the stability chambers of the set $\mC(n)$ of admissible capacities are the convex polyhedral regions defined by the linear functionals $\ell_A$ where $A$ is one of the homology classes of self-intersection $A\cdot A\leq -2$ listed in Proposition~\ref{prop:ZhangPossibleNegativeCurves}. 
\end{theorem}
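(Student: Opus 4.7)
The plan is to combine the stability theorem (Theorem~\ref{thm:StabilityOfEmbeddings}) with Zhang's classification (Proposition~\ref{prop:ZhangPossibleNegativeCurves}), and then show that the functionals associated to the self-intersection $-1$ classes in that list are automatically positive on all of $\mC(n)$ and hence do not subdivide it. First I would invoke Theorem~\ref{thm:StabilityOfEmbeddings}(i), which asserts that the chambers of $\mC(n)$ are the convex polyhedra carved out by the linear functionals $\ell_A$ indexed by $A\in\mS_n^{\leq-1}(\Sigma)$. Each such class $A$ is represented by an embedded $\widetilde{\om}_{\B c}$-symplectic sphere for some admissible $\B c$, and by perturbing a compatible almost complex structure it can be arranged to be $J$-holomorphic and irreducible. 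Proposition~\ref{prop:ZhangPossibleNegativeCurves} then applies, since its hypothesis allows $C\cdot C=-1$, and tells us that $A=aL-\sum r_iE_i$ with $a>0$ and $r_i\geq 0$ belongs to one of the six listed families.

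Next I would separate Zhang's list by self-intersection and argue that the classes $A$ with $A\cdot A=-1$ contribute no interior walls to $\mC(n)$. Since $A$ is represented by a smooth embedded $2$-sphere, the adjunction formula gives
\begin{equation*}
K\cdot A = 2-2g+A\cdot A = 1,
\end{equation*}
where $K=3L-\sum E_i$ is the anti-canonical class of $\widetilde{M}_n$. Together with $A\cdot A=-1$ and smooth embeddedness, this places $A$ in the set $\mE(\widetilde{M}_n)$ of exceptional classes. By Theorem~\ref{thm:GeneralResultsOnRationalSurfaces}(2), the admissibility condition $\B c\in\mC(n)$ is equivalent to the strict positivity of $[\widetilde{\om}_{\B c}]$ against every class of $\mE(\widetilde{M}_n)$, together with the volume condition. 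Hence $\ell_A(\B c)>0$ identically on $\mC(n)$, so the hyperplane $\{\ell_A=0\}$ does not meet this open region and cannot partition it into distinct chambers.

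Combining the two observations, every wall that genuinely separates distinct stability chambers in $\mC(n)$ is of the form $\{\ell_A=0\}$ for some $A$ on Zhang's list with $A\cdot A\leq-2$, which is exactly the desired conclusion. The only real subtlety, and what I expect to be the main obstacle, is the adjunction-based identification of the $-1$ classes on Zhang's list with exceptional classes of $\widetilde{M}_n$: this is what makes their vanishing loci coincide with portions of the boundary of $\mC(n)$ rather than interior walls, and hence lets us drop them from the chamber-defining set. Once this is in place, the rest follows directly from the previously established stability theorem and Zhang's classification.
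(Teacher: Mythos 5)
Your proposal is correct and follows essentially the same route as the paper: invoke the Stability Theorem to reduce to the functionals $\ell_A$ for $A\in\mS_n^{\leq-1}(\Sigma)$, use Zhang's classification (via Corollary~\ref{cor:NegativeCurves}) to identify the classes of square $\leq-2$, and observe that the square $-1$ classes are exceptional (by adjunction) and hence, by the embedding criterion of Theorem~\ref{thm:GeneralResultsOnRationalSurfaces}(2), have $\ell_A>0$ on all of $\mC(n)$, so they only contribute to the boundary of $\mC(n)$ rather than to interior walls. The paper states this last point in one sentence; your write-up simply makes the adjunction argument explicit.
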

\begin{proof}
Exceptional classes in $\mS_{n}^{\leq-1}(\Sigma)$ define the boundary of the set $\mC(n)$ of admissible classes while classes of self-intersection $\leq -2$ define  interior walls in $\mC(n)$. The proposition then follows directly from Theorem~\ref{thm:StabilityOfEmbeddings} and Corollary~\ref{cor:NegativeCurves}.
\end{proof}

\begin{remark}
Using the adjunction inequality for immersed curves, it is easy to see that the classes listed in Proposition~\ref{prop:ZhangPossibleNegativeCurves}  are precisely the spherical classes represented by blow-ups of immersed holomorphic spheres of degree $d\leq 6$ in $\cp^2$. This supports Conjecture~\ref{conjecture}.
\end{remark}

\subsubsection{Stability chambers for $n=1$ or $n=2$ balls in $\cp^2$}\label{section:stability chambers n=1,2}
For $n=1$, the space of admissible capacities is the interval $(0,1)$, while for $n=2$, it is the polygon $0<c_2\leq c_1<c_1+c_2<1$. Since none of the classes in Proposition~\ref{prop:ZhangPossibleNegativeCurves} have self-intersection less than or equal to $-2$ when $n\leq 2$, the entire space of admissible capacities is itself a stability chamber. 

\subsubsection{Stability chambers for $n=3$ balls in $\cp^2$}\label{section:stability chambers n=3}
The space of admissible capacities consists of triples $\B c = (c_1, c_2, c_3)$ satisfying $0<c_3\leq c_2 \leq c_1<c_1+c_2\leq 1$. The $-2$ class $A=L-E_1-E_2-E_3$ is the only negative homology class of self-intersection $A\cdot A\leq-2$ contained in the list of Proposition~\ref{prop:ZhangPossibleNegativeCurves}. The linear functional $\ell_A$ separates the space of admissible capacities into two chambers, namely $c_1+c_2+c_3<1$ and $c_1+c_2+c_3\geq 1$. 

\subsubsection{Stability chambers for $n=4$ balls in $\cp^2$}\label{section:stability chambers n=4}
For $n=4$, the chambers are defined by the $5$ classes $L-E_i-E_j-E_k$ and $L-E_1-E_2-E_3-E_4$. Because of the normalization $0<c_4\leq\cdots\leq c_1<1$, the symplectic areas of these classes are linearly ordered:
\[1-c_1-c_2-c_3-c_4<1-c_1-c_2-c_3\leq 1-c_1-c_2-c_4\leq 1-c_1-c_3-c_4\leq 1-c_2-c_3-c_4\]
Consequently, there are exactly $6$ stability chambers that we label and order accordingly:
\[C_5 \prec C_4 \prec C_3 \prec C_2\prec C_1\prec C_0\]
Note that these chambers correspond to the conditions on the capacities that are listed in Theorem~\ref{homotopy type 4balls} and Theorem~\ref{cohomology ring 4 balls}, with $C_0$ being the chamber in which $c_2+c_3+c_4\geq 1$, and $C_5$ being the chamber for which $c_1+c_2+c_3+c_4<1$. From now on, in order to simplify notation, we will write $\B c \prec \B{c'}$ whenever $\B c$ belongs to a chamber that preceeds the chamber of $\B{c'}$.

\subsubsection{Stability chambers for $n=5$ balls in $\cp^2$}\label{chambers5}
The space of admissible capacities consists of $5$-tuples $\B c = (c_1, \ldots, c_5)$ satisfying $0<c_5\leq \cdots \leq c_1<c_1+c_2\leq 1$ and $\sum c_i<2$. There are 33 chambers defined by the $16$ classes $L-E_{i_1}-E_{i_2}-E_{i_3}$, $L-E_{i_1}-E_{i_2}-E_{i_3}-E_{i_4}$, and $L-E_1-E_2-E_3-E_4-E_5$. This time, the symplectic areas of these classes cannot be linearly ordered, so that the cell decomposition of the space of admissible capacities has a more complicated structure.

\subsection{Stratifications of \texorpdfstring{$\Conf_{n}(\cp^2)$, $\mJ_{\B c}(\Sigma)$, $\mJ_{\B c}([\Sigma])$, $\mA(n,\Sigma)$, and $\mA(n,[\Sigma])$}{Conf, J, A}}\label{section: stratifications Conf A J}

\subsubsection{Stratification of configuration spaces}\label{section:stratification of Conf}
Consider $\cp^{2}$ with its usual complex structure. For $n\leq 4$, the configuration space $\Conf_n(\cp^2)$ decomposes as a disjoint union of strata dtermined by genericity conditions. For $n=1$ and $n=2$, there is only one stratum as any two points lie on a single line. As explained in the Introduction, $\Conf_{3}(\cp^{2})$ decomposes into two disjoint strata
\[\Conf_{3}(\cp^{2}) = F_{0}\sqcup F_{123}\]
while $\Conf_{4}(\cp^{2})$ decomposes into six strata
\[\Conf_{4}(\cp^{2}) =F_{0}\sqcup F_{234}\sqcup F_{134}\sqcup F_{124}\sqcup F_{123}\sqcup F_{1234}\]
where
\begin{align*}
F_{0} & = \{P\in \Conf_{4}(\cp^{2})~|~\text{no three points of $P$ are colinear}\}\\
F_{ijk} & = \{P\in \Conf_{4}(\cp^{2})~|~\text{only the points~}p_{i},p_{j},p_{k}\text{~are colinear}\}\\
F_{1234} & = \{P\in \Conf_{4}(\cp^{2})~|~\text{the four points of $P$ are colinear}\}.
\end{align*}
In the case $n=3$, we define $F_1:=F_0\sqcup F_{123}$ and for $n=4$, we recursively define 
\[F_{1}=F_{0}\sqcup F_{234},~F_{2}=F_{1}\sqcup F_{134},~F_{3}=F_{2}\sqcup F_{124}, ~F_{4}=F_{3}\sqcup F_{123}, \text{~and~}F_5=\Conf_4(\cp^2)\]
so that we have inclusions $F_0\subset F_1\subset\cdots\subset F_5$.\label{Fi}

\subsubsection{Stratification of spaces of almost complex structures}\label{section:stratification of mA}
Let $\mJ(\widetilde{\om}_{\B c})$ 
\index{J@$\mJ({\om})$ -- space of all compatible almost complex structures on $(M, \om)$}
be the space of all compatible almost complex structures on $\widetilde{M_{\B c}}$.
Recall that given an admissible set of capacities $\B c=(c_1,\ldots,c_n)$, we defined in Sections~\ref{section General Setting} and \ref{section:stability}, respectively, the spaces
\[\mJ_{\B c}(\Sigma)=\{J\in\mJ(\widetilde{\om}_{\B c})~|~ \text{the submanifold~} \Sigma \text{~is holomorphic}\}\]
\[\mA_{\B c}(\Sigma) = \left\{J\text{~is compatible with some~}\om'\in\Omega_{\B c}(\Sigma)\text{~and~}\Sigma\text{~is~}J\text{-holomorphic}\right\}.\]

In order to compare the $\PGL(3)$ action on $\Conf_n(\cp^2)$ with the action of $\mD_h(\Sigma)$ on $\mA_{\B c}(\Sigma)$, it will be convenient to consider larger spaces of almost-complex structures by relaxing the conditions on the $J$-holomorphic representatives of the classes $[\Sigma]$. We define:
\begin{align*}
\mJ_{\B c}([\Sigma]) & :=\{J\in\mJ(\widetilde{\om}_{\B c})~|~ \text{the classes~}[\Sigma]=\{E_1,\ldots, E_n\} \text{~have~} J\text{-holomorphic representatives}\},\\
\mA_{\B c} & := \left\{J\text{~is compatible with at least one symplectic form~}\om'\in\Omega_{\B c}\right\},\\
\mA_{\B c}([\Sigma]) & :=\{J\in \mA_{\B c}~|~ \text{the classes~}[\Sigma]=\{E_1,\ldots, E_n\} \text{~have~} J\text{-holomorphic representatives}\}.
\end{align*}
\index{J@$\mJ_{\B c}([\Sigma])$ -- space of compatible almost complex structures on $\widetilde{M_{\B c}}$ such that the classes $[\Sigma]$ have $J$-holomorphic representatives}
\index{A@$\mA_{\B c}$ -- space of compatible almost complex structures on $\widetilde{M_{\B c}}$ with at least one symplectic form in $\Omega_{\B c}$}
\index{A@$\mA_{\B c}([\Sigma])$ -- space of almost complex structures in $\mA_{\B c}$ such that classes $[\Sigma]$ have $J$-holomorphic representatives}
Taking the union over all admissible capacities, we also define
\[
\mA(n,\Sigma) := \bigcup_{\B c\in\mC(n)} \mA_{\B c}(\Sigma), \quad
\mA(n,[\Sigma]) := \bigcup_{\B c\in\mC(n)} \mA_{\B c}([\Sigma]), \quad \text{and}\quad
\mA(n) := \bigcup_{\B c\in\mC(n)} \mA_{\B c}.
\]
\index{Ab@$\mA(n,\Sigma) := \bigcup_{\B c\in\mC(n)} \mA_{\B c}(\Sigma)$}
\index{Ab@$\mA(n,[\Sigma]) := \bigcup_{\B c\in\mC(n)} \mA_{\B c}([\Sigma])$}
\index{Ab@$\mA(n) := \bigcup_{\B c\in\mC(n)} \mA_{\B c}$}
For $n\leq 4$, these spaces decompose into disjoint strata characterized by the existence of curves representing the classes of self-intersection $\leq -2$ listed in Proposition~\ref{prop:ZhangPossibleNegativeCurves}, namely, $L_{ijk}:=L-E_i-E_j-E_k$ and $L_{1234}:=L-E_1-E_2-E_3-E_4$. For $n=1$ and $n=2$, there is only one stratum as no such classes exist. For instance, for $n=3$ we have
\[\mA(3,\Sigma) = \mA_{0}(\Sigma)\sqcup \mA_{123}(\Sigma),\]
where 
\begin{align*}
\mA_{0}(\Sigma) & = \{J\in \mA(3,\Sigma)~|~\text{there is no embedded $J$-holomorphic representative of $L_{123}$}\},\\
\mA_{123}(\Sigma) & = \{J\in \mA(3,\Sigma)~|~\text{$L_{123}$ is represented by an embedded $J$-holomorphic sphere}\}.
\end{align*}
Similarly, for $n=4$, we have
\[\mA(4,\Sigma) =\mA_{0}(\Sigma)\sqcup \mA_{234}(\Sigma)\sqcup \mA_{134}(\Sigma)\sqcup \mA_{124}(\Sigma)\sqcup \mA_{123}(\Sigma)\sqcup \mA_{1234}(\Sigma),\]
where
\begin{align*}
\mA_{0}(\Sigma) & = \{J\in \mA(4,\Sigma)~|~\text{$L_{123}$ and $L_{1234}$ have no embedded $J$-holomorphic representatives}\},\\
\mA_{ijk}(\Sigma) & = \{J\in \mA(4,\Sigma)~|~\text{$L_{ijk}$ is represented by an embedded $J$-holomorphic sphere}\},\\
\mA_{1234}(\Sigma) & = \{J\in \mA(4,\Sigma)~|~\text{$L_{1234}$ is represented by an embedded $J$-holomorphic sphere}\}.
\end{align*}
\index{A@$\mA_{i}(\Sigma), \mA_{i}([\Sigma])$ -- unions of strata in $\mA(n,\Sigma)$ }
In the case $n=3$, we further define $\mA_1(\Sigma):=\mA_0(\Sigma)\sqcup \mA_{123}(\Sigma)$, and for $n=4$, we recursively define
\begin{multline}\mA_{1}(\Sigma)=\mA_{0}(\Sigma)\sqcup \mA_{234}(\Sigma),~ \mA_{2}(\Sigma)=\mA_{1}(\Sigma)\sqcup \mA_{134}(\Sigma),~\mA_{3}(\Sigma)=\mA_{2}(\Sigma)\sqcup \mA_{124}(\Sigma),\\
~ \mA_{4}(\Sigma)=\mA_{3}(\Sigma)\sqcup \mA_{123}(\Sigma),\text{~and~} \mA_5=\mA(4,\Sigma)
\end{multline}
so that we have inclusions $\mA_0(\Sigma)\subset\mA_1(\Sigma)\subset\cdots\subset\mA_5(\Sigma)$.

The spaces $\mJ_{\B c}(\Sigma)$, $\mJ_{\B c}([\Sigma])$, and $\mA(n,[\Sigma])$ admit analogous stratifications that we index similarly. In particular, for any index $I$ we have inclusions
\[
\mJ_I(\Sigma)\subset\mJ_I([\Sigma])\subset\mA_I(\Sigma)\subset\mA_I([\Sigma])\subset \mA_I\subset\mA(n).
\] 
\index{J@$\mJ_I(\Sigma)\subset\mJ_I([\Sigma])\subset\mA_I(\Sigma)\subset\mA_I([\Sigma])\subset \mA_I\subset\mA(n)$ -- various strata of the spaces of almost complex structures}
Note that for the spaces $\mJ_{\B c}(\Sigma)$ and $\mJ_{\B c}([\Sigma])$ some strata may be empty as the symplectic area of the defining classes may not be strictly positive for some choices of capacities $\B c$.

\subsubsection{The equivalence between homotopy orbits}\label{section:equivalence homotopy orbits}
If $G$ is a topological group and $X$ is a $G$-space, we write 
\[
X_{hG} := \OP{E}G\times_G X
\]
for the homotopy orbit (i.e. the Borel construction) of the $G$-action. 
\index{h@$\heartsuit_{hG}$ -- homotopy orbit of $\heartsuit$ with respect to the action of $G$}
The complex automorphism group $\PGL(3)$ acts on $\Conf_{n}(\cp^{2})$ preserving its stratification. The diffeomorphism group $\mD_h(\Sigma)=\Diff_{h}(\widetilde{M}_n,\Sigma)$ acts on $\mA(n,\Sigma)$ while $\mD_h=\Diff_{h}(\widetilde{M}_n)$ acts on $\mA(n,[\Sigma])$. Similarly, the symplectomorphism group $\mG_{\B c}(\Sigma)=\Symp(\widetilde{M_{\B c}}, \Sigma)$ acts on $\mJ_{\B c}(\Sigma)$ while $\mG_{\B c}=\Symp(\widetilde{M_{\B c}})$ acts on $\mJ_{\B c}([\Sigma])$. Each of these actions preserves the corresponding stratification.

\begin{proposition}\label{prop:homotopy orbit symp}
For any admissible capacities $\B c$, the action of $\mG_{\B c}(\Sigma)$ on $\mJ_{\B c}(\Sigma)$ yields a homotopy decomposition of the symplectic stabilizer $\OP{B}\mG_{\B c}(\Sigma)$, that is, $\big(\mJ_{\B c}(\Sigma)\big)_{h\mG_{\B c}(\Sigma)} \simeq \OP{B}\Symp(\widetilde{M_{\B c}}, \Sigma)$.
\end{proposition}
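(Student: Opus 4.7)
The plan is to apply the standard Borel fibration
\[
\mJ_{\B c}(\Sigma)\;\longrightarrow\; \big(\mJ_{\B c}(\Sigma)\big)_{h\mG_{\B c}(\Sigma)}\;\longrightarrow\; B\mG_{\B c}(\Sigma)
\]
and to show that the fiber $\mJ_{\B c}(\Sigma)$ is contractible; the long exact sequence of homotopy groups then immediately yields the claimed weak equivalence. In particular, the statement is essentially a formal consequence of the general fact that the Borel construction of any group action on a contractible space recovers the classifying space.

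First I would check that pushforward defines an action of $\mG_{\B c}(\Sigma)=\Symp(\widetilde{M}_{\B c},\Sigma)$ on $\mJ_{\B c}(\Sigma)$. For $\phi\in\mG_{\B c}(\Sigma)$ and $J\in\mJ_{\B c}(\Sigma)$, the pushforward $\phi_*J$ is again $\widetilde{\omega}_{\B c}$-compatible because $\phi$ is symplectic, and $\Sigma$ remains $\phi_*J$-holomorphic because $\phi(\Sigma)=\Sigma$ setwise. Hence the Borel construction $E\mG_{\B c}(\Sigma)\times_{\mG_{\B c}(\Sigma)}\mJ_{\B c}(\Sigma)$ is well defined and fits into the usual Borel fibration over $B\mG_{\B c}(\Sigma)$ in the Fréchet category.

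The main step is the contractibility of $\mJ_{\B c}(\Sigma)$, which is established by the standard pointwise argument. Over each point $x\in\widetilde{M}_{\B c}$, the set of $\widetilde{\omega}_{\B c,x}$-compatible linear complex structures on $T_x\widetilde{M}_{\B c}$ is a Siegel upper half-space, which is contractible. Over a point $x\in\Sigma$, the additional condition that $J_x$ preserves the symplectic subspace $T_x\Sigma$ cuts out the product of two Siegel half-spaces, one on $T_x\Sigma$ and one on its symplectic orthogonal complement, which is again contractible. Hence $\mJ_{\B c}(\Sigma)$ is the space of sections of a bundle over $\widetilde{M}_{\B c}$ whose fibers are contractible (with a pointwise restriction imposed along $\Sigma$), and it is therefore contractible by the usual obstruction-theoretic argument.

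Putting these two steps together, the long exact sequence of homotopy groups associated to the Borel fibration forces the projection $\big(\mJ_{\B c}(\Sigma)\big)_{h\mG_{\B c}(\Sigma)}\to B\mG_{\B c}(\Sigma)=B\Symp(\widetilde{M}_{\B c},\Sigma)$ to be a weak homotopy equivalence. The only genuinely delicate point is the fiberwise contractibility argument together with the technical care required to guarantee that the Borel construction behaves as expected in this infinite-dimensional Fréchet setting, but both are routine and well documented in the symplectic topology literature (see for example Section~4 of~\cite{LP04}), so no serious obstacle arises.
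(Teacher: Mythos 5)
Your proposal is correct and takes essentially the same route as the paper: the paper's proof is the one-line observation that $\mJ_{\B c}(\Sigma)$ is contractible, so the Borel construction recovers $\OP{B}\mG_{\B c}(\Sigma)$. Your fiberwise Siegel upper half-space argument for the contractibility (with the splitting along $T\Sigma$ and its symplectic complement) is exactly the standard justification the paper implicitly relies on.
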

\begin{proof}
This follows from the fact that the space $\mJ_{\B c}(\Sigma)$ is contractible.
\end{proof}

\begin{proposition}\label{prop:equivalence S and [S]}
Given a set of capacities $\B c$, let $I$ be a multi-index representing a union of non-empty strata in $\mJ_{\B c}(\Sigma)$, and let $\mJ_I(\Sigma)\into\mJ_I([\Sigma])$ be the corresponding inclusion. The canonical map between homotopy orbits
\[
\mJ_{I}(\Sigma)_{h\Symp(\widetilde{M}_{\B c}, \Sigma)}
\to 
\mJ_{I}([\Sigma])_{h\Symp(\widetilde{M}_{\B c})}
\]
is a weak equivalence.
\end{proposition}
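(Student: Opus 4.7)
The strategy is to realize $\mJ_I([\Sigma])$ as a twisted fibre product built from $\mJ_I(\Sigma)$ using the action of $\mG_{\B c}$ on the space of exceptional configurations $\mC_{\B c}(\Sigma)$, and then to invoke a standard identity for homotopy orbits under a subgroup inclusion. Consider the evaluation map
\[
\ev : \mJ_I([\Sigma]) \to \mC_{\B c}(\Sigma)
\]
sending $J$ to the unique (by positivity of intersections) configuration of $J$-holomorphic representatives of the classes $\{E_1,\ldots,E_n\}$. The fibre over $\Sigma$ is exactly $\mJ_I(\Sigma)$. A straightforward adaptation of the parametric Moser/Gromov arguments of \cite[Section 4.1]{LP04} shows that $\ev$ is a $\mG_{\B c}$-equivariant fibration, and it preserves the stratification by the multi-index $I$ since the existence of embedded $J$-holomorphic representatives of the auxiliary negative classes from Proposition~\ref{prop:ZhangPossibleNegativeCurves} transforms equivariantly under the $\mG_{\B c}$-action.

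Next, the transitivity of the $\mG_{\B c}$-action on $\mC_{\B c}(\Sigma)$, established through fibration~\eqref{fibration exceptional configurations}, presents $\mG_{\B c} \to \mC_{\B c}(\Sigma)$, $\phi \mapsto \phi\cdot\Sigma$, as a principal $\mG_{\B c}(\Sigma)$-bundle, so $\mC_{\B c}(\Sigma) \simeq \mG_{\B c}/\mG_{\B c}(\Sigma)$. Since $\ev$ is a $\mG_{\B c}$-equivariant fibration over this homogeneous space with fibre $\mJ_I(\Sigma)$, the associated bundle construction yields a $\mG_{\B c}$-equivariant weak homotopy equivalence
\[
\mG_{\B c} \times_{\mG_{\B c}(\Sigma)} \mJ_I(\Sigma) \xrightarrow{\;\simeq\;} \mJ_I([\Sigma]), \qquad [\phi, J] \mapsto \phi\cdot J.
\]

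Taking homotopy orbits with respect to $\mG_{\B c}$ and using the standard identification $(\mG_{\B c} \times_{\mG_{\B c}(\Sigma)} X)_{h\mG_{\B c}} \simeq X_{h\mG_{\B c}(\Sigma)}$, which follows from the fact that $\OP{E}\mG_{\B c}$ is a contractible free $\mG_{\B c}(\Sigma)$-space and hence also a model for $\OP{E}\mG_{\B c}(\Sigma)$, we obtain a chain of weak equivalences
\[
\mJ_I(\Sigma)_{h\mG_{\B c}(\Sigma)} \;\simeq\; \bigl(\mG_{\B c} \times_{\mG_{\B c}(\Sigma)} \mJ_I(\Sigma)\bigr)_{h\mG_{\B c}} \;\simeq\; \mJ_I([\Sigma])_{h\mG_{\B c}}.
\]
A direct diagram chase confirms that the composite agrees with the canonical map of Borel constructions induced by the subgroup and subspace inclusions, as required.

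The main technical obstacle lies in the verification that $\ev$ is an equivariant fibration whose structure is compatible with the stratification. The parametric transversality and Gromov compactness techniques used in \cite{LP04} to establish the fibration property for $\mJ_{\B c}(\Sigma) \to \mC_{\B c}(\Sigma)$ extend essentially verbatim, but one must check carefully that, on each stratum, the existence of embedded $J$-holomorphic representatives of the negative classes listed in Proposition~\ref{prop:ZhangPossibleNegativeCurves} is an open condition persisting in local trivializations of $\ev$ — this reduces to standard automatic transversality for embedded spheres in dimension four together with continuity of the Gromov compactification.
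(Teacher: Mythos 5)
Your argument is correct and follows essentially the same route as the paper: both proofs rest on the evaluation fibration $\mJ_I(\Sigma)\to\mJ_I([\Sigma])\to\mC_{\B c}([\Sigma])$ (with fibre $\mJ_I(\Sigma)$ over $\Sigma$, by positivity of intersections) together with the transitivity of $\mG_{\B c}=\Symp(\widetilde{M}_{\B c})$ on configurations of exceptional spheres with stabilizer $\mG_{\B c}(\Sigma)=\Symp(\widetilde{M}_{\B c},\Sigma)$. The only difference is in the final formal step: you conclude via the induced-space identification $\mJ_I([\Sigma])\simeq \mG_{\B c}\times_{\mG_{\B c}(\Sigma)}\mJ_I(\Sigma)$ and the standard equivalence $\bigl(\mG_{\B c}\times_{\mG_{\B c}(\Sigma)} X\bigr)_{h\mG_{\B c}}\simeq X_{h\mG_{\B c}(\Sigma)}$, whereas the paper takes homotopy orbits of the fibration directly and identifies the two sides by comparing the resulting Puppe sequence with that of the Borel construction for the $\mG_{\B c}(\Sigma)$-action on $\mJ_I(\Sigma)$ — two standard and interchangeable ways of packaging the same input.
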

\begin{proof}
Let $\mG_{\B c}(\Sigma)=\Symp_{h}(\widetilde{M}_n,\Sigma)$ and $\mG_{\B c}=\Symp_h(\widetilde{M}_{\B c})$. Consider the locally trivial fibration
\[\mJ_{I}(\Sigma)\to\mJ_{I}([\Sigma])\to \mC_{\B c}([\Sigma])\]
that assigns to $J\in\mJ_{I}([\Sigma])$ the unique $J$-holomorphic configuration of disjoint exceptional spheres in classes $E_1,\ldots,E_n$. Since $\mG_{\B c}$ acts transitively on $\mC_{\B c}([\Sigma])$ with stabilizer $\mG_{\B c}(\Sigma)$, taking homotopy orbits gives another fibration
\[\mJ_I(\Sigma)\to \mJ_{I}([\Sigma])_{h\mG_{\B c}}\to\mC_{\B c}([\Sigma])_{h\mG_{\B c}}\simeq \OP{B}\mG_{\B c}(\Sigma)\]
that extends to the longer Puppe sequence
\[\Omega\big( \mJ_{I}([\Sigma])_{h\mG_{\B c}} \big) \to
\mG_{\B c}(\Sigma)\to
\mJ_I(\Sigma)\to 
\mJ_{I}([\Sigma])_{h\mG_{\B c}}\to
\OP{B}\mG_{\B c}(\Sigma).\]
Since the homotopy fiber of the evaluation map $\mG_{\B c}(\Sigma)\to\mJ_I(\Sigma)$ is $\Omega\big(\mJ_I(\Sigma)_{h\mG_{\B c}(\Sigma)}\big)$, we conclude that $\mJ_{I}(\Sigma)_{h\mG_{\B c}(\Sigma)}
\simeq
\mJ_{I}([\Sigma])_{h\mG_{\B c}}$.
\end{proof}

\begin{proposition}\label{prop:equivalence G and D actions}
Given a set of capacities $\B c$, let $I$ be a multi-index representing a union of non-empty strata in $\mJ_{\B c}(\Sigma)$, and let $\mJ_I(\Sigma)\into\mA_I(\Sigma)\into\mA_I([\Sigma])$ be the corresponding inclusions. The canonical maps between homotopy orbits
\[
\mJ_{I}(\Sigma)_{h\Symp(\widetilde{M}_{\B c}, \Sigma)}
\to 
\mA_{I}(\Sigma)_{h\Diff(\widetilde{M}_{n}, \Sigma)}
\quad
\text{~and~}
\quad
\mA_{I}(\Sigma)_{h\Diff(\widetilde{M}_{n}, \Sigma)}
\to 
\mA_{I}([\Sigma])_{h\Diff_h(\widetilde{M}_{n})}
\]
are weak equivalences.
\end{proposition}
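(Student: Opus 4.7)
The plan is to deduce both weak equivalences from a single homotopy-theoretic principle: if a topological group $G$ acts on a space $E$ and there is a $G$-equivariant fibration $F\to E\to G/H$ with $H$ the stabilizer of the chosen basepoint, then $E_{hG}\simeq F_{hH}$. This follows from the identification $E\simeq G\times_H F$ and the associated chain $EG\times_G E\simeq EG\times_G(G\times_H F)\simeq EG\times_H F\simeq F_{hH}$.

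For the first equivalence, I would introduce the auxiliary space of pairs $P_I(\Sigma):=\{(\om',J)\in P_{\B c}(\Sigma)\mid J\in\mA_I(\Sigma)\}$ on which $\mD_h(\Sigma)$ acts diagonally. The projection $P_I(\Sigma)\to\mA_I(\Sigma)$ that forgets the form is a $\mD_h(\Sigma)$-equivariant fibration whose fibers (the spaces of symplectic forms in $\Omega_{\B c}(\Sigma)$ taming a fixed $J$) are contractible, hence it is an equivariant homotopy equivalence and induces
\[(P_I(\Sigma))_{h\mD_h(\Sigma)} \simeq (\mA_I(\Sigma))_{h\mD_h(\Sigma)}.\]
Meanwhile, the other projection $P_I(\Sigma)\to\Omega_{\B c}(\Sigma)$ forgetting $J$ is a $\mD_h(\Sigma)$-equivariant fibration with fiber $\mJ_I(\Sigma)$ over a chosen base form $\om$. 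By the evaluation fibration~\eqref{relativeUFIB}, which rests on a relative Moser argument and Theorem~\ref{thm:GeneralResultsOnRationalSurfaces}(1), the action of $\mD_h(\Sigma)$ on (the relevant component of) $\Omega_{\B c}(\Sigma)$ is transitive with stabilizer $\mG_{\B c}(\Sigma)$, so $\Omega_{\B c}(\Sigma)\simeq\mD_h(\Sigma)/\mG_{\B c}(\Sigma)$. The principle above then yields
\[(P_I(\Sigma))_{h\mD_h(\Sigma)} \simeq (\mJ_I(\Sigma))_{h\mG_{\B c}(\Sigma)},\]
and composing the two equivalences gives the first claim.

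For the second equivalence, I would mimic the argument of Proposition~\ref{prop:equivalence S and [S]}, but with the diffeomorphism group in place of the symplectomorphism group. Consider the evaluation map $\mA_I([\Sigma])\to\mC([\Sigma])$ that sends $J$ to its unique $J$-holomorphic configuration of embedded exceptional spheres in the classes $E_1,\ldots,E_n$ (existence and uniqueness follow from positivity of intersections and the defining condition on $\mA_I([\Sigma])$). This is a locally trivial fibration with fiber $\mA_I(\Sigma)$ over the base configuration $\Sigma$. The group $\mD_h$ acts transitively on $\mC([\Sigma])$ with stabilizer $\mD_h(\Sigma)$ by a smooth isotopy extension argument applied to the disjoint union of the exceptional spheres. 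The same homotopy-orbit principle then delivers
\[(\mA_I([\Sigma]))_{h\mD_h} \simeq (\mA_I(\Sigma))_{h\mD_h(\Sigma)}.\]

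The main technical point to verify carefully is the transitivity at the level of path components, both for the $\mD_h(\Sigma)$-action on $\Omega_{\B c}(\Sigma)$ and for the $\mD_h$-action on $\mC([\Sigma])$; in particular one must check that any two smooth configurations of embedded spheres in the classes $E_1,\ldots,E_n$ are related by an element of $\Diff_h$. Once transitivity and the local triviality of the two evaluation fibrations are established, the homotopy-orbit identifications are formal, and no delicate gluing or quantitative Moser estimates are required beyond what is already invoked in~\eqref{relativeUFIB} and the parametric isotopy extension theorem for embedded $2$-spheres in a $4$-manifold.
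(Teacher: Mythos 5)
Your proposal is correct and follows essentially the same route as the paper: the first equivalence via the space of pairs $P_I(\Sigma)$ with its two $\mD_h(\Sigma)$-equivariant projections (contractible fibers over $\mA_I(\Sigma)$, and the transitive action on $\Omega_{\B c}(\Sigma)$ with stabilizer $\mG_{\B c}(\Sigma)$ coming from the relative Moser argument), and the second via the fibration $\mA_I(\Sigma)\to\mA_I([\Sigma])\to\mC([\Sigma])$ with $\mD_h$ acting transitively with stabilizer $\mD_h(\Sigma)$. The paper phrases the homotopy-orbit identification through Puppe sequences rather than the induced-space isomorphism $E\simeq G\times_H F$, but this is only a cosmetic difference.
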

\begin{proof}
Let $\mD_h(\Sigma)=\Diff_{h}(\widetilde{M}_n,\Sigma)$ and $\mG_{\B c}(\Sigma)=\Symp(\widetilde{M}_{\B c}, \Sigma)$. Consider the space of pairs
\[P_I(\Sigma)=\{(\om,J)\in P_{\B c}(\Sigma)~|~\om\in\Omega_{\B c}(\Sigma),~J\in \mJ_I(\Sigma)\}.\]
The projection onto the first factor is a $\mD_h(\Sigma)$-equivariant fibration 
\[\mJ_I(\Sigma)\to P_I(\Sigma)\to\Omega_{\B c}(\Sigma).\]
Since $\mD_h(\Sigma)$ acts transitively on $\Omega_{\B c}(\Sigma)$, taking homotopy orbits gives another fibration
\[\mJ_I(\Sigma)\to P_I(\Sigma)_{h\mD_h(\Sigma)}\to\Omega_{\B c}(\Sigma)_{h\mD_h(\Sigma)}\simeq \BSymp(\widetilde{M}_{\B c}, \Sigma)\]
that extends to the longer Puppe sequence
\[\Omega\big(P_I(\Sigma)_{h\mD_h(\Sigma)}\big) \to\Symp(\widetilde{M}_{\B c}, \Sigma)\to\mJ_I(\Sigma)\to P_I(\Sigma)_{h\mD_h(\Sigma)}\to\BSymp(\widetilde{M}_{\B c}, \Sigma).\]
Since the homotopy fiber of the evaluation map $\Symp(\widetilde{M}_{\B c}, \Sigma)\to\mJ_I(\Sigma)$ is $\Omega\big(\mJ_I(\Sigma)_{h\mG_{\B c}}\big)$, we conclude that $\mJ_I(\Sigma)_{h\mG_{\B c}}\simeq P_I(\Sigma)_{h\mD_h(\Sigma)}$. Finally, since the projection $P_I(\Sigma)\to\mA_I(\Sigma)$ is a $\mD_h(\Sigma)$-equivariant fibration with convex fibers, we also have the equivalence $P_I(\Sigma)_{h\mD_h(\Sigma)}\simeq \mA_I(\Sigma)_{h\mD_h(\Sigma)}$. This establishes the first homotopy equivalence.

For the second equivalence, let $\mC([\Sigma])$
\index{C@$\mC([\Sigma])$ -- space of all configurations of $n$ disjoint, embedded, symplectic spheres representing the exceptional  classes $[\Sigma]$}
be the space of all configurations of $n$ disjoint, embedded, symplectic spheres representing the exceptional  classes $[\Sigma]$. There is a fibration
\[
\mA_I(\Sigma)\to\mA_I([\Sigma])\to\mC([\Sigma])
\]
The group $\mD_h$ acts transitively on $\mC([\Sigma])$ with stabilizer $\mD_h(\Sigma)$. Taking the homotopy orbits, we get another fibration
\[
\mA_I(\Sigma)\to \mA_I([\Sigma])_{h\mD_h}\to \OP{B}\mD_h(\Sigma)
\]
that extends to the longer Puppe sequence
\[\Omega\big(\mA_I([\Sigma])_{h\mD_h}\big) \to\mD_h(\Sigma)\to\mA_I(\Sigma)\to \mA_I([\Sigma])_{h\mD_h}\to \OP{B}\mD_h(\Sigma).\]
We conclude that $\mA_I([\Sigma])_{h\mD_h}\simeq \mA_I(\Sigma)_{h\mD(\Sigma)}$.
\end{proof}

\subsection{The main geometric result and a proof of Theorems \ref{3balls} and \ref{homotopy type 4balls}}\label{SS:main-geometric}
As explained in the Introduction, for $n\leq 4$, the equivalences between spaces of configurations and spaces of embeddings, and in particular Theorem~\ref{3balls} and Theorem~\ref{homotopy type 4balls}, are consequences of Theorem~\ref{claim introduction} that we now reformulate in a more precise way.
\begin{theorem}\label{claim}
The action of $\PGL(3)$ on the subspace $F_i\subset\Conf_n(\cp^2)$ and the action of $\mD_h$ on the subspace $\mA_i([\Sigma])\subset\mA(n,[\Sigma])$ have homotopy equivalent homotopy orbits. 
\end{theorem}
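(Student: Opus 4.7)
The strategy I would follow is to reduce the diffeomorphism-equivariant analysis of $\mA_i([\Sigma])$ to the $\PGL(3)$-equivariant analysis of $F_i$ by passing through integrable complex structures on $\widetilde{M}_n$ and exploiting Gromov's equivalence $\Symp(\cp^2,\om_{FS})\simeq\PGL(3)$. First I would apply Propositions \ref{prop:equivalence S and [S]} and \ref{prop:equivalence G and D actions} to replace $(\mA_i([\Sigma]))_{h\mD_h}$ by $(\mJ_i(\Sigma))_{h\mG_{\B c}(\Sigma)}$, and then invoke the promised Proposition \ref{prop:equivalence with integrable} to further restrict, without changing homotopy type, to the integrable subspace $\mJ^{int}_i(\Sigma)\subset\mJ_i(\Sigma)$. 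This reduces the problem to comparing integrable complex structures on the blow-up with ordered configurations of points on $\cp^2$.

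Next I would construct an equivariant, stratification-preserving blow-down map $b:\mJ^{int}_i(\Sigma)\to\mJ^{int}(\cp^2)\times F_i$. Any $\widetilde{J}\in\mJ^{int}_i(\Sigma)$ comes with a distinguished $\widetilde{J}$-holomorphic exceptional configuration and admits a canonical blow-down producing an integrable complex structure $J_0$ on $\cp^2$ together with the ordered tuple $\B p=(p_1,\ldots,p_n)$ of images of the exceptional divisors. The key observation is that the collinearity conditions defining $F_i$ match the stratification conditions of $\mA_i$ exactly: a $\widetilde{J}$-holomorphic sphere in class $L-E_{i_1}-\cdots-E_{i_k}$ exists if and only if $p_{i_1},\ldots,p_{i_k}$ lie on a single $J_0$-line of $\cp^2$, which is a projective line because $J_0$ is integrable and K\"ahler. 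A careful check then shows that $b$ is a homeomorphism with inverse given by the blow-up construction, and that it intertwines the natural group actions on either side.

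Finally, since $\mJ^{int}(\cp^2)$ is contractible and $\PGL(3)\simeq\Symp(\cp^2)$ acts on it transitively with (homotopically) trivial stabilizer, the resulting Borel construction collapses onto $(F_i)_{h\PGL(3)}$ once one has tracked the successive change of groups through the tower of equivalences $\Symp(\widetilde{M}_{\B c},\Sigma)\simeq\Symp(\cp^2,\iota(\mB_{\B c}))$ from \eqref{homotopy equivalence ball-fiber}, combined with stabilization in the capacities (Lemma \ref{lemma:Thickening}). The main obstacle I anticipate is precisely this last step: carefully tracking the zigzag of acting groups $\mD_h\leftrightarrow\mG_{\B c}(\Sigma)\leftrightarrow\Symp(\cp^2)\leftrightarrow\PGL(3)$, and verifying that at each stage the stratification by negative spheres on the symplectic side, and by collinearity on the complex side, is preserved. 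I would organize this as a zigzag of fibrations over the common base $\BG\Symp(\cp^2)\simeq B\PGL(3)$ and compare the induced filtrations of the fibers stratum by stratum, which should complete the identification of the two homotopy orbits.
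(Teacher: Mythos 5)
Your opening reductions coincide with the paper's: Propositions \ref{prop:equivalence S and [S]} and \ref{prop:equivalence G and D actions} to pass from $(\mA_i([\Sigma]))_{h\mD_h}$ to $(\mJ_i(\Sigma))_{h\mG_{\B c}(\Sigma)}$, and Proposition \ref{prop:equivalence with integrable} to restrict to integrable structures. The gap lies in your central step. The blow-down of an integrable $\widetilde J\in\mJ^{int}_i([\Sigma])$ is a complex surface biholomorphic to $\cp^2$, but the biholomorphism --- and hence the ordered configuration in $F_i$ --- is determined only up to the action of $\PGL(3)$, and conversely realizing the complex blow-up at a configuration as a point of $\mJ^{int}_i(\Sigma)$ on the \emph{fixed} smooth pair $(\widetilde{M}_n,\Sigma)$ requires a further non-canonical choice of diffeomorphism. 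So the asserted equivariant homeomorphism $b\colon\mJ^{int}_i(\Sigma)\to\mJ^{int}(\cp^2)\times F_i$ is not a well-defined map, and no ``careful check'' will produce it: the correspondence between the two sides exists only at the level of (homotopy) quotients, which is exactly what the theorem asserts and what must be proved by other means. Relatedly, $\PGL(3)$ does not act transitively on $\mJ^{int}(\cp^2)$ (it fixes $J_{FS}$), and Lemma \ref{lemma:Thickening} plays no role in this argument.

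What the paper does instead is compare the two Borel constructions stratum by stratum. Both $(F_i)_{h\PGL(3)}$ and $(\mJ_{\B c}^{int}([\Sigma]))_{h\mG_{\B c}}$ are assembled by iterated homotopy pushouts (diagrams \eqref{eq:pushout diagram F-1}--\eqref{eq:pushout diagram F-2} on the configuration side, \eqref{eq:pushout diagram A-1}--\eqref{eq:pushout diagram A-2} via Proposition \ref{prop:existence pushouts} on the almost-complex side); the individual strata are identified with orbits whose stabilizers agree, $\Iso_h(\widetilde{\om}_{\B c},J_S)\simeq\Stab(\B p)$ (Proposition \ref{prop:Symp action on J}); and --- the essential point your sketch omits --- the \emph{gluing data} of the pushouts are matched by identifying the isotropy representation on the normal fiber of a stratum of complex structures, namely $H^{0,1}_{J}(T\widetilde{M}_n)$ from Kodaira deformation theory (Proposition \ref{prop:properties integrable strata}), with the isotropy representation on the normal fiber of the corresponding stratum of $\Conf_n(\cp^2)$, using rigidity of $J_{FS}$. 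Finally, the stratum $F_{1234}$ requires separate treatment: neither it nor $\mJ_{1234}$ is a single orbit --- both fiber over $\mM_{0,4}$, whose fundamental group $\B F_2$ accounts for the extra factor in the stabilizer (Proposition \ref{prop:structure of the stratum J_1234}) --- so any orbit-by-orbit comparison that ignores the cross-ratio would give the wrong answer on the deepest stratum.
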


The proof of Theorem~\ref{claim} and of its corollaries are broken into several steps. The analysis of the $\PGL(3)$ action on $\Conf(\B c,\cp^2)$ is done in Section~\ref{section: homotopy orbits PGL}, while the $\mG_{\B c}$ action on $\mJ_{\B c}([\Sigma])$ is described in Section~\ref{section:Actions on ACS}. This is used in Section~\ref{section: homotopy orbits Diff} to describe the homotopy orbit of $\mD_h$ acting on $\mA(n,[\Sigma])$. These results are put together in Section~\ref{section:proof of main thm} where the proof of Theorem~\ref{claim} is given.

\begin{proof}[Proof of Theorem \ref{3balls} and Theorem \ref{homotopy type 4balls}]
It follows from \eqref{second fibration unparametrized} there is a homotopy fibration
\[
\IEmb_n(\B c,\cp^2)\to \OP{B}\Symp(\widetilde{M}_{\B c},\Sigma) \to \OP{B}\Symp(\cp^2).
\]
Moreover, we have the following chain of homotopy equivalences for the set of
capacities $\B c$ belonging to a single chamber $C_i$:
\begin{align*}
\OP{B}\Symp(\widetilde{M}_{\B c},\Sigma) 
&\simeq (\mJ_{\B c}(\Sigma))_{h\mG_{\B c}(\Sigma)} &\text{by Proposition \ref{prop:homotopy orbit symp}}\\
&\simeq (\mJ_{\B c}([\Sigma]))_{h\mG_{\B c}} &\text{by Proposition \ref{prop:equivalence S and [S]}}\\
&\simeq (\mA_{\B c}([\Sigma]))_{h\mD_h} &\text{by Proposition \ref{prop:equivalence G and D actions}}\\
&\simeq (\mA_{i}([\Sigma]))_{h\mD_h} &\text{by Theorem \ref{thm:StabilityOfEmbeddings}}\\
&\simeq (F_i)_{h\PGL} &\text{by Theorem \ref{claim}}
\end{align*}
as well as the equivalence $\OP{B}\Symp(\cp^2)\simeq \OP{B}\PGL$. This implies that
$\IEmb_n(\B c,\cp^2)$ is homotopy equivalent to the fiber of the fibration
\[
F_i \to (F_i)_{h\PGL} \to \OP{B}\PGL,
\]
where $F_i$ are defined on page \pageref{Fi} and in the statement of Theorem \ref{homotopy type 4balls}.
\end{proof}

\section{Homotopy orbits of the \texorpdfstring{$\PGL(3)$}{PGL(3)} action on \texorpdfstring{$\Conf_n(\cp^2)$}{Conf(n,CP2)}}\label{section: homotopy orbits PGL}

\subsection{The cases \texorpdfstring{$n=1$}{n=1} and \texorpdfstring{$n=2$}{n=2}}
The group $\PGL(3)$ acts transitively on $\Conf_n(\cp^2)$ for $n=1,2$ so that we have fibrations
\[\U(2)\simeq \Stab(p_1)\to\PGL(3)\to\cp^2\quad\text{and}\quad\B{T}^2\simeq\Stab\big((p_1,p_2)\big)\to\PGL(3)\to\Conf_2(\cp^2)\]
and homotopy orbits
\[\big(\Conf_1(\cp^2)\big)_{h\PGL(3)}\simeq \OP{B}\U(2)\quad\text{and}\quad\big(\Conf_2(\cp^2)\big)_{h\PGL(3)}\simeq \OP{B}\B{T}^2.\]

\subsection{The case \texorpdfstring{$n=3$}{n=3}}
The stratum $F_{0}$ is open and dense in $\Conf_{3}(\cp^{2})$, while $F_{123}$ has codimension $2$. The group $\PGL(3)$ acts transitively on each stratum with stabilisers homotopy equivalent to K\"ahler actions $\B T^2$ and $\B S^1$, respectively, so that $F_0 \simeq \PGL(3)/\B T^2 $ and $F_{123}\simeq \PGL(3)/\B S^1$. The isotropy representation of $\B S^{1}$ on the normal bundle of $F_{123}$ is the standard representation of weight $1$. It follows that there is an invariant neighborhood $\mN_{123}$
\index{N@$\mN_I$ -- invariant neighborhood of $F_I$}
of $F_{123}$ of the form $\PGL(3)\times_{\B S^{1}}\C$. Applying the Borel construction to the orbit diagram, we obtain
\[
\begin{tikzcd}
\mN_{123}\setminus F_{123} \arrow{r}{} \arrow{d}{} & \mN_{123} \arrow{d}{} \\
F_{0} \arrow{r}{} & \Conf_{3}(\cp^{2})
\end{tikzcd}
\implies
\begin{tikzcd}
* \arrow{r}{} \arrow{d}{} & \OP{B}\B S^{1}_{123} \arrow{d}{} \\
 \OP{B} \B T^2 \arrow{r}{} & \OP{B}\B S^{1} \vee B \B T^2 \simeq\big(\Conf_3(\cp^2)\big)_{h\PGL(3)}.
\end{tikzcd}
\]

\subsection{The case \texorpdfstring{$n=4$}{n=4}}
The stratum $F_{0}$ is open and dense in $\Conf_{4}(\cp^{2})$, while $F_{ijk}$ and $F_{1234}$ are, respectively, of codimensions $2$ and $4$. The action of $\PGL(3)$ on $F_{0}$ is transitive with trivial stabilizers, so that $F_{0}\simeq \PGL(3)$. Similarly,  $\PGL(3)$ acts transitively on $F_{ijk}$ with stabilizer $\B S^{1}_{ijk}$, so that $F_{ijk}\simeq \PGL(3)/\B S^{1}_{ijk}$. The isotropy representation of $\B S^{1}_{ijk}$ on the normal bundle of $F_{ijk}$ is the standard representation (of weight $1$) and there is an invariant neighborhood $\mN_{ijk}$ of $F_{ijk}$ in $F\setminus F_{1234}$ of the form $\PGL(3)\times_{\B S^{1}_{ijk}}\C$. Recall that we defined $F_{1}=F_{0}\sqcup F_{234}, \ldots, F_5=\Conf_4(\cp^2)$. Applying inductively the Borel construction to the associated diagrams, we obtain
\begin{equation}\label{eq:pushout diagram F-1}
\begin{tikzcd}
\mN_{ijk}\setminus F_{ijk} \arrow{r}{} \arrow{d}{} & \mN_{ikj} \arrow{d}{} \\
F_{r-1} \arrow{r}{} & F_r
\end{tikzcd}
\quad\implies\quad
\begin{tikzcd}
* \arrow{r}{} \arrow{d}{} & \OP{B}\B S^{1}_{ijk} \arrow{d}{} \\
P_{r-1} \arrow{r}{} & P_r := \big(F_{r}\big)_{h\PGL(3)}
\end{tikzcd}
\end{equation}
so that $\big(F_{r}\big)_{h\PGL(3)}\simeq \OP{B}\B S^1 \vee\cdots\vee \OP{B}\B S^1$ $r$ times, $1\leq r\leq 4$. 

We now consider the pushout diagrams obtained by adding the last stratum $F_{1234}$
\[
\begin{tikzcd}[cramped, column sep=tiny]
\mN_{1234}\setminus F_{1234} \arrow{r}{} \arrow{d}{} & \mN_{1234} \arrow{d}{} \\
F_{4} \arrow{r}{} & \Conf_4(\cp^2)
\end{tikzcd}
~\implies~
\begin{tikzcd}[column sep=tiny]
\left(\mN_{1234}\setminus F_{1234}\right)_{h\PGL(3)} \arrow{r}{} \arrow{d}{} & \left(\mN_{1234}\right)_{h\PGL(3)} \arrow{d}{} \\
P_{4} \arrow{r}{} & P_5:=\big(\Conf_4(\cp^2)\big)_{h\PGL(3)}
\end{tikzcd}
\]
Recall that the cross-ratio
\[\chi(z_{1},z_{2},z_{3},z_{4}):=(z_{1},z_{2}:z_{3},z_{4})=\frac{(z_{3}-z_{1})(z_{4}-z_{2})}{(z_{3}-z_{2})(z_{4}-z_{1})}\]
is a complete invariant of the $\PSL(2,\C)$ action on the configuration space $\Conf_{4}(\cp^1)$ of $4$ distinct ordered points on the complex line. It follows that the moduli space is
\[\mM_{0,4}:=\Conf_4(\cp^1)/\PSL(2)=\C\setminus\{0,1\}.\] 
The action of $\PGL(3)$ on $F_{1234}$ is not transitive as is preserves the cross-ratio of any $4$ distinct points on the same line. However, $\PGL(3)$ does act transitively on the level sets of the cross-ratio with stabilizers $H=\C^{2}\ltimes\C^{*}\simeq \B S^{1}$. Consequently, there is a fibration
\[\PGL(3)/H\to F_{1234}\xrightarrow{\chi} \mM_{0,4}\]
in which the projection $\chi$ is $\PGL(3)$ equivariant (with respect to the trivial action of $\PGL(3)$ on $\mM_{0,4}$). Choosing any parametrized line $u:\cp^{1}\to\cp^{2}$ yields a global equivariant section 
\[s_{u}(w)=(u(w),u(0),u(1),u(\infty)).\]
Let $\mM(L)$ be the moduli space of parametrized lines in $\cp^{2}$. The group $\PSL(3)$ acts transitively on $\mM(L)$ with stabilizer $H_L=\C^2\times \C^*\simeq\B S^1$. There is an equivariant diffeomorphism
\[\phi:F_{1234}\to\mM(L)\times \mM_{0,4}\]
given by $\phi(\B p)=(u_{\B p},\chi(\B p))$ where $u_{\B p}$ is the only curve sending $(0,1,\infty)$ to $(p_{2},p_{3},p_{4})$, and whose inverse is $\phi^{-1}(u,w)=s_{u}(w)$. Applying the Borel construction to $\mM(L)\times \mM_{0,4}$
gives 
\[(F_{1234})_{h\PGL(3)}\simeq \OP{B}H\times \mM_{0,4}\simeq \OP{B}\B S^{1}\times \mM_{0,4}.\]
Similarly, the isotropy representation on the normal bundle of any orbit in $F_{1234}$ splits as the sum of the trivial representation and two copies of the standard representation of weight $1$. Applying the Borel construction to 
\[S^{3}\to S\mN_{1234}\to F_{1234}\]
yields a fibration
\[S^{3}\to \left(\mN_{1234}\setminus F_{1234}\right)_{h\PGL(3)}\to (F_{1234})_{h\PGL(3)}\simeq \OP{B} \B S^{1}\times \mM_{0,4}\]
which is homotopy equivalent to the product bundle of the Hopf fibration with $\mM_{0,4}$,
\[S^{3}\times \{*\}\to S^{2}\times \mM_{0,4}\to \OP{B} \B S^{1}\times\mM_{0,4}.\]
We thus obtain a pushout diagram 
\begin{equation}\label{eq:pushout diagram F-2}
\begin{tikzcd}
S^{2}\times \mM_{0,4} \arrow{r}{} \arrow{d}{} & \OP{B} \B S^{1}\times\mM_{0,4} \arrow{d}{} \\
P_{4}\simeq \OP{B} \B S^{1}\vee \OP{B} \B S^{1}\vee \OP{B} \B S^{1}\vee \OP{B} \B S^{1} \arrow{r}{} & P_5:=\big(\Conf_4(\cp^2)\big)_{h\PGL(3)}.
\end{tikzcd}
\end{equation}

\section{Actions of symplectomorphism groups on almost complex structures}\label{section:Actions on ACS}

In this section, we study the action of symplectomorphisms on spaces of almost complex structures. In the cases $n\leq3$, these statements can also be deduced from scattered results found in~\cite{AbGrKi,AbMcD00,AP13,P08i}. This section provides a uniform treatment for all $n\leq 4$.

\subsection{Almost complex structures and configurations}\label{section:ACS and configurations}
We first recall some notation: $n\in\{1,2,3,4\}$, $\widetilde{M}_n$ is the smooth manifold underlying the complex blow-up of $\cp^2$ at $n$ distinct points, $\Sigma=\Sigma_1\sqcup\cdots\sqcup \Sigma_n\subset \widetilde{M}_n$ is the configuration of exceptional curves, $\B c=(c_1,\ldots,c_n)$ is an admissible set of capacities with $0<c_n\leq\cdots\leq c_1<1$,  $\widetilde{M}_{\B c}$ is the $n$-fold symplectic blow-up with capacities $\B c$, and   $\mJ(\widetilde{\omega}_{\B c})$ is the space of compatible almost complex structures on $\widetilde{M}_{\B c}$. 

Given a configuration of embedded symplectic spheres $S:=S_1\cup\cdots\cup S_m$ in $\widetilde{M}_{\B c}$, we set
\[
\mJ_{\B c}(S)=\{J\in\mJ(\widetilde{\omega}_{\B c})~|~\text{the configuration $S$ is $J$-holomorphic} \}.
\]
\index{J@$\mJ_{\B c}(S)$ -- space of compatible almost complex structures $J$ such that the configuration $S$ is $J$-holomorphic}
Similarly, given a set of spherical homology classes $A= \{A_1,\cdots,A_k\} \subset  H_2(\widetilde{M}_n;\Z)$, we define
\begin{multline*}
\mJ_{\B c}([A])=\{J\in\mJ(\widetilde{\omega}_{\B c})~|~\text{there exists a $J$-holomorphic configuration~} C:=C_1\cup\cdots\cup C_m \\ \text{~of embedded spheres in~} \widetilde{M}_{\B c}  \mbox{ such that } [C_i]= A_i\}.
\end{multline*}
\index{J@$\mJ_{\B c}([A])$ -- space of compatible almost complex structures $J$ such that there exists a $J$-holomorphic configuration  of embedded spheres with homological configuration $A$}
In particular, for a configuration $S$, writing $[S]:=\{[S_1],\cdots, [S_m]\}$ for the corresponding set of homology classes, we have
\begin{multline*}
\mJ_{\B c}([S])=\{J\in\mJ(\widetilde{\omega}_{\B c})~|~\text{there exists a $J$-holomorphic configuration~} C:=C_1\cup\cdots\cup C_m \\ \text{~of embedded spheres in~} \widetilde{M}_{\B c}  \mbox{ such that } [C_i]= [S_i]\}.
\end{multline*}
\index{J@$\mJ_{\B c}([S])$ -- space of compatible almost complex structures $J$ such that there exists a $J$-holomorphic configuration  of embedded spheres with homological configuration $[S]$}
Given a configuration of spheres $S\subset\widetilde{M}_{\B c}$ and an homological configuration $A\subset H_2(\widetilde{M}_n;\Z)$ as above, we define the space of almost complex structures realizing $A$ relative to $S$ as
\[\mJ_{\B c}([A],S) := \mJ_{\B c}([A])\cap\mJ_{\B c}(S).\]
\index{J@$\mJ_{\B c}([A],S) := \mJ_{\B c}([A])\cap\mJ_{\B c}(S)$}
By definition, the strata introduced in Section~\ref{section: stratifications Conf A J}, can be written as
\[
\mJ_{0}(\Sigma)=\mJ_{\B c}([\mE_n],\Sigma), \qquad 
\mJ_{ijk}(\Sigma)=\mJ_{\B c}([L_{ijk}],\Sigma), \qquad 
\mJ_{1234}(\Sigma)=\mJ_{\B c}([L_{1234}],\Sigma),
\]
where $\mE_n\subset H_2(\widetilde{M}_n;\Z)$ is the set of all symplectic exceptional classes.

The above spaces of almost complex structures are closely related to spaces of symplectic spherical configurations. Let $\mC_{\B c}([S])$
\index{C@$\mC_{\B c}([S])$ -- space of  configurations of embedded symplectic spheres in classes $[S]$ that are holomorphic for some $J\in \mJ_{\B c}([S])$}
be the space of  configurations of embedded symplectic spheres in classes $[S]$ that are simultaneously holomorphic for some $J\in \mJ_{\B c}([S])$, and let $\mC_{\B c}^\circ([S])$ 
\index{C@$\mC_{\B c}^\circ([S])$ -- subspace of $\mC_{\B c}([S])$ whose spheres intersect $\widetilde{\om}_{\B c}$-orthogonaly}
be the subspace of configurations whose spheres intersect $\widetilde{\om}_{\B c}$-orthogonaly. Similarly, we write $\mC_{\B c}([A],S)$ 
\index{C@$\mC_{\B c}([A],S)$ -- space of configurations of homological type $A\cup[S]$ that are holomorphic for some $J\in \mJ_{\B c}([A],S)$}
for the space of configurations of homological type $A\cup[S]$ that are holomorphic for some $J\in \mJ_{\B c}([A],S)$, and $\mC_{\B c}^\circ([A],S)$
\index{C@$\mC_{\B c}^\circ([A],S)$ -- subspace of $\mC_{\B c}([A],S)$ whose components intersect $\widetilde{\om}_{\B c}$-orthogonaly}
for the subspace of configurations whose components intersect $\widetilde{\om}_{\B c}$-orthogonaly.

\begin{proposition}\label{prop:Stratification mJ} 
The space $\mJ_{\B c}([\Sigma])$ is an open submanifold of $\mJ(\widetilde{\om}_{\B c})$. The stratum $\mJ_0([\Sigma])$ is open and dense in $\mJ_{\B c}([\Sigma])$. If non-empty, the stratum $\mJ_{ijk}([\Sigma])$ is a codimension $2$ submanifold. In the case $n=4$, the stratum $\mJ_{1234}([\Sigma])$ is a submanifold of codimension $4$ whenever nonempty.
\end{proposition}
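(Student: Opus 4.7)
The plan is to establish the four assertions in turn: openness of $\mJ_{\B c}([\Sigma])$ inside $\mJ(\widetilde{\om}_{\B c})$; the codimension of $\mJ_{ijk}([\Sigma])$; the codimension of $\mJ_{1234}([\Sigma])$; and finally the openness and density of the top stratum $\mJ_{0}([\Sigma])$.

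First I would prove that $\mJ_{\B c}([\Sigma])$ is open. Since positivity of intersections forces embedded $J$-holomorphic representatives of distinct exceptional classes $E_i,E_j$ (with $E_i\cdot E_j = 0$) to be disjoint, it is enough to show that for each $i$ the set of $J$ admitting an embedded representative of $E_i$ is open. Because $c_1(E_i) = 1 \geq -1$, the Hofer--Lizan--Sikorav automatic regularity criterion guarantees that the linearised Cauchy--Riemann operator at any embedded representative is surjective, so the persistence of a representative under small perturbations of $J$ follows from the implicit function theorem.

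For the codimension statements, I would study the universal moduli space of embedded simple $J$-holomorphic spheres in the classes $A \in \{L_{ijk}, L_{1234}\}$; these satisfy $c_1(L_{ijk}) = 0$ and $c_1(L_{1234}) = -1$, so automatic regularity again applies. The universal moduli (after quotienting by $\PSL(2,\C)$-reparametrisations) is therefore a smooth Banach manifold, and its projection to $\mJ_{\B c}([\Sigma])$ is a Fredholm map of index $2c_1(A) - 2$. Because $A \cdot A < 0$, positivity of intersections forces uniqueness of the embedded representative, making the projection injective on fibres; combined with regularity this identifies the image locally with a smooth submanifold of codimension $2 - 2c_1(A)$, giving codimension $2$ for $\mJ_{ijk}([\Sigma])$ and codimension $4$ for $\mJ_{1234}([\Sigma])$.

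Finally, density of $\mJ_0([\Sigma])$ will follow from the fact that its complement is a finite union of submanifolds of positive codimension and hence nowhere dense. Openness of $\mJ_0([\Sigma])$ requires the complement to be closed. Given $J_k\to J$ with $J_k\in\mJ_A([\Sigma])$ for some fixed class $A$ (one may extract such a subsequence as the list of relevant classes is finite, cf.\ Corollary~\ref{cor:NegativeCurves}), Gromov compactness yields subconvergence of the embedded $J_k$-curves $C_k$ to a stable $J$-holomorphic map representing $A$. Using positivity of intersections against the components of $\Sigma$ together with the stability-chamber analysis of Section~\ref{section: stability chambers}, one shows that the resulting bubble tree either collapses to an embedded sphere representing $A$ or contains an irreducible component representing another class from $\{L_{ijk}, L_{1234}\}$, so that $J$ remains in the complement of $\mJ_0([\Sigma])$. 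The main obstacle I expect is precisely this last step: in each stability chamber one must explicitly enumerate the possible bubbling patterns (for instance, $L_{123}$ can split as $L_{1234}+E_4$ whenever $L_{1234}$ has positive area), and use positivity of intersections with the exceptional divisor $\Sigma$ to rule out degenerations that would land the limit $J$ back in $\mJ_0([\Sigma])$.
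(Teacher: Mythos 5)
Your overall architecture (openness of $\mJ_{\B c}([\Sigma])$ via regularity of the exceptional classes, codimension of the lower strata via the universal moduli space and the Fredholm index of the projection $\pi_A$, then density/openness of $\mJ_0([\Sigma])$ by Gromov compactness) is the standard one, and it is essentially what the reference the paper leans on ([AP13, Proposition~B1]) carries out; the paper itself disposes of the proposition in three lines by quoting that result together with ``general theory''. However, there is a genuine error at the heart of your codimension argument. You assert that because $c_1(L_{ijk})=0$ and $c_1(L_{1234})=-1$, ``automatic regularity again applies.'' It does not: the Hofer--Lizan--Sikorav surjectivity criterion requires $c_1(A)\geq 1$ (equivalently $A\cdot A\geq -1$ for an embedded sphere), and it is precisely its \emph{failure} for $A\cdot A\leq -2$ that makes these strata appear in positive codimension --- if $D_u$ were surjective for fixed $J$, the negative index $2(c_1(A)-1)$ would force the moduli space to be empty for every $J$, not to sweep out a codimension-$2$ or $-4$ stratum. (The universal moduli space is a Banach manifold for a different reason, namely surjectivity of the \emph{universal} linearization at somewhere-injective curves, which needs no hypothesis on $c_1$.)

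The second, related gap is that set-theoretic injectivity of $\pi_A$ (which you correctly deduce from positivity of intersections) is not enough to conclude that its image is a submanifold of codimension $-\operatorname{ind}(\pi_A)$: an injective Fredholm map of negative index need not have locally closed image, let alone a submanifold. What is actually needed is that $d\pi_A$ is injective with closed, complemented image of the stated codimension, and this follows from the \emph{injectivity} half of the Hofer--Lizan--Sikorav alternative: the normal Cauchy--Riemann operator $D^N_u$ on the degree-$(A\cdot A)$ normal bundle is injective whenever $A\cdot A\leq -1$, with cokernel of dimension $-2(A\cdot A+1)+\dots$, i.e.\ $2$ for $A\cdot A=-2$ and $4$ for $A\cdot A=-3$. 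Once you replace ``automatic regularity'' by this automatic injectivity of $D^N_u$, the rest of your argument (including the Gromov-compactness discussion of closedness of the complement of $\mJ_0([\Sigma])$, where you correctly flag the possible degenerations such as $L_{123}=L_{1234}+E_4$) goes through and recovers the cited Proposition~B1 of~\cite{AP13}.
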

\begin{proof} 

By the general theory of $J$-holomorphic curves in $4$-manifolds, the space $\mJ_{\B c}([\Sigma])$ is open and dense in $\mJ(\widetilde{\om}_{\B c})$. By ~\cite[Proposition B1]{AP13}, the subspace $\mJ_{\B c}([\Sigma]\cup [L_{ijk}])$ is a submanifold of $\mJ(\widetilde{\om}_{\B c})$ of codimension $2$ contained in the open set $\mJ_{\B c}([\Sigma])$. Likewise, in the case $n=4$, the stratum $\mJ_{\B c}([\Sigma]\cup [L_{1234}])$ is a codimension $4$ submanifold of $\mJ_{\B c}([\Sigma])$.
\end{proof}

\begin{corollary}\label{cor:the open stratum is connected}
The stratum $\mJ_0([\Sigma])$ is connected. \qed
\end{corollary}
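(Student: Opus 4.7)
The plan is to deduce connectedness of $\mJ_0([\Sigma])$ from Proposition~\ref{prop:Stratification mJ} by a standard transversality argument, using the fact that the complementary strata have real codimension at least $2$.

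First, recall that $\mJ(\widetilde{\om}_{\B c})$ is contractible, being the space of sections of a bundle with contractible fibers (the Siegel upper half-space). By Proposition~\ref{prop:Stratification mJ}, $\mJ_{\B c}([\Sigma])$ is open and dense in $\mJ(\widetilde{\om}_{\B c})$, and in particular it is connected (an open dense subset of a connected manifold is connected, since one can perturb any path so that it meets $\mJ_{\B c}([\Sigma])$ generically, and then adjust its endpoints).

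Next, observe that
\[
\mJ_0([\Sigma]) \;=\; \mJ_{\B c}([\Sigma]) \setminus \Bigl(\bigcup_{\{i,j,k\}}\mJ_{ijk}([\Sigma]) \;\cup\; \mJ_{1234}([\Sigma])\Bigr),
\]
where the union is over the relevant triples (and $\mJ_{1234}([\Sigma])$ appears only for $n=4$); each of these strata is either empty or a submanifold of real codimension $\geq 2$ in the connected manifold $\mJ_{\B c}([\Sigma])$, again by Proposition~\ref{prop:Stratification mJ}.

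Finally, given any two points $J_0, J_1 \in \mJ_0([\Sigma])$, pick a smooth path $\gamma:[0,1]\to \mJ_{\B c}([\Sigma])$ joining them. Since each excluded stratum has codimension at least~$2$, a standard transversality argument in the Banach/Fréchet setting (as used repeatedly in the paper and in~\cite{AP13}) allows us to perturb $\gamma$ rel endpoints to a path transverse to every excluded stratum. By dimension count, a transverse path of dimension $1$ cannot meet a codimension $\geq 2$ submanifold, so the perturbed path lies entirely in $\mJ_0([\Sigma])$. Hence $\mJ_0([\Sigma])$ is path connected, proving the corollary. The only mildly delicate point is the standard verification that the transversality theorem applies to this parametric setup, but this is exactly the kind of argument invoked in the proof of Proposition~\ref{prop:Stratification mJ}.
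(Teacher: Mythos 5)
Your overall strategy --- reduce to the connectedness of an ambient space and then push $1$-parameter families off the excluded strata by transversality, using that they have codimension $\geq 2$ --- is exactly the argument the paper leaves implicit behind the \qed{} of Corollary~\ref{cor:the open stratum is connected}, and your final step (a path in $\mJ_{\B c}([\Sigma])$ joining two points of $\mJ_0([\Sigma])$ can be perturbed rel endpoints to become transverse to, hence disjoint from, the submanifolds $\mJ_{ijk}([\Sigma])$ and $\mJ_{1234}([\Sigma])$) is correct and is the intended content.

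There is, however, one genuinely false step in the middle: the parenthetical claim that ``an open dense subset of a connected manifold is connected.'' This is not true ($\R\setminus\{0\}$ is open and dense in $\R$), and the justification you sketch does not repair it --- perturbing a path so that it meets $\mJ_{\B c}([\Sigma])$ \emph{generically} does not force the path to lie \emph{inside} $\mJ_{\B c}([\Sigma])$. What actually gives the connectedness of $\mJ_{\B c}([\Sigma])$ is the same codimension argument you use in your last paragraph, applied one level up: the complement of $\mJ_{\B c}([\Sigma])$ in the contractible space $\mJ(\widetilde{\om}_{\B c})$ consists of those $J$ for which some exceptional class $E_i$ admits no embedded $J$-holomorphic representative, and by the general theory of $J$-holomorphic curves in $4$-manifolds (the same input invoked in the proof of Proposition~\ref{prop:Stratification mJ}) this complement is contained in a countable union of Fredholm strata of codimension $\geq 2$, so a path between two points of $\mJ_{\B c}([\Sigma])$ can be perturbed rel endpoints to avoid it. Density alone is never enough; once you replace the false lemma by this codimension argument, your proof goes through.
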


Each stratum of $\mJ_{\B c}([\Sigma])$ can be characterized by the existence of extended $J$-holomorphic configurations whose orbits under the action of symplectomorphism groups can be described geometrically. 

\subsubsection{Case $n=1$} For any compatible $J$, the one-point blow-up of $\cp^2$ is a ruled surface in which the exceptional class $[\Sigma]$ is represented by a $J$-holomorphic section. Let $F=L-E_1$ be the homology class of a fiber. Pick a point $p\in\widetilde{M}_1$. The structures in the unique stratum $\mJ_0([\Sigma])$ are characterized by the existence of configurations consisting of a $J$-holomorphic curve in the class $[\Sigma]$ together with the unique $J$-holomorphic fiber passing through $p$. We say that these configurations are of homological type $\mT_0$.
\index{T@$\mT_{\bullet}$ -- homological type of configurations}
Note that this kind of configurations admits a toric model.

\subsubsection{Case $n=2$} The structures in the unique stratum $\mJ_0([\Sigma])$ are characterized by the existence of configurations consisting of representatives of the exceptional classes $[\Sigma]=[\Sigma_1]\sqcup[\Sigma_2]$ together with the $J$-holomorphic exceptional curve representing the class $L_{12}:=L-E_1-E_2$. We again say that these configurations are of type $\mT_0$. Again, that this kind of configurations admits a toric model.

\begin{figure}[H]
\centering
\begin{subfigure}{0.4\textwidth}
\centering{
\begin{tikzpicture}[scale=0.265]

\filldraw[fill=lightgray,opacity=0.5,draw=none] (0,8) -- (0,0) -- (13,0) -- (5,8) -- cycle;

\draw[thick,black] (0,8) -- (0,0) -- (13,0);
\draw[line width=0.5ex,black] (0,8) -- (5,8) -- (13,0);

\draw[dotted,thick] (0,8) -- (0,13) -- (5,8);

\draw[fill] (0,0) circle (1.5ex);
\node at (-0.7,-0.7) {$O$};

\node at (2.5,7.2) {$E_1$};
\node at (7,4.5) {$F$};
\end{tikzpicture}}

\caption{Moment image for~$n=1$.}
\label{fig:toric blow-up n=1}
\end{subfigure}
\hfill
\begin{subfigure}{0.4\textwidth}
\centering{
\begin{tikzpicture}[scale=0.265]

\filldraw[fill=lightgray,opacity=0.5,draw=none] (0,8) -- (0,0) -- (10,0) -- (10,3) -- (5,8) -- cycle;

\draw[thick,black] (0,8) -- (0,0) -- (10,0);
\draw[line width=0.5ex,black] (0,8) -- (5,8) -- (10,3) -- (10,0);

\draw[dotted,thick] (0,8) -- (0,13) -- (5,8);
\draw[dotted,thick] (10,0) -- (13,0) -- (10,3);

\draw[fill] (0,0) circle (1.5ex);
\node at (-0.7,-0.7) {$O$};

\node at (2.5,7.2) {$E_1$};
\node at (9,1.5) {$E_2$};
\node at (9,6) {$L_{12}$};
\end{tikzpicture}}

\caption{Moment image for~$n=2$.}
\label{fig:toric blow-up n=2}
\end{subfigure}
\caption{Toric configurations for $n=1$ and $n=2$ balls.}
\label{fig:figures n=1 and n=2}
\end{figure}
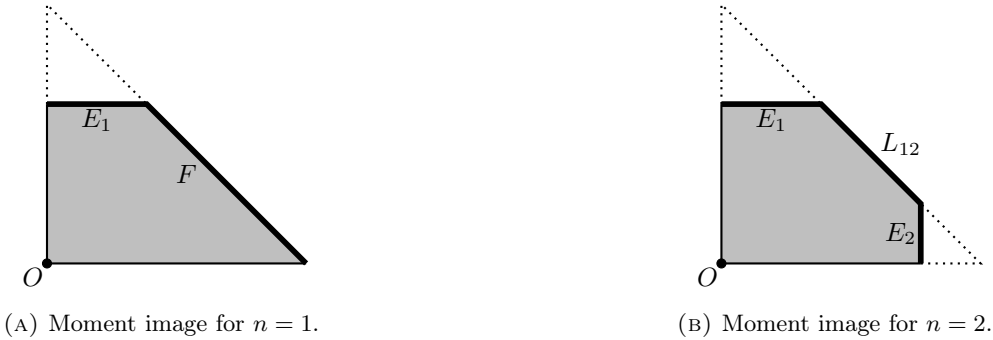

\subsubsection{Case $n=3$} An almost complex structure $J\in\mJ(\Sigma)$ belongs to the stratum $\mJ_0([\Sigma])$ if and only if none of the classes $L_{ijk}$ is represented by a $J$-holomorphic embedded sphere. Since each class $L_{123}$ intersects negatively with either $L_{12}=L-E_1-E_2$ or $L_{23}=L-E_2-E_3$, the existence of $J$-holomorphic spheres representing the classes $L_{12}$ and $L_{23}$ ensures that no $J$-holomorphic representatives of $L_{123}$ exist. Consequently, $J$-holomorphic configurations of type $\mT_{0}$ consisting of representatives of the exceptional classes $[\Sigma]$ together with representatives of the classes $L_{12}\cup L_{23}$ characterize the stratum $\mJ_0(\Sigma)$. Similarly, the stratum $\mJ_{123}$ is characterized by the existence of $J$-holomorphic configurations of type $\mT_{123}$ consisting of representatives of $[\Sigma]$ together with an embedded sphere in class $L_{123}$. For $n=3$, we thus write
\begin{align*}
\mJ_0([\Sigma])=\mJ_{\B c}(\mT_0) &= \mJ_{\B c}([L_{12}]\cup[L_{23}]\cup[\Sigma]) &\mJ_{123}([\Sigma])=\mJ_{\B c}(\mT_{123}) &= \mJ_{\B c}([L_{123}]\cup[\Sigma])\\
\mC_{\B c}(\mT_0) &= \mC_{\B c}([L_{12}]\cup[L_{23}]\cup[\Sigma]) &\mC_{\B c}(\mT_{123}) &= \mC_{\B c}([L_{123}]\cup[\Sigma])
\end{align*}
Note that a configuration of type $\mT_0$ can be realized as a union of invariant holomorphic spheres under a Hamiltonian K\"ahler action of $\B T^2$ on $(\widetilde{M}, \widetilde{\om}_{\B c})$, see Figure~\ref{fig:toric blow-up}. On the other hand, there are no toric configuration of type $\mT_{123}$. However, there exists an almost toric fibration and a configuration $S_{123}$ whose projection on the base diagram form a subset of the boundary, see Figure~\ref{fig:almost-toric blow-up n=3}.

\begin{figure}[H]
\centering
\begin{subfigure}{0.4\textwidth}
\centering{
\begin{tikzpicture}[scale=0.3]

\filldraw[fill=lightgray,opacity=0.5,draw=none] (0,4) -- (4,0) -- (9,0) -- (9,3) -- (2,9) -- (0,9)  -- cycle;

\draw[thick,black] (0,4) -- (4,0) -- (9,0) -- (9,3) -- (2,9) -- (0,9)  -- cycle;
\draw[line width=0.5ex,black] (0,4) -- (4,0) -- (9,0) -- (9,3) -- (2,9) -- (0,9);

\draw[dotted,thick] (0,4) -- (0,0) -- (4,0);

\draw[dotted,thick] (9,0) -- (12,0) -- (9,3);

\draw[dotted,thick] (2,9) -- (0,11) -- (0,9);

\draw[fill] (0,4) circle (1.5ex);
\node[left] at (0,4) {$O$};

\node at (2.5,2.5) {$E_1$};
\node at (8.25,1.5) {$E_2$};
\node at (1.2,8.3) {$E_3$};
\node[right] at (3.7,-1) {$L-E_1-E_2$};
\node[right] at (6,6) {$L-E_2-E_3$};
\end{tikzpicture}}

\caption{Moment image of a toric configuration~$S_0$.}
\label{fig:toric blow-up}
\end{subfigure}
\hfill
\begin{subfigure}{0.4\textwidth}
\centering{
\begin{tikzpicture}[scale=0.265]

\filldraw[fill=lightgray,opacity=0.5,draw=none] (0,8) -- (0,0) -- (10,0) -- (10,3) -- (9,4) -- (7,4) -- (7,6) -- (5,8) -- cycle;

\draw[thick,black] (0,8) -- (0,0) -- (10,0);
\draw[line width=0.5ex,black] (0,8) -- (5,8) -- (7,6);
\draw[line width=0.5ex,black] (9,4) -- (10,3) -- (10,0);

\draw[dotted,thick] (0,8) -- (0,13) -- (5,8);
\draw[dotted,thick] (10,0) -- (13,0) -- (10,3);

\draw[dotted,line width=0.5ex] (9,4) -- (7,4) -- (7,6);
\node at (7,4) {\(\times\)};

\draw[fill] (0,0) circle (1.5ex);
\node at (-0.7,-0.7) {$O$};

\node at (2.5,7.2) {$E_1$};
\node at (9,1.5) {$E_2$};
\node at (6.3,3.3) {$E_3$};
\end{tikzpicture}}

\caption{An almost toric construction of a configuration $S_{123}$}
\label{fig:almost-toric blow-up n=3}
\end{subfigure}
\caption{Configurations for $n=3$ balls.}
\label{fig:figures}
\end{figure}
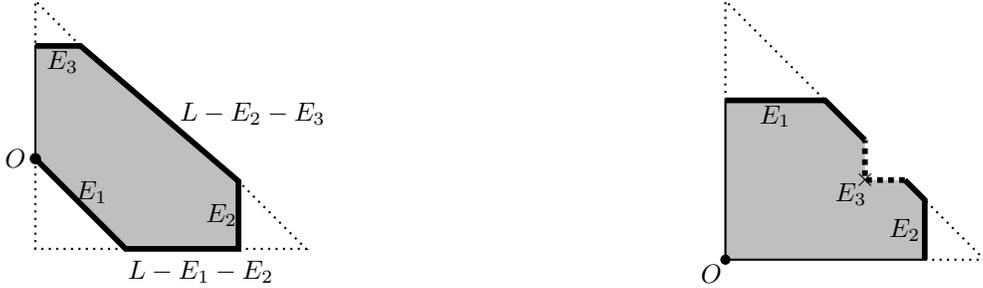

\subsubsection{Case $n=4$} An almost complex structure $J\in\mJ([\Sigma])$ belongs to the stratum $\mJ_0([\Sigma])$ if and only if none of the classes $L_{ijk}$ or $L_{1234}$ is represented by a $J$-holomorphic embedded sphere. Since these classes intersect negatively with either $L_{12}$ or $L_{34}$, the existence of $J$-holomorphic spheres representing the classes $L_{12}$ and $L_{34}$ ensures that no $J$-holomorphic representatives of $L_{ijk}$ or $L_{1234}$ exist. Consequently, configurations of $J$-holomorphic spheres of type $S_0:=[\Sigma] \cup [L_{12}]\cup [L_{34}]$ characterizes the almost complex structures in the stratum $\mJ_0([\Sigma])$, see Figure~\ref{fig:configuration S0 n=4}. Likewise, the strata $\mJ_{ijk}([\Sigma])$ are characterized by the existence of configurations of type $S_{ijk}=[\Sigma] \cup [L_{m\ell}] \cup [L_{ijk}]$, where $m$ is  chosen from $\{i,j,k\}$, as shown in Figure~\ref{fig:configuration Sijk n=4}. For technical reasons, it is convenient to take $m=1$ when $\ell\neq 1$, and $m=4$ when $\ell=1$. These configurations can be constructed from almost toric fibrations as shown in Figure~\ref{fig:almost-toric blow-up n=4}. Finally, the stratum $\mJ_{1234}([\Sigma])$ is characterized by the existence of $J$-holomorphic configurations of type $[\Sigma]\cup [L_{1234}]$ as shown in Figure~\ref{fig:configurationS1234} and Figure~\ref{fig:almost-toric blow-up S1234 n=4}. For $n=4$, we will thus set
\begin{align*}
\mJ_0([\Sigma])=\mJ_{\B c}(\mT_0) &= \mJ_{\B c}([L_{12}]\cup[L_{34}]\cup[\Sigma]) &\mJ_{234}([\Sigma])=\mJ_{\B c}(\mT_{234}) &= \mJ_{\B c}([L_{234}]\cup[L_{14}]\cup[\Sigma])\\
\mC_{\B c}(\mT_0) &= \mC_{\B c}([L_{12}]\cup[L_{34}]\cup[\Sigma]) &\mC_{\B c}(\mT_{234}) &= \mC_{\B c}([L_{234}]\cup[L_{14}]\cup[\Sigma])
\end{align*}
\begin{align*}
\mJ_{ijk}([\Sigma])=\mJ_{\B c}(\mT_{ijk}) &= \mJ_{\B c}([L_{ijk}]\cup[L_{1\ell}]\cup[\Sigma]) & \mJ_{1234}([\Sigma])=\mJ_{\B c}(\mT_{1234}) &= \mJ_{\B c}([L_{1234}]\cup[\Sigma])\\
\mC_{\B c}(\mT_{ijk}) &= \mC_{\B c}([L_{ijk}]\cup[L_{1\ell}]\cup[\Sigma]) &\mC_{\B c}(\mT_{1234}) &= \mC_{\B c}([L_{1234}]\cup[\Sigma])
\end{align*}
where $\{i,j,k\}\neq \{2,3,4\}$.

\begin{lemma}\label{lemma:CharaterizationStrataConfigurations}
Let $\mT$ be any of the above types of configurations characterizing a stratum. 
\begin{enumerate}
\item There is a natural homotopy equivalence $\mJ_{\B c}(\mT)\to \mC_{\B c}(\mT)$. 
\item The inclusion $\mC_{\B c}^\circ(\mT)\into \mC_{\B c}(\mT)$ is a homotopy equivalence.
\end{enumerate}
\end{lemma}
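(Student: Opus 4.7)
For part (1), the plan is to introduce the auxiliary space
\[X_{\mT}=\{(J, C)\mid J\in\mJ_{\B c}(\mT),\ C\text{ is a } J\text{-holomorphic configuration of type }\mT\}\]
and analyze its two projections. Positivity of intersections combined with automatic regularity for embedded spheres of self-intersection $\geq -1$ implies that for each $J\in\mJ_{\B c}(\mT)$ the configuration $C$ is uniquely determined, so the projection $X_{\mT}\to\mJ_{\B c}(\mT)$ is a homeomorphism. The projection $X_{\mT}\to\mC_{\B c}(\mT)$ is a Serre fibration whose fiber over a fixed configuration $C$ is the space of compatible almost complex structures on $\widetilde{M}_{\B c}$ for which $C$ is pointwise holomorphic. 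This fiber is the space of sections of a bundle over $\widetilde{M}_{\B c}$ whose fiber at a point of $\widetilde{M}_{\B c}\setminus C$ is the full (contractible) space of compatible $J$-structures, at a regular point of $C$ is the subspace preserving the tangent line to the relevant component, and at a transverse intersection point is the subspace preserving both tangent lines. Each such fiber is contractible, so the section space is too, and composing the two projections yields the desired homotopy equivalence.

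For part (2), I plan to construct a deformation retraction of $\mC_{\B c}(\mT)$ onto $\mC_{\B c}^{\circ}(\mT)$ by locally orthogonalizing the configurations at each intersection point. At any transverse intersection $p$ of two symplectic spheres, a Darboux chart centered at $p$ identifies a neighborhood with a ball in $(\R^4,\omega_{\mathrm{std}})$ in which the two spheres appear as symplectic $2$-planes through the origin. The space of transverse pairs of symplectic $2$-planes through a fixed point deformation retracts canonically onto the subspace of $\om$-orthogonal pairs (via polar decomposition with respect to any fixed compatible inner product). Cutting this local isotopy off with a bump function supported in a small neighborhood of $p$ and extending by the identity, then applying the same procedure simultaneously at all intersection points of $C$, produces the desired deformation retraction, which preserves both the symplectic form and the homology classes of the components.

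The main technical obstacle in part (1) is verifying the Serre fibration property of $X_{\mT}\to\mC_{\B c}(\mT)$: given a family $\{C_t\}$ of configurations and a lift $J_0$ at $t=0$, one must continuously extend to a family $J_t$ making each $C_t$ holomorphic, which requires a parametric section-extension argument in the spirit of~\cite[Section~4.1]{LP04}. In part (2), the main subtlety is ensuring that the cutoff scales are chosen continuously in $C$ and small enough to prevent the local orthogonalization near one intersection point from affecting another, or from introducing unintended intersections with the remaining components of the configuration; uniform lower bounds on the mutual distances between intersection points over compact families of configurations handle this, but must be arranged consistently.
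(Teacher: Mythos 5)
Your argument is essentially the paper's: part (1) is proved there by exactly the same mechanism (uniqueness of the $J$-holomorphic configuration from positivity of intersections together with the negativity of the self-intersections of the classes involved, giving a fibration $\mJ_{\B c}(\mT)\to\mC_{\B c}(\mT)$ with contractible fibers), and part (2) is handled in the paper by citing the parametric Gompf isotopy lemma (\cite[Lemma~5.3]{Evans}), of which your local-orthogonalization-with-cutoffs construction is precisely a sketch of the proof. The only points to watch are that the $n=1$ stratum needs the fiber constraint through the marked point $p$ (the fiber class has self-intersection $0$, so uniqueness is not a pure negativity argument there), and that in part (2) the spheres must first be flattened to genuine $2$-planes in the Darboux chart before the linear retraction applies — both are absorbed by the cited references.
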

\begin{proof}
For $n=1$, there is a unique configuration of type $\mT_0$ for each $J\in\mA_0$. For $n\geq 2$, since the homological configuration types only contain classes of negative self-intersections, positivity of intersection implies that for $J\in \mJ_{\B c}(\mT)$, there is a unique $J$-holomorphic configuration of type $\mT$. We thus get a well-defined fibration $\mJ_{\B c}(\mT)\to \mC_{\B c}(\mT)$. Now, given a configuration, the set of all $J$ for which it is $J$-holomorphic is contractible. This proves the first statement. The second statement follows from the Gompf isotopy lemma, see~\cite[Lemma~5.3]{Evans}.
\end{proof}


\begin{figure}[H]
\begin{tikzpicture}[scale=0.1, roundnode/.style={circle, draw=black!80, thick, minimum size=8mm}, font=\small]
\draw (0,-20) -- (30,0);
\draw (18,0) -- (48,-20);
\draw (0,-15) -- (10,-24);
\draw (7,-10) -- (17,-19);
\draw (48,-15) -- (38,-24);
\draw (41,-10) -- (31,-19);
\node at (12,-26) {$E_1$};
\node at (19,-21) {$E_2$};
\node at (30,-21) {$E_3$};
\node at (37,-26) {$E_4$};
\node at (14,-6) {$L_{12}$};
\node at (34,-6) {$L_{34}$};
\end{tikzpicture}
\caption{A configuration $S_0$ for $k=4$}
\label{fig:configuration S0 n=4}
\end{figure}
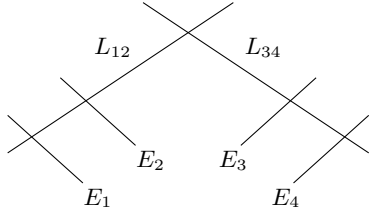

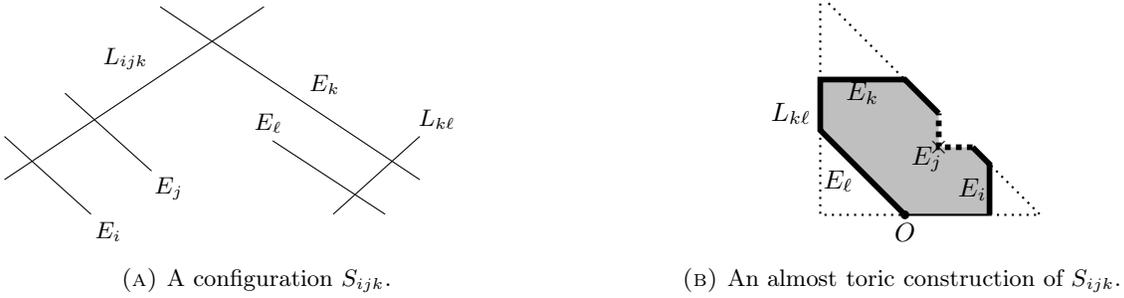
\begin{figure}[H]
\begin{subfigure}{0.45\textwidth}
\begin{tikzpicture}[scale=0.115, roundnode/.style={circle, draw=black!80, thick, minimum size=8mm}, font=\small]
\draw (0,-20) -- (30,0);
\draw (18,0) -- (48,-20);
\draw (0,-15) -- (10,-24);
\draw (7,-10) -- (17,-19);
\draw (48,-15) -- (38,-24);
\draw (44,-24) -- (31,-15.5);
\node at (12,-26) {$E_i$};
\node at (19,-21) {$E_j$};
\node at (37,-9) {$E_k$};
\node at (50,-13) {$L_{k\ell}$};
\node at (14,-6) {$L_{ijk}$};
\node at (30.5,-13.5) {$E_{\ell}$};
\end{tikzpicture}
\caption{A configuration $S_{ijk}$.}
\label{fig:configuration Sijk n=4}
\end{subfigure}
\hfill
\begin{subfigure}{0.4\textwidth}
\centering{
\begin{tikzpicture}[scale=0.225]

\filldraw[fill=lightgray,opacity=0.5,draw=none] (0,8) -- (0,5) -- (5,0) -- (10,0) -- (10,3) -- (9,4) -- (7,4) -- (7,6) -- (5,8) -- cycle;

\draw[thick,black] (5,0) -- (10,0);
\draw[line width=0.5ex,black] (5,0) -- (0,5) -- (0,8) -- (5,8) -- (7,6);
\draw[line width=0.5ex,black] (9,4) -- (10,3) -- (10,0);

\draw[dotted,thick] (0,8) -- (0,13) -- (5,8);
\draw[dotted,thick] (10,0) -- (13,0) -- (10,3);
\draw[dotted,thick] (0,5) -- (0,0) -- (5,0);

\draw[dotted,line width=0.5ex] (9,4) -- (7,4) -- (7,6);
\node at (7,4) {\(\times\)};

\draw[fill] (5,0) circle (1.5ex);
\node at (5,-1) {$O$};

\node[left] at (2.5,2) {$E_\ell$};
\node at (2.5,7.2) {$E_k$};
\node at (9,1.5) {$E_i$};
\node at (6.3,3.3) {$E_j$};
\node[left] at (0,6) {$L_{k\ell}$};
\end{tikzpicture}}

\caption{An almost toric construction of $S_{ijk}$.}
\label{fig:almost-toric blow-up n=4}
\end{subfigure}
\caption{Configurations of type $\mT_{ijk}$ for $n=4$ balls.}
\label{fig:Sijk for n=4}
\end{figure}

\begin{figure}[H]
\begin{subfigure}{0.45\textwidth}
\centering{
\begin{tikzpicture}[scale=0.115, roundnode/.style={circle, draw=black!80, thick, minimum size=8mm}, font=\small]
\draw (10,15) -- (10,30); 
\draw (20,15) -- (20,30); 
\draw (30,15) -- (30,30); 
\draw (40,15) -- (40,30); 
\draw (0,25) -- (45,25); 
\node at (10,32) {$E_1$};
\node at (20,32) {$E_2$};
\node at (30,32) {$E_3$};
\node at (40,32) {$E_4$};
\node at (50,25) {$L_{1234}$};
\end{tikzpicture}}

\caption{A configuration $S_{1234}$}
\label{fig:configurationS1234}
\end{subfigure}
\hfill
\begin{subfigure}{0.4\textwidth}
\centering{
\begin{tikzpicture}[scale=0.225]

\filldraw[fill=lightgray,opacity=0.5,draw=none] (0,8) -- (0,0) -- (11,0) -- (11,2) -- (10,3) -- (8,3) -- (8,5) -- (7,6) -- (6,6) -- (6,7) -- (5,8) -- cycle;

\draw[thick,black] (0,8) -- (0,0) -- (11,0);
\draw[line width=0.5ex,black] (0,8) -- (5,8) -- (6,7);
\draw[line width=0.5ex,black] (8,5) -- (7,6);
\draw[line width=0.5ex,black] (10,3) -- (11,2) -- (11,0);

\draw[dotted,thick] (0,8) -- (0,13) -- (5,8);
\draw[dotted,thick] (11,0) -- (13,0) -- (11,2);

\draw[dotted,line width=0.5ex] (10,3) -- (8,3) -- (8,5);
\node at (8,3) {\(\times\)};

\draw[dotted,line width=0.5ex] (7,6) -- (6,6) -- (6,7);
\node at (6,6) {\(\times\)};

\draw[fill] (0,0) circle (1.5ex);
\node at (0.7,0.7) {$O$};

\node at (2.5,7.2) {$E_1$};
\node at (10,1) {$E_2$};
\node at (7.3,2.3) {$E_3$};
\node at (5.3,5.3) {$E_4$};
\end{tikzpicture}}

\caption{An almost toric construction of a configuration $S_{1234}$.}
\label{fig:almost-toric blow-up S1234 n=4}
\end{subfigure}
\caption{Configurations of type $\mT_{1234}$ for $n=4$ balls.}
\label{fig:configurations of type 1234}
\end{figure}
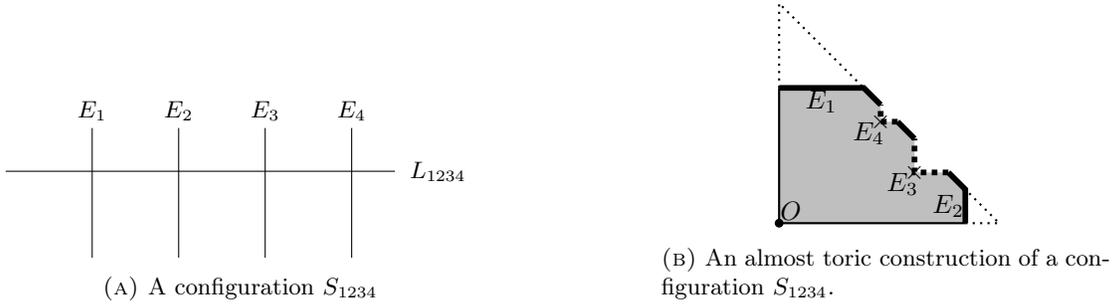


\begin{definition}
A configuration of embedded symplectic spheres $S=S_1\cup\cdots\cup S_k$ in $(M,\om)$ is said to be standard if   \begin{enumerate}
\item $S$ is almost complex for some compatible $J\in\mJ(\om)$,
\item the intersections between the components $S_i$ are transverse and symplectically orthogonal,
\item $[S_i]\cdot [S_j]\leq 1$ for $i\neq j$,
\item $S_i\cap S_j\cap S_k=\emptyset$ for every triple of distinct indices $i,j,k$,
\item $S$ is simply connected. 
\end{enumerate}
\end{definition}

\begin{proposition}\label{prop:BanyagaExtensionOfIsotopies}
Let $S_t=S_{1t}\cup\cdots\cup S_{kt}$, $t\in[0,1]$, be a family of standard configurations in $(M,\om)$. Then there exist symplectomorphisms $\phi_t:M\to M$ depending continuously on $t$, such that $\phi_0=\id$ and $\phi_t(S_0) = S_t$.
\end{proposition}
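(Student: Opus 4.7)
The strategy follows Banyaga's symplectic isotopy extension theorem, adapted to configurations with normal crossings. The plan is to construct the $\phi_t$ in two stages: first, a parametric symplectic neighborhood argument produces symplectomorphisms between neighborhoods of $S_0$ and $S_t$; second, a Hamiltonian cut-off extends them globally to $M$.

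For the first stage, choose a continuous family of compatible almost complex structures $J_t$ for which each component $S_{it}$ is $J_t$-holomorphic (condition (1)), which exists since such $J_t$ form a contractible space. The standardness conditions (2)--(4) imply that $S_t$ is a \emph{symplectic normal crossing divisor}: at every intersection point $p\in S_{it}\cap S_{jt}$, the two components meet transversely, $\omega$-orthogonally, and no triple intersections occur, so there is a Darboux chart centered at $p$ in which $S_{it}$ and $S_{jt}$ coincide with the standard symplectic coordinate planes $\{z_1=0\}$ and $\{z_2=0\}$. A parametric Weinstein-type neighborhood theorem, applied first at the intersections and then extended along each component via Moser's trick relative to the already-fixed identifications, produces a continuous family of symplectic embeddings $\psi_t\colon U_0\to U_t$ of tubular neighborhoods such that $\psi_0=\id$ and $\psi_t(S_0)=S_t$.

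For the second stage, let $X_t$ be the time-dependent symplectic vector field generating $\psi_t$ on $U_0$. To extend $\psi_t$ to a Hamiltonian isotopy on all of $M$, one must trivialize the flux class $[\iota_{X_t}\omega]\in H^1(U_0;\R)$. Since $U_0$ deformation retracts onto $S_0$, and $S_0$ is simply connected by condition (5), this cohomology group vanishes, so $\iota_{X_t}\omega=dH_t$ for a continuous family of Hamiltonians $H_t$ defined near $S_0$. Multiplying $H_t$ by a bump function supported in $U_0$ and equal to $1$ on a smaller neighborhood of $S_0$ yields a global time-dependent Hamiltonian whose flow $\phi_t\in\Symp(M,\omega)$ agrees with $\psi_t$ near $S_0$; in particular $\phi_t(S_0)=S_t$ and $\phi_0=\id$, with continuous dependence on $t$.

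The main obstacle is the parametric neighborhood theorem for symplectic normal crossing configurations: the identifications $\psi_t$ must be built consistently so that the local models near intersection points and the tubular neighborhoods of the smooth strata agree on overlaps. The orthogonality of intersections (condition (2)), the bound $[S_i]\cdot[S_j]\leq 1$ (condition (3)) ensuring that any two components meet in at most one point, and the absence of triple intersections (condition (4)) are precisely the geometric inputs that make this iterative gluing possible. Once the neighborhood identification is in place, the global extension is standard and uses only the simple connectedness assumption.
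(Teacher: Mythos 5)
Your proof is correct and follows essentially the same route as the paper: a parametric Weinstein-type neighborhood theorem for $\omega$-orthogonal configurations (the paper cites Guadagni's Theorem~2, preceded by Moser's stability theorem on the configurations themselves) to produce the family $\psi_t\colon U_0\to U_t$, followed by a global extension, for which the paper simply invokes Banyaga's isotopy extension theorem while you re-derive it via the flux-vanishing ($H^1(U_t;\R)=0$ from simple connectedness of $S_t$) and Hamiltonian cut-off argument.
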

\begin{proof}
From Moser’s stability theorem, there exists a continuous family of symplectic embeddings $
\psi_{t} \colon (S_0,\om\vert_{S_0})\to (S_t,\om\vert_{S_t})$. By the generalization of Weinstein's symplectic neighborhood theorem to orthogonal configurations given in~\cite[Theorem 2]{Gua18}, we can extend these maps to symplectic embeddings $\psi_t \colon U_0\to U_t$, where $U_t$ is some tubular neighborhood of $S_t$ modelled on a symplectic plumbing of the symplectic normal bundles $N(S_{it})$. By Banyaga's extension theorem~\cite[Theorem~II.2.3]{Ban78}, there exists a symplectic isotopy $\phi_t$ that coincides with $\psi_t$ on $S_0$.
\end{proof}

Recall that $\mE(M)\subset H_2(M,\Z)$ is the set of classes that can be represented by exceptional spheres, i.e. symplectically embedded spheres of self-intersection $-1$.

\begin{proposition}[{\cite[Theorem 1.2.7 (iii)]{McDOp15}}]\label{prop:McDuff-Opshtein}
Let $(M,\om)$ be a symplectic $4$-manifold with an orthogonal configuration $S$ of $k$ embedded symplectic spheres $S=S_1\cup\cdots\cup S_k$, and suppose that $A\in \mE$ satisfies $A\cdot S_i\geq 0$ for all $1\leq i\leq k$.
Then there is an open, dense, and connected subset $\mJ([A],S)\subset \mJ(S)$ such that $A$ is represented by an embedded $J$-holomorphic curve.\qed
\end{proposition}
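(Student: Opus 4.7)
The plan is to combine the general theory of embedded $J$-holomorphic spheres in $4$-manifolds with a stratification argument on $\mJ(S)$. The first step is to establish nonemptiness by exhibiting a single $J_{0}\in\mJ(S)$ admitting an embedded representative of $A$. Since the set $\mJ([A])\subset\mJ(\om)$ of almost complex structures admitting a $J$-holomorphic representative of an exceptional class $A$ is open and dense by Taubes--Gromov/McDuff, and since $\mJ(S)$ is a submanifold of $\mJ(\om)$ of finite codimension obtained by imposing $J$-invariance along the orthogonal configuration $S$, a standard transversality-of-intersection argument yields $\mJ([A])\cap\mJ(S)\neq\emptyset$. In concrete situations (blow-ups of toric or almost-toric models as in Figures~\ref{fig:figures n=1 and n=2}--\ref{fig:configurations of type 1234}) one can alternatively produce $J_{0}$ as an integrable Kähler structure for which both $S$ and an embedded sphere in class $A$ are simultaneously holomorphic.

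The second step is openness of $\mJ([A],S)$ in $\mJ(S)$. Given $J_{0}\in\mJ([A],S)$ with embedded representative $C_{0}$ of $A$, the automatic transversality theorem of Hofer--Lizan--Sikorav for embedded spheres of negative self-intersection in dimension $4$ implies that the linearized Cauchy--Riemann operator $D_{C_{0}}$ is surjective. The implicit function theorem then supplies a smooth family of embedded $J$-holomorphic representatives of $A$ for every $J$ in a neighborhood of $J_{0}$ in $\mJ(S)$, proving openness.

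The third step is density and connectedness, established simultaneously by a codimension estimate on the complement $\mJ(S)\setminus\mJ([A],S)$. By Gromov compactness, any $J\in\mJ(S)$ for which no smooth embedded representative of $A$ exists lies in the image of a stratum of the universal moduli space parametrizing stable $J$-holomorphic bubble trees $\sum m_{i}[D_{i}]=A$ whose components $D_{i}$ are simple and either transverse to $S$ or entirely contained in it. The intersection constraints $A\cdot S_{i}\geq 0$ together with positivity of intersections restrict the possible components of $[S]$ that may appear and control their multiplicities. A standard Fredholm/index computation, analogous to the one carried out in~\cite{McDOp15}, shows that every non-smooth stratum has real codimension at least $2$ in $\mJ(S)$ because each additional bubble imposes at least a complex incidence condition. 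This simultaneously gives density (codimension $\geq 1$) and connectedness (codimension $\geq 2$) of $\mJ([A],S)\subset\mJ(S)$.

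The main obstacle is the bookkeeping in the last step: one must enumerate all possible stable bubble decompositions of $A$ whose components are $J$-holomorphic for some $J\in\mJ(S)$, including multiply covered spheres and components lying entirely in $[S]$. The hypothesis $A\cdot S_{i}\geq 0$ must be invoked carefully to bound multiplicities along components of $S$, and the adjunction formula for $A\cdot A=-1$ is used to rule out ghost components and to force any smooth $J$-holomorphic representative to be automatically embedded. Once the codimension estimate is verified for every such stratum, the three conclusions, namely open, dense, and connected, follow at once.
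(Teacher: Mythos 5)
The paper does not actually prove this proposition: it is imported verbatim as \cite[Theorem 1.2.7(iii)]{McDOp15}, and the \qed signals that the authors treat it as a black box. So the relevant question is whether your sketch would stand on its own as a proof of the cited result, and there it has a genuine gap. Your Step 1 argues that $\mJ([A])\cap\mJ(S)\neq\emptyset$ because $\mJ([A])$ is open and dense in $\mJ(\om)$ while $\mJ(S)$ is "a submanifold of finite codimension", so a "transversality-of-intersection" argument applies. But $\mJ(S)$ is \emph{not} of finite codimension in $\mJ(\om)$: requiring every $S_i$ to be $J$-holomorphic pins down $J$ along the tangent spaces of $S$ pointwise, an infinite-codimensional condition. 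An open dense subset of $\mJ(\om)$ can perfectly well miss such a subspace (its closed complement, though nowhere dense, can contain infinite-codimensional sets), so nonemptiness and, more importantly, density of $\mJ([A],S)$ inside $\mJ(S)$ cannot be deduced from genericity in the ambient $\mJ(\om)$.

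The same issue undermines the codimension count in your Step 3. For $J\in\mJ(S)$ the spheres $S_i$ are $J$-holomorphic for \emph{every} such $J$, so a degeneration of an $A$-curve into a stable map containing components of $S$ (with multiplicities) is not cut out by a transversally-achieved incidence condition; the naive "each bubble costs at least codimension two" count applies to generic strata of somewhere-injective curves, not to strata whose components have fixed image holomorphic for all parameters. Controlling exactly these nongeneric degenerations — using $A\cdot S_i\geq 0$, positivity of intersections, and a refined index analysis for decompositions $A=\sum m_i[D_i]+\sum n_j[S_j]$ — is the actual content of McDuff--Opshtein's paper, and it is the step your outline compresses into "a standard Fredholm/index computation". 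Your Step 2 (openness via Hofer--Lizan--Sikorav automatic transversality for an embedded sphere with $A\cdot A=-1$) is correct, and the overall architecture (openness, then density and connectedness via a stratification of the complement) does match the strategy of the cited source; but as written, Steps 1 and 3 assert rather than prove the two points where the hypotheses $J\in\mJ(S)$ and $A\cdot S_i\geq 0$ do the real work.
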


\begin{proposition}[{\cite[Proposition 2.1]{BLW14}}]\label{ConnectednessEmbeddingsInComplementZ}
Let $(M,\om)$ be a closed rational or ruled symplectic $4$-manifold and let $Z\subset M$ be a closed symplectic sphere. Then, for any choice of capacities $\B c=c_{1},\ldots, c_{n}$, the space $\Emb(\B c, M\setminus Z)$ of symplectic embeddings $B(c_{i})\sqcup\cdots\sqcup B(c_{n})\into (M\setminus Z, \om)$ in the complement of $Z$ is connected whenever it is non-empty.\qed
\end{proposition}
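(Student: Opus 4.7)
The plan is to reduce the connectedness of $\Emb(\B c,M\setminus Z)$ to a parametric statement about $J$-holomorphic spheres in the symplectic blow-up, and then exploit the rich theory of holomorphic curves in rational and ruled $4$-manifolds to deform any isotopy in $\Emb(\B c,M)$ to one whose image stays disjoint from $Z$. Concretely, given $\iota_0,\iota_1\in\Emb(\B c,M\setminus Z)$, I would first invoke Theorem \ref{thm:GeneralResultsOnRationalSurfaces}(3) to produce a path $\iota_t$ in $\Emb(\B c,M)$, which a priori may cross $Z$.

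For each $t$ consider the symplectic blow-up $\widetilde{M}_{\B c,t}$ of $M$ at $\iota_t(\mB_{\B c})$. The class $[Z]\in H_2(\widetilde{M}_{\B c,t};\Z)$ pairs trivially with the exceptional classes $E_i$ and retains positive symplectic area $\om(Z)$; by the Gromov--McDuff theory for rational and ruled $4$-manifolds, the set of compatible almost complex structures $J$ on $\widetilde{M}_{\B c,t}$ admitting simultaneous embedded $J$-holomorphic representatives of $[Z]$ and of each exceptional class $E_i$ is open, dense and path-connected. One may therefore choose a continuous family $J_t$ in this set with $J_0$ and $J_1$ making $Z$ itself holomorphic. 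Positivity of intersections forces the $J_t$-holomorphic representative $\widetilde{Z}_t$ of $[Z]$ to be disjoint from the exceptional divisors, and blowing down produces a continuous family $Z_t\subset M$ of embedded symplectic spheres in class $[Z]$, disjoint from $\iota_t(\mB_{\B c})$, with $Z_0=Z_1=Z$.

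By Proposition \ref{prop:BanyagaExtensionOfIsotopies} the isotopy $Z_t$ extends to an ambient symplectic isotopy $\phi_t\in\Symp(M)$ with $\phi_0=\id$ and $\phi_t(Z)=Z_t$. The modified path $\iota'_t:=\phi_t^{-1}\circ\iota_t$ then satisfies $\iota'_t(\mB_{\B c})\cap Z=\emptyset$ for all $t$, starts at $\iota_0$, and ends at $\phi_1^{-1}\circ\iota_1$ with $\phi_1\in\Symp(M,Z)$.

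The principal obstacle is the final step: connecting $\phi_1^{-1}\circ\iota_1$ back to $\iota_1$ within $\Emb(\B c,M\setminus Z)$. This amounts to showing that $\phi_1$, which automatically lies in the identity component of $\Symp(M)$, can also be connected to $\id$ through symplectomorphisms preserving $Z$, or equivalently that the loop $t\mapsto Z_t$ is null-homotopic in the space $\mC([Z])$ of embedded symplectic spheres in class $[Z]$. In the rational or ruled setting, the fibration $\Symp_0(M,Z)\to\Symp_0(M)\to\mC([Z])$ is sufficiently well understood via Gromov-type transitivity and Alexander-trick arguments for symplectic spheres to control this obstruction, either because $\pi_1(\mC([Z]))$ vanishes outright or by re-choosing the family $J_t$ so that the resulting loop of spheres becomes contractible, after which an isotopy of $\phi_1$ to $\id$ through $\Symp(M,Z)$ completes the construction of the required path in $\Emb(\B c,M\setminus Z)$.
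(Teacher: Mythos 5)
The paper does not actually prove this statement: it is quoted from \cite[Proposition~2.1]{BLW14} and stated with a \qed, so there is no internal argument to compare against. Judged on its own terms, your proposal has two genuine gaps.

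The first is the density claim in your second step. You assert that the set of compatible $J$ on the blow-up admitting embedded $J$-holomorphic representatives of $[Z]$ and of each $E_i$ is open, dense and path-connected. For the exceptional classes this is true, but for $[Z]$ it fails as soon as $[Z]\cdot[Z]\leq -2$: by adjunction the expected dimension of the space of embedded spheres in class $[Z]$ is $2([Z]^2+1)<0$, so for generic $J$ the class $[Z]$ has \emph{no} embedded representative, and the locus where it does is a stratum of positive codimension. A generically chosen path $J_t$ therefore misses this locus entirely and your family $\widetilde{Z}_t$ does not exist. This is not a peripheral case: the present paper invokes the proposition in the proof of Proposition~\ref{prop:NegativeCurvesHamiltonianIsotopic} precisely for spheres of self-intersection $-2$ and $-3$. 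Restricting instead to the stratum where $[Z]$ is represented would require knowing that this stratum is nonempty and connected for every $t$, which is essentially the content of Proposition~\ref{prop:NegativeCurvesHamiltonianIsotopic} itself and would make the argument circular.

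The second gap is the one you flag yourself: even granting the family $Z_t$, the terminal symplectomorphism $\phi_1\in\Symp(M,Z)$ produced via Proposition~\ref{prop:BanyagaExtensionOfIsotopies} need not lie in the identity component of $\Symp(M,Z)$; the obstruction is the image of the loop $[Z_t]\in\pi_1\bigl(\mC([Z])\bigr)$ under the connecting map of the fibration $\Symp(M,Z)\cap\Symp_0(M)\to\Symp_0(M)\to\mC([Z])$, and neither of your suggested remedies (vanishing of $\pi_1$, or re-choosing $J_t$) is substantiated. Both difficulties disappear if you invert the roles of $Z$ and the balls, which is what \cite{BLW14} does: keep $Z$ --- equivalently its proper transform $\widetilde{Z}$, which is disjoint from the exceptional divisors and satisfies $E_i\cdot[\widetilde{Z}]=0$ --- \emph{fixed}, and work inside the contractible space of compatible structures making $\widetilde{Z}$ holomorphic. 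Proposition~\ref{prop:McDuff-Opshtein} then gives that the subset admitting embedded representatives of all the $E_i$ is open, dense and connected; a path of such $J$'s carries one configuration of exceptional spheres to the other while positivity of intersections keeps them disjoint from $\widetilde{Z}$, and blowing down yields the required isotopy of ball packings in $M\setminus Z$ with no monodromy left to undo.
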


\begin{proposition}\label{Abreu-McDuff-HamiltonianIsotopies}
Let $(M,\om)$ be a rational, ruled, symplectic $4$-manifold and let $Z$ be an embedded symplectic sphere in the class of a section of nonpositive self-intersection. Then any other embedded symplectic sphere $Z'$ homologous to $Z$ is Hamiltonian isotopic to $Z$.
\end{proposition}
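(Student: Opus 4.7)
The plan is to connect $Z$ and $Z'$ through a smooth one-parameter family $(Z_t)_{t\in[0,1]}$ of embedded symplectic spheres homologous to $Z$, then apply Proposition~\ref{prop:BanyagaExtensionOfIsotopies} to integrate this family into an ambient symplectic isotopy $\phi_t:M\to M$ with $\phi_0=\id$ and $\phi_1(Z)=Z'$, and finally upgrade $\phi_t$ to a Hamiltonian isotopy using the fact that rational ruled $4$-manifolds are simply connected.

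First, I would use the standard fact that any embedded symplectic sphere is pseudoholomorphic for some $\om$-compatible almost complex structure to pick $J_0,J_1\in\mJ(\om)$ making $Z$, respectively $Z'$, holomorphic. Since $\mJ(\om)$ is contractible, fix a smooth path $(J_t)_{t\in[0,1]}$ from $J_0$ to $J_1$, and for each $t$ produce a unique embedded $J_t$-holomorphic representative $Z_t$ of the class $[Z]$ varying smoothly in $t$. When $Z\cdot Z<0$, positivity of intersections gives the uniqueness of such a representative, while McDuff's structure theorems for $J$-holomorphic curves in rational ruled surfaces provide existence for every tamed $J$; smooth dependence then follows from standard parametric transversality for the Cauchy--Riemann operator. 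When $Z\cdot Z=0$, the class $[Z]$ is that of a ruling, and for every tamed $J$ the embedded $J$-holomorphic representatives of $[Z]$ foliate $M$; in this case one first slides $Z$ along the $J_0$-ruling to a sphere $\widetilde Z$ meeting $Z'$ transversally in a single point $p$ (and similarly slides $Z'$ along the $J_1$-ruling if needed), and then defines $Z_t$ as the unique $J_t$-holomorphic representative of $[Z]$ through $p$. Concatenating these subfamilies yields the desired global family.

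With $(Z_t)$ in hand, each $Z_t$ is a standard configuration consisting of a single component in the sense of Proposition~\ref{prop:BanyagaExtensionOfIsotopies}. Applying that proposition produces a symplectic isotopy $\phi_t:M\to M$ with $\phi_0=\id$ and $\phi_t(Z)=Z_t$, so in particular $\phi_1(Z)=Z'$. Finally, any rational ruled symplectic $4$-manifold is simply connected, so $H^1(M;\R)=0$; the flux homomorphism therefore vanishes identically on paths of symplectomorphisms starting at the identity, and $\phi_t$ is automatically Hamiltonian, as required.

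The main obstacle is the case $Z\cdot Z=0$, where uniqueness of the $J_t$-holomorphic representative of $[Z]$ fails and the family $(Z_t)$ must be pinned down by a marking. Ensuring that the sliding argument glues into a single smooth one-parameter family, rather than a concatenation that is merely continuous, requires a little care, but the relevant foliation and transversality results for ruling classes in rational ruled surfaces are classical and make the construction routine.
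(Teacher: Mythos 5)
Your overall strategy---connect $Z$ to $Z'$ through a family of embedded symplectic spheres in the class $[Z]$, integrate via Proposition~\ref{prop:BanyagaExtensionOfIsotopies}, and conclude with vanishing of flux since $H^1(M;\R)=0$---is the same as the paper's, and the first and last steps are fine. The gap is in the middle step: you assert that for a section class of negative self-intersection, ``McDuff's structure theorems \ldots provide existence for every tamed $J$,'' and similarly that in the square-zero case the representatives of $[Z]$ foliate $M$ for every tamed $J$. This is false precisely in the cases that matter. For a section class of self-intersection $\leq -2$ (e.g.\ $B-kF$ with $k\geq 1$ on $S^2\times S^2$, which is exactly what arises in the application to Proposition~\ref{prop:NegativeCurvesHamiltonianIsotopic}), the set of compatible $J$ admitting an embedded $J$-holomorphic representative of $[Z]$ is a stratum of positive codimension in $\mJ(\om)$; a generic $J$, and hence a generic path $(J_t)$ chosen using only the contractibility of $\mJ(\om)$, admits no such representative at all. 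The same failure occurs for a square-zero section $B$ on a non-monotone $S^2\times S^2$, where for $J$ outside the open stratum the class $B$ degenerates into $(B-kF)+kF$. So your family $(Z_t)$ simply does not exist for an arbitrary path of almost complex structures, and positivity of intersections only gives uniqueness \emph{when} a representative exists, not existence.

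What is actually needed---and what the paper invokes---is the nontrivial theorem of Abreu--McDuff (\cite[Corollary 2.8]{AbMcD00}) that the stratum $\mJ([Z])$ of compatible $J$ for which $[Z]$ \emph{is} represented is connected. Connectedness of this stratum gives connectedness of the space $\mC([Z])$ of embedded symplectic spheres in class $[Z]$, and from there your argument (Banyaga extension along a path in $\mC([Z])$, then flux) goes through. To repair your proof you would replace ``choose any path $(J_t)$ in the contractible space $\mJ(\om)$'' by ``choose a path $(J_t)$ inside the stratum $\mJ([Z])$, which is possible by \cite[Corollary 2.8]{AbMcD00}''; without that input the construction of $(Z_t)$ collapses.
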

\begin{proof}
By \cite[Corollary 2.8]{AbMcD00}, the space $\mJ([Z])$ is connected, which implies that the space $\mC([Z])$ of embedded symplectic spheres in class $[Z]$ is also connected. By Proposition~\ref{prop:BanyagaExtensionOfIsotopies}, the group $\Symp_h(M,\om)$ acts transitively on $\mC([Z])$, and it follows from \cite[Corollary 2.7]{AbMcD00} that we have equalities $\Symp_h(M,\om)=\Symp_0(M,\om)=\Ham(M,\om)$.
\end{proof}

\begin{proposition}\label{prop:NegativeCurvesHamiltonianIsotopic}
Let $(M,\om)$ be a closed rational or ruled symplectic $4$-manifold. Let $Z$ be an embedded symplectic sphere of self-intersection $-2$ or $-3$. Then the group $\Symp_h(M,\om)$ acts transitively on the set of embedded symplectic spheres homologous to $Z$.
\end{proposition}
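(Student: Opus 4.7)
The plan is to mirror the structure of the proof of Proposition~\ref{Abreu-McDuff-HamiltonianIsotopies}. Given two embedded symplectic spheres $Z,Z'\subset M$ in the same homology class, we reduce the problem of producing a symplectomorphism $\phi\in\Symp_h(M,\om)$ with $\phi(Z)=Z'$ to exhibiting a continuous family of embedded symplectic spheres connecting them; once this is done, Proposition~\ref{prop:BanyagaExtensionOfIsotopies} yields a symplectic isotopy $\{\phi_t\}_{t\in[0,1]}$ with $\phi_0=\id$ and $\phi_1(Z)=Z'$. Any symplectomorphism isotopic to the identity acts trivially on $H_2(M;\Z)$, so $\phi_1$ automatically lies in $\Symp_h(M,\om)$.

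The task thus reduces to showing that the space $\mC([Z])$ of embedded symplectic spheres in the class $[Z]$ is path-connected. To this end, consider $\mJ([Z]):=\{J\in\mJ(\om)~|~[Z]\text{ has an embedded }J\text{-holomorphic representative}\}$. Because $[Z]^2<0$, positivity of intersections implies that this representative is unique whenever it exists, so there is a well-defined map $\mJ([Z])\to\mC([Z])$ which, as in the proof of Lemma~\ref{lemma:CharaterizationStrataConfigurations}, is a fibration with contractible fibers. It therefore suffices to prove that $\mJ([Z])$ is path-connected.

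Connectedness of $\mJ([Z])$ will be established by showing that its complement inside the contractible space $\mJ(\om)$ is a stratified subset of real codimension at least two. By adjunction, $c_1([Z])=0$ if $[Z]^2=-2$ and $c_1([Z])=-1$ if $[Z]^2=-3$, so the expected dimension of the moduli space of unparametrized $J$-holomorphic spheres in class $[Z]$ is very negative. By Gromov compactness, any $J\notin\mJ([Z])$ corresponds to a stable $J$-holomorphic map in class $[Z]$ with at least two components, and the possible component classes are constrained by the classification of negative curves in rational and ruled $4$-manifolds (in the spirit of Proposition~\ref{prop:ZhangPossibleNegativeCurves}). For each such decomposition type, a dimension count in the moduli space of reducible representatives --- using that each component is a sphere of controlled $c_1$ and that the nodal structure imposes matching conditions --- shows that the corresponding stratum has real codimension at least two. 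Since a stratified subset of positive codimension does not disconnect $\mJ(\om)$, we conclude that $\mJ([Z])$ is open, dense, and connected.

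The main technical obstacle is the combinatorial analysis of the degeneration strata. For $[Z]^2=-2$ the limiting configurations can include, for instance, the nodal union of an exceptional sphere with a $(-3)$-curve, or two components of classes $A_1+A_2=[Z]$ with $A_1\cdot A_2=2$; for $[Z]^2=-3$ additional possibilities arise involving more exceptional components. Verifying case-by-case that each such stratum has codimension at least two relies on the adjunction inequality for each component, the rationality of the underlying tree, and most importantly on the fact that in a rational or ruled surface the classes of $J$-holomorphic spheres of negative self-intersection lie in a combinatorially restricted list. The rational/ruled hypothesis is essential: in general symplectic $4$-manifolds the degeneration types are not tightly controlled, and this line of argument would fail.
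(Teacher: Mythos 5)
Your reduction of the problem to the connectedness of the space $\mC([Z])$ of embedded symplectic spheres in class $[Z]$ (via Proposition~\ref{prop:BanyagaExtensionOfIsotopies}) is sound, but the argument you give for that connectedness has a fatal gap. For an embedded sphere $Z$ with $Z\cdot Z=-2$ or $-3$, adjunction gives $c_1([Z])=0$ or $-1$, so the expected dimension of the moduli space of $J$-spheres in class $[Z]$ is \emph{negative} --- and the consequence is the opposite of what you claim. For a generic compatible $J$ the class $[Z]$ has no $J$-holomorphic representative at all (embedded, nodal, or otherwise), and $\mJ([Z])$ is itself a submanifold of positive codimension ($2$ for a $-2$-class, $4$ for a $-3$-class) in the contractible space $\mJ(\om)$; compare Proposition~\ref{prop:Stratification mJ}. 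Your appeal to Gromov compactness ("any $J\notin\mJ([Z])$ corresponds to a stable map in class $[Z]$ with at least two components") is false: Gromov compactness produces limits of sequences of holomorphic curves, it does not produce a representative for an arbitrary $J$, and the Taubes--Seiberg--Witten existence results that would guarantee one apply only to classes of nonnegative expected dimension. Hence $\mJ([Z])$ is not open and dense, its complement is not a codimension-$\geq 2$ stratified set, and the connectedness of $\mJ([Z])$ cannot be read off from a codimension count of its complement. This is precisely the distinction between exceptional classes, for which Proposition~\ref{prop:McDuff-Opshtein} does give an open, dense, connected $\mJ([A],S)$, and classes of square $\leq -2$.

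Establishing the connectedness of the positive-codimension stratum $\mJ([Z])$ (equivalently of $\mC([Z])$) is the genuinely hard point, and the paper handles it by an entirely different route. Using the Cremona reduction of Dorfmeister--Li--Wu, the class $[Z]$ is brought to one of a short list of normal forms ($B-F$, $E_1-E_2$, $2E_1-L$, or, in the characteristic cases, $L-E_1-E_2-E_3$ and $L-E_1-E_2-E_3-E_4$). One then chooses auxiliary exceptional classes orthogonal to $[Z]$, makes them holomorphic alongside $Z$ via Proposition~\ref{prop:McDuff-Opshtein}, blows them down so that $Z$ becomes a section of nonpositive square in a rational ruled surface, applies Proposition~\ref{Abreu-McDuff-HamiltonianIsotopies} (which rests on the Abreu--McDuff connectedness theorem for section classes), and uses Proposition~\ref{ConnectednessEmbeddingsInComplementZ} to move the blown-down balls back into place before lifting the symplectomorphism; when $[Z]$ is characteristic one first blows up an extra small ball to break characteristicity. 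Some such reduction to a previously established connectedness theorem appears unavoidable here.
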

\begin{proof} The proof depends on whether $A=[Z]$ is a characteristic element of the intersection lattice $H_{2}(M,\Z)$. (Recall that an element $A\in H_{2}(M,\Z)$ is characteristic if for all $B\in H_{2}(M;\Z)$, $A\cdot B = B\cdot B \mod 2$.)

When the class $A$ is not characteristic, it follows from \cite[Proposition 5.15]{DoLiWu18} and \cite[Proposition 5.17]{DoLiWu18} that $A$ is Cremona equivalent to 
\begin{enumerate}
\item $B-F$ if $A\cdot A=-2$ and $M=S^{2}\times S^{2}$,
\item $E_{1}-E_{2}$ if $A\cdot A=-2$ and $M=\cp^{2}\#n\overline{\cp^{2}}$, $n\geq 2$,
\item $2E_{1}-L$ if $A\cdot A=-3$ and $M=\cp^{2}\#n\overline{\cp^{2}}$, $n\geq 1$.
\end{enumerate}
In the first case, the statement of the proposition follows directly from Proposition~\ref{Abreu-McDuff-HamiltonianIsotopies}. In the two other cases, let $Z_0$ and $Z_1$ be two symplectic, embedded spheres representing the class $A$. We can find $(n-1)$ orthogonal exceptional classes $V_1,\ldots, V_{n-1}$ that have zero intersection with $A$. By an inductive application of Proposition~\ref{prop:McDuff-Opshtein}, we can find an almost structure $J_i$, $i=0,1$, for which the classes $V_j$ are represented by holomorphic spheres $\Sigma_{ij}$ and for which $Z_i$ is also holomorphic. We can find a Hamiltonian isotopy $f_t$, $t\in[0,1]$, that takes the exceptional curves $\Sigma_{1j}$ to $\Sigma_{0j}$, so that there is no lost of generality in assuming $\Sigma_{0j}=\Sigma_{1j}$. By positivity of intersections, the spheres $\Sigma_j$ in classes $V_j$ are disjoint from the curves $Z_i$ and can be blown down to obtain a rational ruled surface $(\overline{M},\overline{\om})$ that is diffeomorphic to $S^2\times S^2$ if $A\cdot A=-2$ or to $\cp^2\#\overline{\cp^2}$ if $A\cdot A=-3$. In both cases, the blow-down curves $\overline{Z_i}$ are sections that are disjoint from a collection of $(n-1)$ symplectic balls $B_{j}\subset \overline{M}$. By Proposition~\ref{Abreu-McDuff-HamiltonianIsotopies}, there is an Hamiltonian isotopy $\psi_t$, $t\in[0,1]$, taking $\overline{Z}_1$ to $\overline{Z}_0$. By Proposition~\ref{ConnectednessEmbeddingsInComplementZ}, there is an Hamiltonian isotopy $\phi_t$, $t\in[0,1]$, with support in $\overline{M}\setminus \overline{Z}_0$, taking the balls $\psi_1(B_{j})$ back to the balls $B_{j}$. Moreover, we can arrange that $\phi_1\circ\psi_1$ is the identity near the balls $B_j$, so that $\phi_1\circ\psi_1$ lifts to a symplectomorphism $g\in\Symp_h(M,\om)$ that takes $Z_1$ to~$Z_0$.

When the class $A$ is characteristic, it follows again from \cite[Proposition 5.15]{DoLiWu18} and \cite[Proposition 5.17]{DoLiWu18} that $A$ is Cremona equivalent to either
\begin{enumerate}
\item $L-E_1-E_2-E_3$ if $A\cdot A=-2$ and $M=\cp^{2}\#3\overline{\cp^{2}}$,
\item $L-E_1-E_2-E_3-E_4$ if $A\cdot A=-3$ and $M=\cp^{2}\#4\overline{\cp^{2}}$.
\end{enumerate}
Let $(\widetilde{M},\widetilde{\om})$ be the symplectic blow-up of $(M,\om)$ at a ball $B_0$ of capacity $\epsilon>0$ away from two spheres $Z_0$ and $Z_1$ in class $A$, and let $E_0$ be the class of the new exceptional divisor $\Sigma_0$. Then the exceptional classes $L_{i0}:=L-E_0-E_i$, $i\neq0$, are pairwise orthogonal and have zero intersection with $A$. In particular, $A$ is no longer characteristic. Note that blowing down disjoint divisors representing the classes $L_{0i}$ yields a rational ruled surface $(\overline{M},\overline{\om})$ whose fiber is in the homology class $L-E_0$ so that the blown down curves $\overline{Z_i}$ are in the class of a section. The previous argument then shows the existence of some symplectomorphism $\widetilde{g}\in\Symp_h(\widetilde{M},\widetilde{\om})$ taking $\widetilde{Z_1}$ to $\widetilde{Z_0}$. Composing with a Hamiltonian isotopy supported away from $\widetilde{Z_0}$, we can assume that $\widetilde{g}$ is the identity near the exceptional divisor $\Sigma_0$. Blowing down $\Sigma_0$ gives the required symplectomorphism $g\in\Symp_h(M,\om)$ that takes $Z_1$ to~$Z_0$.
\end{proof}

\begin{proposition}\label{prop:HomogeneityConfigurations}
Let $\mT$ be any of the above types of configurations characterizing a stratum. In the case $n=1$, the subgroup $\mG_{\B c,p}:=\OP{Symp}_h (\widetilde{M}_{\B c}, p)$ fixing $p\in\Sigma$ acts transitively on the space $\mC_{\B c}^\circ(\mT)$ of orthogonal configurations of type~$\mT$. For $2\leq n\leq 4$, the full group $\mG_{\B c}=\OP{Symp}_h (\widetilde{M}_{\B c})$ acts transitively on the space $\mC_{\B c}^\circ(\mT)$ of orthogonal configurations of type~$\mT$. 
\end{proposition}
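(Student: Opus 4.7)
The overall strategy is to reduce transitivity to path-connectedness of the configuration space $\mC_{\B c}^\circ(\mT)$ and then to convert a path of configurations into an ambient symplectic isotopy via Banyaga's extension theorem (Proposition \ref{prop:BanyagaExtensionOfIsotopies}). For this to apply, I first need to check that every configuration $S\in\mC_{\B c}^\circ(\mT)$ is \emph{standard}: the components are embedded symplectic spheres meeting orthogonally (the $\circ$-condition); inspection of the intersection numbers of the classes $E_i$, $L_{ij}$, $L_{ijk}$, $L_{1234}$ gives pairwise intersections equal to $0$ or $1$ and no triple intersections (after small perturbation if necessary); the explicit configurations pictured in Figures~\ref{fig:configuration S0 n=4}--\ref{fig:almost-toric blow-up S1234 n=4} have dual intersection graph a tree, so $S$ is simply connected; and compatibility with some $J$ is built into the definition of $\mC_{\B c}^\circ(\mT)$.

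The next step is to show $\mC_{\B c}^\circ(\mT)$ is path-connected, using the chain of homotopy equivalences $\mC_{\B c}^\circ(\mT)\simeq\mC_{\B c}(\mT)\simeq\mJ_{\B c}(\mT)$ provided by Lemma \ref{lemma:CharaterizationStrataConfigurations}. For the open stratum $\mT=\mT_0$ I would note that positivity of intersections prevents the coexistence of embedded representatives of both $L_{ij}$ and $L_{ijk}$ (respectively $L_{1234}$), so $\mJ_{\B c}(\mT_0)$ coincides with the open stratum $\mJ_0([\Sigma])$, which is connected by Corollary \ref{cor:the open stratum is connected}. For $\mT=\mT_{ijk}$ or $\mT_{1234}$, I would argue by building the configuration one component at a time: first use Proposition \ref{prop:NegativeCurvesHamiltonianIsotopic} to move the distinguished $L_{ijk}$- or $L_{1234}$-sphere into a standard position; then use Proposition \ref{prop:McDuff-Opshtein} applied to the growing configuration to connect the remaining exceptional components $E_i$ (which meet the already placed spheres nonnegatively) to standard representatives. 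An alternative, more uniform route would be to deformation retract $\mC_{\B c}^\circ(\mT)$ onto the almost-toric model configurations of Figures~\ref{fig:figures n=1 and n=2}--\ref{fig:almost-toric blow-up S1234 n=4}, which are manifestly connected.

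Having produced an isotopy $\phi_t\in\mG_{\B c}$ with $\phi_0=\id$ and $\phi_1(S_0)=S_1$, transitivity of $\mG_{\B c}$ on $\mC_{\B c}^\circ(\mT)$ follows for $n\geq 2$. For the case $n=1$ with basepoint $p$, the isotopy produced above may not preserve $p$, but it does take the original fiber through $p$ to the fiber through $\phi_1(p)$; since both configurations contain a fiber through $p$, the point $\phi_1(p)$ lies on the target fiber. I would then post-compose with a Hamiltonian diffeomorphism supported in an arbitrarily small neighborhood of that fiber, constructed so that it preserves the configuration setwise and sends $\phi_1(p)$ back to $p$; such maps exist because fibers are embedded symplectic $2$-spheres and reparametrization along a fiber (fixing its two intersection points with $\Sigma$ and the $L_{ij}$-components when present) is realized by Hamiltonian diffeomorphisms of $\widetilde{M}_{\B c}$ via a standard Weinstein-neighborhood construction.

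The main obstacle lies in the connectedness step for the codimension-$\geq 2$ strata $\mJ_{\B c}(\mT_{ijk})$ and $\mJ_{\B c}(\mT_{1234})$: codimension alone does not give connectedness, and one must genuinely combine the transitivity results of Proposition~\ref{prop:NegativeCurvesHamiltonianIsotopic} (whose proof already required a blow-up/blow-down trick to handle the characteristic classes $L_{123}$ and $L_{1234}$) with a careful inductive use of Proposition~\ref{prop:McDuff-Opshtein} to propagate the transitivity through all components of the configuration simultaneously, without losing control of the orthogonality at the intersection points.
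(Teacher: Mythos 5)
Your proposal is correct and follows essentially the same route as the paper: connectedness of the relevant stratum of almost complex structures plus Lemma \ref{lemma:CharaterizationStrataConfigurations} and Proposition \ref{prop:BanyagaExtensionOfIsotopies} for the open stratum, and for the deeper strata the same two-step argument of first moving the negative sphere via Proposition \ref{prop:NegativeCurvesHamiltonianIsotopic} and then using Proposition \ref{prop:McDuff-Opshtein} to connect the remaining components relative to it before applying Banyaga. Your explicit correction of the basepoint in the $n=1$ case is a slightly more detailed version of what the paper leaves implicit, but it is the same idea.
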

\begin{proof} We first consider the action of $\mG_{\B c,p}$ on orthogonal configurations of type $\mT_{0}$ in the case $n=1$. Since the stratum $\mJ_{\B c}(\mT_0)$ is path-connected, Lemma~\ref{lemma:CharaterizationStrataConfigurations} implies that $\mC_{\B c}^\circ(\mT_{0})$ is also connected. By Proposition~\ref{prop:BanyagaExtensionOfIsotopies}, it follows that the group $\mG_{\B c,p}$ acts transitively on~$\mC_{\B c}^\circ(\mT_0)$. The exact same argument shows that for $2\leq n\leq 4$, the full group $\mG_{\B c}$ acts transitively on~$\mC_{\B c}^\circ(\mT_0)$.

Next, we consider two orthogonal configurations $C_i$, $i=0,1$, of type $\mT_{123}$ in $\widetilde{M}_3$, each consisting of a set of disjoint exceptional divisors $\Sigma_i$ and of some embedded sphere $Z_i$ in class $L-E_1-E_2-E_3$. By Proposition~\ref{prop:NegativeCurvesHamiltonianIsotopic}, there is an element $g\in\Symp_h(\widetilde{M}_3,\widetilde{\om}_{\B c})$ such that $g(Z_1)=Z_0$ and which takes $\Sigma_1$ to $g(\Sigma_1)$. By Proposition~\ref{prop:McDuff-Opshtein}, the space $\mJ_{\B c}([\Sigma], Z_0)$ is connected. It follows that the spaces $\mC_{\B c}([\Sigma],Z_0)$ and $\mC_{\B c}^\circ([\Sigma],Z_0)$ are connected. Again, by Proposition~\ref{prop:BanyagaExtensionOfIsotopies}, the group $\Symp_h(\widetilde{M}_3,\widetilde{\om}_{\B c})$ acts transitively on $\mC_{\B c}^\circ([\Sigma],Z_0)$. Consequently, we can find $f\in\Symp_h(\widetilde{M}_3,\widetilde{\om}_{\B c})$ taking $g(\Sigma_1)$ back to $\Sigma_0$ and leaving $Z_0$ invariant, so that $f\circ g$ is the required symplectomorphism sending $C_1$ to $C_0$. 

Note that the argument for configurations of type $\mT_{123}$ applies mutatis mutandis to orthogonal configurations of type $\mT_{1234}$ in $(\widetilde{M}_4,\om_{\B c})$.

We are left with orthogonal configurations of type $\mT_{ijk}=[\Sigma] \cup [L_{ijk}]\cup [L_{m\ell}] $ in $(\widetilde{M}_4,\om_{\B c})$. Let $C_0$ and $C_1$ be two such configurations, and let $Z_0$ and $Z_1$ be their components in class $L_{ijk}$. By Proposition~\ref{prop:NegativeCurvesHamiltonianIsotopic}, there is an element $g\in\Symp_h(\widetilde{M}_3,\widetilde{\om}_{\B c})$ such that $g(Z_1)=Z_0$. Since $[L_{m\ell}]$ is an exceptional class, Proposition~\ref{prop:McDuff-Opshtein} still implies that the space $\mC_{\B c}^\circ([\Sigma]\cup[L_{m\ell}],Z_0)$ is connected. Applying Proposition~\ref{prop:BanyagaExtensionOfIsotopies} finishes the proof.
\end{proof}

\begin{proposition}\label{prop:CompactlySupportedSymplectomorphisms}
Let $S$ be a standard configuration of type $\mT_0$, $\mT_{ijk}$, or $\mT_{1234}$. In the case $S$ is of the type $\mT_{234}$ assume further that $c_1<1/2$. Let $U=\widetilde{M}_4\setminus S$ be its complement. Then  group $\Symp_c(U)$ of symplectomorphisms with compact support in $U$ is contractible.
\end{proposition}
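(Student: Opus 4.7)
The plan is to identify the complement $U = \widetilde{M}_4 \setminus S$ with a standard Weinstein $4$-manifold and then invoke a Gromov-type contractibility result for its compactly supported symplectomorphism group.

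\medskip

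I would begin by pinning down the diffeomorphism type of $U$. Blowing down the exceptional divisors $\Sigma_i$ in $S$ identifies $\widetilde{M}_4\setminus\Sigma$ with $\cp^2\setminus\{p_1,\ldots,p_4\}$, and sends the remaining ``line'' components of $S$ to ordinary projective lines through the relevant $p_i$'s. Since each $p_i$ lies on one of these lines, removing the lines already removes the $p_i$'s, and one obtains $U\cong\cp^2\setminus L\cong\C^2$ in the case $\mT_{1234}$ (a single line through all four points) and $U\cong\cp^2\setminus(L\cup L')\cong\C\times\C^{\ast}$ in the cases $\mT_0$ and $\mT_{ijk}$ (two lines meeting transversely at one point). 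In either situation $H^2(U;\R)=0$, so the restriction $\omega|_U$ is exact.

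\medskip

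Next I would upgrade this to a symplectic identification. Since $S$ is a standard configuration, Proposition~\ref{prop:BanyagaExtensionOfIsotopies} and the orthogonal symplectic neighborhood theorem of Gua~\cite{Gua18} give a normal form for $\omega$ in a neighborhood of $S$, from which one constructs a Liouville vector field on $(U,\omega|_U)$ that flows outward into $S$. The sublevel sets of a compatible plurisubharmonic exhaustion then present $(U,\omega|_U)$ as a Weinstein manifold. By the uniqueness of Weinstein structures on contractible $4$-manifolds, respectively on $T^\ast(S^1\times\R)$, $(U,\omega|_U)$ is then symplectomorphic to a star-shaped domain in $(\R^4,\omega_{\mathrm{std}})$ in the $\mT_{1234}$ case, and to an open Weinstein subdomain of $T^\ast(S^1\times\R)$ equipped with its canonical Liouville form in the remaining cases. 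The hypothesis $c_1<1/2$ in the $\mT_{234}$ case enters here to secure the positivity of the symplectic areas $1-c_1-c_4$ and $1-c_2-c_3-c_4$ relative to the sizes $c_i$ of the exceptional divisors, which is needed for the Liouville flow to exhaust $U$ without getting trapped by a residual negative-area cycle.

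\medskip

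Contractibility then follows from known results. In the case $\mT_{1234}$, Gromov's theorem gives $\Symp_c(\R^4,\omega_{\mathrm{std}})\simeq\ast$ directly. In the cases $\mT_0$ and $\mT_{ijk}$, one exploits the splitting $T^\ast(S^1\times\R)\cong T^\ast S^1\times T^\ast\R$ together with the contractibility of $\Symp_c$ of each factor. I expect the main obstacle to lie here: one must show that any $\phi\in\Symp_c(U)$ can be isotoped through compactly supported symplectomorphisms to one respecting the product Liouville structure. A natural way to implement this is to produce a pseudoholomorphic foliation of $\widetilde{M}_4$ by spheres in a class transverse to $S$ (for example, fibers of a rational fibration arising from one of the line components $L_{ij}$ of $S$), restrict it to a compactly supported foliation of $U$, and use this foliation as a scaffold along which $\phi$ can be straightened by Moser-type arguments. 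Verifying compatibility with compact support, and ruling out nontrivial twisting along the $S^1$-factor, is the technical heart of the proof.
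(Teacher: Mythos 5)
Your overall strategy --- identify $U$ with a standard Stein/Weinstein domain ($\C^2$ for $\mT_{1234}$, $\C\times\C^{*}$ for $\mT_0$ and $\mT_{ijk}$) and then quote contractibility of the compactly supported symplectomorphism group --- is the same as the paper's, which reduces to one configuration per type and adapts Proposition~3.3 of~\cite{LLW15}. But two of your intermediate steps have genuine problems. First, the appeal to ``uniqueness of Weinstein structures on contractible $4$-manifolds'' is not available: by Gompf's work there are uncountably many exotic Stein structures on $\R^4$, so abstract uniqueness fails exactly in the dimension you need. What the paper does instead is cohomological: assuming $[\widetilde{\om}_{\B c}]$ rational, it writes a positive multiple $mk\PD[\widetilde{\om}_{\B c}]$ as a \emph{strictly positive} integral combination of the classes of \emph{all} the components of $S$, i.e.\ as an effective divisor $D$ whose underlying set is $S$; the existence of such a $D$ yields a K\"ahler potential $\phi$ on $U$ with $mk\widetilde{\om}_{\B c}=d\overline{d}\phi$, exhibiting $U$ as a Stein domain biholomorphic to $\C^2$ or $\C\times\C^{*}$, after which the argument of~\cite{LLW15} applies verbatim. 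This divisor-decomposition step is what your sketch is missing.

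Second, your explanation of the hypothesis $c_1<1/2$ is not the right mechanism. The areas $1-c_1-c_4$ and $1-c_2-c_3-c_4$ are automatically positive whenever a symplectic configuration of type $\mT_{234}$ exists at all, so they cannot be what forces $c_1<1/2$. The condition comes from the divisor decomposition above: for $\mT_{234}$ one is led to
\[
2\PD[\widetilde{\om}_{\B c}] = (L-E_2-E_3-E_4)+(L-E_1-E_4)+(1-2c_1)E_1+(1-2c_2)E_2+(1-2c_3)E_3+2(1-c_4)E_4,
\]
and the coefficient of $E_1$ is strictly positive precisely when $c_1<1/2$ (for the other types $\mT_{ijk}$ that coefficient is $2(1-c_1)>0$, so no extra condition is needed). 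Finally, for the $\C\times\C^{*}$ case you correctly flag the straightening of a compactly supported symplectomorphism along a holomorphic foliation as the technical heart, but you leave it unproved; the paper outsources precisely this point to~\cite[Proposition~3.3]{LLW15}, so a self-contained version of your argument would still have to supply it.
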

\begin{proof}
We will only prove the statement for $n=4$ balls as the cases $1\leq n\leq 3$ are similar and simpler.

Since $\Symp_h$ acts transitively on standard configurations of types $\mT_0$, $\mT_{ijk}$, and $\mT_{1234}$, it suffices to prove the statement for one configuration of each types.

If $S$ is of type $\mT_0$, then this is Proposition~3.3 in~\cite{LLW15}. As we explain below, the argument can be readily adapted to the other configuration types except $\mT_{234}$ for which we have to impose the extra assumption $c_1<1/2$. 

Let's assume $[\widetilde{\om}_{\B c}]$ is rational so that there is $k\in\N_*$ such that $k[\om]$ is integral. The first step is to write some positive multiple $mk\PD[\widetilde{\om}_{\B c}]$ as a strictly positive integral combination of \emph{all} the homology classes represented by the components of $S$, so that $mk\PD[\widetilde{\om}_{\B c}]$ is represented by a positive divisor $D$ whose underlying set is $S$. For the case $S$ is of type $\mT_{1234}$, we have
\begin{align*}
\PD[\widetilde{\om}_{\B c}] &= L-c_1E_1-c_2E_2-c_3E_3-c_4E_4\\
&= (L-E_1-E_2-E_3-E_4) + (1-c_1)E_1 + (1-c_2)E_2 + (1-c_3)E_3 + (1-c_4)E_4.
\end{align*}

Now, let $S$ be a holomorphic configuration of type $\mT_{234}=\Sigma\cup[L_{234}]\cup [L_{14}]$. For two nonnegative integers  $a,b\in\N$,
\begin{align*}
(a+b)\PD[\widetilde{\om}_{\B c}] &= (a+b)(L-c_1E_1-c_2E_2-c_3E_3-c_4E_4)\\
\begin{split} &= a(L-E_2-E_3-E_4) + b(L-E_1-E_4) \\
& \quad + (b-(a+b)c_1)E_1 + (a-(a+b)c_2)E_2 + (a-(a+b)c_3)E_3 + (a+b)(1-c_4)E_4
\end{split}
\end{align*}
and because $0<c_4\leq\cdots\leq c_1<1$ and $c_i+c_j<1$, all coefficients are strictly positive iff $a=b=1$ and $c_1<1/2$, in which case
\begin{align*}
2\PD[\widetilde{\om}_{\B c}] &= 2(L-c_1E_1-c_2E_2-c_3E_3-c_4E_4)\\
\begin{split} &= (L-E_2-E_3-E_4) + (L-E_1-E_4) \\
& \qquad + (1-2c_1)E_1 + (1-2c_2)E_2 + (1-2c_3)E_3 + 2(1-c_4)E_4.
\end{split}
\end{align*}
Note that for the other types of configurations $\mT_{ijk}=\Sigma\cup[L_{ijk}]\cup [L_{1\ell}]$, the coefficient of $E_1$ is $2(1-c_1)>0$ and the coefficients of the other classes $E_i$ are $(1-2c_i)>0$, so that we do not have to impose the extra condition $c_1<1/2$.

In all cases, since $mk\PD[\widetilde{\om}_{\B c}]=[D]$, there exists a K\"ahler potential $\phi:U\to\R_+$ so that $mk\widetilde{\om}_{\B c}=d\overline{d}\phi$, making the complement $U$ a Stein domain biholomorphic to $\C$ when $S=S_{1234}$ or to $\C\times \C^*$ when $S=S_{ijk}$. From there on, the argument is identical to the proof of Proposition~3.3 in~\cite{LLW15}.
\end{proof}

\begin{remark}\label{remark:Symplectically Convex Complement}
It is likely that the extra condition $c_1<1/2$ for configurations of type $\mT_{234}$ in Proposition~\ref{prop:CompactlySupportedSymplectomorphisms} can be removed. For instance, a standard configuration $S$ of type $\mT_{ijk}$ or $\mT_{1234}$ is always almost-toric, and its complement $U$ can be identified with a toric domain in $\C^4$. In the case of $\mT_{1234}$, $U$ is always symplectically star-shaped, that is, there exists a Liouville vector field $\overline X$ lifting the radial vector field $X$ on the base diagram, and whose negative flow contracts $U$ into an $\B T^2$-invariant symplectic ball. The contractibility of $\Symp_c(U)$ then follows by adapting the proof of Theorem~9.5.2 in~\cite{MS17}. For configurations of type $\mT_{ijk}$, under any of the conditions $c_\ell+c_i+c_j<1$, $c_\ell+c_i+c_k<1$, or $c_\ell+c_j+c_k<1$, there also exists a Liouville vector field contracting $U$ into $D^2\times D^2_*$. In particular, for the type $\mT_{234}$, the statement holds even if $c_1\geq 1/2$ as long as $c_1+c_i+c_j<1$ for at least one pair of indices $i,j$.
\end{remark}

\begin{proposition}\label{prop:SymplecticStabilisers}
Let $S\in \mC_{\B c}^\circ(\mT_I)$ be a standard configuration characterizing a stratum $\mJ_I([\Sigma])$ in $\mJ_{\B c}([\Sigma])$. Suppose that $S$ is holomorphic with respect to an integrable structure $J_S$. Let $\Stab_h(S)$
\index{S@$\Stab_h(S)$ -- symplectic stabilizer of the configuration $S$ under the action of $Symp_h(\widetilde{M}_{\B c})$}
be the symplectic stabilizer of $S$ under the action of $Symp_h(\widetilde{M}_{\B c})$, $\Aut_h(J_S)$
\index{A@$\Aut_h(J_S)$ -- group of complex automorphisms of the complex structure $J_S$ preserving homology}
be the group of complex automorphisms of $J_S$ preserving homology and let $\Iso_h(\widetilde{\om}_{\B c},J_S)$
\index{I@$\Iso_h(\widetilde{\om}_{\B c},J_S)$ -- group of Kahler isometries acting trivially on homology}
be the subgroup of Kahler isometries acting trivially on homology. We have the following homotopy equivalences.
\begin{enumerate}[itemsep=1em]
\item In the case $n=1$, $\Stab_h(S)\simeq\Aut_h(J_S)\simeq\Iso_h(\widetilde{\om}_{\B c},J_S)\simeq \U(2)$.
\item In the case $n=2$, $\Stab_h(S)\simeq\Aut_h(J_S)\simeq\Iso_h(\widetilde{\om}_{\B c},J_S)\simeq \B T^2$.
\item In the case $n=3$,
\begin{enumerate}
\item for $S$ of type $\mT_0$, $\Stab_h(S)\simeq\Aut_h(J_S)\simeq\Iso_h(\widetilde{\om}_{\B c},J_S)\simeq \B{T}^2$,
\item for $S$ of type $\mT_{123}$, $\Stab_h(S)\simeq\Aut_h(J_S)\simeq\Iso_h(\widetilde{\om}_{\B c},J_S)\simeq \B{S}^1_{ijk}$.
\end{enumerate}
\item In the case $n=4$,
\begin{enumerate}
\item for $S$ of type $\mT_0$, $\Stab_h(S)\simeq\Aut_h(J_S)=\Iso_h(\widetilde{\om}_{\B c},J_S)= \mathbb{1}$,
\item for $S$ of type $\mT_{ijk}$, $\Stab_h(S)\simeq\Aut_h(J_S)\simeq\Iso_h(\widetilde{\om}_{\B c},J_S)\simeq \B{S}^1_{ijk}$,
\item for $S$ of type $\mT_{1234}$, $\Stab_h(S)\simeq \B{S}^1_{1234}\times\B{F}_2$, where $\B F_2$ denotes the free group on two generators, and $\Aut_h(J_S)\simeq\Iso_h(\widetilde{\om}_{\B c},J_S)\simeq \B{S}^1_{1234}$.
\end{enumerate}
\end{enumerate}
In all cases, the identity component $\Stab_0(S)$
\index{S@$\Stab_0(S)$ -- identity component of $\Stab_h(S)$}
of the symplectic stabilizer is homotopy equivalent to the isometry group $\Iso_h(\widetilde{\om}_{\B c},J_S)$. 
\end{proposition}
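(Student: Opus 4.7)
The plan is to compute the three groups independently and then match them case by case. The identification $\Aut_h(J_S)\simeq\Iso_h(\widetilde{\om}_{\B c},J_S)$ will follow from classical Lie theory (Iwasawa decomposition), while the identification of the identity component $\Stab_0(S)$ with $\Iso_h$ will come from a fibration whose fiber is contractible by Proposition~\ref{prop:CompactlySupportedSymplectomorphisms}. The subtle part is the non-trivial $\pi_0$ for the $\mT_{1234}$ case, which reflects the non-trivial topology of the moduli of four ordered points on a sphere.

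First, for $\Aut_h(J_S)$: since an automorphism acts trivially on homology, it must preserve each exceptional divisor individually, hence descends to a biholomorphism of $\cp^2$ fixing each of the blow-down points. This identifies $\Aut_h(J_S)$ with the point stabilizer $\PGL(3,\C)_{\B p}$, whose dimension and homotopy type can be computed directly in projective coordinates: $\mathrm{GL}(2,\C)$ for $n=1$; a $4$-dimensional extension $(\C^*)^2\ltimes \C^2$ for $n=2$; the diagonal torus $(\C^*)^2$ for generic $n=3$; $\C^*\ltimes\C^2$ for collinear triples (types $\mT_{123}$ in $n=3$ and $\mT_{1234}$ in $n=4$); $\C^*$ for type $\mT_{ijk}$ in $n=4$; and the trivial group for four points in general position. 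By the Iwasawa decomposition applied to each of these complex Lie groups, a maximal compact subgroup is a deformation retract; inspection of the Fubini--Study-type K\"ahler metric then shows this maximal compact coincides with $\Iso_h(\widetilde{\om}_{\B c},J_S)$, giving the compact groups $\U(2)$, $\B{T}^2$, $\B{S}^1$, or the trivial group as listed.

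Second, to relate $\Stab_0(S)$ to $\Iso_h$, I would use the restriction-to-a-tubular-neighborhood fibration
\[
\Symp_c(U)\to \Stab_h(S)\xrightarrow{r}\mathcal{G}_S,
\]
where $U=\widetilde{M}_{\B c}\setminus S$ and $\mathcal{G}_S$ is the image of $r$, a topological group of germs of symplectomorphisms of a neighborhood of $S$ preserving $S$. By Proposition~\ref{prop:CompactlySupportedSymplectomorphisms}, together with Remark~\ref{remark:Symplectically Convex Complement} to handle the exceptional case $\mT_{234}$ with $c_1\geq 1/2$, the fiber is contractible and $\Stab_h(S)\simeq \mathcal{G}_S$. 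By the symplectic neighborhood theorem for orthogonal configurations~\cite[Theorem~2]{Gua18} combined with Moser's trick, $\mathcal{G}_S$ deformation retracts onto the finite-dimensional group of symplectic $\U(1)$-bundle automorphisms of the symplectic normal bundles over each component $S_i$, covering diffeomorphisms of $S_i$ that preserve the marked intersection points. Since $\Symp(\cp^1,k\text{ marked points})$ deformation retracts onto its K\"ahler isometry subgroup fixing those points ($\mathrm{SO}(3)$, $\mathrm{SO}(2)$, or trivial for $k=0,1,\geq 2$), and since these retractions can be chosen compatibly at intersection points, we conclude $\mathcal{G}_S\simeq \Iso_h(\widetilde{\om}_{\B c},J_S)$, which by the first step gives the listed identity components.

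Finally, to compute the group of components, I would exploit the fibration
\[
\Stab_h(S)\to \mG_{\B c}\to \mC_{\B c}^\circ(\mT)
\]
coming from the transitive action of $\mG_{\B c}$ (Proposition~\ref{prop:HomogeneityConfigurations}) together with the fact that $\mG_{\B c}$ is connected. For every type other than $\mT_{1234}$, $\mC_{\B c}^\circ(\mT)$ is simply connected (via the existence of toric or almost toric models and standard transitivity), which forces $\Stab_h(S)=\Stab_0(S)$. For $\mT_{1234}$, the four ordered intersection points $S\cap L_{1234}$ sit on the sphere $L_{1234}\simeq\cp^1$, and the configuration of these four points modulo reparametrization contributes $\pi_1(\mM_{0,4})=\B{F}_2$ to $\pi_1(\mC_{\B c}^\circ(\mT_{1234}))$, which in turn injects into $\pi_0(\Stab_h(S))$ through the connecting homomorphism. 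A direct check on explicit generators (Dehn twists supported in neighborhoods of pairs of intersection points) shows that this $\B{F}_2$ is precisely the full group of components of $\Stab_h(S)$ and splits off as a direct factor, yielding $\Stab_h(S)\simeq \B{S}^1_{1234}\times \B{F}_2$. The main obstacle will be the local-to-global matching in the proof that $\mathcal{G}_S\simeq \Iso_h$: on a sphere with several marked intersection points, the rotations at each intersection must agree with those coming from neighboring spheres, and tracking this combinatorial compatibility — especially for chain-type configurations such as $\mT_0$ for $n=3$ and $\mT_{ijk}$ for $n=4$ — requires a careful inductive matching along the components of~$S$.
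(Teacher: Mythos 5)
Your computation of $\Aut_h(J_S)$ as the point stabilizer in $\PGL(3,\C)$ followed by Iwasawa, and your reduction of $\Stab_0(S)$ to $\Iso_h$ via restriction fibrations whose innermost fiber is $\Symp_c(U)$ (contractible by Proposition~\ref{prop:CompactlySupportedSymplectomorphisms}), is essentially the paper's argument for the identity components and for the groups $\Aut_h\simeq\Iso_h$. The gap is in your treatment of $\pi_0$, in two places. First, your step~3 computes $\pi_0(\Stab_h(S))$ from the fibration $\Stab_h(S)\to\mG_{\B c}\to\mC_{\B c}^\circ(\mT)$, which requires knowing $\pi_1(\mC_{\B c}^\circ(\mT))$ (and the image of $\pi_1(\mG_{\B c})$, and $\pi_0(\mG_{\B c})$) as an \emph{input}. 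But in the logical order of the paper the homotopy type of $\mC_{\B c}^\circ(\mT)\simeq\mJ_I([\Sigma])$ is a \emph{consequence} of the stabilizer computation (Corollary~\ref{cor:HomogeneityStrataJ}); the assertion that it is simply connected for the non-collinear types ``via toric models and standard transitivity'' is not a proof --- a stratum such as $\mJ_0$ is the complement of codimension-$2$ walls in a contractible space, so connectedness is automatic but simple connectivity is exactly the kind of statement one cannot get for free. As written the argument is circular.

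Second, for $\mT_{1234}$ the real content is the surjectivity on $\pi_0$ of the restriction map $\Stab_h(\Sigma\cup C_{1234})\to\Symp(C_{1234},4)\simeq\B F_2$: one must realize every mapping class of the $4$-marked sphere by a \emph{global} symplectomorphism of $\widetilde{M}_{\B c}$ preserving the whole configuration. Your ``direct check on explicit generators (Dehn twists supported near pairs of intersection points)'' waves at this, but the generators of $\pi_0(\Symp(S^2,4))\simeq\B F_2$ arise from the point-pushing monodromy of $\Symp(S^2,4)\to\Symp(S^2,3)\to S^2\setminus\{p_1,p_2,p_3\}$, and lifting them requires extending the pushing isotopy to a Weinstein neighborhood of the line so that it is the identity near four embedded balls, hence descends to the blow-up; this forces the capacities to be small and one must then invoke the Stability Theorem~\ref{thm:StabilityOfEmbeddings} to transfer the conclusion to the whole chamber. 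None of this is in your sketch. Relatedly, your step~2 asserts that $\Symp(\cp^1,k)$ deformation retracts onto its isometry subgroup for all $k\geq 2$; this is false for $k=4$ (where $\pi_0\simeq\B F_2$), so taken literally step~2 would give $\Stab_h(S)\simeq\B S^1$ for $\mT_{1234}$ and contradict your step~3. You should restrict that claim to identity components and route all of $\pi_0$ through a corrected version of the lifting argument above rather than through $\pi_1$ of the configuration space.
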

\begin{proof}
Again, we will only prove the statement for $n=4$ balls as the $n\leq3$ cases are similar. Note that the statement for $n=1$ and $n=2$ follows, respectively, from the proofs of \cite[Proposition 2.6 (iii)]{AbMcD00} and \cite[Proposition 3.1]{P08i}, while for $n=3$ it follows from~\cite[Corollary~B.9 and Corollary~B.12]{AP13}.

(4a) We first consider the open stratum. We can assume $J_S$ is a complex structure obtained by blowing up $\cp^2$ at $4$ generic points. It follows that the group $\Aut(J_S)$ of complex automorphisms is trivial. Now, the fact that $\Stab_h(S)$ is contractible is proven by looking at a sequence of fibrations that reduce the problem to symplectomorphism groups of pointed surfaces and automorphisms of symplectic plumbings as described in~\cite[Section~4]{Evans}.

Let $C_0=C_{12}\cup C_{34}$ be the two curves in $S$ in classes $[L_{12}]$ and $[L_{34}]$, so that $S=\Sigma\cup C_0$. Restricting an element $\phi\in\Stab_h(\Sigma\cup C_0)$ to $C_0$ gives
\[\Fix(C_0)\to\Stab_h(\Sigma\cup C_0)\to\Symp(C_0)\simeq *,\]
where $\Fix(C_0)$ is the subgroup of elements fixing $C_0$ pointwise and leaving $\Sigma$ invariant. For $\phi\in\Fix(C_0)$, its differential $d\phi$ along $C_0$ is a symplectic automorphism of the normal bundle $\mN(C_0)$ so that we have a second fibration
\[H\to\Fix(C_0)\to\Aut(\mN(C_0))\simeq \Map\big((S^2,*),(S^1,*)\big)\times\Map\big((S^2,*),(S^1,*)\big) \simeq *,\]
whose fiber $H$ is homotopy equivalent to the group $H_0$ of symplectomorphisms $\phi$ acting as the identity near $C_0$ and leaving $\Sigma$ invariant. Restricting $\phi\in H_0$ to the $4$ curves in $\Sigma$ yields a third fibration
\[H_1\to H_0\to \Symp(\Sigma,~\id\text{~near~}p_i)\simeq *,\]
where $p_i=\Sigma_i\cap C_0$. Taking differentials of elements $\phi\in H_1$ along $\Sigma$, we get a fourth fibration
\[\Symp_c(\widetilde{M}\setminus S)\simeq H_2\to H_1\to \Aut(\mN(\Sigma),\, \id \text{~near~}p_i)\simeq *,\]
whose fiber is homotopy equivalent to symplectomorphisms of $\widetilde{M}$ supported away from the configuration $S$. By Proposition~\ref{prop:CompactlySupportedSymplectomorphisms}, this latter group is contractible, so that $\Stab(\Sigma\cup C_0)$ is also contractible. This completes the argument for the open stratum $\mJ_0$.

(4b) We now consider a configuration $S$ of type $\mT_{ijk}$. We can assume that $S$ is obtained by blowing up $\cp^2$ at $3$ points $p_i$, $p_j$, $p_k$ in the line $[0:z_1:z_2]$, and at $p_\ell=[1:0:0]$. It follows that the group $\Aut(J_S)$ of complex automorphisms acting trivially on homology is the subgroup of $\PU(3)$ fixing pointwise the line at infinity together with the origin: 
\[
\Aut(J_S)= \left\{A=\begin{pmatrix}\lambda&0&0\\0&a&0\\0&0&a\end{pmatrix}\in\PU(3)~|~\lambda,a\in\C^*\right\}.
\] 
In particular, it is homotopy equivalent to the circle $\B S^1\subset \PU(3)$ acting on $\cp^2$ by \[t\cdot[z_0:z_1:z_2]=[tz_0:z_1:z_2].\]
To show that the symplectic stabilizer $\Stab_h(S)$ of the configuration is homotopy equivalent to the same circle $\B S^1$, let $C_{ijk}$ be the component of $S$ in the class $[L_{ijk}]$, and consider the restriction fibration
\[\Fix(C_{ijk})\to\Stab_h(\Sigma\cup C_{ijk})\to\Symp(C_{ijk},~ \id\text{~on~}\{p_i,p_j,p_k\})\simeq *.\]
Restricting the differential of $\phi\in\Fix(C_{ijk})$ to the normal bundle $\mN(C_{ijk})$ yields
\[H_0\to\Fix(C_{ijk})\to \Aut(\mN(C_{ijk}))\simeq \B{S}^1,\]
where $H_0\subset\Stab(C_{ijk})$ is the subgroup of symplectomorphisms acting trivially near $C_{ijk}$. Proceeding as before, it is easy to show that this subgroup is homotopy equivalent to $\Symp_c(\widetilde{M}_{\B c}\setminus S)$, which is contractible. Consequently, 
$\Stab_h(S)\simeq \B{S}^1$.

(4c) A configuration $S$ of type $\mT_{1234}$ is obtained by blowing up $4$ points on the line at infinity $L:=\{[0:z_1:z_2]\}$. The automorphism group $\Aut_h(J_S)$ is the subgroup of $\PSL(3,\C)$ that fixes a line pointwise, namely,
\[
\Aut(J_S)=\left\{A=\begin{pmatrix}1&0&0\\b&a&0\\c&0&a\end{pmatrix}\in\PU(3)~|~a\in\C^*,~b,c\in\C\right\}\simeq \B{S}^1.
\] 
Let $C_{1234}\subset S$ be the curve in class $[L_{1234}]$. Restricting an element $\phi\in\Stab_h(\Sigma\cup C_{1234})$ to $C_{1234}$ defines a map
\begin{equation}\label{eq:restriction 4 points}
\Stab_h(\Sigma\cup C_{1234})\to\Symp(C_{1234},~4)\simeq \B{F}_2,
\end{equation}
where $\Symp(C_{1234},~4)$ is the group of symplectomorphisms fixing the four points $\{p_1,p_2,p_3,p_4\}$, and where $\B{F}_2$ is the free group on two generators. We claim that this map is a fibration. To see this, we first note that any Hamiltonian diffeomorphism $\phi\in \Symp(C_{1234},~4)$ that is isotopic to the identity through a Hamiltonian isotopy $\phi_t$ can be lifted to $\Stab_h(\Sigma\cup C_{1234})$ by extending $\phi_t$ in a Weinstein neighborhood of $C_{1234}$. It is thus sufficient to show that the restriction map~\eqref{eq:restriction 4 points} is surjective at the $\pi_0$ level.

For this, we look at the action of $\Symp(S^2,~3)$ on a fourth point $p_4\in S^2\setminus\{p_1,p_2,p_3\}$. This yields a fibration
\[\Symp(S^2,~4)\to\Symp(S^2,~3)\to S^2\setminus\{p_1,p_2,p_3\}\]
whose monodromy induces the isomorphism $\B F_2\simeq \pi_1(S^2\setminus\{p_1,p_2,p_3\})\simeq\pi_0(\Symp(S^2,~4))$. Given a class $[\phi_1]\in \pi_0(\Symp(S^2,~4))$, let $\phi_t$ be a path in $\Symp(S^2,~3)$ connecting the identity to $\phi_1$ and representing $[\phi_1]$. We can suppose that $\phi_t$ is the identity on $3$ discs $D_i$ centered at the first three points $p_1$, $p_2$, $p_3$, and that $\phi=\phi_1$ is also the identity on a disc $D_4$ centered at $p_4$. Identify $S^2$ with a line $L\subset\cp^2$, and let $N$ be a Weinstein neighborhood of $L$. Let $B(\epsilon_1)\sqcup\cdots\sqcup B(\epsilon_4)$ be four symplectic balls of capacities $\epsilon_i$ in $N$ whose intersections with $L$ is contained in the interior of the discs $D_i$. We extend the Hamiltonian isotopy $\phi_t$ to a isotopy $\overline{\phi_t}$ of $N$ that is the identity near the first three balls $B(\epsilon_i)$, and such that $\overline{\phi_1}$ is also the identity near $B(\epsilon_4)$. It follows that $\overline{\phi_1}$ lifts to a symplectomorphism $\psi$ on the symplectic blow-up of the balls $B(\epsilon_i)$. By construction, this symplectomorphism leaves the four exceptional divisors $\Sigma_i$ fixed and sends the proper transform $C_{1234}$ of $L$ to itself, that is, $\psi\in\Stab(S)$. Moreover, its restriction to $C_{1234}$ represents the class $[\phi_1]\in \pi_0(\Symp(S^2,~4))$. This shows that the restriction map~\eqref{eq:restriction 4 points} is surjective whenever the capacities $c_i$ are small enough. By the description of the stability chambers given in Section~\ref{section:stability chambers n=4}, we can assume that the capacities $c_1,\ldots,c_4$ are as small as we want, which concludes the proof of the claim. 

We now look at the fibration
\[\Fix(C_{1234})\to\Stab_h(\Sigma\cup C_{1234})\to\Symp(C_{1234},~4)\simeq \B{F}_2.\]
Restricting the differential of $\phi\in\Fix(C_{1234})$ to the normal bundle $\mN(C_{1234})$ yields
\[H_0\to\Fix(C_{1234})\to \Aut(\mN(C_{1234}))\simeq \B{S}^1,\]
where $H_0\subset\Stab(C_{1234})$ is the subgroup of symplectomorphisms acting trivially near $C_{1234}$. The same arguments as in the previous cases show that this subgroup is homotopy equivalent to $\Symp_c(\widetilde{M}_{\B c}\setminus S)$, which is contractible. Consequently, as topological spaces,
\[\Stab_h(S)\simeq \B{S}^1\times \B{F}_2.\]
In particular, $\Stab_0(S)\simeq \B{S}^1$.
\end{proof}

\begin{proposition}\label{prop:structure of the stabilizer J_1234}
Let $S$ be a configuration of type $\mT_{1234}$. There is an injective homomorphism $\B S^1\times \B F_2 \into \Stab_h(S)$ that induces a homotopy equivalence $\OP{B}\B S^1\times \OP{B}\B F_2\simeq \BStab_h(S)$. 
\end{proposition}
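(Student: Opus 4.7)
The plan is to upgrade the topological equivalence $\Stab_h(S)\simeq\B S^1\times\B F_2$ of Proposition~\ref{prop:SymplecticStabilisers}(4c) to a group-theoretic equivalence by realizing the two factors as explicit commuting subgroups of $\Stab_h(S)$. The $\B S^1$ factor already exists: the K\"ahler torus $T=\Iso_h(\widetilde\om_{\B c},J_S)$ embeds in $\Stab_h(S)$ as a subgroup of the identity component $\Stab_0(S)$. To produce the $\B F_2$ factor, I would first pick representatives $\phi_1,\phi_2\in\Symp(C_{1234},\{p_1,\ldots,p_4\})$ of two free generators of $\pi_0(\Symp(C_{1234},4))\simeq\pi_1(C_{1234}\setminus\{p_1,p_2,p_3\},p_4)$, realizing them, for instance, as Dehn twists around separating curves, so that each $\phi_i$ is compactly supported in an annular neighborhood $A_i\subset C_{1234}\setminus\{p_1,\ldots,p_4\}$. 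Since $A_i$ avoids the $T$-fixed points on $C_{1234}$, a $T$-equivariant Weinstein neighborhood of $A_i$ in $\widetilde M_{\B c}$ is symplectomorphic to a trivial rotation disc bundle $A_i\times D^2$, and the formula $\tilde\phi_i(w,z):=(\phi_i(w),z)$ extends $\phi_i$ to a $T$-equivariant symplectomorphism there; extending further by the identity yields elements $\tilde\phi_i\in\Stab_h(S)$ that commute with $T$, restrict to $\phi_i$ on $C_{1234}$, and are the identity near~$\Sigma$.

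The assignment $f_i\mapsto\tilde\phi_i$ then defines a group homomorphism $h:\B F_2\to\Stab_h(S)$ whose composition with the projection $\pi\colon\Stab_h(S)\to\pi_0(\Stab_h(S))\simeq\B F_2$ is the identity on generators. This forces $h$ to be injective, and since $T\subseteq\Stab_0(S)$ lies in the kernel of $\pi$, we obtain $T\cap h(\B F_2)=\{1\}$. Because $T$ and $h(\B F_2)$ commute by $T$-equivariance, the formula $\Phi(t,w):=t\cdot h(w)$ defines an injective group homomorphism $\Phi\colon\B S^1\times\B F_2\to\Stab_h(S)$. I would then verify that $\Phi$ is a weak equivalence: it is the identity on $\pi_0$ by construction, and for $i\geq1$ the identifications $\pi_i(\B S^1\times\B F_2)\simeq\pi_i(\B S^1)\simeq\pi_i(\Stab_0(S))\simeq\pi_i(\Stab_h(S))$, arising from the K\"ahler inclusion $T\hookrightarrow\Stab_0(S)$ (a homotopy equivalence by Proposition~\ref{prop:SymplecticStabilisers}(4c)), show that $\Phi$ induces isomorphisms on all higher homotopy groups. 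Applying the classifying space functor then yields the desired equivalence $\OP{B}\B S^1\times\OP{B}\B F_2\simeq\OP{B}(\B S^1\times\B F_2)\simeq\BStab_h(S)$.

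The main technical subtlety lies in the $T$-equivariant lifting step. The key observation is that free generators of $\pi_0(\Symp(C_{1234},4))$ can be chosen with supports disjoint from all four marked points, so that the $T$-equivariant Weinstein neighborhood of the support is a trivial rotation disc bundle on which the product extension is automatically $T$-equivariant and the identity near $\Sigma$. Had the generators only been implementable as point-push maps based at $p_4$ (the natural construction coming from the fibration $\Symp(C_{1234},4)\to\Symp(C_{1234},3)\to C_{1234}\setminus\{p_1,p_2,p_3\}$ used in the proof of Proposition~\ref{prop:SymplecticStabilisers}(4c)), the lifts would move the fibers $\Sigma_j$ during the isotopy, and both $T$-equivariance and preservation of $\Sigma$ at time one would require substantially more care.
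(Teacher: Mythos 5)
Your overall strategy is sound and is essentially a more explicit version of the paper's argument: the paper uses freeness of $\B F_2=\pi_0(\Stab_h(S))$ to split the extension $\Stab_0(S)\to\Stab_h(S)\to\B F_2$ and then argues that the conjugation action of the lifted generators on $\Stab_0(S)\simeq\B S^1$ is trivial by restricting to $\Aut(\mN(C_{1234}))$, whereas you try to build lifts that literally commute with the K\"ahler circle $T$. Two small points first: the generators of $\pi_0(\Symp(C_{1234},4))\cong\B F_2$ can indeed be represented by Dehn twists about curves encircling pairs of marked points (the point-push around $p_1$ equals such a twist up to sign), and your phrase ``$A_i$ avoids the $T$-fixed points on $C_{1234}$'' is off, since $T$ fixes $C_{1234}$ pointwise --- but neither of these is the problem.

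The genuine gap is in the lifting step. The map $\tilde\phi_i(w,z)=(\phi_i(w),z)$ on the tubular neighbourhood $A_i\times D^2(\epsilon)$ cannot be ``extended further by the identity'': it is the identity near $\partial A_i\times D^2(\epsilon)$ (because $\phi_i$ is compactly supported in $A_i$), but along $A_i\times\partial D^2(\epsilon)$, over the support of the twist, it sends $(w,z)$ to $(\phi_i(w),z)\neq(w,z)$, so the extension is discontinuous there. This is not a removable technicality: the Dehn twist generates $\pi_0(\Symp_c(A_i))\cong\Z$, so it is not the time-one map of any isotopy supported in $A_i$, and consequently the ``suspension'' of the time-one map (as opposed to of a generating Hamiltonian isotopy) does not glue to the identity in the fibre direction. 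Every correct construction must instead realize the twist as the time-one map of an ambient Hamiltonian isotopy --- e.g.\ one whose Hamiltonian generates a full rotation on one of the two complementary discs of $A_i$ in $C_{1234}$, cut off $T$-invariantly in the normal direction --- and any such isotopy necessarily moves points of $C_{1234}\setminus A_i$ (hence some of the $p_j$, and after blow-up the divisors $\Sigma_j$) at intermediate times, returning them only at time one. That is precisely the difficulty you claim to have circumvented by choosing Dehn-twist rather than point-push representatives; in fact it reappears in your construction in a different guise, and handling it is the real content of the lifting argument (which the paper carries out via the point-push isotopies in the proof of Proposition~\ref{prop:SymplecticStabilisers}(4c)). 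Once corrected lifts are in hand, your verification that $\Phi(t,w)=t\cdot h(w)$ is an injective homomorphism and a weak equivalence, and the passage to classifying spaces, are fine.
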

\begin{proof}
From Proposition~\ref{prop:SymplecticStabilisers}~(4c), we know that $\Stab_h(S)/\Stab_0(S)=\pi_0(\Stab_h(S))\simeq \B F_2$, and because $\B F_2$ is free, there is a section $\sigma:\B F_2\into \Stab_h(S)$. Consider the diagram of exact sequences 
\begin{equation}
\begin{tikzcd}
\B S^1 \arrow{d}{\simeq} \ar[r] & \B S^1 \rtimes \B F_2 \arrow{d}{\simeq} \ar[r] & \B F_2\arrow{d}{\simeq} \\
\Stab_0(S) \ar[r] & \Stab_h(S) \ar[r] & \pi_0(\Stab_h(S))\\
\Fix(C_{1234}) \arrow{u}[swap]{\simeq} \ar[r] & \Stab_h(S) \arrow[equal]{u} \ar[r] & \Symp(C_{1234},4) \arrow{u}[swap]{\simeq}
\end{tikzcd}
\end{equation}
Looking at the lifts of the elements of $\B F_2$ constructed in the proof of Proposition~\ref{prop:SymplecticStabilisers} (4c), it follows immediately that given $a\in\B F_2$ and $t\in\B S^1\subset\Stab_0(S)$, the conjugate $ata^{-1}$ restricts to the image of $t$ in the bundle automorphisms group $\Aut(\mN(C_{1234}))$. Consequently, $\B F_2$ acts trivially on $\B S^1$ and there is an injective homomorphism $\B S^1\times \B F_2\to \Stab_h(S)$ that is a homotopy equivalence.
\end{proof}

\begin{corollary}\label{cor:HomogeneityStrataJ}
The group $Symp_h(\widetilde{M}_{\B c})$ acts on $\mJ_{\B c}([\Sigma])$ preserving the stratification. Each of the strata $\mJ_0([\Sigma])$ and $\mJ_{ijk}([\Sigma])$ is homotopy equivalent to the orbit of an integrable complex structure. For a complex structure $J_S$ in the stratum $\mJ_{1234}([\Sigma])$, the homotopy fiber of the inclusion of its orbit $(\Symp_h(\widetilde{M}_{\B c})\cdot J_S)\into\mJ_{1234}([\Sigma])$ is homotopy equivalent to the free group $\B{F}_2$, while the homotopy fiber of the evaluation map $\Symp_h(\widetilde{M}_{\B c})\to\mJ_{1234}([\Sigma])$ is equivalent to $\B S^1\times \B F_2$.
\end{corollary}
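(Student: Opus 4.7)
The plan is to translate each stratum into a homogeneous space $\mG_{\B c}/\Stab_h(S)$ using the results of the preceding section, and then to compare it with the orbit $\mG_{\B c}/\Stab_h(J_S)$ of an integrable structure $J_S$ whose underlying standard configuration is $S$. Preservation of the stratification is immediate: each stratum $\mJ_I([\Sigma])$ is defined by the existence of embedded $J$-holomorphic spheres in prescribed homology classes, and every element of $\Symp_h(\widetilde{M}_{\B c})$ acts trivially on $H_2$.

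Fix a pair $(J_S,S)$ of type $\mT_I$. Since a symplectomorphism fixes $J_S$ exactly when it is a biholomorphism, $\Stab_h(J_S) = \Iso_h(\widetilde{\om}_{\B c},J_S)$, which by Proposition~\ref{prop:SymplecticStabilisers} coincides with the identity component $\Stab_0(S)$ of the configuration stabilizer. Combining the equivalences $\mJ_I([\Sigma]) \simeq \mC_{\B c}(\mT_I) \simeq \mC^\circ_{\B c}(\mT_I)$ from Lemma~\ref{lemma:CharaterizationStrataConfigurations} with the transitivity of the $\mG_{\B c}$-action on $\mC^\circ_{\B c}(\mT_I)$ of Proposition~\ref{prop:HomogeneityConfigurations} yields $\mJ_I([\Sigma])\simeq \mG_{\B c}/\Stab_h(S)$. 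For the strata $\mJ_0([\Sigma])$ and $\mJ_{ijk}([\Sigma])$, Proposition~\ref{prop:SymplecticStabilisers} shows that $\Stab_h(S)$ is already connected, so $\Stab_h(S) = \Stab_0(S) \simeq \Stab_h(J_S)$, and the orbit $\mG_{\B c}\cdot J_S = \mG_{\B c}/\Stab_h(J_S)$ is therefore homotopy equivalent to the entire stratum.

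For the stratum $\mJ_{1234}([\Sigma])$, Proposition~\ref{prop:structure of the stabilizer J_1234} gives $\Stab_h(S)\simeq \B S^1\times \B F_2$, while $\Stab_h(J_S)\simeq \Stab_0(S)\simeq \B S^1$. The inclusion of the orbit into the stratum is therefore modelled by the projection
\begin{equation*}
\mG_{\B c}/\B S^1 \longrightarrow \mG_{\B c}/(\B S^1\times \B F_2),
\end{equation*}
a fibration with discrete fiber $(\B S^1\times \B F_2)/\B S^1 \simeq \B F_2$. For the evaluation map, composing with the equivalence $\mJ_{1234}([\Sigma])\simeq \mC^\circ_{\B c}(\mT_{1234})$ turns $\ev_{J_S}\colon \mG_{\B c}\to\mJ_{1234}([\Sigma])$, $g\mapsto g\cdot J_S$, into the orbit map $g\mapsto g\cdot S$, which is a $\Stab_h(S)$-principal fibration; applying two-out-of-three for the above equivalence shows that the homotopy fiber of $\ev_{J_S}$ is $\Stab_h(S)\simeq \B S^1\times \B F_2$.

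The only subtlety --- rather than a genuine obstacle, since all of the geometric heavy lifting has already been carried out in the preceding propositions --- is to keep track of the distinction between $\Stab_h(J_S)$, which always equals $\Stab_0(S)$ and governs the isotropy of the orbit, and $\Stab_h(S)$, which controls the full stratum. For the types $\mT_0$ and $\mT_{ijk}$ these coincide, but for $\mT_{1234}$ they differ by the discrete factor $\pi_0(\Stab_h(S))\simeq \B F_2$ coming from the mapping class group of the four-punctured sphere; this accounts precisely for the extra $\B F_2$ appearing in the homotopy fiber.
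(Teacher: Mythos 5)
Your argument is correct and follows essentially the same route as the paper, which packages the comparison between the orbit $\Symp_h/\Iso_h(J_S)$ and the stratum $\mJ_I([\Sigma])\simeq\mC_{\B c}(\mT_I)\simeq\mC^\circ_{\B c}(\mT_I)\simeq\Symp_h/\Stab_h(S)$ into a single $\Symp_h$-equivariant commutative diagram with fiber $\Stab_h(S)/\Iso_h(J_S)$ and then invokes Proposition~\ref{prop:SymplecticStabilisers}. The only cosmetic point is that $\Iso_h(\widetilde{\om}_{\B c},J_S)$ is homotopy equivalent to, rather than equal to, $\Stab_0(S)$, but that is all your fiber computations actually use.
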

\begin{proof} Given a complex structure $J$ in a stratum $\mJ_I([\Sigma])$ and the corresponding $J$-holomorphic configuration $S$, this follows from the $\Symp_h(\widetilde{M}_{\B c})$-commutative diagram 
\[
\begin{tikzcd}[column sep=small]
&\mJ_I([\Sigma])\arrow[two heads]{r}{\simeq} & \mC_{\B c}(\mT_I)\arrow[hookleftarrow]{r}{\simeq} & \mC_{\B c}^\circ(\mT_I)\\
\Stab_h(S)/\Iso_h(J_S)\ar[r] & \Symp_h/ \Iso_h(J_S) \arrow[two heads]{r} \ar[u,hook]& \Symp_h/\Stab_h(S)\arrow[hook]{u}[swap]{\simeq} \arrow[equal]{r} & \Symp_h/\Stab_h(S)\arrow[hook]{u}[swap]{\simeq}
\end{tikzcd}
\]
together with Proposition~\ref{prop:SymplecticStabilisers}.
\end{proof}

Using the same techniques, it is easy to show that the strata of the spaces $\mJ_{\B c}(\Sigma)$, $\mA(n,\Sigma)$, and $\mA(n,[\Sigma])$ are co-oriented submanifolds of the same codimensions than the strata of $\mJ_{\B c}([\Sigma])$. By Propositions~\ref{prop:equivalence S and [S]} and~\ref{prop:equivalence G and D actions}, all the actions on these stratified spaces defined in Section~\ref{section:equivalence homotopy orbits} share similar properties.

For convenience, we collect the main results of this section in a single statement.
\begin{proposition}\label{prop:Symp action on J} Let $\B c$ be an admissible capacity and let $\mG_{\B c}:=\Symp(\widetilde{M}_{\B c})$.
\begin{enumerate}[itemsep=1em]
\item The stratum $\mJ_0\subset\mJ_{\B c}([\Sigma])$ is open, dense, and connected. It is homotopy equivalent to the orbit of a complex structure $J_0$ under the action of $\mG_{\B c}$. The homotopy fiber $\mF_{\ev}$ of the evaluation map $\ev:\mG_{\B c}\to\mJ_0$ at $J_0$ is homotopy equivalent to $\Iso_h(\widetilde{\om}_{\B c},J_0)\simeq\Aut_h(J_0)$, that is, $\mF_{\ev}\simeq\U(2)$ if $n=1$, $\mF_{\ev}\simeq\B T^2$ if $n=2,3$, and $\mF_{\ev}\simeq\mathbb{1}$ if $n=4$.

\item Whenever nonempty, the stratum $\mJ_{ijk}\subset \mJ_{\B c}([\Sigma])$ is a co-oriented submanifold of codimension $2$. It is homotopy equivalent to the orbit of a complex structure $J_{ijk}$ under the action of $\mG_{\B c}$. The homotopy fiber of the evaluation map $\ev:\mG_{\B c}\to\mJ_{ijk}$ at $J_{ijk}$ is homotopy equivalent to $\Iso_h(\widetilde{\om}_{\B c},J_{ijk})\simeq\Aut_h(J_{ijk})\simeq\B S^1_{ijk}$. 

\item Whenever nonempty, the stratum $\mJ_{1234}\subset \mJ_{\B c}([\Sigma])$ is a co-oriented manifold of codimension $4$. It contains a complex structure $J_{1234}$ with automorphism group $\Iso_h(\widetilde{\om}_{\B c},J_{1234})\simeq\Aut_h(J_{ijk})\simeq\B S^1_{1234}$. The homotopy fiber of the evaluation map $\ev:\mG_{\B c}\to\mJ_{1234}$ at $J_{1234}$ is homotopy equivalent to $\B S^1_{1234}\times \B F_2$.
\end{enumerate}
Moreover, similar statements hold for the action of $\mG_{\B c}(\Sigma)$ on $\mJ_{\B c}(\Sigma)$, for the action of $\mD_h(\Sigma)$ on $\mA(n,\Sigma)$, and for the action of $\mD_h$ on $\mA(n,[\Sigma])$.
\end{proposition}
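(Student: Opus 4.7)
The proposition is a summary that packages the main technical results already established in this section, so the plan is to assemble the pieces coherently. For each stratum label $I \in \{0,\, ijk,\, 1234\}$, openness, codimension, and the submanifold property of $\mJ_I \subset \mJ_{\B c}([\Sigma])$ follow from Proposition~\ref{prop:Stratification mJ}, with connectedness of the top stratum $\mJ_0$ supplied by Corollary~\ref{cor:the open stratum is connected}. The co-orientation of $\mJ_{ijk}$ and $\mJ_{1234}$ comes from the complex structure on the normal bundle to the $\bar\partial$ moduli space of embedded $J$-holomorphic spheres in the negative classes $L_{ijk}$ and $L_{1234}$, as in the normal form computation of~\cite[Proposition~B.1]{AP13} that underlies Proposition~\ref{prop:Stratification mJ}.

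Next, I will establish the claimed homotopy equivalence between each stratum and an orbit. Lemma~\ref{lemma:CharaterizationStrataConfigurations} yields $\mJ_I([\Sigma]) \simeq \mC_{\B c}(\mT_I) \simeq \mC_{\B c}^\circ(\mT_I)$, and Proposition~\ref{prop:HomogeneityConfigurations} asserts that $\mG_{\B c}$ acts transitively on $\mC_{\B c}^\circ(\mT_I)$ for $n\geq 2$ (and that $\mG_{\B c,p}$ acts transitively when $n=1$). Fixing an integrable $J_S$ holomorphically realizing a standard configuration $S_I$, the stabilizer is $\Stab_h(S_I)$, whence $\mJ_I \simeq \mG_{\B c}/\Stab_h(S_I)$. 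The homotopy fiber of the evaluation map $\ev \colon \mG_{\B c} \to \mJ_I$ at $J_S$ is then read off from the commutative diagram of homogeneous spaces appearing in the proof of Corollary~\ref{cor:HomogeneityStrataJ}: it is homotopy equivalent to $\Iso_h(\widetilde{\om}_{\B c}, J_S) \simeq \Aut_h(J_S)$ in cases (1) and (2), while in case (3) Proposition~\ref{prop:structure of the stabilizer J_1234} refines this to the product $\B S^1_{1234} \times \B F_2$. Substituting the explicit groups computed in Proposition~\ref{prop:SymplecticStabilisers} produces the fibers $\U(2)$, $\B T^2$, $\mathbb{1}$, $\B S^1_{ijk}$, and $\B S^1_{1234}\times\B F_2$ as stated.

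The final clause concerning the actions of $\mG_{\B c}(\Sigma)$ on $\mJ_{\B c}(\Sigma)$, of $\mD_h(\Sigma)$ on $\mA(n,\Sigma)$, and of $\mD_h$ on $\mA(n,[\Sigma])$ will be transferred from the $\mG_{\B c}$-action on $\mJ_{\B c}([\Sigma])$ using Propositions~\ref{prop:equivalence S and [S]} and~\ref{prop:equivalence G and D actions}, which establish weak equivalences of the corresponding homotopy orbit spaces and hence yield Puppe sequences of the same homotopy type; the stratification is preserved throughout by construction. The real computational content behind this summary lies in the $n=4$ portion of Proposition~\ref{prop:SymplecticStabilisers}, in particular case (4c) and its refinement Proposition~\ref{prop:structure of the stabilizer J_1234}: showing that $\Stab_h(S_{1234})$ splits as a genuine direct product $\B S^1 \times \B F_2$, rather than a semidirect one, is the main obstacle and relies on an explicit lifting construction that extends Hamiltonian isotopies of $(S^2,4\text{ points})$ through a Weinstein neighborhood of a line and then to the blow-up, with enough control to check that conjugation by these lifts acts trivially on the $\B S^1$-factor.
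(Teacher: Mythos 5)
Your assembly is correct and matches the paper exactly: the proposition is stated there as a summary ``collecting the main results of this section'' with no separate proof, and the intended justification is precisely the chain you give (Proposition~\ref{prop:Stratification mJ} and Corollary~\ref{cor:the open stratum is connected} for the stratification; Lemma~\ref{lemma:CharaterizationStrataConfigurations}, Proposition~\ref{prop:HomogeneityConfigurations}, Proposition~\ref{prop:SymplecticStabilisers} and Proposition~\ref{prop:structure of the stabilizer J_1234}, packaged through Corollary~\ref{cor:HomogeneityStrataJ}, for the orbit identifications and evaluation fibers; and Propositions~\ref{prop:equivalence S and [S]} and~\ref{prop:equivalence G and D actions} for the transfer to the other three actions). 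You also correctly handle the one delicate point: for type $\mT_{1234}$ the stratum is the orbit of the \emph{configuration}, $\mG_{\B c}/\Stab_h(S)$, not of the complex structure, so the evaluation fiber is $\Stab_h(S)\simeq \B S^1_{1234}\times\B F_2$ rather than $\Iso_h(\widetilde{\om}_{\B c},J_{1234})$.
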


\subsection{Integrable structures and isotropy representations}\label{section:integrable structures 8}
In this section we determine the isotropy representations associated to the action of the symplectomorphism group $\mG_{\B c}$ on $\mJ_{\B c}([\Sigma])$. To this end, we study the restrictions of the actions to the subspace $\mJ_{\B c}^{int}([\Sigma])$ made of integrable complex structures, and show that the isotropy representations of the restricted action can be described through complex deformation theory. We then show that the homotopy orbits of the action of $\mG_{\B c}$ on $\mJ_{\B c}^{int}([\Sigma])$ coincide with the homotopy orbits of the whole space $\mJ_{\B c}([\Sigma])$. Our discussion closely follows the exposition in~\cite{AbGrKi}, where similar considerations are applied to the study of symplectomorphism groups of ruled $4$-manifolds.%

Given an integrable structure $J$ we write $H^{0,q}_{J}(M)$ for the $q^{\text{th}}$ Dolbeault cohomology group with coefficients in the sheaf of germs of holomorphic functions, and $H^{0,q}_{J}(TM)$ for the $q^{\text{th}}$ Dolbeault cohomology group with coefficients in the sheaf of germs of holomorphic vector fields. 
\begin{theorem}[{\cite[Theorem 2.3]{AbGrKi}}]\label{thm:integrable deformations}
Let $(M,\om)$ is a symplectic $4$-manifold and suppose $J\in\mJ^{int}(\om)$
\index{J@$\mJ^{int}(\om)$ -- space of all compatible integrable complex structures on $(M,\omega)$}
is an integrable complex structure for which the cohomology groups $H^{0,2}_{J}(M)$ and $H^{0,2}_{J}(TM)$ are zero. Then $\mJ^{int}(\om)$ is a submanifold of $\mJ(\om)$ in the neighborhood of $J$. Moreover, the moduli space of infinitesimal compatible deformations of $J$ in $\mJ^{int}(\om)$ coincides with the moduli space of infinitesimal deformations of $J$ in the set of all integrable structures, that is, it is given by $H^{0,1}_{J}(TM)$. Finally, the tangent space of $\mJ^{int}(\om)$ at $J$ is naturally identified with the direct sum
$T_{J}\big((\Diff_{[\om]}(M)\cdot J)\cap \mJ(\om)\big)\oplus H^{0,1}_{J}(TM)$.\qed
\end{theorem}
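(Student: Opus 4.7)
The plan is to combine the Kodaira--Spencer/Kuranishi theory for deformations of complex structures with a parametric Moser argument to handle the compatibility condition. First I would parametrize nearby almost complex structures by endomorphisms $A\colon TM\to TM$ anti-commuting with $J$, equivalently by $(0,1)$-forms with values in $T^{1,0}M$. The Newlander--Nirenberg integrability condition for $J+tA$ is the Maurer--Cartan equation $\bar\partial A+\tfrac12[A,A]=0$, so to first order integrability amounts to $A\in\ker\bar\partial$, giving $H^{0,1}_J(TM)$ as infinitesimal deformations; the obstructions lie in $H^{0,2}_J(TM)$. Since this obstruction group vanishes by hypothesis, Kuranishi's theorem shows that the set of integrable structures near $J$ is a smooth submanifold of $\mJ(M)$ with tangent space canonically identified with $H^{0,1}_J(TM)$.

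Next I would show that the compatibility condition with $\om$ cuts out the asserted smooth submanifold $\mJ^{int}(\om)$. The hypothesis $H^{0,2}_J(M)=0$, combined with upper semicontinuity of Hodge numbers under deformation, yields $H^{0,2}_{J'}(M)=0$ for every integrable $J'$ close to $J$. By the $\partial\bar\partial$-lemma on such $J'$, the class $[\om]$ admits a Kähler representative $\om_{J'}$ compatible with $J'$ depending smoothly on $J'$, reducing to $\om$ at $J'=J$. A parametric Moser argument then produces a smooth family of diffeomorphisms $\phi_{J'}\in\Diff_{[\om]}(M)$ with $\phi_{J'}^*\om_{J'}=\om$ and $\phi_J=\id$; hence $\phi_{J'}^*J'$ is an $\om$-compatible integrable structure near $J$, and this assignment defines a smooth local section of the projection from the space of integrable structures to its Kuranishi slice.

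The splitting of $T_J\mJ^{int}(\om)$ then follows tautologically: any tangent vector to $\mJ^{int}(\om)$ at $J$ can be written as the sum of the Moser-correction term, which is tangent to the $\Diff_{[\om]}(M)$-orbit of $J$ intersected with $\mJ(\om)$, and a pure Kuranishi deformation in $H^{0,1}_J(TM)$. Transversality of the Kuranishi slice with the diffeomorphism orbit (equivalently, the fact that the Moser diffeomorphism is uniquely determined up to the stabilizer of $\om$, which is an issue of pure gauge) ensures the sum is direct.

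The main obstacle is the smooth parametric construction in the middle step: one must verify not only that every nearby integrable $J'$ admits a Kähler form in $[\om]$, but that this form can be chosen to vary smoothly with $J'$ and that the resulting Moser vector field integrates to diffeomorphisms in the identity component of $\Diff_{[\om]}(M)$. This is exactly the content packaged by the two vanishing hypotheses: $H^{0,2}_J(TM)=0$ provides unobstructedness of the complex deformations, while $H^{0,2}_J(M)=0$ guarantees the Kähler-class representative needed to run Moser's trick.
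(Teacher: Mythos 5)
This theorem is quoted from \cite{AbGrKi} and the paper supplies no proof of its own, so the right comparison is with the argument in that reference; your proposal follows essentially the same strategy (unobstructed deformation theory from $H^{0,2}_{J}(TM)=0$, persistence of the K\"ahler class $[\om]$ from $H^{0,2}_{J}(M)=0$, a Moser correction to return to $\mJ(\om)$, and the resulting local product structure). Two steps in your sketch are imprecise, and the second is a genuine gap as written. First, the set of integrable structures near $J$ inside all almost complex structures is a submanifold whose tangent space is $\ker\bar\partial\subset\Omega^{0,1}(T^{1,0}M)$, not $H^{0,1}_{J}(TM)$; the cohomology group is the tangent space of the local \emph{moduli} (the Kuranishi slice), and the asserted splitting is precisely the claim that, after imposing compatibility, the $\Diff_{[\om]}$-orbit directions account for $\mathrm{im}\,\bar\partial$ while the slice contributes $H^{0,1}_{J}(TM)$ --- conflating the two obscures exactly what has to be proved in the last part. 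Second, you invoke the $\partial\bar\partial$-lemma on a nearby integrable $J'$ to produce a K\"ahler representative of $[\om]$, but the $\partial\bar\partial$-lemma is only available once you already know $J'$ is K\"ahler. What is actually needed is the Kodaira--Spencer stability theorem (small deformations of compact K\"ahler manifolds are K\"ahler) --- or, for surfaces, Kodaira's criterion that $b_1$ even implies K\"ahler --- combined with upper semicontinuity giving $H^{0,2}_{J'}(M)=0$, hence $H^2(M;\R)=H^{1,1}_{J',\R}$ so that $[\om]$ stays of type $(1,1)$, and openness of the K\"ahler cone to place a K\"ahler form for $J'$ in the \emph{fixed} class $[\om]$ rather than in the drifting class of the Kodaira--Spencer family. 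With those substitutions, and a word on why harmonic representatives of $H^{0,1}_{J}(TM)$ can be taken $\om$-compatible so that the sum is direct, your outline reproduces the proof in \cite{AbGrKi}.
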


The next theorem gives a sufficient condition for the $\Symp(M,\om)$-orbit of an integrable structure $J$ to be homotopy equivalent to the partial orbit $(\Diff_{[\om]}(M)\cdot J)\cap \mJ(\om)$ of Theorem~\ref{thm:integrable deformations}.

\begin{theorem}[{\cite[Corollary 2.6]{AbGrKi}}]\label{thm:homotopy equivalence integrable}
Let $(M,\om,J)$ be a K\"ahler $4$-manifold. Suppose the inclusion $\Iso(\om,J)\into \Aut_{[\om]}(J)$ is a weak homotopy equivalence. Then the inclusion of the $\Symp(M,\om)$-orbit $\Symp(M, \om)/ \Iso(\om, J) \into (\Diff_{[\om]}(M)\cdot J)\cap \mJ(\om)$ is also a weak homotopy equivalence. \qed
\end{theorem}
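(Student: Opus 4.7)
I plan to exhibit both sides as bases of principal bundles and compare them via a Moser-type fibration argument. Set $H := \Symp(M,\om)$, $G := \Diff_{[\om]}(M)$, $K := \Aut_{[\om]}(J)$ and $L := \Iso(\om,J) = H \cap K$, and let
\[
G_J := \{\phi\in G : \phi\cdot J \in \mJ(\om)\}.
\]
Since $\phi_1\cdot J = \phi_2\cdot J$ iff $\phi_2^{-1}\phi_1\in K$, the map $G_J \to G_J/K$ is a principal $K$-bundle (right action by composition) whose base is precisely $(\Diff_{[\om]}(M)\cdot J)\cap\mJ(\om)$, while $H \to H/L$ is the principal $L$-bundle whose base is the $\Symp$-orbit $\Symp(M,\om)/\Iso(\om,J)$. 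The inclusion we wish to prove a weak equivalence is induced by $H \into G_J$ through the evident map of principal bundles.

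The key step is to show that $H \into G_J$ itself is a weak homotopy equivalence. For this, consider
\[
p : G_J \to \Omega_{[\om]}(J), \qquad \phi \mapsto \phi^*\om,
\]
where $\Omega_{[\om]}(J)$ denotes the space of $J$-compatible symplectic forms in $[\om]$. The image lies in $\Omega_{[\om]}(J)$ by the very definition of $G_J$ (since $\phi\cdot J\in \mJ(\om)$ is equivalent to $J$ being compatible with $\phi^*\om$), and the fiber over $\om$ is exactly $H$. A parametric Moser argument, integrating the family $\om_t = (1-t)\om + t\om'$ of forms in $\Omega_{[\om]}(J)$, produces continuous local sections of $p$, exhibiting it as a locally trivial fibration with fiber $H$. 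The base $\Omega_{[\om]}(J)$ is a convex open subset of an affine space (a convex combination of $J$-compatible forms in $[\om]$ is again $J$-compatible and cohomologous to $\om$) and hence contractible, so $H \into G_J$ is a weak equivalence.

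To conclude, I would compare the two principal bundles in the commutative diagram
\[
\begin{tikzcd}
L \arrow[r] \arrow[d] & H \arrow[r] \arrow[d] & H/L \arrow[d] \\
K \arrow[r] & G_J \arrow[r] & G_J/K.
\end{tikzcd}
\]
The middle vertical is a weak equivalence by the preceding step; the left vertical is one by hypothesis. Applying the five lemma to the induced maps of long exact sequences of homotopy groups then forces the right vertical, which is the inclusion in the statement, to be a weak equivalence as well. The main technical obstacle I foresee is the verification that $p$ is genuinely a locally trivial fibration and not merely a surjection with contractible fibers: one must assemble Moser isotopies that depend continuously on parameters over a neighbourhood of each point of $\Omega_{[\om]}(J)$, which is standard but requires care with the Fréchet topology. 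A secondary routine point is checking that the right $K$-action on $G_J$ admits local slices so that $G_J \to G_J/K$ is an honest principal bundle, which follows from the fact that $K$ is a finite-dimensional Lie group acting smoothly and properly on the diffeomorphism group.
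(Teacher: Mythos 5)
The paper itself gives no proof of this statement --- it is quoted from [AbGrKi, Corollary 2.6] --- and your argument follows essentially the same route as the source and as the Moser-fibration technique used throughout Section 2 of the paper: realize $G_J=\{\phi\in\Diff_{[\om]}(M): \phi^*\om \text{ is compatible with } J\}$ as the total space of a fibration over the convex, hence contractible, space of $J$-compatible forms in $[\om]$ with fibre $\Symp(M,\om)$, and then compare the two coset fibrations via the five lemma. The Moser step is sound: choosing the primitives linearly in the form (e.g.\ $\sigma_{\om'}=d^*G(\om'-\om_1)$ via Hodge theory) makes the isotopies depend continuously on $\om'$, and since the base is convex one in fact obtains a global section, so $p$ is a trivial $\Symp(M,\om)$-bundle and $H\into G_J$ is a (weak) homotopy equivalence.

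The one step you treat too lightly is the identification of $G_J/K$ and $H/L$ with the orbits sitting inside $\mJ(\om)$. Properness of the right translation action of $K$ on $G_J$ at best gives $G_J\to G_J/K$ the structure of a principal bundle for the \emph{quotient} topology; but the theorem asserts a weak equivalence onto $(\Diff_{[\om]}(M)\cdot J)\cap\mJ(\om)$ with its \emph{subspace} topology, and in this infinite-dimensional setting a continuous bijection from $G_J/K$ onto that subset need not be a homeomorphism, nor even a weak equivalence. What is actually required is that the orbit maps $\phi\mapsto\phi_*J$ from $H$ and from $G_J$ into $\mJ(\om)$ admit local sections over their images, i.e.\ a local slice for the diffeomorphism action at $J$. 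This is precisely where the K\"ahler hypothesis (integrability of $J$) enters in the cited source, through the deformation-theoretic description of a neighbourhood of the orbit (their Theorem 2.3 and Appendix C, echoed here in Proposition \ref{prop:tubular nbhood}); your proposal never uses integrability anywhere, which signals that this point has been elided. Once that slice is supplied, the five-lemma comparison goes through exactly as you describe.
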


Each stratum of $\mJ_{\B c}([\Sigma])$ is characterized by the existence of a $J$-holomorphic sphere $u:S^2\to (\widetilde{M}_n,J)$ representing a given homology class $A$. We define the universal moduli space
\[\mM(A,\mJ_{\B c})=\{(u,J)\in C^\infty(S^2,\widetilde{M}_n)\times \mJ_{\B c}([\Sigma])~|~[u]=A,~u~\text{is simple and}~J\text{-holomorphic}\}.\]
This is a Fréchet manifold whose image under the projection $\pi_A:\mM(A,\mJ_{\B c})\to\mJ_{\B c}([\Sigma])$ is precisely the stratum $\mJ_A$ defined by the class $A$. The next theorem gives sufficient conditions under which the projection $\pi_A$ is tranversal to the inclusion $\mJ_{\B c}^{int}([\Sigma])\into \mJ_{\B c}([\Sigma])$. Although the original statement found in~\cite{AbGrKi} is slightly weaker, the given proof establishes the following stronger form.

\begin{theorem}[{\cite[Theorem 2.9]{AbGrKi}}]\label{thmC}
Let $(M,\om,J)$ be a K\"ahler $4$-manifold for which the cohomology groups $H^{0,2}_{J}(M)$ and $H^{0,2}_{J}(TM)$ are zero. Suppose that $(u,J)\in\mM(A,\mJ(\om))$ is such that $u^{*}:H^{0,1}_{J}(TM)\to H^{0,1}(u^{*}(TM))$ is surjective. Then the projection $\pi_A:\mM(A,\mJ(\om))\to\mJ(\om)$ is tranversal at $(u,J)$ to the inclusion $\mJ^{int}(\om)\into \mJ(\om)$ and the infinitesimal complement to the image $\mJ_{A}$ of $\pi_A$ in a neighborhood of $J$ can be identified with $H^{0,1}_{J}(TM)/\ker(u^*)$. \qed 
\end{theorem}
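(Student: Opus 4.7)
My plan is to identify the tangent space to $\mM(A,\mJ(\omega))$ at $(u,J)$ via the linearised Cauchy--Riemann operator, compute the image and cokernel of $d\pi_A$ in terms of Dolbeault cohomology along $u$, and then interpret the restriction of the cokernel map to $T_J\mJ^{int}(\omega)$ as the pullback $u^{*}\colon H^{0,1}_{J}(TM)\to H^{0,1}(u^{*}TM)$. Surjectivity of $u^{*}$ will then give transversality, and the complement can be read off from the kernel of this pullback.

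Concretely, at $(u,J)$ the universal moduli space has tangent space
\[
T_{(u,J)}\mM(A,\mJ(\omega))=\{(\xi,Y)\in \Omega^{0}(u^{*}TM)\times T_{J}\mJ(\omega)~|~D_{u}\xi+\tfrac12 Y\circ du\circ j=0\},
\]
where $D_{u}\colon\Omega^{0}(u^{*}TM)\to \Omega^{0,1}(u^{*}TM)$ is the standard linearisation of $\bar\partial_{J}$. Since $d\pi_{A}(\xi,Y)=Y$, a natural cokernel representative for $d\pi_A$ is obtained by composing $Y\mapsto Y\circ du\circ j$ with the projection onto $\Omega^{0,1}(u^{*}TM)/\image(D_{u})$. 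Because the domain of $u$ is $S^{2}$, this quotient is exactly $H^{0,1}(u^{*}TM)$, and by elliptic theory (with $u$ simple) the induced map $T_J\mJ(\omega)/\image(d\pi_A)\to H^{0,1}(u^*TM)$ is an isomorphism.

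By Theorem \ref{thm:integrable deformations}, the hypotheses on $J$ give a splitting
\[
T_{J}\mJ^{int}(\omega)=T_{J}\big((\Diff_{[\omega]}(M)\cdot J)\cap\mJ(\omega)\big)\oplus H^{0,1}_{J}(TM),
\]
and the first summand lies inside $\image(d\pi_A)$: a deformation $Y=L_X J$ is matched by $\xi=du(X)$ in the linearised equation, since the $\Diff_{[\omega]}(M)$-action preserves $\mJ_A$. Hence the composition $T_J\mJ^{int}(\omega)\hookrightarrow T_J\mJ(\omega)\twoheadrightarrow H^{0,1}(u^*TM)$ factors through a map $\Phi\colon H^{0,1}_J(TM)\to H^{0,1}(u^*TM)$, and the main technical step is the identification $\Phi=u^{*}$. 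This is a Kodaira--Spencer computation: given a Dolbeault representative $\mu\in\Omega^{0,1}(TM)$, the associated infinitesimal integrable deformation $Y$ of $J$ satisfies $[Y\circ du\circ j]=u^{*}[\mu]$ modulo $\image(D_u)$, reflecting that $u^{*}\mu$ is precisely the obstruction to extending $u$ to a holomorphic family in the deformed structure.

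Granting this identification, surjectivity of $u^{*}$ immediately yields $T_{J}\mJ^{int}(\omega)+\image(d\pi_{A})=T_{J}\mJ(\omega)$, which is transversality. Moreover the kernel of the composition above, namely $T_J\mJ^{int}(\omega)\cap \image(d\pi_A)$, splits as the diffeomorphism summand plus $\ker(u^{*})\subset H^{0,1}_{J}(TM)$; the infinitesimal complement to $\mJ_A$ inside $\mJ^{int}(\omega)$ at $J$ is therefore canonically $H^{0,1}_{J}(TM)/\ker(u^{*})$. The main obstacle is precisely the verification $\Phi=u^{*}$, i.e.\ matching the two a priori distinct definitions of the ``pullback'' of a Kodaira--Spencer class along $u$ -- one via the linearised Cauchy--Riemann equation, the other via restriction of the Dolbeault representative through $du$.
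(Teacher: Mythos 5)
The paper does not actually prove this statement: it is quoted (in a strengthened form) from Abreu--Granja--Kitchloo \cite{AbGrKi} and followed immediately by a tombstone, so there is no in-paper argument to compare against. Judged on its own, your reconstruction is correct and follows the same standard route as the cited proof: identify $\operatorname{coker}(d\pi_A)$ with $\operatorname{coker}(D_u)\cong H^{0,1}(u^{*}TM)$ via the linearised Cauchy--Riemann operator, observe that the $\Diff_{[\om]}$-orbit directions die in this cokernel, and recognise the induced map on $H^{0,1}_{J}(TM)$ as $u^{*}$, so that surjectivity of $u^{*}$ gives transversality and the normal space is $H^{0,1}_{J}(TM)/\ker(u^{*})$. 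Two cosmetic points only: the element matching $Y=L_XJ$ in the linearised equation should be $\xi=X\circ u\in\Omega^{0}(u^{*}TM)$ rather than $du(X)$ (which does not parse for a vector field $X$ on $M$); and the identification $\Phi=u^{*}$ that you flag as the main obstacle is indeed the crux, but it is a pointwise computation rather than a genuine gap --- for $J$-holomorphic $u$ one has $Y\circ du\circ j=-JY\circ du$, which is precisely the $(0,1)$-form $u^{*}\mu$ up to a universal constant, where $\mu\in\Omega^{0,1}(M,T^{1,0}M)$ is the Dolbeault representative of $Y$.
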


\begin{lemma}\label{lemma:vanishing+surjectivity}
Let $(\widetilde{M}_n,J)$ be the complex blow-up of $\cp^2$ at $n$ distinct points. 
\begin{enumerate}
\item The cohomology groups $H^{0,2}_{J}(\widetilde{M}_n)$ and $H^{0,2}_{J}(T\widetilde{M}_n)$ are zero. 
\item Let $u:S^2\to (\widetilde{M}_n,J)$ be a $J$-holomorphic map representing any of the classes $L_{ijk}$ or $L_{1234}$. Then $u^{*}:H^{0,1}_{J}(TM)\to H^{0,1}(u^{*}(TM))$ is surjective.
\end{enumerate}
\end{lemma}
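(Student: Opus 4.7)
The plan is to reduce both claims to standard cohomology computations on $\cp^2$ via the blow-down map $\pi\colon \widetilde{M}_n \to \cp^2$, using the short exact sequence of sheaves
\begin{equation*}
0 \to T_{\widetilde{M}_n} \xrightarrow{d\pi} \pi^{*} T_{\cp^2} \to \bigoplus_{i=1}^{n} (i_{E_i})_{*}\mathcal{O}_{E_i}(+1) \to 0,
\end{equation*}
which I will verify by a local chart computation: in blow-up coordinates $(x,u)$ with $\pi(x,u)=(x,xu)$, the matrix of $d\pi$ is $\left(\begin{smallmatrix}1 & 0 \\ u & x\end{smallmatrix}\right)$, and reading off the cokernel gives a line bundle on each $E_i$ whose transition function matches that of $\mathcal{O}(+1)$. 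Together with its algebraic dual
\begin{equation*}
0 \to \pi^{*}\Omega^1_{\cp^2} \to \Omega^1_{\widetilde{M}_n} \to \bigoplus_{i=1}^{n} (i_{E_i})_{*}\mathcal{O}_{E_i}(-2) \to 0,
\end{equation*}
obtained by applying $\mathcal{H}om(-,\mathcal{O})$ and identifying the relevant $\mathcal{E}xt^1$ via the normal bundle of $E_i$, these two sequences will drive both parts.

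\textbf{Proof of (1).} For the structure sheaf, the projection formula together with the standard vanishing $R^{\geq 1}\pi_{*}\mathcal{O} = 0$ for a smooth blow-up immediately gives $H^{0,2}_J(\widetilde{M}_n) = H^2(\cp^2, \mathcal{O}) = 0$. For the tangent bundle, I take the long exact sequence of the first displayed SES. The flanking terms $H^2(\cp^2, T_{\cp^2})$ (zero by Serre duality and Bott's formula $H^0(\Omega^1_{\cp^2}(-3))=0$) and $H^1$ of the cokernel, which equals $\bigoplus_i H^1(\cp^1, \mathcal{O}(1))=0$, both vanish, forcing $H^2(T_{\widetilde{M}_n})=0$.

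\textbf{Proof of (2).} Let $C = u(\cp^1) \subset \widetilde{M}_n$ and consider the restriction SES $0 \to T(-C) \to T \to (i_C)_{*}u^{*}T \to 0$. Using $H^2(T)=0$ from (1), the long exact sequence identifies the cokernel of $u^{*}\colon H^1(T) \to H^1(u^{*}T)$ with $H^2(T(-C))$, so it suffices to prove that this group vanishes. Serre duality on $\widetilde{M}_n$ converts this to $H^0(\Omega^1_{\widetilde{M}_n}(K+C))$. A direct intersection calculation gives $K+C = -2H + \sum_{\ell \notin \{i,j,k\}} E_\ell$ uniformly in all cases (the sum is empty when $C=L_{1234}$, and has a single term when $C=L_{ijk}$ with $n=4$). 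Tensoring the dual $\Omega^1$ sequence with $\mathcal{O}(K+C)$, the skyscraper contributions restrict on each $E_m$ to either $\mathcal{O}_{E_m}(-2)$ or $\mathcal{O}_{E_m}(-3)$, all with zero $H^0$; and the $\pi^{*}\Omega^1$ piece reduces, via projection formula and the identity $\pi_{*}\mathcal{O}(\sum E_\ell)=\mathcal{O}_{\cp^2}$, to $H^0(\cp^2, \Omega^1_{\cp^2}(-2))$, which vanishes by Bott.

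\textbf{Main obstacle.} The finickiest step is the bookkeeping for the dual $\Omega^1$ sequence, and especially the correct identification of the line-bundle twists when tensoring by $\mathcal{O}(K+C)$ and restricting to each $E_\ell$; miscounting the intersections $E_\ell\cdot E_m = -\delta_{\ell m}$ or the contribution of $\pi_{*}\mathcal{O}(E_\ell)$ would break the Bott reduction. An alternative route, should the algebraic bookkeeping become cumbersome, is to identify $u^{*}$ geometrically with the Kodaira--Spencer map sending an infinitesimal motion $v_i \in T_{p_i}\cp^2$ of the $i$-th blown-up point to an obstruction class in $H^1(N_{C/\widetilde{M}_n})$; surjectivity then follows from a dimension count showing that the normal-to-$C$ components at the $p_i\in C$ span a subspace of the expected dimension ($1$ for $L_{ijk}$ and $2$ for $L_{1234}$) modulo the two-dimensional family of motions that merely slide the underlying line, matching $\dim H^1(\mathcal{O}(-2))=1$ and $\dim H^1(\mathcal{O}(-3))=2$ respectively.
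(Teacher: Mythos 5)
Your proof is correct, and for part (2) it follows a genuinely different route from the paper's. For part (1) the mathematical content is the same: the paper simply cites the invariance of $h^{0,2}(\mathcal{O})$ and $h^{0,2}(T)$ under pointwise blow-up together with the vanishing on $\cp^2$, which is exactly what your computation via $R^{\geq 1}\pi_*\mathcal{O}=0$ and the tangent-sheaf sequence $0\to T_{\widetilde{M}_n}\to\pi^*T_{\cp^2}\to\bigoplus_i(i_{E_i})_*\mathcal{O}_{E_i}(1)\to 0$ establishes from scratch (your identification of the cokernel as $\mathcal{O}_{E_i}(1)$ is right). For part (2) the paper argues quite differently: it invokes a family of $J$-holomorphic curves in an auxiliary class $F$ with $c_1(F)=2$, $F\cdot F=0$, $F\cdot A=1$ foliating a dense open set, and defers to the proof of Lemma~7.11 of \cite{AP13}, where surjectivity of $u^*$ is extracted from this ruling. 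You instead identify $\operatorname{coker}(u^*)$ with $H^2(T(-C))$ via the restriction sequence and $H^2(T)=0$, convert this by Serre duality to $H^0(\Omega^1_{\widetilde{M}_n}(K+C))$ with $K+C=-2L+\sum_{\ell\notin\{i,j,k\}}E_\ell$, and kill it with the blown-up cotangent sequence plus Bott vanishing; I checked the twists ($\mathcal{O}_{E_m}(-2)$ on the $E_m$ meeting $C$, $\mathcal{O}_{E_\ell}(-3)$ on the remaining one, and $\Omega^1_{\cp^2}(-2)$ downstairs) and the bookkeeping is correct. What your route buys is a self-contained verification independent of the auxiliary class $F$ --- which is welcome, since for $A=L_{1234}$ (and for $L_{123}$ when $n=3$) the conditions $c_1(F)=2$, $F^2=0$, $F\cdot A=1$ force $2a=1$ for $F=aL-\sum b_iE_i$, so no such integral class exists and the paper's reduction as literally stated needs adjusting in those cases. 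What the paper's route buys is brevity and consistency with the almost-complex machinery of \cite{AP13}. Your closing alternative --- reading $u^*$ as the Kodaira--Spencer map for moving the blown-up points along the normal directions to $C$ --- is in fact the picture the authors exploit later, in the proof of Theorem~\ref{claim}, to match the gluing data of the strata of $\mA(n,[\Sigma])$ and $\Conf_n(\cp^2)$, so the two viewpoints are complementary rather than competing.
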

\begin{proof}
The first statement holds for any rational surfaces since $H^{0,2}_{J}(M)$ and $H^{0,2}_{J}(TM)$ are invariant under pointwise blow-ups, and since $H^{0,2}_{J}(\cp^2)=0$ and $H^{0,2}_{J}(T\cp^2)=0$, see~\cite{Ko2005} and~\cite[Exercise 10.5]{Ha-GTM257}. For the second statement, let $A$ be one of the classes $L_{ijk}$ or $L_{1234}$. We first note that there always exist $J$-holomorphic representatives of a class $F$ satisfying $c_1(F)=2$, $F\cdot F=0$, and $F\cdot A=1$. Note that representatives of $F$ foliate an open and dense subset of $\widetilde{M}_n$. Granted this, the proof of~\cite[Lemma 7.11]{AP13} applies mutatis mutandis to our case.
\end{proof}

\begin{proposition}\label{prop:properties integrable strata} 
Let $(\widetilde{M}_{\B c},\widetilde{\om}_{\B c})$ be the symplectic blow-up of $\cp^2$ at $n\leq 4$ balls of capacity $\B c$. 
\begin{enumerate}
\item For each integrable complex structure $J\in\mJ_{\B c}([\Sigma])$, there is a homotopy equivalence
\[\Symp(\widetilde{M}_{\B c},\widetilde{\om}_{\B c})/\Iso(\widetilde{\om}_{\B c},J_I)\into (\Diff_{[\om]}(M)\cdot J)\cap \mJ_{\B c}([\Sigma]).\]
\item The inclusion $\mJ_{\B c}^{int}([\Sigma])\into \mJ_{\B c}([\Sigma])$ is transverse to each stratum. In particular, the stratification of $\mJ_{\B c}([\Sigma])$ induces a stratification of $\mJ_{\B c}^{int}([\Sigma])$ with strata of the same codimensions.
\item The isotropy representation along the orbit of an an integrable structure $J$ is isomorphic to the action of $\Iso(\widetilde{\om}_{\B c},J_I)$ on $H^{0,1}_{J}(TM)$.
\item For each stratum $\mJ_I$ with normal bundle $\mN_I$, the isotropy representation on the normal fiber $N_J$ at an integrable $J\in\mJ_I([\Sigma])$ is isomorphic to the action of $\Iso(\widetilde{\om}_{\B c},J_I)$ on $H^{0,1}_{J}(TM)/\ker(u^*)$.
\end{enumerate}
\end{proposition}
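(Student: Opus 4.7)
The plan is to derive all four statements as direct applications of the abstract Kähler-deformation machinery given in Theorems~\ref{thm:integrable deformations}, \ref{thm:homotopy equivalence integrable}, and \ref{thmC}, after verifying the hypotheses of those theorems using Lemma~\ref{lemma:vanishing+surjectivity} and the explicit descriptions of stabilizers from Proposition~\ref{prop:SymplecticStabilisers}.

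For part (1), I would invoke Theorem~\ref{thm:homotopy equivalence integrable} directly. Its only nontrivial hypothesis is that the inclusion $\Iso(\widetilde{\om}_{\B c},J)\into \Aut_{[\om]}(J)$ is a weak homotopy equivalence. Since $(\widetilde{M}_{\B c},\widetilde{\om}_{\B c},J)$ is K\"ahler, $\Iso(\widetilde{\om}_{\B c},J)$ sits as a maximal compact subgroup of the complex Lie group $\Aut_{[\om]}(J)$, and the inclusion of a maximal compact subgroup of a complex Lie group is a homotopy equivalence. In our case the explicit matrix descriptions in the proof of Proposition~\ref{prop:SymplecticStabilisers} make this transparent: $\Aut_h(J)$ deformation retracts onto $\U(2)$, $\B T^2$, $\B S^1_{ijk}$, or $\B S^1_{1234}$, which is precisely $\Iso_h(\widetilde{\om}_{\B c},J)$.

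For part (2), the open stratum $\mJ_0$ is already open in $\mJ_{\B c}([\Sigma])$ so transversality there is automatic. For each lower stratum $\mJ_I = \mJ_A$, with $A$ one of $L_{ijk}$ or $L_{1234}$, I would apply Theorem~\ref{thmC} to the projection $\pi_A\colon \mM(A,\mJ(\widetilde{\om}_{\B c}))\to\mJ(\widetilde{\om}_{\B c})$ at a pair $(u,J)$ with $J$ integrable. The two hypotheses of that theorem, namely vanishing of $H^{0,2}_J(\widetilde{M}_n)$ and $H^{0,2}_J(T\widetilde{M}_n)$, together with surjectivity of $u^\ast\colon H^{0,1}_J(T\widetilde{M}_n)\to H^{0,1}(u^\ast T\widetilde{M}_n)$, are exactly the content of Lemma~\ref{lemma:vanishing+surjectivity}. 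Transversality of $\pi_A$ to the inclusion $\mJ_{\B c}^{int}([\Sigma])\into\mJ_{\B c}([\Sigma])$ is equivalent to transversality of the inclusion with the stratum $\mJ_A$, and the codimension is preserved.

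For parts (3) and (4), I would read the identifications off directly. Theorem~\ref{thm:integrable deformations}, whose hypotheses are again furnished by Lemma~\ref{lemma:vanishing+surjectivity}, gives the splitting $T_J\mJ_{\B c}^{int}([\Sigma])\cong T_J\bigl((\Diff_{[\om]}\!\cdot J)\cap\mJ(\widetilde{\om}_{\B c})\bigr)\oplus H^{0,1}_J(T\widetilde{M}_n)$; by part (1) the first summand coincides with the tangent to the $\Symp$-orbit, so the normal fiber of the orbit is $H^{0,1}_J(T\widetilde{M}_n)$, on which $\Iso_h(\widetilde{\om}_{\B c},J)$ acts by naturality of Dolbeault cohomology under holomorphic isometries, establishing (3). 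For (4), the last sentence of Theorem~\ref{thmC} identifies the infinitesimal complement to $\mJ_I$ at $J$ with $H^{0,1}_J(T\widetilde{M}_n)/\ker u^\ast$, and the $\Iso_h$-equivariance of this identification is again automatic from naturality. The substantive step in the whole argument is thus verifying the hypotheses of Theorem~\ref{thmC}, which is precisely what Lemma~\ref{lemma:vanishing+surjectivity} achieves; everything else is a direct application of Kähler deformation theory.
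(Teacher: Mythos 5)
Your proposal is correct and follows essentially the same route as the paper: the paper's own proof is a one-line citation of Proposition~\ref{prop:Symp action on J} together with Theorem~\ref{thm:homotopy equivalence integrable} for part (1), and of Lemma~\ref{lemma:vanishing+surjectivity}, Theorem~\ref{thmC}, and Theorem~\ref{thm:integrable deformations} for parts (2)--(4), which is exactly the chain of results you invoke (your verification of the hypothesis $\Iso(\widetilde{\om}_{\B c},J)\simeq\Aut_{[\om]}(J)$ via the explicit stabilizer computations is what Proposition~\ref{prop:Symp action on J} records). You have simply supplied the details the paper leaves implicit.
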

\begin{proof} 
The first statement follows from Proposition~\ref{prop:Symp action on J} together with Theorem~\ref{thm:homotopy equivalence integrable}. The other statements follows directly from Lemma~\ref{lemma:vanishing+surjectivity}, Theorem~\ref{thmC}, and Theorem~\ref{thm:integrable deformations}. 
\end{proof}

Among all strata in $\mJ_{\B c}$, the only stratum that is not homotopy equivalent to a single orbit under the action of the symplectomorphism group $\mG_{\B c}$ is the codimension $4$ stratum $\mJ_{1234}$. Its orbit structure can be understood in the following way. Consider the map $\chi:\mJ_{1234}\to\mM_{0,4}$ that assigns to $J$ the cross-ratio $\chi(p_4,p_1,p_2,p_3)$, where $p_i$ is the intersection of the $J$-holomorphic representatives $C_{1234}$ and $\Sigma_i$ of the classes $L_{1234}$ and $E_i$. Since there is a unique holomorphic parametrization $u:S^2\to C_{1234}$ that sends $(0,1,\infty)$ to $(p_1,p_2,p_3)$, this map is well defined. Moreover, since $\phi\circ u$ is $\phi^*J$-holomorphic, $\chi$ is invariant under the action of $\mG_{\B c}$. 

\begin{proposition}\label{prop:structure of the stratum J_1234}
The cross-ratio defines a fibration
\[\mJ_{\chi} \to \mJ_{1234} \xrightarrow{\chi} \mM_{0,4}\]
whose fiber is homotopy equivalent to the orbit under $\mG_{\B c}$ of an integrable complex structure.
\end{proposition}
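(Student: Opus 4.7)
The plan is to exhibit $\chi$ as a fibration whose fiber is homotopy equivalent to the $\mG_{\B c}$-orbit of an integrable structure. First I would verify well-definedness and continuity of $\chi$. Given $J\in\mJ_{1234}$, positivity of intersections ensures a unique $J$-holomorphic orthogonal configuration $\Sigma\cup C_{1234}$, hence unique ordered intersection points $p_1,\ldots,p_4$ on the $L_{1234}$-curve, and the cross-ratio is a conformal invariant of $(C_{1234}, p_1, \ldots, p_4)$ computed with respect to the complex structure that $J$ induces on $C_{1234}$. Continuity follows from standard Gromov compactness for simple holomorphic spheres, while $\mG_{\B c}$-invariance is immediate because $\phi_* J$-holomorphic curves are $\phi$-images of $J$-holomorphic ones and biholomorphisms preserve cross-ratios.

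Next I would construct an explicit continuous section $\sigma\colon\mM_{0,4} \to \mJ_{1234}$ landing in integrable structures: blowing up $\cp^2$ at the four collinear points $[0:0:1]$, $[0:1:0]$, $[0:1:1]$, $[0:\lambda:1]$ and applying Moser's theorem to the resulting family of K\"ahler forms (which lie in the cohomology class of $\widetilde{\om}_{\B c}$ once the blow-up radii are chosen according to $\B c$) yields integrable structures $J_\lambda$ with $\chi(J_\lambda)=\lambda$. The main step is then to show $\mG_{\B c}\cdot J_{\lambda_0} \simeq \chi^{-1}(\lambda_0)$. The inclusion of the orbit into the fiber follows from invariance. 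For the reverse equivalence, the projection $\mJ_{1234} \to \mC^\circ_{\B c}(\mT_{1234})$ is a homotopy equivalence by Lemma~\ref{lemma:CharaterizationStrataConfigurations}, and Proposition~\ref{prop:HomogeneityConfigurations} identifies the target with $\mG_{\B c}/\Stab_h(S)$. Since $\Stab_h(S)\simeq \B S^1\times \B F_2$ by Proposition~\ref{prop:structure of the stabilizer J_1234} while $\Iso(\widetilde{\om}_{\B c}, J_{\lambda_0})\simeq \B S^1$, the orbit $\mG_{\B c}/\B S^1$ is an $\B F_2$-cover of $\mC^\circ_{\B c}(\mT_{1234})$. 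On the other hand, restricting $\chi$ to a contractible fiber $\mJ_{\B c}(S)$ yields the classical cross-ratio map of a $4$-pointed symplectic sphere, whose preimage of $\lambda_0$ is by Teichm\"uller theory a disjoint union of $\B F_2$-many contractible components. The orbit meets each such component in exactly one $\Iso$-orbit, yielding the desired homotopy equivalence.

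Finally, to establish the fibration property I would combine $\sigma$ with the group action: for $\lambda_0\in\mM_{0,4}$ and a small contractible neighborhood $U\ni\lambda_0$, local sections of the principal fibration $\mG_{\B c}\to\mG_{\B c}/\Iso(\widetilde{\om}_{\B c}, J_{\lambda_0})$ together with the map $(\phi,\lambda)\mapsto \phi_*\sigma(\lambda)$ produce local trivializations $U\times(\mG_{\B c}\cdot J_{\lambda_0}) \to \chi^{-1}(U)$, giving local triviality. I expect the main obstacle to be the fiber identification, specifically showing that the generators of $\pi_0(\Stab_h(S))\simeq \B F_2$ realize the monodromy of $\chi$ along the generators of $\pi_1(\mM_{0,4})\simeq \B F_2$, so that the $\B F_2$-covering on the orbit side matches the $\B F_2$-deck transformations coming from the cross-ratio.
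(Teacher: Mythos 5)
Your identification of the fiber follows essentially the same route as the paper's: both arguments come down to comparing two objects fibered over the configuration space $\mC_{\B c}(\mT_{1234})\simeq \mG_{\B c}/(\B S^1\times \B F_2)$, namely the orbit $\mG_{\B c}/\Iso$ (an $\B F_2$-cover, by Proposition~\ref{prop:structure of the stabilizer J_1234}) and the fiber $\mJ_{\chi}$, whose restriction over a fixed configuration $S$ is the Teichm\"uller-type fiber $\mJ_{\B c}(S)\cap\chi^{-1}(\lambda_0)\simeq \B F_2$; the monodromy matching you flag at the end is indeed the point that makes the two $\B F_2$'s correspond. That part of your proposal is sound and matches the paper's diagram of fibrations.

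The genuine gap is in your proof that $\chi$ is a fibration. The map $(\phi,\lambda)\mapsto\phi_{*}\sigma(\lambda)$ takes values in the union of the $\mG_{\B c}$-orbits of the integrable structures $\sigma(\lambda)$, and since integrability is preserved under pushforward by diffeomorphisms, its image consists entirely of integrable almost complex structures. But $\chi^{-1}(U)$ contains non-integrable elements of the stratum $\mJ_{1234}$: the fiber is only \emph{homotopy equivalent} to the orbit, not equal to it (that the integrable locus sits in the stratum as a homotopy equivalence is exactly the content of Proposition~\ref{prop:equivalence 1234}, which is proved \emph{using} the present statement and cannot be assumed here). Hence your map $U\times(\mG_{\B c}\cdot J_{\lambda_0})\to\chi^{-1}(U)$ is not surjective and cannot be a local trivialization. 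The paper avoids this by producing sections through \emph{every} point of every fiber: for any $J$ with $\chi(J)=z$, one extends the Hamiltonian isotopies $\phi^{z,\xi}$ realizing a local section of $\Symp(S^2,3)\to\mM_{0,4}$ to a neighbourhood of the $J$-holomorphic curve $C_{1234}$ and sets $S_z(\xi)=(\phi^{z,\xi})^{*}J$; because this construction is continuous in $J$, the resulting map $\chi^{-1}(z)\times U\to\chi^{-1}(U)$ is onto and yields the trivialization. Your argument is repaired by replacing the single global section $\sigma$ with this fiberwise construction; the section $\sigma$ remains useful, but only as evidence that every fiber is nonempty and that $\chi$ admits integrable representatives in each fiber.
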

\begin{proof}
Let $\Symp(S^2,k)$ be the group of symplectomorphisms of $S^2$ fixing a configuration of $k$ points. Given $w\neq 0,1,\infty$, there is an evaluation  fibration with local sections 
\[\Symp(S^2,4)\to\Symp(S^2,3)\to\mM_{0,4}\]
Fix a retraction of  $\Symp(S^2,3)\simeq\{\id\}$. Then any local section $s_z$ is given by the flow at time $t=1$ of some unique Hamiltonian on $S^2$, say $s_z(\xi)=\phi^{z,\xi}$. Given $J$ such that $\chi(J)=z$, and extending this Hamiltonian near the $J$-holomorphic curve $C_{1234}$ defines a local section of $\chi$ near $z$, namely, $S_z(\xi)=(\phi^{z,\xi})^*J$. This shows that $\chi$ is a fibration.

Let $C$ be a standard configuration of type $\mT_{1234}$ and consider the diagram of fibrations
\begin{equation}
\begin{tikzcd}
\mJ_{\chi}(C) \arrow[hook]{r}\ar[d] & \mJ_{\chi} \arrow[hook]{d}\arrow[twoheadrightarrow]{r}& \mC(\mT_{1234})\arrow[equal]{d}\simeq \mG_{\B c}/(\B S^1\times\B F_2)\\
*\simeq\mJ(C) \arrow[twoheadrightarrow]{d}{\chi} \arrow[hook]{r}& \mJ_{1234} \arrow[twoheadrightarrow]{d}{\chi}\arrow[twoheadrightarrow]{r}& \mC(\mT_{1234})\ar[d]\simeq \mG_{\B c}/(\B S^1\times\B F_2)\\
\mM_{0,4} \arrow[equal]{r} & \mM_{0,4} \arrow{r} & *
\end{tikzcd}
\end{equation}
The homotopy fiber of the leftmost vertical fibration is $\B F_2$, so that $\mJ_{\chi}(C) \simeq \B F_2$. On the other hand, the orbit $\mG_{\B c}/\Iso$ is the total space of the fibration 
\[
\B F_2 \to\mG_{\B c}/\Iso \to \mG_{\B c}/(\B S^1\times\B F_2) \simeq \mC(\mT_{1234}).
\]
Consequently, the inclusion $\mG_{\B c}/\Iso  \hookrightarrow \mJ_{\chi}$ is a homotopy equivalence as claimed. 
\end{proof}

\begin{proposition}\label{prop:equivalence 1234}
For every admissible capacity $\B c$, and for every stratum $\mJ_I([\Sigma])$, the inclusion $\mJ_{I}^{int}([\Sigma])\into \mJ_{I}([\Sigma])$ is a homotopy equivalence.
\end{proposition}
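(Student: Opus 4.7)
The plan is to exploit Proposition~\ref{prop:properties integrable strata}(1), which provides a homotopy equivalence $\Symp/\Iso \xrightarrow{\simeq} (\Diff_{[\om]}\cdot J_I)\cap\mJ_{\B c}([\Sigma])$ for any integrable $J_I$, together with Proposition~\ref{prop:Symp action on J}, which identifies each stratum $\mJ_I([\Sigma])$ with the homotopy orbit of an integrable structure (or, in the case of $\mJ_{1234}$, realizes it as the total space of the cross-ratio fibration whose fibers are such orbits). The main strategy will be a two-out-of-three argument in which $\mJ_I^{int}([\Sigma])$ is sandwiched between these two homotopy equivalent models.

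First I would treat the strata $\mJ_0$ and $\mJ_{ijk}$. Choose an integrable $J_I \in \mJ_I^{int}([\Sigma])$ and consider the tower of inclusions
\[
\Symp\cdot J_I \hookrightarrow (\Diff_{[\om]}\cdot J_I)\cap\mJ_{\B c}([\Sigma]) \hookrightarrow \mJ_I^{int}([\Sigma]) \hookrightarrow \mJ_I([\Sigma]).
\]
The first inclusion is a weak equivalence by Proposition~\ref{prop:properties integrable strata}(1), while the overall composite is the orbit map, a weak equivalence by Proposition~\ref{prop:Symp action on J}. To conclude via two-out-of-three I would show that $\mJ_I^{int}=(\Diff_{[\om]}\cdot J_I)\cap\mJ_{\B c}([\Sigma])$ as a subset: since $\PGL(3)$ acts transitively on configurations of $n\leq 3$ points in $\cp^2$ and on $4$-tuples with exactly three collinear points, any two integrable structures in $\mJ_I^{int}$ are biholomorphic via a map isotopic to the identity; after an auxiliary Moser isotopy enforcing preservation of the cohomology class $[\widetilde{\om}_{\B c}]$, this biholomorphism becomes an element of $\Diff_{[\om]}$ and identifies the two sets.

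Next I would handle the codimension-$4$ stratum $\mJ_{1234}$ by applying the same reasoning fiberwise to the cross-ratio fibration $\chi \colon \mJ_{1234}\to\mM_{0,4}$ of Proposition~\ref{prop:structure of the stratum J_1234}. The restriction $\chi^{int}\colon \mJ_{1234}^{int}\to \mM_{0,4}$ remains a fibration because local sections can be produced holomorphically by explicitly varying the four collinear points of $\cp^2$ so as to realize the prescribed cross-ratio. Each fiber $(\chi^{int})^{-1}(z)$ consists of integrable structures with fixed cross-ratio, hence mutually biholomorphic, and therefore forms a single $\Diff_{[\om]}$-orbit; the previous two-out-of-three argument then yields a fiberwise equivalence $(\chi^{int})^{-1}(z)\hookrightarrow \chi^{-1}(z)$. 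The five lemma applied to the induced map of long exact sequences in homotopy over the common base $\mM_{0,4}$ upgrades this to the required equivalence $\mJ_{1234}^{int}([\Sigma])\hookrightarrow\mJ_{1234}([\Sigma])$.

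The main obstacle will be the set-theoretic identification $\mJ_I^{int}=(\Diff_{[\om]}\cdot J_I)\cap\mJ_{\B c}$ and its fiberwise analogue for $I=1234$; morally this records that the complex-analytic moduli inside a stratum is controlled by the $\PGL(3)$-orbit of the underlying point configurations, but promoting a biholomorphism of blow-ups to an element of $\Diff_{[\om]}$ requires a careful Moser correction to adjust the Kähler class. Verifying that $\chi^{int}$ is a genuine Serre fibration with integrable local sections is the other technical point; granted these two ingredients, the remaining argument is purely formal and relies only on the cited propositions.
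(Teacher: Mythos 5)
Your proposal follows essentially the same route as the paper: for the strata $\mJ_0$ and $\mJ_{ijk}$ it sandwiches $\mJ_I^{int}([\Sigma])$ between the $\Symp$-orbit and the full stratum using Proposition~\ref{prop:properties integrable strata}(1) and Proposition~\ref{prop:Symp action on J}, identifying $\mJ_I^{int}$ with $(\Diff_{[\om]}\cdot J)\cap\mJ_{\B c}([\Sigma])$ via transitivity of $\PGL(3)$ on the point-configuration strata, and for $\mJ_{1234}$ it compares the integrable and non-integrable cross-ratio fibrations over $\mM_{0,4}$ fiberwise. Your added remarks on the Moser correction and the five lemma only make explicit what the paper leaves implicit, so the argument is correct and matches the paper's proof.
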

\begin{proof}
For the strata $\mJ_0$ and $\mJ_{ijk}$, Proposition~\ref{prop:Symp action on J}~(1-2) and Proposition~\ref{prop:properties integrable strata}~(1) imply that for each compatible almost complex structure $J$ in the stratum $\mJ_I$ we have equivalences
\[\mJ_I\simeq \Symp(\widetilde{M}_{\B c},\widetilde{\om}_{\B c})/\Iso(\widetilde{\om}_{\B c},J_I)
\simeq(\Diff_{[\om]}(M)\cdot J)\cap \mJ_{\B c}([\Sigma])\]
On the other hand, any complex integrable structure in $\mJ^{int}_I$ is obtained by blowing up a configuration of $n$ points in $\cp^2$ that are either in $F_0$ or in $F_{ijk}$. Since $\PGL(3)$ acts transitively on these sets, $\mD_h$ acts transitively on $\mJ^{int}_I$, so that $(\Diff_{[\om]}(M)\cdot J)\cap \mJ_{\B c}([\Sigma])=\mJ^{int}_I$.

For the stratum $\mJ_{1234}$, consider the ladder of fibrations
\begin{equation}
\begin{tikzcd}
\mJ_{\chi} \ar[r] & \mJ_{1234} \arrow{r}{\chi}& \mM_{0,4} \\
\mJ_{\chi}^{int} \ar[r] \ar[u, hookrightarrow] & \mJ_{1234}^{int} \ar[u, hookrightarrow] \arrow{r}{\chi}& \mM_{0,4} \ar[u,equal]
\end{tikzcd}
\end{equation}
By Proposition~\ref{prop:structure of the stratum J_1234}, each fiber is homotopy equivalent to an orbit. Consequently, the inclusion $\mJ_{\chi}^{int}\into \mJ_{\chi}$ is an equivalence and it follows that $\mJ_{1234}^{int}\into \mJ_{1234}$ is also a homotopy equivalence.
\end{proof}

Recall that given the stratification of $\mJ_{\B c}([\Sigma])$, we recursively defined $\mJ_0\subset\mJ_1\subset\cdots$ by setting $\mJ_0$ equal to the open stratum, and by successively adding strata of increasing codimensions. In the discussion below, $\mJ_i$ stands for a union of strata, while we keep using $\mJ_I$ when we refer to a single stratum.

\begin{proposition}\label{prop:equivalence with integrable}
For every admissible capacity $\B c$, the inclusion $\mJ_{\B c}^{int}([\Sigma])\into \mJ_{\B c}([\Sigma])$ is a homotopy equivalence.
\end{proposition}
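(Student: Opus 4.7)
The plan is to argue by induction on the filtration $\mJ_0([\Sigma])\subset\mJ_1([\Sigma])\subset\cdots\subset\mJ_{\B c}([\Sigma])$ by unions of strata (as introduced on page~\pageref{Fi}), using the parallel filtration of the integrable locus $\mJ_{\B c}^{int}([\Sigma])$ obtained by restriction. The base case $\mJ_0^{int}\into \mJ_0$ is exactly Proposition~\ref{prop:equivalence 1234} applied to the open stratum. For the inductive step, suppose that $\mJ_{i-1}^{int}\into \mJ_{i-1}$ is a weak equivalence, and let $\mJ_I=\mJ_i\setminus\mJ_{i-1}$ be the stratum of positive codimension added at level~$i$.

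By Proposition~\ref{prop:Symp action on J}, $\mJ_I$ is a co-oriented submanifold of $\mJ_i$ of codimension $2$ or $4$, so it admits an open tubular neighborhood $N_I\subset \mJ_i$. This yields an open cover pushout presentation
\begin{equation*}
\mJ_i \simeq \mJ_{i-1}\cup_{N_I\setminus \mJ_I} N_I,
\end{equation*}
with $N_I$ deformation retracting onto $\mJ_I$ and $N_I\setminus \mJ_I$ deformation retracting onto the associated sphere bundle. By Proposition~\ref{prop:properties integrable strata}(2), the inclusion $\mJ_{\B c}^{int}([\Sigma])\into \mJ_{\B c}([\Sigma])$ is transverse to each stratum $\mJ_I$; therefore $N_I^{int}:=N_I\cap \mJ_{\B c}^{int}([\Sigma])$ is a tubular neighborhood of $\mJ_I^{int}$ in $\mJ_i^{int}$, whose normal bundle is the restriction of the normal bundle of $\mJ_I$ in $\mJ_i$. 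This gives a corresponding pushout
\begin{equation*}
\mJ_i^{int} \simeq \mJ_{i-1}^{int}\cup_{N_I^{int}\setminus \mJ_I^{int}} N_I^{int}.
\end{equation*}

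It remains to compare these two pushouts term by term. By the inductive hypothesis, $\mJ_{i-1}^{int}\into \mJ_{i-1}$ is a weak equivalence. By Proposition~\ref{prop:equivalence 1234}, $\mJ_I^{int}\into \mJ_I$ is a weak equivalence, and since the normal bundles match, this upgrades to weak equivalences $N_I^{int}\into N_I$ and $N_I^{int}\setminus\mJ_I^{int}\into N_I\setminus \mJ_I$ on the sphere bundles. The gluing lemma for homotopy pushouts of open covers then gives that $\mJ_i^{int}\into\mJ_i$ is a weak equivalence. Iterating through the finite filtration (at most six strata for $n\leq 4$) yields the result.

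The main technical point is the compatibility of tubular neighborhoods of strata between $\mJ_{\B c}([\Sigma])$ and its integrable locus; this rests entirely on the transversality statement of Proposition~\ref{prop:properties integrable strata}(2), which identifies the infinitesimal normal direction to a stratum inside $\mJ^{int}$ with the normal direction inside $\mJ$. Once this is in hand, the argument is a straightforward Mayer--Vietoris style gluing, and no new geometric input is required.
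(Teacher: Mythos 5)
Your argument is correct and follows essentially the same route as the paper's proof: both compare the two stratified spaces stratum by stratum, using the stratum-wise equivalence $\mJ_I^{int}\into\mJ_I$ of Proposition~\ref{prop:equivalence 1234} together with the transversality/normal-bundle identification of Proposition~\ref{prop:properties integrable strata}, and then conclude by inductively gluing one stratum at a time via the homotopy pushout (excisive triad) lemma. Your write-up simply makes explicit the tubular-neighborhood bookkeeping that the paper compresses into the phrase ``the gluing data are isomorphic.''
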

\begin{proof}
Both spaces are partitioned into the same number of strata, namely, $\mJ_{\B c}^{int}([\Sigma])=\bigsqcup_I \mJ_I^{int}$ and $\mJ_{\B c}([\Sigma])=\bigsqcup_I \mJ_I$, the corresponding strata $\mJ_I^{int}$ and $\mJ_I$ are homotopy equivalent, their normal bundles $\mN_I^{int}$, $\mN_I$, are homotopy equivalent, and the gluing data are isomorphic. The statement then follows inductively from applying~\cite[Theorem p.80]{May-Course} to the homotopy equivalent excisive triads $\big(\mJ_i^{int}([\Sigma]),\, \mN_i^{int},\, \mJ_{i-1}^{int}([\Sigma])\big)\to \big(\mJ_i([\Sigma]),\, \mN_i,\, \mJ_{i-1}([\Sigma])\big)$ obtained by adding one stratum at a time. For more details see~\cite[Theorem 1.1]{AbGrKi}.
\end{proof}

\subsection{Existence of homotopy pushout squares}
Let $J$ be an integrable structure that belongs to the stratum of highest codimension $\mJ_I^{int}\subset\mJ_{\B c}^{int}([\Sigma])$. Although the orbit $\mO_J:=(\mG_{\B c}\cdot J)$ is a smooth $\mG_{\B c}$-invariant submanifold of finite codimension, it is not known whether one can find a $\mG_{\B c}$-invariant tubular neighborhood. However, the action of $\mG_{\B c}$ on $\mJ_{\B c}^{int}([\Sigma])$ admits slices. As explained in~\cite[Appendix C]{AbGrKi}, this implies that for any tubular neighborhood $\mN_J$ of $\mO_J$, and for any compact set $K\in\mG_{\B c}$, there is a smaller tubular neighborhood $\mN_J(K)$ sent by the action of $K$ into $\mN_J$. This is enough to define a $A_\infty$-action of $\mG_{\B c}$ on $\mN_J$ which is equivalent to the left $\mG_{\B c}$-action on a standard tube $\mG_{\B c}\times_{H_J} N_J$, where $N_J\simeq H^{0,1}_J(T\widetilde{M}_n)$ is the normal fiber at $J$ endowed with the isotropy representation of $H_J=\Iso(\om_{\B c},J)$. We summarize the previous discussion in the next proposition.

\begin{proposition}\label{prop:tubular nbhood}
Let $J\in\mJ_{\B c}^{int}([\Sigma])$ be an integrable structure and let $H_J=\Iso(\om_{\B c},J)$. Suppose its $\mG_{\B c}$-orbit $\mO_J$ is of codimension $(k+1)$. Then there exists a tubular neighborhood $\mN_J$ of $\mO_J$ such that the projection $(\mN_J\setminus\mO_J)\to\mO_J$ has the weak homotopy type of the projection $\mG_{\B c}\times_{H_J} S^k \to\mG_{\B c}/H_J$ of a standard tube. Moreover, one can define a $A_\infty$-action of $\mG_{\B c}$ on $\mN_J$ which is weakly homotopy equivalent to the left $\mG_{\B c}$-action on $\mG_{\B c}\times_{H_J} S^k \to\mG_{\B c}/H_J$.
\end{proposition}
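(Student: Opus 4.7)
The plan is to build a local slice $\Sigma_J$ for the $\mG_{\B c}$-action at $J$, then propagate it by the group action and compare the resulting ``fat tube'' with the honest fibre bundle $\mG_{\B c}\times_{H_J} N_J\to\mG_{\B c}/H_J$ up to weak equivalence. By Proposition~\ref{prop:properties integrable strata}, the orbit $\mO_J$ is a smooth $\mG_{\B c}$-invariant submanifold of $\mJ_{\B c}^{int}([\Sigma])$, and the normal fiber $N_J$ to $\mO_J$ at $J$ is equivariantly identified with the $(k+1)$-dimensional $H_J$-representation $H^{0,1}_J(T\widetilde{M}_n)/\ker(u^*)$ carrying the restriction of the isotropy representation of $\Iso(\widetilde{\om}_{\B c},J)=H_J$. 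Since $H_J$ is a compact Lie group acting linearly on $T_J\mJ_{\B c}^{int}([\Sigma])=T_J\mO_J\oplus N_J$, a standard averaging argument produces an $H_J$-equivariant smooth embedding $\exp_J\colon D\subset N_J\to\mJ_{\B c}^{int}([\Sigma])$ whose image $\Sigma_J$ is transverse to $\mO_J$ at $J$.

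Next, the orbit map induces a natural map
\[
\mu\colon \mG_{\B c}\times_{H_J} \Sigma_J \longrightarrow \mJ_{\B c}^{int}([\Sigma]),\qquad [\phi,\sigma]\longmapsto \phi^*\sigma,
\]
whose image is the candidate tubular neighborhood $\mN_J$. That $\mu$ is a weak equivalence onto $\mN_J$ follows from the transversality of $\Sigma_J$ to $\mO_J$, combined with the observation that radial retraction inside $D\subset N_J$ deformation retracts $\mN_J\setminus\mO_J$ onto $\mG_{\B c}\times_{H_J}S^k\to\mG_{\B c}/H_J\simeq \mO_J$. This yields the first assertion.

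The main obstacle is that in the Fréchet setting the $\mG_{\B c}$-action does not admit a strict tubular neighborhood: for $\phi\in\mG_{\B c}$ sufficiently far from the identity, $\phi\cdot\mN_J$ may fail to lie in $\mN_J$, so the naive left translation does not close up. The remedy, following the slice analysis carried out in~\cite[Appendix C]{AbGrKi}, is compact-by-compact control: for any compact $K\subset\mG_{\B c}$, one can shrink $\Sigma_J$ to obtain a slice $\Sigma_J(K)$ with $K\cdot\Sigma_J(K)\subset\mN_J$. A cofinal exhaustion by compacts together with the associahedra-parametrised composition maps built from these shrinkings then packages the partial translations into a coherent $A_\infty$-action of $\mG_{\B c}$ on $\mN_J$.

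Finally, this $A_\infty$-action is weakly equivalent to the honest left $\mG_{\B c}$-action on $\mG_{\B c}\times_{H_J} N_J$: the map $\mu$ is $\mG_{\B c}$-equivariant up to the coherent homotopies produced at the previous step, and composing with the radial retraction onto the unit sphere $S^k\subset N_J$ gives the desired weak equivalence with $\mG_{\B c}\times_{H_J}S^k\to\mG_{\B c}/H_J$. The entire argument is formally identical to the one given in~\cite[Appendix~C]{AbGrKi} for ruled surfaces, and in our setting no new analytic input is required beyond the transversality and orbit description already supplied by Proposition~\ref{prop:properties integrable strata}.
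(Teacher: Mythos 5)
Your argument follows exactly the same route as the paper, whose proof simply invokes the slice theory of \cite[Appendix C]{AbGrKi} (Proposition~C.6 for the tube and formula~(A.1) for the $A_\infty$-action); your write-up just spells out what that citation contains, so the approaches are identical. One small inaccuracy: by Theorem~\ref{thm:integrable deformations} the normal fibre to the orbit $\mO_J$ inside $\mJ_{\B c}^{int}([\Sigma])$ is $H^{0,1}_J(T\widetilde{M}_n)$, whereas the quotient $H^{0,1}_J(T\widetilde{M}_n)/\ker(u^*)$ that you use is the normal fibre to the stratum $\mJ_I$ in $\mJ_{\B c}([\Sigma])$ (Theorem~\ref{thmC}); the two coincide only when the stratum is a single orbit, so this does not affect the structure of the argument.
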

\begin{proof}
The first part of the statement is analogous to Proposition~C.6 in~\cite[Appendix C]{AbGrKi}. The second part follows from formula~(A.1) in the same reference.
\end{proof}

\begin{proposition}\label{prop:existence pushouts}
Let $J\in\mJ_{\B c}^{int}([\Sigma])$ be an integrable structure that belongs to a stratum $\mJ_I^{int}$ of highest codimension. Let $S_J$ be the corresponding $J$-holomorphic configuration of type $\mT_I$. Let $u:S^2\to S_J$ be a $J$-holomorphic parametrization of the component in the homology class ($L_{ijk}$ or $L_{1234}$) characterizing the stratum $\mJ_I$.
\begin{enumerate}
\item If the stratum $\mJ_I$ is of codimension $2$, the homotopy orbit $\big(\mJ_{\B c}^{int}([\Sigma])\big)_{h\mG_{\B c}}\simeq \OP{B}\mG_{\B c}(\Sigma)$ is equivalent to the homotopy pushout
\begin{equation*}
\begin{tikzcd}
*\simeq\big(S^{1}\big)_{h\Iso(\widetilde{\om}_{\B c},J)} \arrow{r}{} \arrow{d}{} & \OP{B}\Stab(S_J)\simeq\B S^1 \arrow{d}{} \\
\big(\mJ_{\B c}^{int}([\Sigma])\setminus\mJ_I^{int}\big)_{h\mG_{\B c}}\arrow{r}{} & \OP{B}\mG_{\B c}(\Sigma)
\end{tikzcd}
\end{equation*}
where $\big(S^{1}\big)_{h\Iso(\widetilde{\om}_{\B c},J)}$ is the homotopy orbit of the isotropy representation of $\Iso(\widetilde{\om}_{\B c},J)$ on the unit sphere in $H^{0,1}_J(T\widetilde{M}_n)\simeq \C$. 
\item If the stratum $\mJ_I$ is of codimension $4$, the homotopy orbit $\big(\mJ_{\B c}^{int}([\Sigma])\big)_{h\mG_{\B c}}\simeq \OP{B}\mG_{\B c}(\Sigma)$ is equivalent to the homotopy pushout
\begin{equation*}
\begin{tikzcd}
\mM_{0,4}\times\big(S^{3}\big)_{h\Iso(\widetilde{\om}_{\B c},J)} \arrow{r}{} \arrow{d}{} & \OP{B}\Stab(S_J)\simeq \mM_{0,4}\times\B S^1 \arrow{d}{} \\
\big(\mJ_{\B c}^{int}([\Sigma])\setminus\mJ_I^{int}\big)_{h\mG_{\B c}}\arrow{r}{} & \OP{B}\mG_{\B c}(\Sigma)
\end{tikzcd}
\end{equation*}
where $\big(S^{3}\big)_{h\Iso(\widetilde{\om}_{\B c},J)}$ is the homotopy orbit of the isotropy representation of $\Iso(\widetilde{\om}_{\B c},J)$ on the unit sphere in $H^{0,1}_J(T\widetilde{M}_n)/\ker(u^*)\simeq \C^2$.
\end{enumerate}
\end{proposition}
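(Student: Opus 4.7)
The plan is to apply the standard excision principle for homotopy orbits to an invariant open cover of $\mJ_{\B c}^{int}([\Sigma])$. By Proposition \ref{prop:tubular nbhood}, the $\mG_{\B c}$-orbit $\mO_J$ of $J$ admits a tubular neighborhood $\mN_J$ whose deleted version has the weak homotopy type of the standard sphere bundle $\mG_{\B c}\times_{H_J}S^k$, with $k=1$ in the codimension $2$ case and $k=3$ in the codimension $4$ case. In the codimension $4$ setting, we first extend $\mN_J$ to an invariant neighborhood $\mN_I$ of the entire stratum $\mJ_{1234}^{int}$ by spreading along the cross-ratio fibration $\chi$ of Proposition \ref{prop:structure of the stratum J_1234} using its local sections together with an equivariant partition of unity on $\mM_{0,4}$. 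Writing $\mJ_{\B c}^{int}([\Sigma]) = U \cup V$ with $U = \mJ_{\B c}^{int}([\Sigma]) \setminus \mJ_I^{int}$ and $V = \mN_I$, the intersection $U\cap V$ equivariantly deformation retracts onto the sphere bundle $S(\mN_I)$, and the Borel construction (which preserves homotopy pushouts) produces a homotopy pushout square
\begin{equation*}
\begin{tikzcd}
(S(\mN_I))_{h\mG_{\B c}} \arrow{r}\arrow{d} & (\mJ_I^{int})_{h\mG_{\B c}} \arrow{d} \\
(U)_{h\mG_{\B c}} \arrow{r} & (\mJ_{\B c}^{int}([\Sigma]))_{h\mG_{\B c}}.
\end{tikzcd}
\end{equation*}
The lower-right corner is identified with $\OP{B}\mG_{\B c}(\Sigma)$ through the chain of equivalences in Propositions \ref{prop:equivalence with integrable}, \ref{prop:equivalence S and [S]} and \ref{prop:homotopy orbit symp}.

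The upper row is handled case-by-case. In the codimension $2$ case, Corollary \ref{cor:HomogeneityStrataJ} and Proposition \ref{prop:equivalence 1234} together imply that $\mJ_{ijk}^{int}$ is a single $\mG_{\B c}$-orbit, so its homotopy orbit is $\OP{B}\Stab(S_J)$. Proposition \ref{prop:tubular nbhood} identifies the upper-left corner as $(S^1)_{h\Iso(\widetilde{\om}_{\B c},J)}$, and Proposition \ref{prop:properties integrable strata}~(4) combined with Lemma \ref{lemma:vanishing+surjectivity} shows that the isotropy action of $\Iso(\widetilde{\om}_{\B c},J)\simeq\B S^1_{ijk}$ on $H^{0,1}_J(T\widetilde{M}_n)\simeq\C$ is the standard weight-one representation, making the homotopy orbit contractible. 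In the codimension $4$ case, Proposition \ref{prop:structure of the stratum J_1234} gives an equivariant fibration $\mJ_{1234}^{int}\to\mM_{0,4}$ (with trivial action on the base) whose fibers are single orbits; the associated Borel construction fibers over $\mM_{0,4}$ and, using Proposition \ref{prop:structure of the stabilizer J_1234} together with $\mM_{0,4}\simeq \OP{B}\B F_2$, the fibration trivializes to identify the upper-right corner with $\mM_{0,4}\times\B S^1$. The sphere bundle analysis is parallel, the normal fiber being $N_J\simeq H^{0,1}_J(T\widetilde{M}_n)/\ker(u^*)\simeq\C^2$ with weight-one $\B S^1_{1234}$ action, yielding the upper-left corner $\mM_{0,4}\times(S^3)_{h\Iso(\widetilde{\om}_{\B c},J)}$.

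The main technical difficulty lies in the codimension $4$ case, where Proposition \ref{prop:tubular nbhood} supplies a tubular neighborhood only of a single orbit rather than of the full stratum. Making the pushout work globally over $\mM_{0,4}$ requires the normal bundle to extend equivariantly along the cross-ratio fibration and the associated sphere bundle to be weakly equivalent to a product. This should follow from the naturality in $J$ of the deformation-theoretic splitting $T_J\mJ_{\B c}^{int}([\Sigma])\simeq T_J\mO_J\oplus H^{0,1}_J(T\widetilde{M}_n)/\ker(u^*)$ provided by Theorem \ref{thmC}, together with the monodromy lifts produced by the Hamiltonian isotopies constructed in the proof of Proposition \ref{prop:SymplecticStabilisers}~(4c); these ensure that the $\pi_1(\mM_{0,4})\simeq\B F_2$-action on the normal fibers is homotopically trivial, so that the resulting sphere-bundle Borel construction splits as the indicated product.
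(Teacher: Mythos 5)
Your proposal is correct and follows essentially the same route as the paper: decompose $\mJ_{\B c}^{int}([\Sigma])$ into the complement of the deepest stratum and a tubular neighborhood, apply the Borel construction to obtain the pushout, identify the corners via Propositions \ref{prop:Symp action on J}, \ref{prop:tubular nbhood} and \ref{prop:structure of the stratum J_1234}, and in the codimension~$4$ case use the cross-ratio fibration over $\mM_{0,4}$ to reduce to a single orbit fiberwise. The only notable difference is that you justify the triviality of the sphere bundle over $\mM_{0,4}$ by checking that the $\B F_2$-monodromy on the normal fibers is homotopically trivial via the lifts from Proposition \ref{prop:SymplecticStabilisers}~(4c), whereas the paper deduces triviality more tersely from the fibration $S^2\to(\mN_I\setminus\mJ_I^{int})_{h\mG_{\B c}}\to\mM_{0,4}$; your version makes that step more explicit but does not change the argument.
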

\begin{proof}
Suppose the stratum $\mJ_I$ is of codimension $2$. Let $\mN_I$ be a tubular neighborhood of $\mJ_I^{int}$ and consider the the pushout diagram describing the union $\mJ_{\B c}^{int}([\Sigma])=(\mJ_{\B c}^{int}([\Sigma])\setminus \mJ_I^{int})\cup \mN_I$, namely,
\[
\begin{tikzcd}
\mN_{I}\setminus\mJ_{I}^{int} \arrow{r}{} \arrow{d}{} & \mN_{I} \arrow{d}{} \\
\mJ_{\B c}^{int}([\Sigma])\setminus \mJ_I^{int} \arrow{r}{} & \mJ_{\B c}^{int}([\Sigma])
\end{tikzcd}
\]
By Proposition~\ref{prop:Symp action on J}, each stratum $\mJ_I=\mJ_{ijk}^{int}$ of codimension $2$ is homotopy equivalent to an orbit under the action of $\mG_{\B c}$, and $\Iso(\widetilde{\om}_{\B c},J)\simeq \Stab(S_I)$. By Proposition~\ref{prop:tubular nbhood}, the upper map $\mN_{I}\setminus\mJ_{I}^{int} \to \mN_{I}$ is equivalent to the projection $\mG_{\B c}\times_{H_J} S^1 \to\mG_{\B c}/H_J$, where $H_J=\Iso(\widetilde{\om}_{\B c},J)\simeq \B S^1$ acts with weight one. Applying the Borel construction relative to the action of $\mG_{\B c}$ proves the statement.

Now consider the only stratum of codimension $4$, i.e. $\mJ_I^{int}=\mJ_{1234}^{int}$. Applying the Borel construction relative to the action of $\mG_{\B c}$ to the diagram of inclusions, we get
\[
\begin{tikzcd}
\mN_{I}\setminus\mJ_{I}^{int} \arrow{r}{} \arrow{d}{} & \mN_{I} \arrow{d}{} \\
\mJ_{\B c}^{int}([\Sigma])\setminus \mJ_I^{int} \arrow{r}{} & \mJ_{\B c}^{int}([\Sigma])
\end{tikzcd}
\implies
\begin{tikzcd}
\big(\mN_{I}\setminus\mJ_{I}^{int}\big)_{h\mG_{\B c}} \arrow{r}{} \arrow{d}{} & \big(\mN_{I}\big)_{h\mG_{\B c}}\simeq \mM_{0,4}\times \B S^1\arrow{d}{} \\
\big(\mJ_{\B c}^{int}([\Sigma])\setminus \mJ_I^{int}\big)_{h\mG_{\B c}} \arrow{r}{} & \big(\mJ_{\B c}^{int}([\Sigma])\big)_{h\mG_{\B c}}\simeq \OP{B}\mG_{\B c}(\Sigma)
\end{tikzcd}
\]
To better understand the upper map $\big(\mN_{I}\setminus\mJ_{I}^{int}\big)_{h\mG_{\B c}} \to \big(\mN_{I}\big)_{h\mG_{\B c}}\simeq \mM_{0,4}\times \B S^1$, consider the fibration 
\begin{equation}\label{eq:fiber over M(0,4)}
Y\to\mN_I\setminus\mJ_I\xrightarrow{\chi} \mM_{0,4}
\end{equation}
obtained by composing the projection $\mN_I\setminus\mJ_I\to \mJ_I$ with the cross-ratio map $\chi$. Its fiber $Y$ is itself a fibration over $\mJ_\chi$ with fiber homotopy equivalent to $S^3$. Since $\mJ_\chi$ is homotopy equivalent to an orbit $\mG_{\B c}/{\B S^1}$, the homotopy orbit $\big(Y\big)_{h\mG_{\B c}}$ is equivalent to $\big(S^3\big)_{h\B S^1}$, and since the action of $\B S^1$ on $S^3$ is free, we get $\big(S^3\big)_{h\B S^1}\simeq S^2$. On the other hand, the action of $\mG_{\B c}$ on $\mM_{0,4}$ being trivial, taking the homotopy orbits of the inclusion $Y\to\mN_I\setminus\mJ_I$ in the fibration~\eqref{eq:fiber over M(0,4)} gives another fibration 
\[S^2\simeq \big(S^3\big)_{h\B S^1} \to \big(\mN_{I}\setminus\mJ_{I}^{int}\big)_{h\mG_{\B c}} \to \mM_{0,4}.\]
Since $S^2$ is connected, and since $\mM_{0,4}\simeq S^1\vee S^1$, this last fibration must be trivial. This shows that the map $\big(\mN_{I}\setminus\mJ_{I}^{int}\big)_{h\mG_{\B c}} \to \big(\mN_{I}\big)_{h\mG_{\B c}}$ is homotopy equivalent to the product map
\[\mM_{0,4}\times S^2\simeq\mM_{0,4}\times\big(S^{3}\big)_{h\Iso(\widetilde{\om}_{\B c},J)} \to \mM_{0,4}\times\B S^1.\]
\end{proof}

\section{Homotopy orbits of the \texorpdfstring{$\Diff_{h}(\widetilde{M}_n)$}{Diff} action on \texorpdfstring{$\mA(n,[\Sigma])$}{A(n,S)}}\label{section: homotopy orbits Diff}

\subsection{Homotopy orbits for actions on almost complex structures}
We are now in a position to determine the homotopy orbits associated to the actions of diffeomorphism groups on spaces of almost complex structures. We state the results for the action of $\mD_h=\Diff_h(\widetilde{M}_n)$ on $\mA(n,[\Sigma])$ but, by Proposition~\ref{prop:equivalence G and D actions}, the same statements hold for the action of $\mD_h(\Sigma)=\Diff_h(\widetilde{M}_n,\Sigma)$ on $\mA(n,\Sigma)$.

\subsubsection{Homotopy orbits for $n=1$ and $n=2$}
For any admissible capacity $\B c$, the group $\mD_h$ acts on the unique stratum $\mA_0([\Sigma])$. From Proposition~\ref{prop:Symp action on J} (1) and Proposition~\ref{prop:equivalence G and D actions}, the homotopy orbit is
\[\mA_0([\Sigma])_{h\mD_h}\simeq\OP{B}\mG_{\B c}\simeq\OP{B}\Aut_h(J_0)\simeq \OP{B}\Iso(\widetilde{\om}_{\B c},J_0),\] 
that is, $\BU(2)$ in the case $n=1$, and $\OP{B}\B T^2$ in the case $n=2$.

\subsubsection{Homotopy orbits for $n=3$}
The group $\mD_h$ acts on $\mA(3,[\Sigma])=\mA_0([\Sigma])\sqcup\mA_{123}([\Sigma])$ preserving the stratification. By the Stability Theorem~\ref{thm:StabilityOfEmbeddings} and the description of chambers given in Section~\ref{section:stability chambers n=3}, we have equality $\mA_{\B c}([\Sigma])=\mA_0([\Sigma])$ whenever the capacities satisfy $c_1+c_2+c_3\geq 1$. By Proposition~\ref{prop:Symp action on J}, the stratum $\mA_0([\Sigma])$ is homotopy equivalent to a single orbit with stabilizer equivalent to a torus $\B T^2$. Consequently, the homotopy orbit $\mA_0([\Sigma])_{h\mD_h}$ is homotopy equivalent to $\OP{B}\B T^2$. On the other hand, we know from Proposition~\ref{prop:equivalence G and D actions} that the homotopy orbit $\mA_{\B c}([\Sigma])_{h\mD_h}=\mA_0([\Sigma])_{h\mD_h}$ is equivalent to $\BSymp(\widetilde{M}_{\B c},\Sigma)$. Since $\mA_0([\Sigma])$ and $\mD_h$ are independent of $\B c$, the homotopy orbit $\mA_0([\Sigma])_{h\mD_h}$ of the open stratum is \emph{always} homotopy equivalent to $\OP{B}\B T^2$.

When $c_1+c_2+c_3<1$, we have equality $\mA_{\B c}([\Sigma])=\mA_0([\Sigma])\sqcup \mA_{123}([\Sigma])=\mA(3,[\Sigma])$. By Proposition~\ref{prop:Symp action on J}, the stratum $\mA_{123}([\Sigma])$ is homotopy equivalent to a single orbit with stabilizer equivalent to a circle $\B S^1$. By Proposition~\ref{prop:existence pushouts}, applying the Borel construction to the diagram of inclusions gives
\[
\begin{tikzcd}
\mN(\mA_{123})\setminus\mA_{123} \arrow{r}{} \arrow{d}{} & \mN(\mA_{123}) \arrow{d}{} \\
\mA_{0}([\Sigma]) \arrow{r}{} & \mA(3,\Sigma)
\end{tikzcd}
\implies
\begin{tikzcd}
*\simeq\big(S^{1}\big)_{h\Iso(\widetilde{\om}_{\B c},J)}\arrow{r}{} \arrow{d}{} & \OP{B}\B S^1 \arrow{d}{} \\
\mA_0([\Sigma])_{h\mD_h} \simeq \OP{B}\B T^2 \arrow{r}{} & \mA(3,\Sigma)_{h\mD_h}\simeq \OP{B}\B T^2 \vee \OP{B}\B S^1.
\end{tikzcd}
\]
Again, we know from Proposition~\ref{prop:equivalence G and D actions} that the homotopy orbit $\mA_{\B c}([\Sigma])_{h\mD_h}=\mA(3,[\Sigma])_{h\mD_h}$ is equivalent to $\BSymp(\widetilde{M}_{\B c},\Sigma)$ whenever $c_1+c_2+c_3<1$. Consequently,
\begin{equation}\label{eq:precise statement n=3}
\OP{B}\mG_{\B c}=\BSymp(\widetilde{M}_{\B c},\Sigma)\simeq
\begin{cases}
\OP{B}\B T^2 & \text{if~}c_1+c_2+c_3\geq 1\\
\OP{B}\B T^2 \vee \OP{B}\B S^1 & \text{if~} c_1+c_2+c_3<1
\end{cases}
\end{equation}

\subsubsection{Homotopy orbits for $n=4$}
The group $\mD_h$ acts on the chain of inclusions
\[\mA_0([\Sigma])\subset\mA_1([\Sigma])~ \ldots ~\subset\mA_5([\Sigma])=\mA(4,[\Sigma]).\]
For $1\leq r\leq 4$, Proposition~\ref{prop:Symp action on J}, Proposition~\ref{prop:equivalence G and D actions}, and Proposition~\ref{prop:existence pushouts} give a sequence of pushout diagrams of inclusions and of homotopy orbits
\begin{equation}\label{eq:pushout diagram A-1}
\begin{tikzcd}[cramped,column sep=small]
\mN(\mA_{ijk})\setminus\mA_{ijk} \arrow{r}{} \arrow{d}{} & \mN(\mA_{ijk}) \arrow{d}{} \\
\mA_{r-1}([\Sigma]) \arrow{r}{} & \mA_r([\Sigma])
\end{tikzcd}
\implies
\begin{tikzcd}[cramped,column sep=small]
*\simeq\big(S^{1}\big)_{h\Iso} \arrow{r}{} \arrow{d}{} & \OP{B}\B S^1_{ijk} \arrow{d}{} \\
\mA_{r-1}([\Sigma])_{h\mD_h} \arrow{r}{} & \mA_{r}([\Sigma])_{h\mD_h}\simeq \mA_{r-1}([\Sigma])_{h\mD_h}\vee \OP{B}\B S^1_{ijk}
\end{tikzcd}
\end{equation}
By induction, we have $(\mA_{r})_{h\mD_h}\simeq \OP{B}\B S^1 \vee\cdots\vee \OP{B}\B S^1$ ($1\leq r\leq 4$ times). 

Adding the last stratum $\mA_{1234}$, and recalling that $\mM_{0,4}\simeq\OP{B}\B F_2$, Proposition~\ref{prop:existence pushouts} gives one more pair of diagrams of inclusions and homotopy orbits
\begin{equation}\label{eq:pushout diagram A-2}
\begin{tikzcd}[column sep=small]
\mN(\mA_{1234})\setminus\mA_{1234} \arrow{r}{} \arrow{d}{} & \mN(\mA_{1234}) \arrow{d}{} \\
\mA_{4}([\Sigma]) \arrow{r}{} & \mA_5([\Sigma])=\mA(4,\Sigma)
\end{tikzcd}
\implies
\begin{tikzcd}[column sep=small]
\OP{B}\B F_2\times(S^3)_{hS^1_{1234}} \arrow{r}{} \arrow{d}{} & \OP{B}\B F_2\times \OP{B}\B S^1_{1234} \arrow{d}{} \\
\mA_{4}([\Sigma])_{h\mD_h} \arrow{r}{} & \mA_5([\Sigma])_{h\mD_h}
\end{tikzcd}
\end{equation}
where $(S^3)_{hS^1_{1234}}\simeq S^2$.

By Proposition~\ref{prop:equivalence G and D actions}, the homotopy orbit $\mA_{\B c}([\Sigma])_{h\mD_h}$ is $\OP{B}\Symp(\widetilde{M}_{\B c},\Sigma)$, and by the Stability Theorem~\ref{thm:StabilityOfEmbeddings}, it only depends on the stability chamber $\B c$ belongs to. The description of the stability chambers given in Section~\ref{section:stability chambers n=4} shows that for $\B c$ in the chamber $C_r$, the space $\mA_{\B c}([\Sigma])$ is the union of strata $\mA_{r}([\Sigma])$.
Consequently, $\mA_{r}([\Sigma])_{h\mD_h}=\OP{B}\Symp(\widetilde{M}_{\B c},\Sigma)$ whenever $\B c$ is in the stability chamber $C_r$. Therefore,
\begin{equation}\label{eq:precise statement n=4}
\OP{B}\mG_{\B c}\simeq
\begin{cases}
* & \text{if~}c_2+c_3+c_4\geq 1\\
\OP{B} \B S^1   & \text{if~}  c_2 +c_3 +c_4 <  1 \text{~and~} c_1 +c_3 +c_4 \geq 1; \\
\OP{B} \B S^1 \vee B \B S^1  & \text{if~} c_1 +c_3 +c_4 <  1  \text{~and~} c_1 +c_2 +c_4 \geq 1; \\
\OP{B} \B S^1 \vee B \B S^1 \vee B \B S^1  & \text{if~} c_1 +c_2 +c_4 <  1 \text{~and~} c_1 +c_2 +c_3 \geq 1; \\
\OP{B} \B S^1 \vee \OP{B} \B S^1 \vee \OP{B} \B S^1 \vee \OP{B} \B S^1 & \text{if~}  c_1 +c_2 +c_3 <  1  \text{~and~} c_1 +c_2 +c_3 +c_4\geq 1.
\end{cases}
\end{equation}

\subsection{Comparison with configuration spaces of points}\label{section:proof of main thm}
We are finally in a position to prove the main geometric result of this paper. We only consider the case of $4$ balls as the proofs for the other cases are similar and simpler.  
\begin{proof}[Proof of Theorem~\ref{claim}]
Comparing the pushout diagrams~\eqref{eq:pushout diagram F-1} and \eqref{eq:pushout diagram F-2} describing the homotopy orbits of $\PGL(3)$ acting on $\Conf_n(\cp^2)$ with the diagrams~\eqref{eq:pushout diagram A-1} and \eqref{eq:pushout diagram A-2} describing the homotopy orbits of $\mD_h$ acting on $\mA(3,[\Sigma])$, we see that the top rows are always equivalent. It remains to show that the leftmost downward arrows induced by the inclusion of a deleted neighborhood of a stratum into the union of the other strata are also equivalent. This amounts to showing that, when adding a stratum $F_I$ or $\mA_I$, the gluing data for the inclusion
\[\mN(F_I)\setminus F_I\into F_{i-1}\]
are equivalent to the gluing data of the inclusion
\[\mN(\mA_{I})\setminus\mA_{I}\into \mA_{i-1}([\Sigma]).\]
By Proposition~\ref{prop:properties integrable strata}, the isotropy representation on the normal fiber of $\mN(\mA_{I})\setminus\mA_{I}$ at $J_I\in\mA_{I}$ is isomorphic to the action of $\Iso(\widetilde{\om},J_I)$ on the space of infinitesimal deformations $H^{0,1}_{J_I}(T\widetilde{M}_4)$. Suppose $J_I$ is obtained from the Fubini-Study structure $J_{FS}$ by blowing-up $\cp^2$ at a configuration of $4$ distinct points $\B p\in F_I$. As explained in~\cite[Exercise 10.5]{Ha-GTM257}, since the Fubini-Study structure is rigid, all such deformations correspond to blow-ups at other configurations in the normal fiber $U$ of $F_I$ over $\B p$. Under this identification, the infinitesimal action of $\Iso(\widetilde{\om},J_I)$ on $T_{\B p}U$ is, by construction, isomorphic to the complex representation $H^{0,1}_{J_I}(T\widetilde{M}_4)$. This shows that the gluing data for the strata in $\mA(4,[\Sigma])$ and in $\Conf_4(\cp^2)$ are isomorphic.
\end{proof}

\begin{remark}\label{remark:other proofs}
In a number of cases, the homotopy type of the symplectic stabilizer $\Symp(\widetilde{M}_{\B c},\Sigma)$ can be computed by combining the Stability Theorem, the computation of the stability chambers of Section~\ref{section: stability chambers}, and Lemma~\ref{lemma:Thickening}, with a few results on symplectomorphism groups of some monotone symplectic manifolds. For instance, in the cases $n=1$ and $n=2$, since there is a unique stability chamber, the homotopy type of embedding spaces is independent of the choice of capacities. Consequently, 
\[\IEmb_{n}(\B c,\cp^2)\simeq \varinjlim \IEmb_{n}(\B c,\cp^2) \simeq \Conf_{n}(\cp^2)\]
which agree with the results obtained in~\cite{P08i}. Similarly, in the cases $n=3$ and $n=4$, taking the limit as the capacities approach zero shows that $\IEmb_{n}(\B c,\cp^2) \simeq \Conf_{n}(\cp^2)$ for all capacities $\B c$ in the chamber $\sum_i c_i<1$ corresponding to "small" balls. For the other extremal chambers corresponding to "big" balls ($c_1+c_2+c_3\geq 1$ in the case $n=3$, $c_2+c_3+c_4\geq 1$ in the case $n=4$), we can choose the capacities to be equal to $c_i=1/3$. The symplectic blow-up is then monotone, so that we have a homotopy equivalence of the symplectic stabilizer $\Symp(\widetilde{M}_{\B c},\Sigma)$ with the symplectomorphism group $\Symp_h(\widetilde{M}_{\B c})$. It is shown in~\cite{Evans} that the latter group is contractible in the case $n=4$, and that it is homotopy equivalent to $\B T^2$ in the case $n=3$, in accordance with \eqref{eq:precise statement n=3} and~\eqref{eq:precise statement n=4}.
\end{remark}

\section{A rational model for \texorpdfstring{$\IEmb_{n}(\B c,\cp^2)$}{Im Emb (mBk,CP2)}}\label{section rational model}
The purpose of this section is to compute an algebraic model for the configuration space
$\IEmb_{n}(\B c,\cp^2)$ of $n$ embedded symplectic balls in $\cp^2$. It is done by
computing a relative model of the following fibration 
\[ \Symp(M,\iota(\mB_{\B c})) \to  \Symp_0(M,\omega) \to \IEmb_{n}(\B c,M). \]
Since the isotropy subgroup $\Symp(M,\iota(\mB_{\B c}))$ is homotopy equivalent to $\Symp(\widetilde{M_{\B c}},\Sigma)=\mG_{\B c}(\Sigma)$ by~\eqref{homotopy equivalence ball-fiber}, we will use the simpler notation $\mG_{\B c}(\Sigma)$ for $\Symp(M,\iota(\mB_{\B c}))$ in what follows.  
The above fibration induces the following one
\begin{equation}
\begin{tikzcd}\label{Eq:IEmb}
\Symp(\cp^2) \arrow[r] & \IEmb_{n}(\B c,\cp^2)\arrow[r] & \BG_{\B c}(\Sigma)
\end{tikzcd}
\end{equation}
which is the pull-back of the universal fibration
$$
\Symp(\cp^2) \to \ESymp(\cp^2)\to \BSymp(\cp^2)
$$
with respect to the map $\OP{B}i\colon \BG_{\B c}(\Sigma)\to \BSymp(\cp^2)$ induced by the inclusion
of the isotropy subgroup. Since $\Symp(\cp^2) \simeq {\rm PSU}(3)$ the relative model of the
above universal fibration is of the form
$$
\Lambda (\overline{\beta},\overline{\gamma})\to
\left( \Lambda (\overline{\beta},\overline{\gamma})\otimes \Lambda(\beta,\gamma),d_E \right)\to
\Lambda(\beta,\gamma)
$$
where $|\beta|=3,\ |\gamma|=5$ and $d_E\beta=\overline{\beta}$, $d_E\gamma=\overline{\gamma}$.
It follows from \cite[Theorem 2.70]{FOT08} that the relative model of the fibration \eqref{Eq:IEmb}
is given by
\begin{equation}
{\rm M}(\BG_{\B c}(\Sigma))\to
\left( {\rm M}(\BG_{\B c}(\Sigma))\otimes \Lambda(\beta,\gamma),D \right)\to
\Lambda(\beta,\gamma),
\label{Eq:MIEmb}
\end{equation}
where $D(\beta) = \OP{B}i^*(\overline{\beta})$ and $D(\gamma) = \OP{B}i^*(\overline{\gamma})$.
So we need to compute the minimal model of the classifying space of the isotropy subgroup and the map
$\OP{B}i^*\colon \rm{M}(\BSymp(\cp^2))\to \rm{M}(\BG_{\B c}(\Sigma))$ induced
on the minimal models by the inclusion. We will first compute the map induced
on the cohomology and then deduce the map on the minimal models.

Any circle action $\B S^1\to \Symp(\cp^2)$ factors as follows
\[
\B S^1\to \B T^2 \to \PSU(3)\to \Symp(\cp^2),
\]
where $\B T^2\to \PSU(3)$ is a maximal torus
\cite{Wil87}.
Moreover, the inclusion ${\rm PSU}(3)\to {\rm Symp}(\cp^2)$ is a homotopy equivalence.
Thus after a possible conjugation by an element of ${\rm PSU}(3)$ we can assume
that the maximal torus
consists of elements represented by matrices of the form
\[
\begin{pmatrix}
e^{is} & 0 & 0\\
0      & e^{it} & 0\\
0      & 0      & e^{-i(s+t)}
\end{pmatrix}
\in \SU(3).
\]
Since $\PSU(3)=\SU(3)/ \Z/3 \Z$ and since we are interested in real or rational
homotopy, we can replace $\PSU(3)$ by $\SU(3)$ in all considerations.

It is well known that the cohomology of the classifying space of a compact Lie group is 
isomorphic to the subalgebra of the cohomology of the maximal torus invariant with respect
to the action of the Weyl group. Thus, in our case, we have
\[
H^*(\BSU(3)) \cong H^*(\OP{B}\B T^2)^{S_3}\cong \Lambda(t_1,t_2)^{S_3}
\]
where $S_3$ denotes the symmetric group and $\deg(t_i)=2$.

The action of the symmetric group can be described as follows. The group $H^2(\OP{B}\B T^2)$ is canonically isomorphic to the dual of the Lie algebra of $\B T^2$. Let $t_1,t_2\in\mathfrak{t}^*$ be
the dual basis induced by the splitting $\B T^2=S^1\times S^1$.  Let
\begin{equation*}
e_1 = [1,0] = t_1,  \quad 
e_2 =[ 0,1] = t_2 \quad \mbox{and} \quad 
e_3 =[ -1,-1]
\end{equation*}
be the outward pointing normals to the facets of the polytope representing $\cp^2$ equipped with the standard toric action. The Weyl group $S_3$ acts on the dual Lie algebra $\R^2\cong H^2(\OP{B} \B T^2)$ as the permutation group of $\{e_1, e_2, e_3\}$. Since the cohomology $H^*(\OP{B}\B T^2)$ of the classifying space is generated by its cohomology in degree $2$, the action extends naturally to the
whole algebra. The invariant subalgebra is a free algebra generated by
\begin{align*}
\sigma_2 &= -e_1e_2 - e_2e_3 -e_3e_1 = t_1^2+t_2^2 +t_1t_2  \\
\sigma_3 &= -e_1e_2e_3 = t_1^2t_2+t_1t_2^2 .
\end{align*}
We get that
\[
H^*(\BSU(3)) = \Lambda (\sigma_2,\sigma_3).
\]

Any homomorphism $\B S^1\to \B T^2$ is of the form $e^{iz} \mapsto \left( e^{aiz},e^{biz} \right)$,
where $a,b\in \Z$ are integers. Such a homomorphism is injective if and only if
$\gcd(a,b)=1$. It follows that on the cohomology the circle action induces the homomorphism
$\Lambda(t_1,t_2)=H^*(\OP{B}\B T^2) \to H^*(\OP{B}\B S^1)=\Lambda(t)$ given by
\[
t_1\mapsto at \quad \text{and} \quad t_2\mapsto bt.
\]
Any homomorphism $\B S^1\to G$ to a compact Lie group $G$ factors, after a possible conjugation by an element of
$G$, through a homomorphism to the maximal torus. Since conjugation by an element of the group
acts trivially on cohomology we get that
$H^*(\OP{B}\SU(3))\to H^*(\OP{B}\B S^1)$ is given by
\begin{equation}
\sigma_2 \mapsto \left( a^2+b^2 +ab\right)t^2\quad\text{and}\quad
\sigma_3 \mapsto (a^2b+ab^2)t^3
\label{Eq:HBSU->HBS}
\end{equation}

Suppose that $H\subseteq \Symp(\cp^2)$ is a subgroup generated by $n$ circle actions
and such that it is an amalgamated free product of tori, that is a pushout of tori over a point as in the previous section. Then its classifying space $\OP{B}H$ is weak homotopy equivalent to the wedge product of the classifying spaces of the tori. 
Therefore, by \cite[Example 2.47]{FOT08}, an algebraic model for $\OP{B}H$ is generated
by elements $T_1,\ldots,T_n$ of degree $2$ corresponding to the circle actions, such that $T_iT_j=0$ if 
the generators $T_i$ and $T_j$ correspond to circle actions which are not in the same maximal torus. 

It follows that the minimal model of $\BG_{\B c}(\Sigma)$ is of the form
\begin{equation}
\left( \Lambda(T_1,\ldots,T_k) \otimes \Lambda W, d_{B}\right),
\label{Eq:MMStab}
\end{equation}
where $|T_i|=2$, the generators of the algebra $\Lambda W$ have degree $\geq 3$,  $d_{B}T_i=0$ and ${d_{B}}{|_W}$ is injective, due to the formality of the space
\cite[Exercise 2.3]{FOT08}.

Notice that $H^2(H)\cong \R^n$ is generated by $T_i$'s. Moreover, each inclusion
$\B S^1\to H$ induces the coordinate function $H^2(\OP{B}H)\cong \R^n\to \R = H^2(\OP{B}\B S^1)$.
Thus in order to compute the induced homomorphism
\[
H^2(\BSymp(\cp^2))=H^2(\BSU(3))\to H^2(\OP{B}H)
\]
it is enough to compute it for the generating circle actions as follows.

Suppose that the $i$-th circle action $\B S^1\to \Symp(\cp^2)$ is up to a conjugation a
homomorphism $\B S^1\to \B T^2$ given by the integers $(a_i,b_i)$. Then
$H^2(\OP{B} \B T^2)\to H^2(\OP{B}\B S^1)$ is given by
\[
t_1\mapsto a_iT_i\quad\text{and}\quad t_2\mapsto b_iT_i
\]
It follows that $(\OP{B}i)^*\colon H^*(\BSU(3))\cong\Lambda(\sigma_2,\sigma_3)\to H^*(\OP{B}H)$
is defined by
\begin{align}
\sigma_2=t_1^2 + t_2^2+t_1t_2 
&\mapsto 
\left( \sum_{i=1}^n a_iT_i \right)^2 + \left( \sum_{i=1}^n b_iT_i \right)^2 +\left( \sum_{i=1}^n a_iT_i \right) \left( \sum_{i=1}^n b_iT_i \right) \label{Eq:sigma2}\\
\sigma_3=t_1t^2_2 + t_1^2t_2
&\mapsto \left( \sum_{i=1}^n a_iT_i \right)
 \left( \sum_{i=1}^n b_iT_i \right)^2 +\left( \sum_{i=1}^n a_iT_i \right)^2
 \left( \sum_{i=1}^n b_iT_i \right)\label{Eq:sigma3} 
\end{align}

Since $\BSU(3)$ is rationally equivalent to the product of Eilenberg-Maclane spaces
$K(\B Q,4)\times K(\B Q,6)$, the set of homotopy classes of maps $\BSU(3)\to \OP{B}H$ is
in bijective correspondence with $H^4(\BSU(3);\B Q)\times H^6(\BSU(3);\B Q)$.
Moreover, we have the following commutative diagram of bijections
$$
\begin{tikzcd}
\left[ \OP{B}H,\BSU(3) \right]\ar[r,leftrightarrow]\ar[d,leftrightarrow]
& H^4(\OP{B}H;\B Q)\times H^6(\OP{B}H,\B Q)\ar[d,leftrightarrow]\\
\left[ \Lambda(\sigma_2,\sigma_3),{\rm M}(\OP{B}H) \right]\ar[r,leftrightarrow] & {\rm Hom}(H^*(\BSU(3);\B Q),H^*(\OP{B}H,\B Q)),
\end{tikzcd}
$$
where $\left[ \Lambda(\sigma_2,\sigma_3),{\rm M}(\OP{B}H) \right]$ denotes the space
of homotopy classes of morphism between minimal models of
$\BSU(3)\simeq \BSymp(\cp^2)$ and $\OP{B}H$.  It follows that the map
$\OP{B}i^*\colon \Lambda(\sigma_2,\sigma_3)\to {\rm M}(\OP{B}H)$ induced on the minimal
models is, up to a homotopy, given by formulas \eqref{Eq:sigma2} and
\eqref{Eq:sigma3} after suitable cancellations are made.
As explained above, it follows from \cite[Theorem 2.70]{FOT08} that the differential in
the relative model \eqref{Eq:MIEmb}
is given by $D(\beta) = \OP{B}i^*(\sigma_2)$ and $D(\gamma) = \OP{B}i^*(\sigma_3)$.
Consequently the relative model \eqref{Eq:MIEmb} is the minimal model of
$\IEmb_{n}(\B c,\cp^2)$.  More precisely we get the following result.

\begin{theorem}\label{T:model}
The minimal model of the embedding space $\IEmb_{n}(\B c,\cp^2)$ is
given by
\[
(\Lambda(T_1,\ldots,T_n) \otimes \Lambda W \otimes \Lambda(\beta,\gamma),d),
\]
where $|T_i|=2$, $|\beta|=3$, $|\gamma|=5$, the generators of the algebra $\Lambda W$ have degree $\geq 3$, $d T_i =0$, $d_{|W}$ is injective and  $d_{|\Lambda(\beta,\gamma)}$ is
given by the following formulas depending on the configurations:
\begin{enumerate}
\item For a configuration of $3$ balls:
\begin{enumerate}
\item if { $c_1 +c_2 +c_3 \geq 1$} then
\begin{align*}
d(\beta) &=
\left(a_1T_1+a_2T_2 \right)^2 + \left(b_1T_1+b_2T_2 \right)^2
+\left(a_1T_1+a_2T_2 \right) \left(b_1T_1+b_2T_2 \right),\\
d(\gamma)
&=
\left(a_1T_1+a_2T_2 \right)
 \left( b_1T_1+b_2T_2 \right)^2 +\left(a_1T_1+a_2T_2 \right)^2
 \left( b_1T_1+b_2T_2 \right).
\end{align*}
\item if { $c_1 +c_2 +c_3 < 1$} then
\begin{align*}
d(\beta) &=
\left(a_1T_1+a_2T_2 \right)^2 + \left(b_1T_1+b_2T_2 \right)^2
+\left(a_1T_1+a_2T_2 \right) \left(b_1T_1+b_2T_2 \right) + (a_3^2+a_3b_3+b_3^2)T_3^2,\\
d(\gamma)
&=
\left(a_1T_1+a_2T_2 \right)
 \left( b_1T_1+b_2T_2 \right)^2 +\left(a_1T_1+a_2T_2 \right)^2
 \left( b_1T_1+b_2T_2 \right) + (a_3^2b_3+a_3b_3^2)T_3^3.
\end{align*}
\end{enumerate}

\item For a configuration of $4$ balls:
\begin{align*}
d(\beta)&=
\sum_{i=1}^n (a_i^2+a_ib_i+b_i^2)T_i^2,\\
d(\gamma)&=
\sum_{i=1}^n (a_i^2b_i+a_ib_i^2)T_i^3,
\end{align*}
where $n=1,2,3,4$ depending on the capacities.
\end{enumerate}
\qed
\end{theorem}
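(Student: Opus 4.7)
The plan is to assemble the computations developed throughout this section into the statement of the theorem, following the outline given in the paragraphs that precede it. First, I would apply the relative Sullivan model construction of \cite[Theorem~2.70]{FOT08} to the pullback fibration $\Symp(\cp^2)\to\IEmb_n(\B c,\cp^2)\to\BG_{\B c}(\Sigma)$ obtained from the universal fibration over $\BSymp(\cp^2)\simeq\BPSU(3)$. This reduces the problem to two subcomputations: (i) the minimal model of $\BG_{\B c}(\Sigma)$, and (ii) the morphism $\OP{B}i^{*}\colon\Lambda(\sigma_2,\sigma_3)={\rm M}(\BSU(3))\to{\rm M}(\BG_{\B c}(\Sigma))$ induced by the stabilizer inclusion, since then $D\beta=\OP{B}i^{*}(\sigma_2)$ and $D\gamma=\OP{B}i^{*}(\sigma_3)$ by formula~\eqref{Eq:MIEmb}.

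For (i), I would invoke the homotopy decompositions \eqref{eq:precise statement n=3} and \eqref{eq:precise statement n=4} established in Section~\ref{section: homotopy orbits Diff}: in every chamber $\BG_{\B c}(\Sigma)$ is weakly equivalent to a wedge of classifying spaces of tori. Combined with \cite[Example~2.47]{FOT08} and formality, this yields a minimal model of the form~\eqref{Eq:MMStab}, with degree-two generators $T_i$ indexed by a basis of generating circles and quadratic relations $T_iT_j=0$ between generators belonging to distinct wedge summands. Concretely: for $n=3$ with $c_1+c_2+c_3\geq 1$ the stabilizer is a single $\B T^2$, producing two generators $T_1,T_2$ with no cross relations; for $n=3$ with $c_1+c_2+c_3<1$ an extra summand $\OP{B}\B S^1$ contributes $T_3$ with $T_1T_3=T_2T_3=0$; and for $n=4$ every wedge summand is an $\OP{B}\B S^1$, so all cross products $T_iT_j$ vanish. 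For (ii), I would use that any circle subgroup of $\PSU(3)$ is conjugate to a subcircle of the standard maximal torus $\B T^2$, so each generating circle of $\mG_{\B c}(\Sigma)$ is specified by integer weights $(a_i,b_i)$. Substituting $t_1\mapsto\sum_i a_iT_i$ and $t_2\mapsto\sum_i b_iT_i$ into $\sigma_2$ and $\sigma_3$ gives \eqref{Eq:sigma2}-\eqref{Eq:sigma3}, after which the wedge relations eliminate all mixed products across summands. Because $\BSU(3)\simeq_{\mathbb{Q}} K(\mathbb{Q},4)\times K(\mathbb{Q},6)$, the map $\OP{B}i^{*}$ between minimal models is, up to homotopy, determined by these cohomological expressions in degrees~$4$ and~$6$.

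Putting the two subcomputations together recovers the formulas for $d\beta$ and $d\gamma$ listed case by case in the theorem, with the intermediate $\Lambda W$ factor coming from the higher-degree generators needed to make the differential injective on $W$ (as in~\eqref{Eq:MMStab}). I expect the only real obstacle to lie in identifying the circle weights $(a_i,b_i)$ associated to each stratum; these, however, are precisely the weights of the Hamiltonian $\B T^2$-subactions on the toric and almost-toric configurations depicted in Figures~\ref{fig:figures n=1 and n=2}--\ref{fig:configurations of type 1234}, so no further geometric input beyond what is already developed in Section~\ref{section:Actions on ACS} is required.
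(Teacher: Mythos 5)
Your proposal follows the paper's own derivation essentially verbatim: the relative Sullivan model of the pullback fibration via \cite[Theorem~2.70]{FOT08}, the identification of ${\rm M}(\BG_{\B c}(\Sigma))$ from the wedge decompositions \eqref{eq:precise statement n=3}--\eqref{eq:precise statement n=4} together with \cite[Example~2.47]{FOT08}, and the computation of $\OP{B}i^{*}$ on $\sigma_2,\sigma_3$ by conjugating each generating circle into the maximal torus and using that $\BSU(3)$ is rationally a product of Eilenberg--MacLane spaces. This is correct and is the same argument the paper gives in the discussion preceding the theorem.
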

In our concrete situation, in order to compute the model it is enough to find
the integers $(a_i,b_i)$ for each circle action from its geometric properties.

\section{The rational cohomology ring of \texorpdfstring{$\IEmb_{n}(\B c,\cp^2)$}{IEmb(mBk,CP2)}}\label{section cohomology ring}

The main goal of this section is to prove Theorem \ref{cohomology ring 4 balls}, that is, to compute the rational cohomology ring  of the space  of unparametrized balls $\IEmb_{4}(\B c, \cp^2)$, using the computation of its rational model obtained in the previous section. We also show that, using the same approach, we recover the cohomology ring of $\IEmb_{3}(\B c, \cp^2)$ in both cases of big and smalls balls, when this space is homotopy equivalent to the flag manifold $\PU(3)/\B T^2$ and the configuration space $\Conf_3(\cp^2)$, respectively. In the latter case, the cohomology ring has been obtained earlier by Ashraf and Berceanu in \cite{AB14} through methods completely different from ours. More precisely, they use an algebraic model of the configuration space obtained independently by Kriz \cite{Kr94}  and Totaro \cite{Tot96} that we explain in section \ref{section:small4balls}.
The case of $3$ balls serves also to illustrate our methods in a simpler context before addressing the $4$ balls case. 

\subsection{The rational cohomology ring of \texorpdfstring{$\IEmb_{3}(\B c, \cp^2)$}{Im Emb (mBk,CP2)}} \label{section:cohomology3balls} In this section let $\B c = (c_1,c_2,c_3)$.
We need to consider two cases: 

\subsubsection{The case $c_1 +c_2 +c_3 \geq  1$} It follows from \eqref{eq:precise statement n=3} that  $\mG_{\B c}(\Sigma)\simeq \B T^2$ and from Theorem \ref{3balls} that $\IEmb_{3}(\B c, M)$ is homotopy equivalent to the flag manifold $\PU(3)/ \B T^2 \simeq \U(3)/\B T^3$. We construct an algebraic model for the fibration, 
\begin{equation}\label{unparametrized 3balls}
 \Symp(\cp^2)\to\IEmb_{3}(\B c, \cp^2) \to \BG_{\B c}(\Sigma),
\end{equation}
using Theorem \ref{T:model}. Note that the stabilizer $\B T^2$ is the standard toric action on $\cp^2$ equipped with the monotone symplectic form, so the coefficients corresponding to this action are $a_1=1, a_2=0,b_1=0$ and $b_2=1$. Therefore we obtain 
 \[ \OP M(\IEmb_{3}(\B c, \cp^2)) = (\Lambda(T_1,T_2)\otimes \Lambda(\beta, \gamma),d) \]
 where 
 \[ d(T_i) =0, \quad d(\beta)=T_1^2+ T_2^2 + T_1T_2, \quad \mbox{and} \quad d(\gamma)=T_1T_2^2 + T_1^2T_2. \]
Then, computing the cohomology  of this model, it follows that the cohomology groups in degrees $1$ and $3$ are trivial ($\beta$ is not closed) and in degree $2$ clearly there are $2$ generators, namely $T_1$ and $T_2$. In degree 4, since the class $T_1^2+ T_2^2+ T_1T_2$ is exact it follows that the rank of $H^4(\IEmb_{3}(\B c, \cp^2))$ is $2$ and we can choose as generators of this cohomology group, for example, $T_2^2$ and $T_1T_2$. Then, the group in degree $5$ vanishes, because the class $\gamma$ is not closed as well as the classes $T_1\beta$ and $T_2\beta$ and there is no linear combination of these that gives a closed class. Indeed, we have $d(T_1\beta)= T_1^3 + T_1T_2^2 + T_1^2T_2$ and $d(T_2\beta)=  T_1T_2^2 +T_1^2T_2 + T_2^3$. Moreover, these together with the fact that the class $T_1T_2^2 + T_1^2T_2$ is exact implies that in degree 6 the classes $T_1^3$ and $T_2^3$ are also exact and the rank of the cohomology group is $1$. We can choose, for example,  $T_1T_2^2$ as a generator of this group. Finally, it is not hard to verify that the cohomology groups of degree $k$, with $k \geq 7$  are trivial, so the cohomology ring is given by 
\[ H^*(\IEmb_{3}(\B c, \cp^2), \Q) = \Lambda(T_1,T_2)/ (T_1^2+T_2^2 + T_1T_2, T_1^3), \]
where $|T_i|=2$, $i=1,2$. Moreover, this ring coincides with the  standard presentation of the rational cohomology ring of the flag manifold, $H^*(\U(3)/\B T^3)$.

\subsubsection{The case $c_1 +c_2 +c_3 <  1$}  In this situation, it follows from \eqref{eq:precise statement n=3} that $\BG_{\B c}(\Sigma)\simeq \OP{B} \B T^2 \vee \OP{B} \B S^1$  where the torus is the same as in the previous case and the circle  does not commute with any of the circles contained in the torus. It follows that 
\[ \OP M(\BG_{\B c}(\Sigma))= (\OP M (\OP{B} \B T^2) \oplus \OP M (\OP{B}\B S^1), 0) = \Lambda(T_1,T_2,T_3) / (T_1T_3,T_2T_3). \]
and the integers $(a_i,b_i)$,  with $i=1,2$, in Theorem \ref{T:model} are the same as in the previous case. For the third action, as we are going to see, we do not need to specify the values of the coefficients, so let us denote them simply by $(a,b)$.  It follows that an algebraic model of the fibration \eqref{unparametrized 3balls} is given by 
\[
\OP M (\IEmb_{3}(\B c, \cp^2)) =(\Lambda(T_1,T_2,T_3)/(T_1T_3,T_2T_3)\otimes \Lambda(\beta, \gamma),d)
\]
where
\[
d(T_i) =0, \quad d(\beta)=T_1^2+ T_2^2 + T_1T_2 +(a^2+ab +b^2) T_3^2 \quad \mbox{and} \] \[
d(\gamma)=T_1T_2^2 + T_1^2T_2 + (a^2b+ab^2)T_3^3.
\]
Note that $a^2+b^2+ab$ does not vanish unless $a=b=0$. Next we compute the cohomology of this model. Clearly it vanishes in degree $1$ and there are 3 generators in degree $2$, namely $T_1,T_2,T_3$. Since the class $\beta$ is not closed it follows that the cohomology in degree $3$ is trivial. In degree $4$,  the class  $T_1^2+ T_2^2 + T_1T_2 +(a^2+ab +b^2) T_3^2$ is exact and $T_1T_3=T_2T_3 =0$, so the rank of this cohomology group is $3$. Its generators can be $T_2^2,T_3^2$ and $T_1T_2$, for example. Then, since 
\begin{align*}
&d(T_1\beta) =  T_1^3  + T_1T_2^2 + T_1^2T_2, \\  
 &  d(T_2\beta) =  T_1^2T_2 + T_2^3 + T_1T_2^2, \\
  & d(T_3\beta)  =   (a^2+ab +b^2)T_3^3 \quad \quad \mbox{and} \\
   &d(\gamma)  =  T_1T_2^2 + T_1^2T_2 + (a^2b+ab^2)T_3^3
\end{align*}
it is easy to check that there are no nontrivial closed classes in degree $5$. Moreover, these differentials show that the classes $T_1^3, T_2^3,T_3^3$ and $T_1T_2^2 + T_1^2T_2$ are exact, so the rank of the cohomology group of degree $6$ is $1$ and it can be generated by $T_1T_2^2$. Now consider the class 
\[\eta: = (a^2+ab +b^2)T_3 \gamma - (a^2b+ab^2)T_3^2 \beta.\]
It is clear that $d \eta =0$, so this class is closed and gives a generator in degree $7$. On the other hand, note that the differentials 
\begin{align*}
     d(T_1 \gamma) & = T_1^3T_2 + T_1^2 T_2^2,  & &d(T_1^2 \beta) = T_1^4+ T_1^2 T_2^2 +  T_1^3 T_2 \\
     d(T_2 \gamma)  & = T_1^2T_2^2+T_1 T_2^3,  &  &d(T_2^2 \beta) =  T_1^2 T_2^2 +T_2^4 + T_1T_2^3 \\
     d(T_3 \gamma)  & = (a^2b+ab^2)T_3^4, & & d(T_1 T_2\beta) = T_1^3T_2+ T_1T_2^3 + T_1^2 T_2^2
\end{align*}
together with the relations $T_1T_3=T_2T_3=0$ and 
\[
\quad d(\beta\gamma)= (T_1^2+ T_2^2 + T_1T_2 +(a^2+ab +b^2) T_3^2) \gamma -\beta (T_1T_2^2 + T_1^2T_2 + (a^2b+ab^2)T_3^3)
\]
imply that the cohomology in degree $8$ is trivial. Finally, it is clear that $d(T_3 \eta)=0$, so the class $T_3 \eta$ is closed and it is not difficult to see that is not exact, so it represents a generator in degree $9$. Moreover, one can  show that there are no more nontrivial classes in this degree. In particular, clearly $\eta T_1 = \eta T_2 =0$. Since $d(T_3\beta\gamma)=\eta T_3^2$ it follows that the class $\eta T_3^2$ vanishes in cohomology. Moreover, all cohomology groups of degree $k$, for $k \geq 10$, vanish.  We conclude that 
\begin{align*}
H^*(\IEmb_{3}(\B c, \cp^2); \Q) = \Lambda(T_1,T_2,T_3,\eta) /  (T_1^2+ & T_2^2 + T_1T_2 +  (a^2+ab +b^2) T_3^2,\\
 & T_1T_3,\, T_2T_3,\,  T_1^3,\, \eta T_1,\, \eta T_2),  
\end{align*}
where $|T_i|=2$, $i=1,2,3$ and $|\eta|=7 $.

\subsubsection{Another approach}
It is interesting to compare our result with the one obtained by S. Ashraf and B. Berceanu in \cite[Theorem 1.3]{AB14} using a completely different approach to the problem. They showed that the cohomology ring of the space $\Conf_3(\cp^2)$ is given by 
\begin{equation}\label{CohomologyConf}
H^*(\Conf_3(\cp^2); \Q)= \Lambda(\alpha_1,\alpha_2,\alpha_3,\zeta)/ (\alpha_i^2+\alpha_j^2 + \alpha_i\alpha_j, \, \alpha_1^3, \, \zeta (\alpha_i-\alpha_j), \, i \neq j),
\end{equation}
where $|\alpha_i|=2$, $i=1,2,3$ and $|\zeta|=7.$ We confirm that we indeed obtain the same result by giving a ring  isomorphism,
$H^*(\Conf_3(\cp^2); \Q)  \to  H^*(\IEmb_{3}(\B c, \cp^2); \Q)$, defined by 
\begin{eqnarray*}
\alpha_1 & \mapsto & T_1 +c\, T_3, \\
\alpha_2 & \mapsto & T_2 + c\, T_3, \\
\alpha_3 & \mapsto  & -T_1 -T_2 + c\, T_3, \\
\zeta & \mapsto & \eta,
\end{eqnarray*}
where $c = \displaystyle {\sqrt{\frac{a^2+ab +b^2}{3}}}$.

Indeed, computing the image of the relations in \eqref{CohomologyConf} by this map we obtain 
\begin{eqnarray*}
\alpha_1^2+\alpha_2^2 + \alpha_1\alpha_2 & \mapsto & T_1^2+T_2^2 + T_1T_2 +3c^2 T_3^2 + 3cT_1T_3+ 3cT_2T_3, \\
\alpha_1^2+\alpha_3^2 + \alpha_1\alpha_3 & \mapsto & T_1^2+T_2^2 + T_1T_2 +3c^2 T_3^2 - 3cT_2T_3, \\
\alpha_2^2+\alpha_3^2 + \alpha_2\alpha_3 & \mapsto & T_1^2+T_2^2 + T_1T_2 +3c^2 T_3^2 - 3cT_1T_3, \\
\alpha_1^3 & \mapsto & T_1^3 +3cT_1^2T_3 + 3c^2T_1T_3^2 + c^3T_3^3, \\ 
\zeta (\alpha_1-\alpha_2) & \mapsto &  \eta(T_1-T_2),\\
\zeta (\alpha_1-\alpha_3) & \mapsto &  \eta(2T_1+T_2),\\
\zeta (\alpha_2-\alpha_3) & \mapsto &  \eta(T_1+2T_2),
\end{eqnarray*}
which imply the relations in  $H^*(\IEmb_{3}(\B c, \cp^2); \Q)$.

\subsection{The rational cohomology ring of \texorpdfstring{$\IEmb_{4}(\B c, \cp^2)$}{Im Emb (mBk,CP2)}}\label{section:cohomology4balls}
In this section let $\B c=(c_1,c_2,c_3,c_4)$.
\begin{proof}[Proof of Theorem \ref{cohomology ring 4 balls}]
Recall from the previous section that the fibration 
\begin{equation}\label{unparametrized 4balls}
\Symp(\cp^2)\to\IEmb_{4}(\B c, \cp^2) \to \BG_{\B c}(\Sigma),
\end{equation}
can be used to compute a rational model of $\IEmb_{4}(\B c, \cp^2)$. 
Since the homotopy type of $\BG_{\B c}(\Sigma)$ depends on the sizes of the capacities $c_i$, it follows that the algebraic model of the fibration depends also on these capacities. We need to consider five different cases:
\begin{itemize}
\item if $c_2 +c_3 +c_4 \geq 1$ then $\mG_{\B c}(\Sigma)$ is contractible so $\IEmb_{4}(\B c, \cp^2)  \simeq \Symp(\cp^2) \simeq \PU(3)$. Therefore
\[
H^*(\IEmb_{4}(\B c, \cp^2), \Q) = \Lambda(\beta, \eta),
\]
where $\Lambda(\beta,\eta)$ denotes an exterior algebra on generators $\beta$ of degree $3$ and $\eta$ of degree $5$;
\item if $c_2 +c_3 +c_4 <  1$ and $c_1 +c_3 +c_4 \geq 1$ then it follows from \eqref{eq:precise statement n=4} that $\BG_{\B c}(\Sigma)$ is weak homotopy equivalent to $ B\B S^1$. Then Theorem \ref{T:model} implies that a rational model for $\IEmb_{4}(\B c, \cp^2)$ is given by 
\[\OP M(\IEmb_{4}(\B c, \cp^2)) = (\Lambda(T)\otimes \Lambda(\beta, \gamma),d)\]
where $|T|=2$, $|\beta|=3$ and $|\gamma|=5$.  Moreover, the differential satisfies 
\[
d(T)=0,  \quad d(\beta) = (a^2+b^2+ab)T^2 \quad \mbox{and} \quad d(\gamma)=(a^2b+ab^2)T^3,
\]
where the circle action is given by the integers $(a,b)$.  The cohomology of this algebraic model is the rational cohomology of the space, so in degree $2$ there is one generator, namely  $T$,  and the cohomology groups of degrees $3$ and $4$ vanish since $\beta$ is not closed ($a^2+b^2+ab$ does not vanish unless $a=b=0$).  Then in degree $5$ there is one generator since the class $\eta:=(ab^2+ba^2)T\beta-(a^2+b^2+ab)\gamma$ is closed, as one can easily verify. It is clear that the cohomology in degree $6$ is trivial ($T^3$ is exact) and $d(T\eta)=0$, so the rank of $H^7(\IEmb_{4}(\B c, \cp^2)$ is $1$. Moreover, since $d(\gamma \beta)= T^2 \eta$ it follows that $T^2 \eta$ is exact and $H^k(\IEmb_{4}(\B c, \cp^2), \Q)=0$ for $k \geq 8$. Hence
\[H^*(\IEmb_{4}(\B c, \cp^2), \Q) = \Lambda(T,\eta)/ (T^2)\]
where  $|T|=2$ and $|\eta|=5$;
\item if $c_1 +c_3 +c_4 <  1$ and $c_1 +c_2 +c_4 \geq 1$ then $\BG_{\B c}(\Sigma)\simeq B\B S^1 \vee B\B S^1$ (see \eqref{eq:precise statement n=4}). Therefore, by Example 2.47 in \cite{FOT08}, its algebraic model is given by 
\[
\OP M(\BG_{\B c}(\Sigma))= (\OP M (B\B S^1) \oplus \OP M (B\B S^1), 0) = \Lambda(T_1,T_2) / (T_1T_2),
\]
where $|T_i|=2$, $i=1,2$. Assume the two circle actions are given by the integers $(a_i,b_i)$, $i=1,2$. Let $m_i = a_i^2+b_i^2+a_ib_i $  and $n_i= a_i^2b_i+a_ib_i^2$. Note that $m_i\neq 0$.  Using Theorem \ref{T:model} again one obtains 
\[
\OP M(\IEmb_{4}(\B c, \cp^2)) = (\Lambda(T_1,T_2)/ (T_1T_2)\otimes \Lambda(\beta, \gamma),d)
\]
where 
\begin{equation*}
d(T_i)=0, \quad 
d(\beta)  = m_1T_1^2 + m_2T_2^2  \quad \mbox{and} \quad
d(\gamma)  = n_1T_1^3 + n_2T_2^3. 
\end{equation*}
The cohomology of this algebraic model clearly contains $2$ generators in degree $2$, namely $T_1$ and $T_2$ and it vanishes in degree $3$, because $\beta$ is not closed. Then in degree $4$ there is only one class, since the class $m_1T_1^2 + m_2T_2^2$ is exact and $T_1 T_2=0$. Next, it is easy to check that the class $\eta := m_1m_2 \gamma -n_1m_2T_1\beta -n_2m_1T_2\beta$ is closed and since it is not exact it follows it represents a generator in degree 5. Therefore we obtain two classes in degree $7$ which are closed, namely  $T_1\eta$ and  $T_2\eta$. The cohomology groups in degrees $6$ and $8$ are trivial because on one hand $d(T_i \beta) = m_iT_i^3$, so the classes $T_i^3$ with $i=1,2$ are exact, and on the other hand $d(\gamma\beta)= (n_1T_1^3 + n_2T_2^3)\beta - \gamma (m_1T_1^2 + m_2T_2^2) $, so the class $\gamma\beta$ is not closed. The latter also implies that the rank of the cohomology group in degree $9$ is $1$, generated, for example, by the class $T_1^2\eta$. Since $ d(m_jT_i\beta \gamma) = m_jm_i\gamma T_i^3 -n_im_jT_i^4 \beta= \eta T_i^3$ for $i \neq j$ it follows that the remaining cohomology groups are trivial. Therefore the cohomology ring in this case is given by 
\[H^*(\IEmb_{4}(\B c, \cp^2), \Q) = \Lambda(T_1,T_2,\eta)/ (m_1T_1^2+m_2T_2^2,T_1T_2),\]
where $|T_1|=|T_2|=2$ and $|\eta|=5$;
\item if $c_1 +c_2 +c_4 <  1$ and $c_1 +c_2 +c_3 \geq 1$ then it follows from \eqref{eq:precise statement n=4} that in this case $\BG_{\B c}(\Sigma) \simeq B\B S^1 \vee B\B S^1 \vee B\B S^1$ and the computation of the cohomology ring is similar to the previous case. In this case the minimal model for $\BG_{\B c}(\Sigma)$ is given by 
\[
\quad \quad \quad   \OP M(\BG_{\B c}(\Sigma))  =  (\OP M (B\B S^1) \oplus \OP M (B\B S^1)\oplus \OP M (B\B S^1), 0) \\  =  \Lambda ( T_1,T_2,T_3) / ( T_iT_j, i \neq j )
\]
where $|T_i|=2$, $i=1,2,3$. It follows that a  rational model for the space of unparametrized balls is given by  \[ \OP M(\IEmb_{4}(\B c, \cp^2)) = (\Lambda(T_1,T_2,T_3)/ ( T_iT_j, i \neq j )\otimes \Lambda(\beta, \gamma),d) \]
where 
\begin{equation*}
d(T_i)=0, \quad 
d(\beta)  = \sum_{i=1}^{3}m_iT_i^2  \quad \mbox{and} \quad
d(\gamma)  = \sum_{i=1}^{3}n_iT_i^3. 
\end{equation*}
where $m_i$ and $n_i$ are defined above. The computation of the  cohomology ring of  this model is similar to the previous case, so we leave it to the interested reader. It gives the following ring
\[
H^*(\IEmb_{4}(\B c, \cp^2), \Q) = \Lambda(T_1,T_2,T_3,\eta)/ (m_1T_1^2+m_2T_2^2+m_3T_3^2,\ T_iT_j \ \mbox{if} \ i\neq j),
\]
where $|T_i|=2$, $i=1,2,3$,  and $|\eta|=5$;
\item finally, if $c_1 +c_2 +c_3 <  1$ and $c_1 +c_2 +c_3 +c_4 \geq 1$, the classifying space $\BG_{\B c}(\Sigma)$ is again a wedge of classifying spaces of circles. More precisely, $\BG_{\B c}(\Sigma) \simeq B \B S^1 *  B\B S^1 * B \B S^1* B\B S^1$. Therefore it should be clear that the rational cohomology ring in this case is given by 
\[
\quad \quad H^*(\IEmb_{4}(\B c, \cp^2), \Q) = \Lambda(T_1,T_2,T_3,T_4,\eta)/ \left(\sum_{i=1}^{4}m_iT_i^2, \ T_iT_j \ \mbox{if} \ i\neq j\right),
\]
where $|T_i|=2$, $i=1,2,3,4$,  and $|\eta|=5$. 
\end{itemize}
Note that in all cases the cohomology ring does not depend on the integers $(a_i,b_i)$ giving the circle actions and  setting $\alpha_i := \sqrt{m_i}T_i$ we obtain the cohomology ring presentations of Theorem \ref{cohomology ring 4 balls}. This completes the proof of the theorem. 
\end{proof}

\subsubsection{The space of embeddings \texorpdfstring{$\IEmb_{4}(\B c, \cp^2)$}{Im Emb (mBk,CP2)} in the case of small balls}\label{section:small4balls}

Note that in this case, by Theorem~\ref{homotopy type 4balls}, the space $\IEmb_{4}(\B c, \cp^2)$ is weakly homotopy equivalent to $\Conf_4(\cp^2)$. However, Theorem~\ref{cohomology ring 4 balls} does not give the cohomology ring of $\IEmb_{4}(\B c, \cp^2)$. The reason is that,  when $c_1+c_2+c_3+c_4<1$, it is not as easy to find the minimal model of $\OP{B}\mG_{\B c}(\Sigma)$
as in the previous cases. Although the classifying space of the stabilizer still has the homotopy type of a pushout, in this case the pushout diagram is not over a point, which prevents us to describe its model as before. Nevertheless,  from  Theorem \ref{homotopy type 4balls} we can obtain the rank of the rational cohomology groups $H^k(\OP{B}\mG_{\B c}(\Sigma);\Q)$, $k \geq 0$ as follows. 

\begin{proposition}
If $c_1+c_2+c_3+c_4<1$ the rational cohomology groups of the classifying space of the stabilizer $\OP{B}\mG_{\B c}(\Sigma)$ are given by 
\begin{equation}\label{rkcohomologygps}
H^{q}(\OP{B}\mG_{\B c}(\Sigma);\Q)\simeq
\begin{cases}
\Q &\text{if~} q=0\\
0 &\text{if~} q=1\\
\Q^{4} &\text{if~} q=2\\
0 &\text{if~} q=3\\
\Q^{5}&\text{if~} q=2k\geq 4\\
\Q^{2}&\text{if~} q=2k+1\geq 5.
\end{cases}
\end{equation}
\end{proposition}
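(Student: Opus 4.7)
The plan is to identify $\OP{B}\mG_{\B c}(\Sigma)$ in this regime with the homotopy orbit $P_5:=(\Conf_4(\cp^2))_{h\PGL(3)}$, which follows from Theorem~\ref{claim} together with the chain of equivalences used in the proof of Theorem~\ref{homotopy type 4balls}, and then to read off $H^*(P_5;\Q)$ from the homotopy pushout square~\eqref{eq:pushout diagram F-2} via Mayer--Vietoris.

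The three non-terminal vertices of that pushout have easily computable cohomology. Since $\mM_{0,4}\simeq S^1\vee S^1$, the K\"unneth formula gives
\[
P(S^2\times \mM_{0,4})=(1+t^2)(1+2t)\quad\text{and}\quad P(\OP{B}\B S^1\times\mM_{0,4})=\frac{1+2t}{1-t^2},
\]
while $P_4$, being a wedge of four copies of $\OP{B}\B S^1$, has Poincar\'e series $1+4t^2/(1-t^2)$. The crucial input is the surjectivity of the restriction map $H^*(\OP{B}\B S^1\times\mM_{0,4};\Q)\to H^*(S^2\times\mM_{0,4};\Q)$ induced by the top edge of the pushout. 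By the description preceding~\eqref{eq:pushout diagram F-2}, that top map is homotopic to $(\cp^1\hookrightarrow\cp^\infty)\times \id_{\mM_{0,4}}$: it sends the generator $t_1\in H^2(\OP{B}\B S^1)$ to the generator $h\in H^2(S^2)$ and is the identity on the $H^*(\mM_{0,4})$ factor. Since $H^*(S^2\times\mM_{0,4};\Q)$ is concentrated in degrees $\leq 3$ with basis $\{1,a,b,h,ha,hb\}$, every basis element is visibly in the image. This forces the Mayer--Vietoris connecting homomorphisms to vanish, and the long exact sequence breaks into short exact sequences
\[
0\to H^i(P_5;\Q)\to H^i(P_4;\Q)\oplus H^i(\OP{B}\B S^1\times\mM_{0,4};\Q)\to H^i(S^2\times\mM_{0,4};\Q)\to 0.
\]

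Substituting the dimensions recorded above into $\dim H^i(P_5)=\dim H^i(P_4)+\dim H^i(\OP{B}\B S^1\times\mM_{0,4})-\dim H^i(S^2\times\mM_{0,4})$ produces the sequence $1,0,4,0,5,2,5,2,\ldots$ in degrees $0,1,2,3,4,5,6,7,\ldots$, matching~\eqref{rkcohomologygps} exactly. There is no deep obstruction in this argument; the computation is bookkeeping once the pushout diagram is in hand. The only substantive point is the identification of the top edge of~\eqref{eq:pushout diagram F-2} with a product of a Hopf-type inclusion and the identity on $\mM_{0,4}$, which relies on the splitting of the isotropy representation on the normal bundle of $F_{1234}$ as the trivial representation plus two copies of the standard weight-one representation, recorded just before~\eqref{eq:pushout diagram F-2}.
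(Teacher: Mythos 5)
Your proposal is correct and follows essentially the same route as the paper: both identify $\OP{B}\mG_{\B c}(\Sigma)$ with $P_5=(\Conf_4(\cp^2))_{h\PGL(3)}$, apply Mayer--Vietoris to the pushout square~\eqref{eq:pushout diagram F-2}, and use surjectivity of $H^*(\OP{B}\B S^1\times\mM_{0,4};\Q)\to H^*(S^2\times\mM_{0,4};\Q)$ to split the long exact sequence into short exact sequences, from which the dimensions follow. Your explicit identification of the top edge with $(\cp^1\hookrightarrow\cp^\infty)\times\id_{\mM_{0,4}}$ is a slightly more detailed justification of the surjectivity that the paper simply asserts.
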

\begin{proof}
The homotopy pushout decomposition of the classifying space of the stabilizer can be understood using a finite dimensional model given in terms of configuration spaces. 
From the equivalence of diagram~\eqref{eq:pushout diagram F-2} and  diagram~\eqref{eq:pushout diagram A-2} we obtain a Mayer-Vietoris sequence
\[0\to H^{1}(P_{5})\to H^{1}(P_{4})\oplus H^{1}(\OP{B} \B S^{1}\times\mM_{0,4})\to H^{1}(S^{2}\times \mM_{0,4})\to H^{2}(P_{5})\to \cdots\]
where $P_5\simeq \OP{B}\mG_{\B c}(\Sigma)$. Since $\mM_{0,4}\simeq \B S^{1}\vee \B S^{1}$, 
\[
H^{q}(\mM_{0,4})\simeq
\begin{cases}
\Q &\text{if~} q=0\\
\Q^{2} &\text{if~} q=1\\
0 &\text{if~} q\geq 2.
\end{cases}
\]
The map $S^{2}\times \mM_{0,4} \to \OP{B} \B S^{1}\times\mM_{0,4}$ induces a surjection in cohomology, so the Mayer-Vietoris sequence splits and we get 
\begin{align*}
0&\to& H^{1}(P_{5})&\to& H^{1}(P_{4})\oplus H^{1}(\OP{B} \B S^{1}\times\mM_{0,4})&\to& H^{1}(S^{2}\times \mM_{0,4})&\to& 0\\
& & 0 & & 0\oplus\Q^{2}& & \Q^{2}& & \\
0&\to& H^{2}(P_{5})&\to& H^{2}(P_{4})\oplus H^{2}(\OP{B} \B S^{1}\times\mM_{0,4})&\to& H^{2}(S^{2}\times \mM_{0,4})&\to& 0\\
 & &\Q^{4} & &\Q^{4}\oplus\Q & &\Q & & \\
0&\to& H^{3}(P_{5})&\to& H^{3}(P_{4})\oplus H^{3}(\OP{B} \B S^{1}\times\mM_{0,4})&\to& H^{3}(S^{2}\times \mM_{0,4})&\to& 0\\
& &0 & &0\oplus\Q^{2} & &\Q^{2} & & \\
0&\to& H^{4}(P_{5})&\to& H^{4}(P_{4})\oplus H^{4}(\OP{B} \B S^{1}\times\mM_{0,4})&\to& 0\\
& &\Q^{5}       & & \Q^{4}\oplus \Q                                & &  \\
0&\to& H^{5}(P_{5})&\to& H^{5}(P_{4})\oplus H^{5}(\OP{B} \B S^{1}\times\mM_{0,4})&\to& 0\\
& & \Q^{2} & & 0\oplus \Q^{2}& & \\
\end{align*}
Consequently, we obtain the desired cohomology groups.
\end{proof}

We now use an algebraic model of $\Conf_{4}(\cp^2)$ constructed independently by  Kriz~\cite{Kr94} and Totaro~\cite{Tot96} for configuration spaces to obtain the rational cohomology ring of $\OP{B}\mG_{\B c}(\Sigma)$. More precisely, for $M$  a smooth complex projective variety of complex dimension $m$, they  constructed a rational model $E(M,k)$ for $\Conf_{n}(M)$. Let us remind their construction. 
 
Let $\Omega \in H^{2m}(M)$ denote a fixed orientation class in $M$. For an arbitrary basis $\{a_i \}_{i=1,2, \hdots, q}$, in $H^*(M)$, take the dual basis $\{b_j \}_{j=1,2, \hdots, q}$, $(a_i \cup b_j = \delta_{ij}\Omega)$ and construct the {\it diagonal class} of $M$ by $\Delta= \sum_{i=0}^q a_i \otimes b_i \in H^*(M^2)$. 

For $a\neq b \in \{1, \hdots, k\}$, let $p_a^*\colon H^*(M) \to H^*(M^k)$ and 
$p^*_{ab}\colon H^*(M^2)\to H^*(M^k)$ be the pullbacks of the projection maps
\[ p_a:M^k \longrightarrow M, \quad p_a(x_1,\hdots,x_a, \hdots, x_n)=x_a,\]
and
\[
p_{ab}:M^k \longrightarrow M^2, \quad p_{ab}(x_1,\hdots,x_a, \hdots, x_b, \hdots, x_n)=(x_a,x_b),
\]
respectively. 

\begin{definition}[\cite{Kr94}]\label{Krizmodel}
Denote by $H^*(M^k)[G_{ab}]$ the algebra over $H^*(M^k)$ with generators $G_{ab}$, $ 1 \leq a \neq b \leq k$, of degree $2m-1$. The Kriz model $E(M,k)$ for  $\Conf_{n}(M)$ is the differential graded algebra (DGA)  given by the quotient of \[H^*(M^k)[G_{ab}],\]
modulo the following relations 
\begin{enumerate}
    \item $G_{ab}=G_{ba}$,
    \item $p_a^*(x)G_{ab}= p_b^*G_{ab}$ for $x \in H^*(M)$,
    \item $G_{ab}G_{bc}+ G_{bc}G_{ca} + G_{ca}G_{ab}=0$.
\end{enumerate}
The differential $d$ of degree $+1$ is given by 
\[d(p_a^*(x))=0 \]
and 
\[d(G_{ab})= p^*_{ab} (\Delta). \]
\end{definition}

Kriz proved 

\begin{theorem}[\cite{Kr94}]
Let $M$ be a complex projective manifold of dimension  $m$. Then the DGA $E(M,k)$ is a rational model, in the sense of Sullivan, of the configuration space $\Conf_{n}(M)$. 
\end{theorem}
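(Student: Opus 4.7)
The plan is to identify $E(M,k)$ with a rational model of $\Conf_{n}(M)$ in two steps: first check that $E(M,k)$ is a well-defined CDGA whose cohomology matches $H^*(\Conf_n(M);\Q)$, and then upgrade this to an honest quasi-isomorphism of CDGAs using the formality of smooth projective varieties.

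First, I would verify that the differential $d$ descends to the quotient by the three defining relations. Relation (1) is symmetric in $a,b$ and so is trivially compatible with $d$. For relation (2), one computes $d\bigl((p_a^*(x)-p_b^*(x))G_{ab}\bigr) = (p_a^*(x)-p_b^*(x))\,p_{ab}^*(\Delta)$; since $p_{ab}^*(\Delta)$ is pulled back from the diagonal in $M^2$, multiplication by it forces the $a$-th and $b$-th pullbacks of $x$ to agree, so the expression vanishes modulo the relations. The Arnold-type relation (3) differentiates to a statement in $H^*(M^k)$ encoding a triangle identity among pulled-back diagonal classes in $M^3$, which is verified using the K\"unneth decomposition and the definition of $\Delta$.

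The heart of the proof is constructing a quasi-isomorphism from $E(M,k)$ into a CDGA of differential forms on $\Conf_n(M)$. Using the Fulton-MacPherson compactification $\overline{\Conf_n(M)}$, the configuration space appears as the complement of a simple normal crossings divisor inside a smooth projective variety. For each pair $a\neq b$ one obtains a closed form $\theta_{ab}$ of degree $2m-1$ on $\Conf_n(M)$ representing the global angular/Thom class associated to the boundary divisor of collisions of the $a$-th and $b$-th points; by construction $d\theta_{ab}$ equals the pullback of the Poincar\'e dual of the diagonal, and the three Kriz relations are then verified by direct geometric arguments (symmetry, compatibility with classes pulled back from $M$, and the triangle identity at triple intersection strata of the boundary divisor).

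The main obstacle will be showing that this comparison map is a quasi-isomorphism. My approach would be to use the Leray spectral sequence for the inclusion $\Conf_n(M)\hookrightarrow M^n$: when $M$ is smooth and projective, mixed Hodge theory (Deligne) forces this spectral sequence to degenerate at $E_2$ by a weight argument, and one identifies its $E_2$-page with the graded algebra underlying $E(M,k)$. Combining this cohomological match with the formality of $M$ (Deligne-Griffiths-Morgan-Sullivan), and a standard descent via zigzags of CDGAs connecting Sullivan's PL-forms to de Rham forms on $\overline{\Conf_n(M)}$, would then promote the identification from cohomology to the level of minimal models, exhibiting $E(M,k)$ as a rational model of $\Conf_n(M)$.
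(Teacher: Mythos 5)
First, note that the paper itself gives no proof of this statement: it is quoted verbatim from Kriz's paper \cite{Kr94} (with Totaro's independent proof in \cite{Tot96}), so your attempt can only be measured against those sources. Your preliminary step (checking that $d$ descends to the quotient by the three relations) is correct and necessary, and your cohomological input is essentially Totaro's: the Leray spectral sequence of $\Conf_n(M)\hookrightarrow M^n$ has a single nontrivial differential sending $G_{ab}$ to the diagonal class, and degenerates afterwards by Deligne's weight argument, which identifies $H^*(E(M,k))$ with $H^*(\Conf_n(M);\Q)$ as rings. That part is sound.

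The gap is in the step that actually constitutes the theorem: producing a zigzag of CDGA quasi-isomorphisms between $E(M,k)$ and $A_{PL}(\Conf_n(M))$. Your proposed comparison map, sending $G_{ab}$ to a propagator form $\theta_{ab}$ with $d\theta_{ab}=p_{ab}^*(\Delta)$, is not a well-defined DGA map: relation (2) requires $(p_a^*\omega_x-p_b^*\omega_x)\wedge\theta_{ab}=0$ \emph{on the nose} in the de Rham complex, and no choice of closed representatives $\omega_x$ and of primitive $\theta_{ab}$ achieves this (one only gets exactness); the Arnold relation (3) fails at the form level for the same reason. This is precisely the difficulty that forces Kriz to argue algebraically, by induction on $n$ along the Fadell--Neuwirth fibrations $\Conf_n(M)\to\Conf_{n-1}(M)$, exhibiting $E(M,n)$ as a relative Sullivan model over $E(M,n-1)$ whose fiber part models $M$ minus $n-1$ points (using formality of $M$ to control the twisting). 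Your fallback --- ``promote the cohomology identification to the level of minimal models using formality of $M$'' --- does not close the gap either: formality of $M$ does not imply formality of $\Conf_n(M)$ (nor of the CDGA $E(M,k)$, which carries a nontrivial differential), so an isomorphism of cohomology rings cannot by itself be upgraded to a quasi-isomorphism of models. Without either a corrected geometric comparison map (which in the literature requires the Fulton--MacPherson operadic machinery or graph-complex techniques) or Kriz's inductive fibration argument, the proof is incomplete at its central point.
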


We are interested in the model of $\Conf_{n}(\cp^2)$. The cohomology algebra of $M=\cp^2$ is given by $H^*(M, \Q)= \Q[x]/\langle x^3 \rangle$ where $\deg x= 2$, that is, $x^i$, with $i=0,1,2$ is a basis of $H^{2i}(M;\Q)$ and all other cohomology groups are zero.  First we construct the Kriz model $E(\cp^2,3)$. Using the K\"unneth formula we find the canonical basis of $H^{2i}(M^3)$: $x^j\otimes x^p \otimes x^q$, such that $i=j+p+q$. Let us denote the generators of $H^2(\cp^2)^{\otimes 3}$, namely $x\otimes 1\otimes 1$, $1\otimes x\otimes 1$ and $1\otimes 1\otimes x$, by $\alpha_1$, $\alpha_2$ and $\alpha_3$, respectively. Then we add the exterior part generated by $G_{12}, G_{13}, G_{23}$, where the generators have degree 3 and satisfy the following relations
\begin{enumerate}
    \item $G_{ab}=G_{ba}$,
    \item $\alpha^i_aG_{ab}=\alpha^i_bG_{ab}$,
    \item $G_{ab}G_{bc}+ G_{bc}G_{ca} + G_{ca}G_{ab}=0$,
\end{enumerate}
where $ 1 \leq a \neq b \neq c \leq 3$ and $i=1,2$, by Definition \ref{Krizmodel}. The differential is given by 
\[ d (\alpha_a) = 0  \] and 
\[ d (G_{ab})= p^*_{ab}(\Delta)= \alpha_a^2+ \alpha_a\alpha_b + \alpha_b^2,\] with $ 1 \leq a \neq b \leq 3$.

In fact, it was this model that Ashraf and Berceanu used in \cite{AB14} to obtain an explicit presentation of the cohomology ring of $\Conf_3(\cp^2)$. However, to our knowledge, no such presentation is known for $H^*(\Conf_4(\cp^2))$. On the other hand, using the algebraic model described above together with the software SageMath we can understand the rational cohomology ring $H^*(\Conf_4(\cp^2)$ and use it to compute the rational cohomology ring of the classifying space of the stabilizer $\mG_{\B c}(\Sigma)$.

\begin{theorem}\label{cohomology_stab_small_balls}
Consider $\cp^2$ equipped with its standard Fubini-Study symplectic form and let  $c_1,c_2,c_3, c_4 \in (0,1)$ such that $c_1+c_2+c_3+c_4< 1$. Then the  rational cohomology ring of the classifying space of the stabilizer $\mG_{\B c}(\Sigma)$ is given by 
\[ H^*(\BG_{\B c}(\Sigma);\Q) = \Lambda (\alpha_1,\alpha_2,\alpha_3,\alpha_4,\eta_1,\eta_2) / I,  \]
where $|\alpha_i|=2$, $|\eta_i|=5$ and $I$ is the ideal given by 
\[
I = \left( \begin{array}{c}
\alpha_1^2 -\alpha_2^2 +\alpha_1\alpha_i-\alpha_2\alpha_i,  \ \alpha_2^2 -\alpha_i^2 +\alpha_1\alpha_2-\alpha_1\alpha_i, \  i=3,4  \\
\alpha_3^2 -\alpha_4^2 +\alpha_1\alpha_3-\alpha_1\alpha_4, \\  \eta_i (\alpha_j -\alpha_k), \ i=1,2, \ j,k=1,2,3,4, \\
\eta_1 \eta_2
\end{array}
\right)
\]
\end{theorem}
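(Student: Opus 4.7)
The plan is to determine $H^*(\BG_{\B c}(\Sigma);\Q)$ by coupling the Kriz--Totaro model of $\Conf_4(\cp^2)$ with the principal $\PU(3)$-fibration
\[
\Symp(\cp^2)\to\IEmb_4(\B c,\cp^2)\to\BG_{\B c}(\Sigma),
\]
whose total space is homotopy equivalent to $\Conf_4(\cp^2)$ by Theorem~\ref{homotopy type 4balls} in the small-balls regime $c_1+c_2+c_3+c_4<1$. As noted in the discussion preceding the statement, the usual strategy of Section~\ref{section rational model} breaks down here because the pushout diagram \eqref{eq:pushout diagram F-2} is not over a point; hence we must use the Kriz--Totaro CDGA $E(\cp^2,4)$ directly as a Sullivan model for the total space.

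First I will compute $H^*(\Conf_4(\cp^2);\Q)$ from $E(\cp^2,4)$, whose generators are the degree-2 classes $\alpha_1,\ldots,\alpha_4$ (with $\alpha_i^3=0$) and the degree-3 classes $G_{ab}$, subject to the symmetry, diagonal, and Arnold relations, with differential $dG_{ab}=\alpha_a^2+\alpha_a\alpha_b+\alpha_b^2$. As hinted just before the theorem, this cohomology is tractable with a SageMath computation and produces an explicit graded ring. Next, I will exploit the Serre spectral sequence of the fibration above, whose fiber $\PU(3)$ has rational cohomology $\Lambda(\beta,\gamma)$ with $|\beta|=3$, $|\gamma|=5$. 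The only nontrivial differentials are the transgressions $d_4(\beta)=\tau(\beta)\in H^4(\BG_{\B c}(\Sigma);\Q)$ and $d_6(\gamma)=\tau(\gamma)\in H^6(\BG_{\B c}(\Sigma);\Q)$, which are the pull-backs of $\sigma_2$ and $\sigma_3$ along the classifying map $\BG_{\B c}(\Sigma)\to\BSU(3)$; all higher differentials vanish for degree reasons.

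From this spectral sequence I will extract the generators and relations. Since $H^{\leq 2}(\PU(3);\Q)=\Q$, the edge homomorphism identifies $H^2(\BG_{\B c}(\Sigma);\Q)$ with the four-dimensional subspace spanned by the $\alpha_i$. The five degree-4 relations $\alpha_i^2-\alpha_j^2+\alpha_k(\alpha_i-\alpha_j)=0$ are precisely the image of $\tau(\beta)$, and correspond in $E(\cp^2,4)$ to the linearly independent combinations of $dG_{ab}$ surviving modulo the Kriz ideal. The two degree-5 generators $\eta_1,\eta_2$ then arise as closed classes of the form $\sum c_{ab,i}\,G_{ab}\alpha_i$ in $E(\cp^2,4)$ that represent nontrivial elements of $H^5$ and transgress no further; the relations $\eta_i(\alpha_j-\alpha_k)=0$ follow from the second Kriz relation $\alpha_a G_{ab}=\alpha_b G_{ab}$ combined with the Arnold identity.

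The main obstacle will be establishing completeness of the presentation, in particular the vanishing $\eta_1\eta_2=0$ and the absence of extra generators in higher degrees. I plan to settle this by comparing Hilbert series: the candidate algebra $\Lambda(\alpha_1,\ldots,\alpha_4,\eta_1,\eta_2)/I$ must realize the Betti sequence $(1,0,4,0,5,2,5,2,\ldots)$ from Proposition~\ref{rkcohomologygps}, and verifying this in every degree (via a Gr\"obner basis computation) forces exactly the listed relations. In particular, the stability of the Betti numbers at $5$ in even degrees and $2$ in odd degrees $\geq 5$ is what compels both $\eta_1\eta_2=0$ and $\eta_i(\alpha_j-\alpha_k)=0$, ruling out any further generators.
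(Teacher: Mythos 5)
Your overall strategy is close in spirit to the paper's, but you have inverted the fibration the paper actually uses for the hard part: the paper builds a relative Sullivan model for $\IEmb_{4}(\B c,\cp^2)\to\BG_{\B c}(\Sigma)\to\BPU(3)$ (so that $\BG_{\B c}(\Sigma)$ is the \emph{total} space and the differential on the degree $3$ and $5$ generators is controlled by the pullback of $\sigma_2,\sigma_3$ from $\BPU(3)$), and the key geometric input is that the connecting maps $\partial_4\colon\pi_4(\BPU(3))\to\pi_3(\IEmb)$ and $\partial_6$ are nonzero, proved via the Serre spectral sequence of $\mG_{\B c}(\Sigma)\to \OP{E}\mG_{\B c}(\Sigma)\to\BG_{\B c}(\Sigma)$ and the Betti numbers of $\BG_{\B c}(\Sigma)$. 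Your plan works only with the fibration $\PU(3)\to\IEmb_4\to\BG_{\B c}(\Sigma)$, and this creates a genuine gap in degree $4$. The product map $\operatorname{Sym}^2H^2(\BG_{\B c}(\Sigma))\to H^4(\Conf_4(\cp^2))$ has a $6$-dimensional kernel (spanned by the classes $dG_{ab}=\alpha_a^2+\alpha_a\alpha_b+\alpha_b^2$); of this, a $5$-dimensional subspace consists of relations that already hold in $H^4(\BG_{\B c}(\Sigma))=\Q^5$, and a complementary line is spanned by the transgression $\tau(\beta)\neq 0$. Your sentence that the five relations ``are precisely the image of $\tau(\beta)$'' conflates a $5$-dimensional space with a single class, and more importantly neither the spectral sequence of $\PU(3)\to\IEmb_4\to\BG_{\B c}(\Sigma)$ nor the Betti numbers of the base can tell you \emph{which} $5$-dimensional subspace is the relation space and which residual line is $\tau(\beta)$: they only see the $6$-dimensional sum. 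You need extra input — either the paper's route through the classifying map $\BG_{\B c}(\Sigma)\to\BPU(3)$ and the uniqueness check on the perturbed differential, or naturality of $\tau(\beta)$ under the restrictions to the circle factors $\OP{B}\B S^1_i\subset P_4\to P_5$, where it must restrict to $m_iT_i^2\neq0$.

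The second gap is the relation $\eta_1\eta_2=0$. Your Hilbert-series argument only forces $\eta_1\eta_2$ to lie in the $5$-dimensional span of the degree-$10$ monomials in the $\alpha_i$; it does not exclude, say, $\eta_1\eta_2=\alpha_1^5$, which yields the same Betti number $\dim H^{10}=5$. Nor does the edge homomorphism help here: in degree $10$ its kernel contains $\tau(\beta)\cdot H^6+\tau(\gamma)\cdot H^4$, which is essentially all of $H^{10}(\BG_{\B c}(\Sigma))$, so vanishing of $\eta_1\eta_2$ in $H^{10}(\Conf_4(\cp^2))$ gives nothing. The paper obtains this relation from the direct SageMath computation of the cohomology of the relative model; alternatively one can get it from the Mayer--Vietoris sequence of the pushout defining $P_5\simeq\BG_{\B c}(\Sigma)$, since each $\eta_i$ restricts trivially to $P_4$ and to a class in $H^4(\OP{B}\B S^1)\otimes H^1(\mM_{0,4})$ on the other piece, whence $\eta_1\eta_2$ restricts to zero on both pieces of a decomposition on which $H^{10}$ is detected injectively. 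By contrast, your derivation of $\eta_i(\alpha_j-\alpha_k)=0$ is sound once you add the (true but unstated) observation that the edge map is injective on $H^7(\BG_{\B c}(\Sigma))$ because $H^1=H^3=0$.
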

\begin{remark}
It follows from Theorem \ref{cohomology_stab_small_balls} that the generators of the cohomology modules  of the classifying space of the stabilizer, $H^q(\BG_{\B c}(\Sigma);\Q)$, are given by 
\begin{itemize}
\item $\alpha_i$ \ if  \  $q=2$; 
\medskip
\item $\alpha_1^k, \alpha_1^{k-2}\, \alpha_2^2, \alpha_1^{k-1}\, \alpha_2,\alpha_1^{k-1}\, \alpha_3,\alpha_1^{k-1}\, \alpha_4$ \ if \ $q=2k\geq 4$;
\medskip
\item $\eta_1 \alpha_1^{k-2}, \eta_2 \, \alpha_1^{k-2}$ \ if \ $q=2k+1\geq 5$,
\end{itemize}
which agrees with the computation of the cohomology groups in \eqref{rkcohomologygps}. 
\end{remark}
\begin{proof}
The main idea to prove the theorem is to construct the algebraic model of the fibration 
\begin{equation}\label{main_fibration}
\IEmb_{4}(\B c, \cp^2)   \to \BG_{\B c}(\Sigma) \to \BSymp (\cp^2) \simeq  \BPU (3). 
\end{equation}
In order to simplify the notation,  let us denote the rational homotopy groups $\pi_*(\cdot) \otimes \Q$  by $\pi_*(\cdot)$.  

Recall that any fibration $V \into P \to U$ for which the theory of minimal models applies gives rise to a sequence 
\begin{equation}\label{model_fibration}
(\OP M(U),d_U ) \longrightarrow (\OP M(U) \otimes \OP M (V),d) \longrightarrow (\OP M (V),d_V ),  
\end{equation}
where the middle differential graded algebra is a model for the total space of the fibration. Let $d_{|U}$ and $d_{|V}$ denote the restriction of the differential $d$ to $U$ and $V$, respectively. The theory of minimal models implies that 
\begin{equation*}
d_{|U}  = d_U \quad \mbox{and} \quad d_{|V}= d_V + d'
\end{equation*}
where $d'$ is a perturbation with image not in $\OP M(V)$. Moreover, the linear part of $d'$  is dual to the boundary map $\partial_*\colon \pi_{*} (U) \to \pi_{*-1}(V)$. 

Note that in our case all the spaces are simply connected, so the theory of minimal models applies. The model for the base of the fibration is 
\[ \OP M(\BPU(3))=(\Lambda (Z,W),0)\]
where $|Z|=4$ and $|W|=6$.
Therefore \eqref{model_fibration} implies that 
\[
\OP M(\OP{B}\mG_{\B c}(\Sigma))= (\OP M(\IEmb_{4}(\B c, \cp^2)) \otimes \Lambda (Z,W), d)
\]
where $d \, Z= d \, W = 0$ and 
\[d_{|\IEmb} = d_{\IEmb}+d'\]
where the image of $d'$ is not in $M(\IEmb_{4}(\B c, \cp^2))$. Since the linear part of $d'$ is dual to the boundary map $\partial_*$, it follows from the long exact homotopy sequence of fibration~\eqref{main_fibration} that the image of the differential $d$ of elements of degree 3 and 5 might contain terms with $Z$ and $W$, respectively, depending  on $\partial_4\colon \pi_4(\BPU(3)) \to \pi_3(\IEmb)$ and $\partial_6\colon\pi_6(\BPU(3)) \to \pi_5(\IEmb)$ are trivial or not. 

\emph{Claim: $\partial_4$ and $\partial_6$ are not trivial.}

Note that if $\partial_4$ was trivial  then the long exact homotopy sequence 
\[
\dots \rightarrow \pi_4(\BPU(3)) \stackrel{\partial_4}{\longrightarrow} \pi_3(\IEmb) \longrightarrow \pi_3(\OP{B}\mG_{\B c}(\Sigma)) \longrightarrow \pi_3(\BPU(3)) \to \dots
\]
would imply there was an isomorphism between  $\pi_3(\IEmb)$ and $ \pi_3(\OP{B}\mG_{\B c}(\Sigma))$, because $\pi_3(\BPU(3))$ is trivial. 
Since $\IEmb_{4}(\B c, \cp^2) \simeq \Conf^4(\cp^2)$, we then implement the Kriz model in the software SageMath~\cite{SM} to obtain the minimal model of $\Conf_4(\cp^2)$ and therefore to obtain the rank of its rational homotopy groups in the relevant degrees for us. In particular, we obtain $\pi_3(\IEmb)=\Q^6$ which would imply that $\pi_2(\mG_{\B c}(\Sigma) = \pi_3(\OP{B}\mG_{\B c}(\Sigma))=\Q^6$.
We now use  the Serre-Leray spectral sequence of the universal fibration 
\begin{equation}\label{universal_fibration}
\mG_{\B c}(\Sigma) \to \OP{E}\mG_{\B c}(\Sigma) \to \OP{B}\mG_{\B c}(\Sigma)
\end{equation}
to prove that in fact $\pi_2(\Stab)$ cannot be isomorphic to $\Q^6$ and therefore $\partial_4$ is not trivial. 
More precisely, since $\mG_{\B c}(\Sigma)$ is an $H$-space, by the Cartan-Serre Theorem, the rational cohomology is a free algebra such that the number of generators of dimension $d$  is equal to the dimension of $\pi_d(\mG_{\B c}(\Sigma))$.  Moreover, the Kriz model implies that $\pi_2(\IEmb)=\Q^4$. Therefore it follows again from the long exact homotopy sequence 
\[ 
\dots \rightarrow \pi_3(\BPU(3)) \stackrel{\partial_3}{\longrightarrow} \pi_2(\IEmb) \longrightarrow \pi_2(\OP{B}\mG_{\B c}(\Sigma)) \longrightarrow \pi_2(\BPU(3)) \to \dots 
\]
that  $\pi_1(\mG_{\B c}(\Sigma)) =\pi_2(\OP{B}\mG_{\B c}(\Sigma)) = \pi_ 2(\IEmb) = \Q^4$. Note that these $4$ generators correspond to the $4$ circles appearing in the pushout diagram which yields the homotopy type of $\Stab$.   Then, from the computation of the cohomology groups of $H^*(\OP{B}\mG_{\B c}(\Sigma))$ in \eqref{rkcohomologygps}, it follows that the $E_2$ page of the spectral sequence is given as in Figure \ref{E2page} below.

\begin{figure}[H]
\begin{tikzpicture}
  \matrix (m) [matrix of math nodes,
    nodes in empty cells,nodes={minimum width=7ex,
    minimum height=7ex,outer sep=-7pt},
    column sep=1ex,row sep=0.1ex]{
                &  \hdots    &     &          &      &         &        & \\
          2     &  \Q^{11}   &  0  &  \hdots  &   0  & \hdots  &        & \\
          1     &  \Q^4      &  0  &  \Q^{16} &   0  & \Q^{20} & \hdots & \\
          0     &  \Q        &  0  &  \Q^4    &   0  & \Q^5    &  \Q^2  & \\
    \quad\strut &   0        &  1  &  2       &  3   &  4      &  5     & \strut \\};
\draw[-stealth] (m-3-2) -- (m-4-4);
\draw[-stealth] (m-2-2) -- (m-3-4);
\draw[-stealth] (m-3-4) -- (m-4-6);
\draw[thick] (m-1-1.east) -- (m-5-1.east) ;
\draw[thick] (m-5-1.north) -- (m-5-8.north) ;
\end{tikzpicture}
\caption{$E_2$-page of the spectral sequence of fibration \eqref{universal_fibration}.}\label{E2page}
\end{figure}
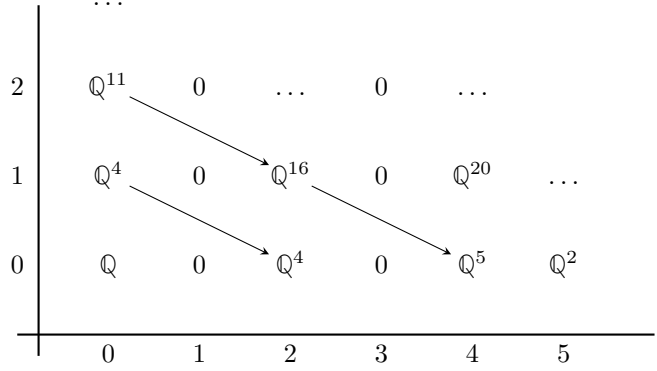
\noindent where the column $E_2^{p,0}$ contains the cohomology groups $H^p(\mG_{\B c}(\Sigma))$ while the row $E_2^{0,q}$ contains the cohomology groups $H^q(\OP{B}\mG_{\B c}(\Sigma))$. The $4$ generators of degree $1$ in $H^1(\mG_{\B c}(\Sigma))$ give rise to $6$ elements in $H^2(\mG_{\B c}(\Sigma))$, so there are only 5 new generators $H^2(\mG_{\B c}(\Sigma))$. If instead there were $6$ new generators then one element clearly would survive to the $E_\infty$-page of the spectral sequence, which is not possible. 
So we conclude that $\pi_2(\mG_{\B c}(\Sigma)) = \pi_3(\OP{B}\mG_{\B c}(\Sigma))=\Q^5$ and $\partial_4$ is not trivial. A similar argument shows that $\partial_6$ is not trivial as well. 

Next, using again the software SageMath we can check there is only one way of defining the differential $d_{|\IEmb}$ on elements of degree $3$ such that their image contain a non-zero term with $Z$. The analog holds for the differential of elements of degree $5$ whose image contain a non-zero term with $W$. Moreover, we can also verify that there are no non-linear terms in the image of the differential involving the generators $Z$ and $W$. Finally, this software gives the generators of the cohomology ring  $H^*(\OP{B}\mG_{\B c}(\Sigma);\Q)$ in degrees $2$ and $5$ and their relations. In order to confirm that these give the complete description of the cohomology ring we use the Leray-Serre spectral sequence of the fibration 
\begin{equation}\label{main_fibration2}
 \OP{PU(3)} \to \IEmb_{4}(\B c, \cp^2)  \to \OP{B}\mG_{\B c}(\Sigma)
\end{equation}
which will give in the $E_\infty$-page the rational cohomology groups of $\IEmb_{4}(\B c, \cp^2)$. 
These are also obtained from the Kriz model by the software SageMath
\begin{equation}\label{Emb_coho_modules}
H^{q}(\IEmb_{4}(\B c, \cp^2);\Q )\simeq
\begin{cases}
\Q &\text{if~} q=0,11\\
\Q^{4} &\text{if~} q=2,4,9\\
\Q^{2}&\text{if~} q=5,10,12\\
\Q^{6}&\text{if~} q=7 \\
0  & \text{otherwise.}
\end{cases}
\end{equation}
In the $E_2$ page of the spectral sequence the column $E_2^{p,0}$ contains $H^*(\OP{PU(3)};\Q)=\Lambda(\beta,\gamma)$ where $|\beta|=3$ and $|\gamma|=5$. The row $E_2^{0,q}$ contains the cohomology algebra $H^*(\OP{B}\mG_{\B c}(\Sigma);\Q)$. Since the cohomology of $\IEmb_{4}(\B c, \cp^2)$ in dimension $3$ is trivial, it follows that $d_k \beta$ is non-zero, for some $k$. The only possibility is $k=4$. Moreover, since $H^6(\IEmb_{4}(\B c, \cp^2);\Q)$ is trivial we conclude that $4$ elements in $H^6(\OP{B}\mG_{\B c}(\Sigma);\Q)$ should be in the image of $d_4(\beta \alpha_i)$, $i=1,2,3,4$ and the $5$th element is the image of $d_6 \gamma$. Recall that the cohomology module $H^6(\OP{B}\mG_{\B c}(\Sigma);\Q)$ is generated by $\alpha_1^3, \alpha_1\alpha_2^2, \alpha_1^2 \alpha_2,\alpha_1^2 \alpha_3,\alpha_1^2 \alpha_4$. Therefore we should have $d_4 \beta=\alpha_1^2$ and $d_6 \gamma = \alpha_1\alpha_2^2$. Then $d_4(\beta \alpha_i)= \alpha_1^2\alpha_i$ and there are no elements in dimension $6$ left in the $E_k$-page for $k \geq 7$. Moreover, the relevant pages in the spectral sequence, that is, the pages in which the differential is not trivial are $E_4$ and $E_6$. The page $E_4$ contains all elements in $H^*(\OP{PU(3)}) \otimes H^*(\OP{B}\mG_{\B c}(\Sigma))$ and the non-zero terms in the $E_6$-page are the ones in Figure \ref{E6page}, 
\begin{figure}[H]
\begin{tikzpicture}
  \matrix (m) [matrix of math nodes,
    nodes in empty cells,nodes={minimum width=7ex,
    minimum height=7ex,outer sep=-7pt},
    column sep=1ex,row sep=0.1ex]{
                &  \hdots &    &     &    &      &      &    &      &\\
          5     &  \gamma  &  0  &  \alpha_i \gamma  & 0  & (*)\gamma &  \eta_i \gamma & \alpha_1 \alpha_2^2\gamma & \alpha_1\eta_i\gamma &\\
        \vdots &  0  &  0  &  0     & 0  &  0   &  0   &  0 &  0   & \\
          0     &  \Q &  0  &  \alpha_i  & 0  & (*) & \eta_i & \alpha_1 \alpha_2^2 & \alpha_1\eta_i  & \\
    \quad\strut &  0  &  1  &  2     &  3 &  4   &  5   & 6  &  7 & \strut \\};
\draw[-stealth] (m-2-2) -- (m-4-8);
\draw[thick] (m-1-1.east) -- (m-5-1.east) ;
\draw[thick] (m-5-1.north) -- (m-5-10.north);
\end{tikzpicture}
\caption{$E_6$-page of the spectral sequence of fibration \eqref{main_fibration2}.}\label{E6page}
\end{figure}
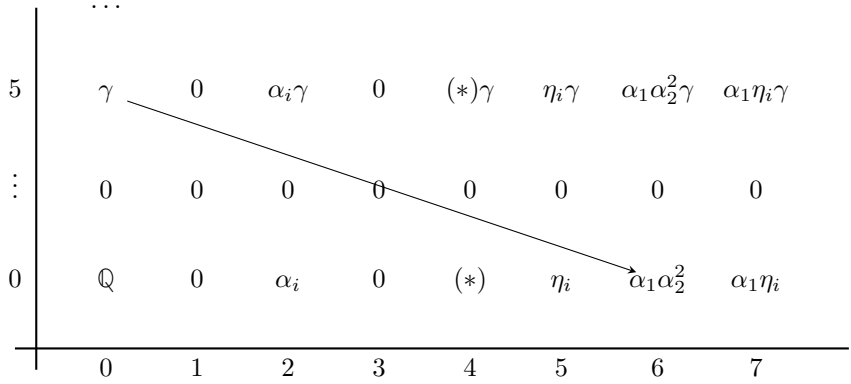
\noindent 
where (*) denotes the elements $\alpha_2^2, \alpha_1\alpha_2, \alpha_1\alpha_3, \alpha_1\alpha_4.$ Therefore in the $E_\infty$-page of the spectral sequence, in Figure \ref{E_infty}, we obtain the desired cohomology groups \eqref{Emb_coho_modules}. 
\begin{figure}[H]
\begin{tikzpicture}
  \matrix (m) [matrix of math nodes,
    nodes in empty cells,nodes={minimum width=7ex,
    minimum height=7ex,outer sep=-7pt},
    column sep=1ex,row sep=0.1ex]{
                &  \hdots &    &     &    &      &      &    &      &\\
          5     &  0  &  0  &  \Q^4  & 0  & \Q^4 &  \Q^2& \Q & \Q^2 & \\
         \vdots &  0  &  0  &  0     & 0  &  0   &  0   &  0 &  0   & \\
          0     &  \Q &  0  &  \Q^4  & 0  & \Q^4 & \Q^2 & 0  & \Q^2  & \\
    \quad\strut &  0  &  1  &  2     &  3 &  4   &  5   & 6  &  7 & \strut \\};
\draw[thick] (m-1-1.east) -- (m-5-1.east) ;
\draw[thick] (m-5-1.north) -- (m-5-10.north) ;
\end{tikzpicture}
\caption{$E_\infty$-page of the spectral sequence of fibration \eqref{main_fibration2}.}\label{E_infty}
\end{figure}
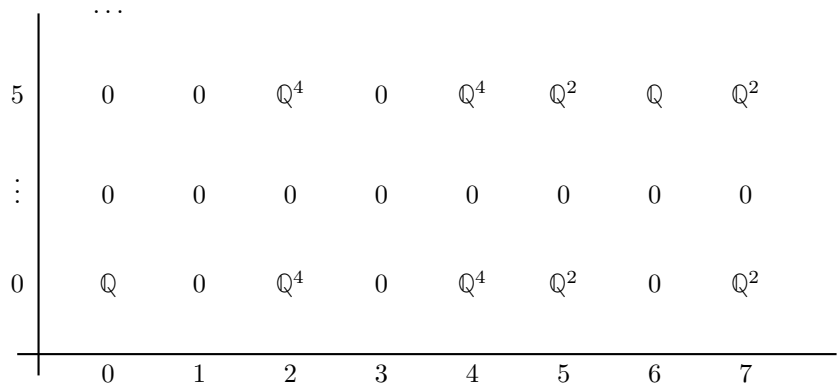
\end{proof}

\bibliographystyle{plain}

\begin{thebibliography}{AbGrKi}

\bibitem{AbGrKi} M. Abreu, G. Granja and N. Kitchloo, \emph{Compatible complex structures on symplectic rational ruled surfaces}, Duke Math. J. {\bf 148} (2009), 539--600.

\bibitem{AbMcD00}
M. Abreu and D. McDuff
{ \em Topology of symplectomorphism groups of rational ruled surfaces}, 
J. Amer. Math. Soc.13(2000), no.4, 971–1009.

\bibitem{AB14}
S. Ashraf and B. Berceanu. \newblock{\em Cohomology of 3-points configuration spaces of complex projective spaces.} \newblock{Advances in Geometry}, vol. 14  (2014), no. 4 pp. 691--718.

\bibitem{ALLP23}
S. Anjos, J. Li, T.-J. Li and M. Pinsonnault. 
\newblock {\em Stability of the symplectomorphism group of rational surfaces}
\newblock {to appear in Math. Ann.}

\bibitem{ALP09}  
S. Anjos, F.  Lalonde, and M. Pinsonnault.
\newblock {\em The homotopy type of the space of symplectic balls in rational ruled 4-manifolds.},
\newblock {Geom. Topol.}, {13 (2009)}, {no. 2},  {1177--1227}. 

\bibitem{AP13}
S. Anjos and M. Pinsonnault.
\newblock {\em The homotopy Lie algebra of symplectomorphism groups of 3-fold blow-ups of the projective plane.}
\newblock {Math. Z}, 275 (2013), no. 1-2, 245-292.

\bibitem{Ban78}
A. Banyaga,
{ \em Sur la structure du groupe des difféomorphismes qui préservent une forme symplectique}.  
Comment. Math. Helv.53(1978), no.2, 174–227.

\bibitem{Bi96} P. Biran, 
{\emph{Connectedness of spaces of symplectic embeddings,}} International Mathematics Research Notices, Volume 1996, Issue {\bf 10}, (1996), Pages 487--491. 

\bibitem{BLW14}
M. S. Borman, T.-J. Li and W. Wu,
{\em Spherical Lagrangians via ball packings and symplectic cutting.}
Selecta Math. (N.S.)20(2014), no.1, 261–283.

\bibitem{CM21} J. Chaidez and M. Munteanu, {\emph{Essential tori in spaces of symplectic embeddings}}, Algebr. Geom. Topol. {\bf 21} (2021), no. 5, 2489--2522. 

\bibitem{DoLiWu18} J.~G. Dorfmeister, T.-J. Li and W.  Wu, {\em Stability and existence of surfaces in symplectic 4-manifolds with b+=1},
J. Reine Angew. Math.742(2018), 115–155.

\bibitem{Evans} J. D. Evans. \newblock  {\emph{ Symplectic mapping class groups of some Stein and rational surfaces}}, \newblock  {J. Symplectic Geom. } {  9(2011), no. 1}, 45--82.

\bibitem{FT94}
Y. F\'{e}lix\ and\ J.-C. Thomas, {\em Homologie des espaces de lacets des espaces de configuration}, Ann. Inst. Fourier (Grenoble) {\bf 44} (1994), no.~2, 559--568. MR1296743

\bibitem{FOT08} 
Y. F\'{e}lix, J.~F. Oprea\ and\ D. Tanr\'{e}, {\it Algebraic models in geometry}, Oxford Graduate Texts in Mathematics, 17, Oxford Univ. Press, Oxford, 2008. MR2403898 

\bibitem{Gr85}  M. Gromov, {\emph{Pseudo holomorphic curves in symplectic manifolds}}, Invent.  Math., {\bf 82} (1985), 307--347.

\bibitem{Gua18}
R. Guadagni, \emph{Symplectic neighborhood of crossing divisors}, arXiv:1611.02363v2.


\bibitem{Ha-GTM257}
R. Hartshorne, \emph{Deformation theory}, Grad. Texts in Math., 257, Springer, New York, 2010. viii+234 pp.

\bibitem{Ke05}
J. K\c{e}dra, {\em Evaluation fibrations and topology of symplectomorphisms}, Proc. Amer. Math. Soc. {\bf 133} (2005), no.~1, 305--312. MR2086223

\bibitem{Ko2005}
K. Kodaira, {\em Complex Manifolds and Deformation of Complex Structures}, Classics Math., Springer, Berlin, 2005. MR 2109686

\bibitem{Kr94}
I. Kriz, {\emph{On the rational homotopy type of configuration spaces}}. Ann. of Math. (2) 139 (1994), no. 2, 227--237. 


\bibitem{LM96}
F. Lalonde and D. McDuff.
\newblock{\em $J$-curves and the classification of rational and ruled symplectic 4-manifolds.} {Contact and symplectic geometry (Cambridge, 1994), 3–42, 
Publ. Newton Inst., 8, Cambridge Univ. Press, Cambridge, 1996.}

\bibitem{LP04}
 {F. Lalonde and M. Pinsonnault},
 \newblock{\em The topology of the space of symplectic balls in rational 4-manifolds.}
\newblock{Duke Math. J.}, {122 (2004), no. 2}, {347-397}.

\bibitem{LLW15}
Li, J., Li, T.-J., Wu, W.,
{\em The symplectic mapping class group of $\mathbb{CP}^2\#n\overline{\mathbb {CP}}^2$ with $n \leq 4$}, Michigan Math. J., \textbf{64} (2015), no.2, 319–333.

\bibitem{LL95}T. J. Li and A. Liu, {\em Symplectic structure on ruled surfaces and a generalized adjunction formula}.
Math. Res. Lett. 2 (1995), no. 4, 453--471.

\bibitem{LL01}
{T-J. Li and  A-K. Liu},
 \newblock  {\em Uniqueness of symplectic canonical class, surface cone and symplectic cone of 4-manifolds with {$B^+=1$}.}
\newblock {J. Differential Geom.}, {58 (2001), no. 2}, {331--370}.

\bibitem{May-Course}
J. P. May, {\em A Concise Course in Algebraic Topology}, Chicago Lectures in Math., Univ. Chicago Press, Chicago, 1999. MR 1702278

\bibitem{McDOp15}
D. McDuff, E. Opshtein, {\em Nongeneric $J$-holomorphic curves and singular inflation}, 
Algebr. Geom. Topol.15 (2015), no.1, 231–286.

\bibitem{McD98}
D. McDuff. 
\newblock {\em From symplectic deformation to isotopy.}
\newblock {\it Topics in symplectic $4$-manifolds} (Irvine, CA, 1996), 85--99, First Int. Press Lect. Ser., I, Int. Press, Cambridge, MA, 1998.


\bibitem{McD09} D. McDuff, {\emph{Symplectic embeddings of 4-dimensional ellipsoids}}, J. Topol. {\bf 2} (2009), no. 1, 1--22.

\bibitem{MS17}
D. McDuff and D. Salamon.
\newblock {\em Introduction to Symplectic Topology}.
\newblock Oxford Mathematical Monographs, 3rd edition, 2017.

\bibitem{P08i}
M. Pinsonnault, 
\newblock{\em Symplectomorphism groups and embeddings of balls into rational ruled 4-manifolds.} \newblock{Compos. Math.}, {144 (2008)}, {no. 3}, {787--810}.

\bibitem{SM} SageMath-9-6, William Stein, Marc Culler, Nathan Dunfield, Matthhias Görner and others (2022).


\bibitem{Tot96} B. Totaro, {\emph{Configuration spaces of algebraic varieties}}, Topology {\bf 35} (1996), 1057--1067. 

\bibitem{Wil87} D. M. Wilczyński.
\newblock { \em Group actions on the complex projective plane}
\newblock{Trans. Am. Math. Soc.} {303 (1987)}, 707--731. 

\bibitem{Zhang17} W. Zhang, { \em The curve cone of almost complex 4-manifolds}, Proc. Lond. Math. Soc. (3) {\bf 115} (2017), no.~6, 1227--1275. MR3741851
\end{thebibliography}

\onehalfspacing
\printindex

\singlespacing
\end{document}